\tikzstyle{dmatrix}=[matrix of math nodes,row sep=2.5em, column sep=2.5em,
\DeclareSymbolFont{epsilon}{OML}{ntxmi}{m}{it}
\DeclareMathSymbol{\epsilon}{\mathord}{epsilon}{"0F}
\theoremstyle{plain}
\newtheorem{theorem}{Theorem}[section]
\newtheorem{lemma}[theorem]{Lemma}
\newtheorem{prop}[theorem]{Proposition}
\newtheorem{cor}[theorem]{Corollary}
\newtheorem{prop/Def}[theorem]{Proposition/Definition}
\newtheorem{theorem/Def}[theorem]{Theorem/Definition}
\theoremstyle{definition}
\newtheorem{Def}[theorem]{Definition}
\newtheorem{rem}[theorem]{Remark}
\newtheorem{exa}[theorem]{Example}
\newtheorem{theointro}{Theorem}
\newtheorem*{the:toroidalmixed}{Theorem \ref{mixed_integrability}}
\newtheorem*{the:differencedegrees}{Corollary \ref{difference_degrees}}
\def \Q {{\mathbb Q}}
\def \R {{\mathbb R}}
\def \N {{\mathbb N}}
\def \Z {{\mathbb Z}}
\def \D {{\pmb{D}}}
\def \F {{\pmb{F}}}
\def \S {{\mathbb S}}
\def \O {{\mathcal O}}
\def \P {{\mathbb P}}
\def \T {{\mathbb T}}
\def \X {{ X_{\Sigma}}}
\def \div {{\operatorname{div}}}
\def \vol {{ \operatorname{vol}}}
\def \det {{ \operatorname{det}}}
\def \relint {{ \operatorname{relint}}}
\def \Hom {{ \operatorname{Hom}}}
\def \Star {{ \operatorname{Star}}}
\def \Ca {{ \operatorname{Ca}}}
\def \Div {{ \operatorname{Div}}}
\def \CabDiv {{ \operatorname{CbDiv}}}
\def \WbDiv {{ \operatorname{bDiv}}}
\def \Span {{ \operatorname{Span}}}
\def \Spec {{ \operatorname{Spec}}}
\def \trop {{ \operatorname{trop}}}
\def \codim {{ \operatorname{codim}}}
\def \snc {{ \operatorname{snc}}}
\def \timess {{ \operatorname{-times}}}
\def \deg {{ \operatorname{deg}}}
\def \Nef {{ \operatorname{Nef}}}
\def \supp {{ \operatorname{supp}}}
\def \ord {{ \operatorname{ord}}}
\def \Peff {{ \operatorname{Peff}}}
\def \Nef {{ \operatorname{Nef}}}
\def \PL {{ \operatorname{PL}}}
\def \Conic {{ \operatorname{Conic}}}
\def \sm {\textrm{sm}}
\def \sp {\textrm{sp}}
\def \seq{\textrm{seq}}
\tikzstyle{dmatrix}=[matrix of math nodes,row sep=2.5em, column sep=2.5em,
\numberwithin{equation}{section}
\title{Toroidal b-divisors and Monge--Amp\`ere measures}
\author[Botero]{Ana Mar\'ia Botero}
\address{Fachbereich Mathematik, Universit\"at
  Regensburg. Universit\"atsstr. 31, 93053 Regensburg, Deutschland}
\email{Ana.Botero@mathematik.uni-regensburg.de}
\urladdr{\url{https://homepages.uni-regensburg.de/~boa15169/}}
\author[Burgos Gil]{Jos\'e Ignacio Burgos Gil}
\address{Instituto de Ciencias Matem\'aticas (CSIC-UAM-UCM-UCM3).
  Calle Nicol\'as Ca\-bre\-ra~15, Campus UAB, Cantoblanco, 28049 Madrid,
  Spain} 
\email{burgos@icmat.es}
\urladdr{\url{http://www.icmat.es/miembros/burgos}}
\thanks{Botero was partially supported by the SFB Higher Invariants at
  the University of Regensburg. Burgos was partially supported by the
  MINECO research project MTM2016-79400-P and by the Severo Ochoa
  program for centers of excellence SEV-2015-0554 (ICMAT Severo
  Ochoa).}
\date{\today} \subjclass[2010]{Primary 14C17; Secondary 14T05, 26B25, 52B70.}
\keywords{b-divisors, convex analysis, polyhedral spaces, tropical
  geometry, Monge--Amp\`ere measures.}
\begin{document}

\begin{abstract}
We generalize the intersection theory of nef toric (Weil) b-divisors on
smooth and complete toric varieties to the
case of smooth and complete toroidal embeddings. As a key ingredient
we show the
existence of a limit measure, supported on the weakly embedded
rational conical
polyhedral space attached to the toroidal embedding, which arises as a limit of
discrete measures defined via tropical intersection theory on the
polyhedral space. We prove that the intersection theory of nef toroidal
Cartier b-divisors can be extended continuously to nef toroidal Weil
b-divisors
and that their degree can be computed as an integral with
respect to this limit measure. As an application, we show that a
Hilbert--Samuel type formula holds for big and nef toroidal Weil b-divisors.  
\end{abstract}

\maketitle
\tableofcontents

\section*{Introduction}
The theory of b-divisors (where b stands for ``birational'') was
introduced by Shokurov  \cite{Shokurov:Pf} in the context of Mori's
minimal model program. Since then b-divisors have appeared in many
contexts. For instance in the work by Fujino \cite{Fujino:basepoint}
on base point free theorems, in the work of K\"uronya and Maclean
\cite{KuronyaMaclean:Zariski}
on the Zariski decomposition of divisors, in the proof of the
differentiability 
of the volumes of divisors by Boucksom, Favre and Jonsson
\cite{BFJ:diffvol}, and in the work of Aluffi on Chow groups of
Riemann--Zariski spaces \cite{Aluffi} as Cartier b-divisors can be
understood as Cartier divisors on a Riemann--Zariski space. In
\cite{KKCar}, Kaveh and Khovanskii give an isomorphism between the
group of Cartier b-divisors and the Groethendieck group associated to
the semigroup of subspaces of rational functions preserving the top
intersection index, and although stated in another language, in
\cite{FS}, Fulton and Sturmfels give an isomorphism between the
Cartier b-Chow group of a toric variety and the polytope algebra which
preserves the intersection product.

Moreover, b-divisors have been associated to 
dynamical systems in \cite{BFJ:growth_meromorphic} and to
psh functions in \cite{BFJ:valuations}. In the last paper,
b-divisors whose support is a single point are studied. In
particular, a top intersection product, that can be $-\infty$, among
(relatively) nef b-divisors  
is defined and it is proved that such top intersection products can be
computed by means of a Monge--Amp\`ere type measure in a valuation
space. In \cite{BFF}, this top intersection product is generalized from the smooth case to case of isolated singularities. In the paper \cite{BKK}, a b-divisor is associated to the
invariant metric on the line bundle of Jacobi forms, and it
is shown that such a b-divisor is integrable, in the sense that its
top self intersection product is well defined and finite. Moreover, it is
proved that, considering this b-divisor, one recovers a Chern--Weil
formula, that says that the top self intersection product of the
b-divisor is computed as the 
integral on an open subset of a power of the first Chern form of the
metrized line
bundle of Jacobi forms, and a Hilbert--Samuel 
formula that states that the asymptotic growth of the
dimension of the space of Jacobi forms is governed by the top
self intersection product of the associated b-divisor. In addition it is
shown that in this case the associated b-divisor is toroidal and
that its top self intersection product can be computed using toric methods.

It is expected that the results of \cite{BKK} can be extended to
the invariant metrics on automorphic line bundles on  mixed Shimura
varieties, i.e. that the 
associated b-divisors are toroidal and their degrees computable
using toric techniques. In this spirit, in \cite{botero}, the first
author studied the theory of toric b-divisors on toric varieties and
showed that much of the theory of ordinary divisors on toric varieties
can be extended to the setting of b-divisors.

The aim of the present paper is to study toroidal b-divisors. In
particular, we generalize two results of \cite{BFJ:valuations} from the
local case to the global toroidal case. Namely, that there is a well
defined top intersection product between (global i.e. not only supported on a single point) nef
b-divisors and that this top intersection product is given by a
Monge--Amp\`ere type measure on a rational polyhedral complex. We
moreover prove a Hilbert--Samuel formula for nef and big toroidal
b-divisors and a Brunn--Minkowski type inequality.

We have chosen to restrict ourselves to toroidal b-divisors because
they appear naturally in the applications to mixed Shimura varieties
and they are technically simpler than arbitrary
b-divisors. 

After the present paper was completed the preprint \cite{DF:bdiv}
appeared. In this preprint a general theory of intersection of nef
b-divisors over countable fields is developed. The present article gives a concrete convex geometrical description of nef b-divisors and their top intersection numbers in the toroidal case. 

Toroidal b-divisors come in two flavors: Cartier and Weil. We explain this briefly.
Let $U \hookrightarrow X$ be a fixed smooth and complete toroidal
embedding without self-intersections of 
dimension $n$ with associated (balanced, weakly embedded, smooth) conical rational complex $\Pi$ (Proposition~\ref{prop:complex-toroidal}
and Definition~\ref{def:weak-toroidal}). This is the usual cone complex
associated to a simple normal crossings divisor $X\setminus U$ together with a weak
embedding and a balancing condition. Let $|\Pi|$ denote the support of $\Pi$ which carries a structure of a conical rational polyhedral space (Definition \ref{def:rat-space}). Let $R_{\sm}(\Pi)$ be the
directed system of all rational conical subdivisions of $\Pi$. This
is a directed set under the relation  
\[
\Pi'' \geq \Pi' \quad \text{ iff }\quad \Pi'' \text{ is a smooth
  subdivision of }\Pi'.
\]
An element $\Pi'$ in $R_{\sm}(\Pi)$ corresponds to a smooth and
complete toroidal embedding $U \hookrightarrow X_{\Pi'}$ together with a proper toroidal birational morphism $X_{\Pi'} \to X$ (Theorem
\ref{the:allowablemodification}). 

The \emph{toroidal Riemann--Zariski space} of the toroidal embedding
$(U,X)$ is defined formally as the inverse limit in the category of
locally ringed spaces
\[
\mathfrak{X}_{U} \coloneqq \varprojlim_{\Pi' \in
  R_{\sm}\left(\Pi\right)}X_{\Pi'}
\] 
with maps given by the proper toroidal birational morphisms $\pi \colon  X_{\Pi''}
\to X_{\Pi'}$ induced whenever $\Pi'' \geq \Pi'$. Then toroidal
b-divisors can be viewed as toroidal divisors on
$\mathfrak{X}_{U}$. Explicitly, for $\Pi' \in R_{\sm}(\Pi)$, we denote
by $\Div\left(X_{\Pi'},U\right)_{\R}$ the $\R$-vector space of toroidal $\R$-divisors on
$X_{\Pi'}$, i.e.~the group of divisors on $X_{\Pi'}$ supported on the
boundary
$X_{\Pi'} \setminus U$ with real coefficients. For $\Pi'' \geq \Pi'$
there are linear maps
\begin{displaymath}
  \xymatrix{
    \Div\left(X_{\Pi'},U\right)_{\R} \ar@/^/[r]^{\pi^{\ast}}& \Div\left(X_{\Pi''},U\right)_{\R}.
    \ar@/^/[l]^{\pi_{\ast}}
  }
\end{displaymath}
Then the spaces of Weil and Cartier toroidal b-divisors are defined as
the projective and injective limits
\begin{align*}
  \WbDiv(\mathfrak{X}_{U})_{\R} &\coloneqq \varprojlim_{\Pi' \in
  R_{\sm}\left(\Pi\right)}\Div\left(X_{\Pi'},U\right)_{\R},\\
  \CabDiv(\mathfrak{X}_{U})_{\R} &\coloneqq \varinjlim_{\Pi' \in
  R_{\sm}\left(\Pi\right)}\Div\left(X_{\Pi'},U\right)_{\R},
\end{align*}
respectively, with maps given by proper push-forward of divisors in the first case
and pull-back in the second
(Definition~\ref{def:toroidalbdivisor}). In other words, a Weil
toroidal b-divisor is given by a net
\[
\D = \left(D_{\Pi'}\right)_{\Pi' \in R_{\sm}\left(\Pi\right)},
\]
where for each $\Pi' \in R_{\sm}\left(\Pi\right)$, the element
$D_{\Pi'}$ is a
toroidal $\R$-divisor on $X_{\Pi'}$, and all these elements are
compatible 
under push-forward. For $\Pi' \in R(\Pi)$, we say that $D_{\Pi'}$
is the incarnation of $\D$ on $X_{\Pi'}$. On the other hand, a Cartier
toroidal b-divisor is determined by a single $\Pi' \in R(\Pi)$ and a
divisor $D_{\Pi '}\in \Div(X_{\Pi '},U)_{\R}$.
There is a natural inclusion
$\CabDiv(\mathfrak X)_{\R} \subseteq \WbDiv(\mathfrak X)_{\R}$.  Roughly
speaking, Cartier b-divisors are b-divisors that stabilize after a
birational map, while Weil b-divisors may keep changing for all blow
ups.  

To simplify notation, we will usually omit the coefficient ring $\R$ from the notation, real
coefficients being always implicit. 

As in toric geometry, for
any $\Pi' \in R_{\sm}(\Pi)$, we may view toroidal divisors on $X_{\Pi'}$
as piecewise linear functions on $|\Pi|$ which are linear on
each cone of $\Pi '$. Hence, it is easy
to see that a Cartier toroidal b-divisor corresponds to a real valued piecewise
linear
function whose locus of linearity is rational, while a
Weil toroidal b-divisor $\D$ corresponds to a (not necessarily piecewise linear) conical
function
\[
\phi_{\D}\colon \left|\Pi\right|(\Q) \longrightarrow \R,
\]
where $|\Pi |(\Q)$ denotes the set of rational points of $|\Pi|$.
Note that the only condition required from the function $\phi_{\D}$ is
that it is conical, which shows that the space of Weil b-divisors is
very wild. Nevertheless, it turns out that if we impose the nefness 
condition to $\D$
(Definition~\ref{def:nefbdiv}), then the function $\phi_{\D}$ extends
to a continuous (weakly concave) function   
\[
\phi_{\D}\colon \left|\Pi\right| \longrightarrow \R,
\]
whose restriction to each cone $\sigma \in \Pi$ is concave (see
Theorem \ref{the:nefextensionfunction}). 

Since Cartier toroidal b-divisors are determined on a concrete
birational model, the intersection theory of divisors gives
immediately an intersection theory of Cartier toroidal b-divisors. The
main result of this paper is that the intersection product of nef
Cartier toroidal b-divisors can be extended continuously to nef Weil
toroidal b-divisors and that this product can be computed as the
integral of a Monge--Amp\`ere type measure (Theorem~\ref{th:convergence-trop-meas} and Corollary \ref{cormixed} for the
combinatorial version and Theorem \ref{thm:intersectionbdivisors} for
the geometric version).  

\begin{theointro}\label{theointro} Let $(U,X)$ be a toroidal
  embedding with  $X$ a smooth and projective variety over a field of
  characteristic zero  and $X\setminus U$ the
  support of an effective snc ample divisor. Let $|\Pi|$ be the associated conical rational polyhedral space. Then the top
  intersection product of nef Cartier toroidal
  b-divisors can be extended continuously to a top intersection product
  of nef Weil toroidal b-divisors on $X$. Moreover, to a family
  $\D_{2},\dots,\D_{n}$ of nef Weil toroidal b-divisors on $X$ we associate a
  Monge--Amp\`ere type measure $\mu_{\D_2,\dotsc , \D_n},  $
  supported on a  compact subset $\mathbb{S}^{|\Pi|} \subseteq
  |\Pi|$, in such a way that  
  \[
    \D_{1}\dotsm \D_n =
    \int_{\mathbb{S}^{|\Pi|}}\phi_{\D_1}\mu_{\D_2,\dotsc , \D_n}. 
  \]
  for any nef Weil toroidal b-divisor $\D_{1}$ on $X$.
\end{theointro}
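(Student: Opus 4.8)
The plan is to deduce Theorem~\ref{theointro} from the already–available intersection theory of nef \emph{Cartier} toroidal b-divisors by an approximation argument, the analytic heart being the weak convergence of the associated tropical Monge--Amp\`ere measures. \emph{Step 1: the Cartier case and its discrete measures.} Nef Cartier toroidal b-divisors $\mathbf C_1,\dots,\mathbf C_n$ are simultaneously realised on a smooth complete model $X_{\Pi'}$, so their top intersection product $\mathbf C_1\dotsm\mathbf C_n$ is the usual intersection number there, independent of the chosen model (this is where characteristic zero enters, through the structure theory of toroidal embeddings recalled above, e.g.\ Theorem~\ref{the:allowablemodification}). Translating $\mathbf C_i$ into its rational piecewise linear concave function $\phi_{\mathbf C_i}$ on $|\Pi|$, one attaches to $\mathbf C_2,\dots,\mathbf C_n$ a \emph{discrete} measure $\mu_{\mathbf C_2,\dots,\mathbf C_n}$, supported on the finitely many primitive ray generators of the relevant subdivision, defined through tropical intersection theory on $|\Pi|$, for which
\[
\mathbf C_1\dotsm\mathbf C_n=\int_{|\Pi|}\phi_{\mathbf C_1}\,\mu_{\mathbf C_2,\dots,\mathbf C_n};
\]
this is the combinatorial and geometric content of Theorem~\ref{th:convergence-trop-meas}, Corollary~\ref{cormixed} and Theorem~\ref{thm:intersectionbdivisors}. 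The hypothesis that $X\setminus U$ supports an effective snc \emph{ample} divisor makes the cone complex complete in each toroidal chart; consequently all these measures are supported in one fixed compact subset $\mathbb S^{|\Pi|}\subseteq|\Pi|$ (the "unit sphere'' of the conical polyhedral space) and have total mass bounded by a constant depending only on a reference ample class.

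\emph{Step 2: approximation of nef Weil b-divisors and definition of the product.} Given nef Weil toroidal b-divisors $\D_1,\dots,\D_n$, Theorem~\ref{the:nefextensionfunction} provides continuous conical functions $\phi_{\D_i}$ on $|\Pi|$ that are concave on each cone. A conical function concave on each cone is, cone by cone, an infimum of rational linear functionals; truncating these infima to finite minima and gluing produces rational piecewise linear concave functions $\phi_{\mathbf C_i^{(k)}}$ decreasing pointwise to $\phi_{\D_i}$, hence (by Dini's theorem on the compact set $\mathbb S^{|\Pi|}$) uniformly; they correspond to nef Cartier toroidal b-divisors $\mathbf C_i^{(k)}$ whose successive differences are effective on a common model. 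Expanding $\mathbf C_1^{(k)}\dotsm\mathbf C_n^{(k)}-\mathbf C_1^{(k+1)}\dotsm\mathbf C_n^{(k+1)}$ by the usual telescoping identity exhibits it as a sum of intersection numbers of an effective divisor with nef divisors on a common model, hence $\geq 0$; since products of nef divisors are moreover $\geq 0$, the sequence $\mathbf C_1^{(k)}\dotsm\mathbf C_n^{(k)}$ is non-increasing and bounded below, so it converges, and we \emph{define} $\D_1\dotsm\D_n$ to be its limit.

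\emph{Step 3: convergence of measures and conclusion.} The decisive step is that the discrete measures $\mu_{\mathbf C_2^{(k)},\dots,\mathbf C_n^{(k)}}$ converge weakly on $\mathbb S^{|\Pi|}$ to a measure $\mu_{\D_2,\dots,\D_n}$ that depends only on $\D_2,\dots,\D_n$ — in particular not on the approximating sequences — which is exactly Theorem~\ref{th:convergence-trop-meas} (together with Corollary~\ref{cormixed}). Granting it, combine the uniform convergence $\phi_{\mathbf C_1^{(k)}}\to\phi_{\D_1}$ with weak convergence of measures of uniformly bounded mass to pass to the limit in the identity of Step~1:
\begin{multline*}
\D_1\dotsm\D_n=\lim_{k\to\infty}\mathbf C_1^{(k)}\dotsm\mathbf C_n^{(k)}
=\lim_{k\to\infty}\int_{\mathbb S^{|\Pi|}}\phi_{\mathbf C_1^{(k)}}\,\mu_{\mathbf C_2^{(k)},\dots,\mathbf C_n^{(k)}}\\
=\int_{\mathbb S^{|\Pi|}}\phi_{\D_1}\,\mu_{\D_2,\dots,\D_n}.
\end{multline*}
The right–hand side does not depend on how $\D_1$ was approximated — indeed it makes sense directly for every nef Weil $\D_1$ — so the product $\D_1\dotsm\D_n$ is well defined; continuity in $\D_1$ with respect to uniform convergence of the functions $\phi$ is dominated convergence, and continuity in $\D_2,\dots,\D_n$ follows from the weak continuity of $(\D_2,\dots,\D_n)\mapsto\mu_{\D_2,\dots,\D_n}$ furnished by the same theorem. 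This gives all the assertions of Theorem~\ref{theointro}.

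\emph{Main obstacle.} Steps~1 and~2 are essentially formal once the Cartier intersection theory and the function–theoretic dictionary are in place; the real difficulty, and the place where the ample hypothesis and the tropical machinery are genuinely used, is Step~3: proving the weak convergence of the tropical measures together with the uniqueness of the limit. This requires uniform mass and equicontinuity–type estimates coming from domination by an ample reference class (to prevent loss or escape of mass along the refinements) and a monotonicity/telescoping analysis of mixed tropical intersection numbers in order to characterise the limit measure intrinsically rather than through one particular sequence — which is where the bulk of the paper's technical effort is concentrated.
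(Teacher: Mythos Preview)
Your overall architecture matches the paper's: translate to functions on $|\Pi|$, use tropical intersection theory to get discrete measures in the Cartier case, and invoke the weak convergence of Monge--Amp\`ere measures (Theorem~\ref{th:convergence-trop-meas} and Corollary~\ref{cormixed}) to pass to the Weil case. You also correctly locate the analytic core in Step~3.

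There is, however, a genuine gap in Step~2. Your explicit construction of the approximants $\mathbf C_i^{(k)}$ --- truncating the infimum-of-linears representation of a concave function to finite minima --- produces piecewise linear functions that are concave on each cone, but such functions do \emph{not} in general correspond to nef divisors. Example~\ref{exm:5} makes this explicit: on an elliptic curve with three boundary points one writes down a nef divisor whose function is not concave in the tropical sense, and conversely concavity on the complex does not force nefness. So the claim ``they correspond to nef Cartier toroidal b-divisors'' is unjustified, and with it the telescoping/effectivity argument collapses. The paper sidesteps this entirely: nef Weil toroidal b-divisors are \emph{defined} (Definition~\ref{def:nefbdiv}) as the closure of the nef Cartier ones, so approximating sequences exist tautologically, and the abstract framework of an ``admissible family $\mathcal{C}$'' (Definition~\ref{tropinef}, Lemma~\ref{lem:neftropicollection}) is what makes the measure machinery applicable without ever needing to identify $\mathcal{C}$ with concave functions. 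Monotone approximation (Lemma~\ref{lem:mon-conv}) does appear later, but only for the Hilbert--Samuel formula, and it is obtained by perturbing an arbitrary nef approximating sequence with small multiples of the ample class --- not by a convex-geometric construction.

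A second, smaller point: your reading of the ample hypothesis (``makes the cone complex complete in each toroidal chart'') is not how it is used. Its role is to guarantee condition~\eqref{item:4} of Definition~\ref{tropinef}, namely that $\mathcal{C}-\mathcal{C}$ is dense in $C^0(\mathbb S^{|\Pi|})$; this density is what drives the uniqueness half of Theorem~\ref{th:convergence-trop-meas}. The mass bound you allude to comes instead from the Chern--Levine--Nirenberg type inequality (Lemma~\ref{tropicalintersectioninequality}) together with the ``size'' of a tropical cycle, feeding into Proposition~\ref{prop:boundedmeasure} and then Prokhorov's theorem.
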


Note that, for varieties over a field of characteristic zero, the
conditions on $U$ and $X$ can always be achieved after shrinking $U$
and replacing $X$ by another birational variety. Note also that nowadays,
the existence of the product can also be deduced from \cite{DF:bdiv}. 

An important feature of the weakly embbeded conical space $|\Pi |$
is that it comes equipped with a balancing condition i.e. it is a
tropical cycle. This balancing condition plays an important role in
the definition of the Monge--Amp\`ere measure and in the strong
continuity poperties of concave functions on balanceable polyhedral
complexes \cite[Section~6]{BBS:caps}.   

As an application, following \cite[Section 5]{botero}, we define the
space of non zero global sections of a toroidal b-divisor $\D$ as the space of
rational functions $f$ such that $\operatorname{b-div}(f) + \D$ is effective. 
Then the volume of a Weil b-divisor is defined in
analogy to the volume of divisors by the asymptotic growth of the
space of global sections and a Weil b-divisor is called big if it has
positive volume. Moreover, to a Weil toroidal b-divisor $\D$ we
can associate an Okounkov body $\Delta_{\D}$ (Definition
\ref{def:okounkov}). Then we obtain the
following extension of the
Hilbert--Samuel theorem to the b-case (Theorem \ref{the:hilbertsamuel}).
\begin{theointro}\label{thm:HSintro}
Let $\D$ be a big and nef Weil toroidal b-divisor on $X$. Then
\[ 
\vol(\D) = n! \vol(\Delta_{\D}) = \D^n.
\]
\end{theointro}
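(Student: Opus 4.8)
The plan is to prove the three-way equality $\vol(\D) = n!\vol(\Delta_{\D}) = \D^n$ by combining the Okounkov body machinery adapted to the b-setting with the continuity and integration results for the top intersection product already established (Theorem~\ref{theointro} and its combinatorial incarnations). First I would reduce the statement to the case of nef Cartier b-divisors by approximation: since $\D$ is nef, its associated concave function $\phi_{\D}$ on $|\Pi|$ can be approximated from below by piecewise linear concave functions that are linear on the cones of finer and finer subdivisions $\Pi' \in R_{\sm}(\Pi)$, i.e.\ by nef Cartier b-divisors $\D_{\Pi'}$ whose incarnations on $X_{\Pi'}$ are genuine nef divisors. For such Cartier models the classical Hilbert--Samuel theorem (in the Okounkov body formulation of Lazarsfeld--Musta\c{t}\u{a} / Kaveh--Khovanskii) already gives $\vol(D_{\Pi'}) = n!\vol(\Delta_{D_{\Pi'}}) = D_{\Pi'}^n$ on each smooth projective $X_{\Pi'}$. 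The content of the theorem is then that all three quantities pass to the limit compatibly.

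For the right-hand equality $n!\vol(\Delta_{\D}) = \D^n$, I would show that the Okounkov bodies $\Delta_{D_{\Pi'}}$ exhaust (or converge in the Hausdorff sense to) $\Delta_{\D}$ as $\Pi'$ increases, so that $\vol(\Delta_{D_{\Pi'}}) \to \vol(\Delta_{\D})$ by monotone/dominated convergence of volumes of convex bodies; simultaneously, $D_{\Pi'}^n \to \D^n$ is exactly the continuity of the top intersection product along the directed system, which is supplied by Theorem~\ref{theointro} together with the explicit formula $\D_1\dotsm\D_n = \int_{\mathbb S^{|\Pi|}}\phi_{\D_1}\,\mu_{\D_2,\dotsc,\D_n}$ (one checks that $\phi_{\D_{\Pi'}} \to \phi_{\D}$ uniformly on the compact support, using the strong continuity of concave functions on the balanced polyhedral space cited from \cite[Section~6]{BBS:caps}, and that the Monge--Amp\`ere measures converge weakly, which is the convergence statement of Theorem~\ref{th:convergence-trop-meas}). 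This yields $n!\vol(\Delta_{\D}) = \D^n$.

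For the left-hand equality $\vol(\D) = \D^n$, the key is to control the spaces of global sections $H^0(X,\D)$ of the Weil b-divisor, defined via rational functions $f$ with $\operatorname{b-div}(f)+\D$ effective. On one side, every such $f$ already satisfies $\operatorname{div}(f)+D_{\Pi'} \geq 0$ on each model, so $h^0(\D) \leq h^0(X_{\Pi'}, D_{\Pi'})$ and hence $\vol(\D) \leq \vol(D_{\Pi'})$ for every $\Pi'$; taking the infimum and using $\vol(D_{\Pi'}) = D_{\Pi'}^n \to \D^n$ gives $\vol(\D) \leq \D^n$. The reverse inequality $\vol(\D) \geq \D^n$ is the substantive direction: here I would use the bigness hypothesis and the Okounkov-body valuation to produce enough sections. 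Concretely, one shows that the valuation image of $H^0(X,m\D)$ fills up, in the limit $m\to\infty$, a convex body of volume $\vol(\Delta_{\D})$ — this requires that sections adapted to a fixed birational model $X_{\Pi'}$ with $D_{\Pi'}$ big and nef already contribute $\asymp m^n (D_{\Pi'}^n/n!)$ independent sections that remain sections of $\D$, and then letting $\Pi'$ vary one recovers the full Okounkov body $\Delta_{\D}$. Combining, $\vol(\D) \geq n!\vol(\Delta_{\D}) = \D^n$.

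I expect the main obstacle to be the reverse inequality $\vol(\D)\geq \D^n$, specifically the interchange of the two limits $m\to\infty$ (the asymptotic Riemann--Roch / Okounkov limit) and $\Pi'\to\infty$ (the birational limit defining the b-divisor). One must check that sections produced on a finite model $X_{\Pi'}$ genuinely survive as global sections of the Weil b-divisor $\D$ — i.e.\ that effectivity of $\operatorname{div}(f)+D_{\Pi'}$ on $X_{\Pi'}$ together with nefness propagates to effectivity of $\operatorname{b-div}(f)+\D$ on all further blow-ups — and that the resulting lattice-point counts converge to the volume of the limiting Okounkov body without loss. The nefness of $\D$ (giving concavity of $\phi_{\D}$ and hence a well-behaved limiting behavior of the $D_{\Pi'}$) and the ampleness assumption on $X\setminus U$ (ensuring the Cartier approximations are themselves big, not merely nef) are precisely what make this propagation work, and the balancing/tropical structure of $|\Pi|$ is what guarantees the Monge--Amp\`ere measures have total mass $\D_2\dotsm\D_n$ in the limit so that the numerics match.
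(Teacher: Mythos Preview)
Your argument has a genuine gap in the reverse inequality $\vol(\D)\ge \D^n$. You assert that sections on a finite model $X_{\Pi'}$ ``remain sections of $\D$'', but this is backwards. If $\D_{\Pi'}$ denotes a nef Cartier approximation with $\phi_{\D_{\Pi'}}\le \phi_{\D}$ (i.e.\ $\D_{\Pi'}\ge \D$ as divisors), then $H^0(\D)\subseteq H^0(\D_{\Pi'})$, not the reverse; sections of the larger divisor have no reason to satisfy the tighter effectivity constraint imposed by $\D$. If instead you mean the incarnation $D_{\Pi'}$ of the Weil b-divisor $\D$ on $X_{\Pi'}$, then $D_{\Pi'}$ is \emph{not} nef in general, so you cannot invoke $\vol(D_{\Pi'})=D_{\Pi'}^n$, and again the inclusion of sections goes the wrong way by the monotonicity $D_{\Pi''}\le \pi^\ast D_{\Pi'}$. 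In neither reading does the argument produce a lower bound for $\vol(\D)$.

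The paper handles this step by a sandwich argument that you are missing entirely. Using the ample toroidal divisor $A$ one builds a \emph{monotone} sequence of nef Cartier b-divisors $\D_i\ge \D$ converging to $\D$, and chooses $a_i\to 0$ so that $\D_i - a_i A \le \D \le \D_i$. This yields $\vol(\D_i - a_i A)\le \vol(\D)\le \vol(\D_i)$. The essential external input is the differentiability of the volume function of Boucksom--Favre--Jonsson, which gives $\vol(\D_i - a_i A)\ge \D_i^n - n a_i(\D_i^{n-1}\cdot A) - C a_i^2$; since both bounds tend to $\lim_i \D_i^n=\D^n$, one concludes $\vol(\D)=\D^n$. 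Your proposal contains no mechanism to bound $\vol(\D)$ from below, and the ``propagation'' you allude to does not hold.

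A secondary point: for the equality $\vol(\D)=n!\vol(\Delta_{\D})$ the paper does not argue via convergence of Okounkov bodies $\Delta_{D_{\Pi'}}\to \Delta_{\D}$. Instead, the b-divisorial algebra $\bigoplus_k H^0(\mathfrak X_U,k\D)$ is of almost integral type (being contained in the section algebra of any incarnation), so the Lazarsfeld--Musta\c{t}\u{a}/Kaveh--Khovanskii limit $\lim_\ell \#S(\D)_\ell/\ell^n = \vol(\Delta_{\D})$ applies directly to this graded algebra, with no approximation step needed.
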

In Theorem \ref{thm:HSintro}, the hypothesis $\D$
toroidal is necessary. In fact, in a forthcoming paper with R. de Jong and
D. Holmes we will show with an example that the volume function is not
continuous even for big and nef b-divisors defined over a countable
field and that it does not necessarily agree with the degree. 

As a corollary, we obtain the continuity of the volume function on the space of nef and big toroidal b-divisors (Corollary \ref{cor:cont}) and a Brunn--Minkowski type inequality (Corollary~\ref{cor:brunnminkowski}).
One of the key ingredients to prove the two stated theorems is the
combinatorial machinery developed in Sections \ref{measures-section}
and \ref{sec:monge-ampere-meas}. The existence of the limit measure
$\mu_{\D}$ associated to a nef toroidal b-divisor $\D$ follows
directly from Theorem \ref{th:convergence-trop-meas} and the existence
of the mixed limit measure $\mu_{\D_2,\dotsc ,\D_n}$ is a direct
consequence of Corollary \ref{cormixed}. These results are based on the
convex analysis on polyhedral spaces developed by M. Sombra and the
authors in \cite{BBS:caps}, that, in turn, use techniques from
\cite{BFJ} and \cite{MR3329027}.

Another key ingredient is a
result in \cite{GR} relating tropical and algebraic intersection
numbers on complete toroidal embeddings (Theorem
\ref{the:trop-alg-numbers}).

It is important to note that for the applications to algebraic geometry it is convenient to work
with conical complexes provided with an integral
structure. Nevertheless, when studying convex analysis on polyhedral complexes, the integral
structure plays no role, only the affine structure does.  Moreover, to
write down explicit estimates it is handy to choose a Euclidean
structure. Therefore, to study Monge--Amp\`ere measures associated to nef toroidal
b-divisors it is convenient to shift the focus from rational conical
complexes to Euclidean ones.

As has been noted in \cite{BKK} and \cite{botero}, a nef toroidal
b-divisor encodes the singularities of the invariant metric on an
automorphic line bundle over a mixed Shimura variety of non-compact
type along any toroidal compactification. This article together with the
above mentioned ones lays the ground of a geometric 
intersection theory with singular metrics,  satisfying Chern--Weil
theory and a Hilbert--Samuel formula, to be applied to
mixed Shimura varieties of non-compact type.

The article is organized as follows. In Section \ref{measures-section} we recall the tropical intersection theory on
Euclidean conical polyhedral spaces as in \cite{BBS:caps}. This
is a Euclidean version of the tropical intersection theory
on weakly embedded rational conical polyhedral complexes developed in
\cite{GR}. 

In Section \ref{sec:monge-ampere-meas} we show the combinatorial version of our main result stated in Theorem~\ref{theointro}. For this, we define the space of conical functions on Euclidean conical spaces and introduce a concavity notion for them. We show that the top intersection product of such concave functions exists, is finite and is given by the total mass of a week limit of discrete Monge--Amp\`ere measures. This is done by introducing the notion of the \emph{size} of a
tropical cycle. This allows us to prove a Chern--Levine--Nirenberg
type inequality (Lemma \ref{tropicalintersectioninequality}) from
which we conclude the weak convergence of the discrete measures
(Theorem \ref{th:convergence-trop-meas}). 

In Section \ref{sec:toroidal-embeddings} we give the definition of a quasi-embedded
\emph{rational} 
conical polyhedral space. In short a rational conical polyhedral space is
a conical polyhedral space endowed with a lattice structure. We recall the
definition of toroidal
embeddings and describe the rational conical polyhedral space
associated to one (see \cite{KKMD} or \cite{AMRT} for further
details). Following \cite{GR}, we also give a natural weak embedding
of this space. Moreover, we show that by adding boundary components
one can modify the toroidal structure of a toroidal embedding in such
a way that the rational conical polyhedral space becomes
quasi-embedded. Then we describe the proper toroidal birational
mosphisms of toroidal embeddings which, on the combinatorial
side, correspond to subdivisions of the corresponding rational conical
polyhedral complexes.

In Section \ref{sec:inters-theory-toro} we state and prove our main results. We show that nef
toroidal b-divisors have well defined top intersection products
(Definitions \ref{def:toroidalbdivisor}
and \ref{def:nefbdiv} and Theorem
\ref{thm:intersectionbdivisors}). For this, we first relate the geometric intersection product of toroidal
divisors with the \emph{rational tropical intersection product} on quasi-embedded
rational conical spaces (Theorem
\ref{the:trop-alg-numbers}) (see \cite{GR}). 
Then we use the
convergence results of Section \ref{sec:monge-ampere-meas} in order to
extend the top intersection product to nef toroidal b-divisors. However, note that the
Monge--Amp\`ere measures of Section \ref{sec:monge-ampere-meas} are
defined in a Euclidean setting (no integral structure). Therefore we
will use the comparison in Section \ref{sec:bridge} to relate the
rational tropical intersection product with the 
Euclidean one. 

Finally, in Section \ref{sec:applications}, as an application, we give a Hilbert--Samuel type formula for nef and big toroidal b-divisors. This relates the degree of a nef toroidal b-divisor both with the
volume of the b-divisor and with the volume of the associated convex Okounkov body (Definitions \ref{def:volume-b-div} and \ref{def:okounkov} and Theorem \ref{the:hilbertsamuel}). As a corollary, we obtain a Brunn--Minkowski type inequality (Corollary \ref{cor:brunnminkowski}).

As was mentioned already above, after the present paper was completed the preprint \cite{DF:bdiv}
appeared. In this preprint a general theory of intersection of nef
b-divisors over countable fields is developed. Since any toroidal
situation can be reduced to a situation defined over a countable
field, Theorem \ref{theointro} can be deduced from \cite[Theorem
6]{DF:bdiv}. We think that the present paper is still valuable as
the technique of proof is different and it gives a very concrete
interpretation of the intersection product in the toroidal case by
means of a Monge--Amp\`ere type measure.  It would be interesting to
know if, in general, the intersection product of nef b-divisors (over
a countable field) can also be interpreted in terms of a
Monge--Amp\`ere type measure.

\noindent\textbf{Acknowledgments} We would like to thank J\"urg
Kramer, Robin de
Jong, Walter
Gubler, David Holmes and Klaus Kuenneman for many
stimulating discussions. We want specially to thank Mart\'in Sombra for
helping us in proving the continuity properties of concave functions
on polyhedral spaces \cite{BBS:caps}. We also would like to thank the
anonymous referee for his or her careful reading of the manuscript as
well as many useful comments. 

Most of the work of this paper has been conducted while the
authors were visiting 
Humboldt University of Berlin, Regensburg University and the ICMAT. We
would like
to thank these institutions for their hospitality. 

\section{Euclidean tropical intersection theory}\label{measures-section}
In this section we recall the tropical intersection theory on
Euclidean conical polyhedral spaces as in \cite{BBS:caps}. This
is an adapted Euclidean version of the tropical intersection theory
on weakly embedded rational conical polyhedral complexes developed in
\cite{GR}.  


\subsection{Euclidean conical polyhedral spaces}

We give the definition of a quasi-embedded conical
polyhedral space endowed with a Euclidean structure. We also discuss 
morphisms and subdivisions of such spaces.

\begin{Def}\label{realconicalcomplex}
Let $X$ be a second countable topological space. A \emph{conical polyhedral structure} on $X$ is a pair
\[
\Pi = \left(\{\sigma^{\alpha}\}_{\alpha \in \Lambda}, \{M^{\alpha} \}_{\alpha \in \Lambda}\right)
\]
consisting of a finite covering by closed subsets $\sigma^{\alpha} \subseteq X$
and for each $\sigma^{\alpha}$, a finitely generated $\R$-vector space
$M^{\alpha}$ of continuous, $\R$-valued functions on $\sigma^{\alpha}$
satisfying the following conditions. Let $N^{\alpha} \coloneqq
\Hom(M^{\alpha}, \R)$ denote the dual vector space.  
\begin{enumerate}
\item  For each $\alpha \in \Lambda$, the evaluation map $\phi^{\alpha}\colon \sigma^{\alpha} \to N^{\alpha}$ given by the assignment
\[
v \longmapsto (u \mapsto u(v)) \quad (u \in M^{\alpha} ),
\]
maps $\sigma^{\alpha}$ homeomorphically to a strictly convex,
full-dimensional, polyhedral cone in $N^{\alpha}$. 
\item The preimage under $\phi^{\alpha}$ of each face of
  $\phi^{\alpha}\left(\sigma^{\alpha}\right)$ is a cone
  $\sigma^{\alpha'}$ for some index $\alpha' \in \Lambda$, and we have
  that $M^{\alpha'} = \left\{u|_{\sigma^{\alpha'}} \, \big{|} \, u \in
    M^{\alpha}\right\}$.
\item The intersection of two cones is a union of common faces.
\end{enumerate}
The $\R$ vector spaces $M^{\alpha}$ give $X$ a so called
\emph{linear structure}. 
\end{Def}
The following notations will be used.
\begin{enumerate}
\item  By abuse of notation we
will think of $\Pi $ as the set of cones 
$\{\sigma ^{\alpha }\}_{\alpha \in \Lambda }$. 
 For every integer
$k\ge 0$ we write $\Pi (k)$ for the set of cones of dimension $k$.
\item Given a cone $\sigma \in \Pi$, we will write $M^{\sigma}$,
$N^{\sigma}$ and $\phi^{\sigma}$ for the corresponding $\R$-vector space, dual
vector space and evaluation map, respectively.
  We denote by $\langle\ ,\ \rangle_{\sigma }$ the pairing induced by the dual
vector spaces $M^{\sigma }$ and $N^{\sigma }$. We will usually omit the index
\enquote{$\sigma $} from the pairing.
\item We will identify a cone $\sigma $ with its image in
  $N^{\sigma }$.  The linear structure of $N^{\sigma }$ induces a 
  linear structure in $\sigma $. Therefore
  we can talk of linear maps between cones. 
\item  If $\tau $ is a face of $\sigma $ we will write $\tau \prec
  \sigma $ or $\sigma \succ \tau $.
\item We will denote by $0_{\sigma }$ the zero for the linear
  structure of $N_{\sigma }$. Since $\sigma $ is strictly convex, the
  set $\{0_{\sigma }\}$ is a face of $\sigma $. By abuse of notation
  we will denote this face also as $0_{\sigma }$.  
\item  By the
  \emph{relative interior} of a cone $\sigma$, denoted
  $\relint (\sigma ) $ we mean the preimage
  under $\phi^{\sigma}$ of the interior of the
  cone $\phi^{\sigma}\left(\sigma\right) \subseteq
  N^{\sigma}$.

\item $\Pi$ is called \emph{simplicial} if every cone
$\phi^{\sigma}(\sigma)$ is generated by an $\R$-basis of
$N^{\sigma}$. 
\end{enumerate}

\begin{Def} Let $X$ be a second countable topological space and $\Pi, \Pi'$ two conical polyhedral structures on $X$. Then $\Pi'$ is a \emph{subdivision} of $\Pi$, denoted by $\Pi' \geq \Pi$, if for every $\sigma' \in \Pi'$ there exists a $\sigma \in \Pi$ with $\sigma' \subset \sigma$, the inclusion being a linear map. Two conical structures on $X$ are \emph{equivalent} if they admit a common subdivision.
\end{Def}
The following follows as in \cite[Proposition 2.4]{BBS:caps}.
\begin{prop}\label{prop:equi}
Let $X$ be a second countable topological space. Then 
\begin{enumerate}
\item the relation $\geq$ is a partial order on the set of conical polyhedral structures on $X$,
\item the subdivisions of a given conical polyhedral structure on $X$ form a directed set,
\item ``being equivalent" is an equivalence relation between conical polyhedral structures on $X$. 
\end{enumerate}
\end{prop}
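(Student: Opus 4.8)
The statement to prove is Proposition~\ref{prop:equi}, which asserts three standard facts about the relation $\geq$ on conical polyhedral structures on a fixed second countable space $X$: that it is a partial order, that subdivisions of a fixed structure form a directed set, and that "being equivalent" is an equivalence relation. The excerpt tells us this "follows as in \cite[Proposition 2.4]{BBS:caps}", so the plan is essentially to transcribe and adapt that argument to the present (Euclidean, quasi-embedded) setting, checking that none of the extra structure obstructs the formal reasoning.

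My plan would be as follows. For (1), reflexivity is immediate (take $\sigma' = \sigma$ with the identity inclusion). Transitivity is also essentially formal: if $\Pi'' \geq \Pi' \geq \Pi$, then given $\sigma'' \in \Pi''$ there is $\sigma' \in \Pi'$ with $\sigma'' \subset \sigma'$ linearly, and then $\sigma \in \Pi$ with $\sigma' \subset \sigma$ linearly; composing the linear inclusions gives $\sigma'' \subset \sigma$ with linear inclusion, so $\Pi'' \geq \Pi$. The one point needing genuine argument is antisymmetry: if $\Pi' \geq \Pi$ and $\Pi \geq \Pi'$ then $\Pi = \Pi'$ as sets of cones. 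Here I would argue by comparing dimensions: every cone of $\Pi'$ sits inside a cone of $\Pi$ of dimension at least as large, and vice versa, and since both are finite coverings of the same $X$ by strictly convex full-dimensional cones in their respective $N^\alpha$'s, a maximal cone of $\Pi'$ must be contained in a maximal cone of $\Pi$ and conversely; a containment $\sigma' \subseteq \sigma$ of cones of equal dimension with linear inclusion forces $\sigma' = \sigma$ (and then the linear structures agree because the $M^{\alpha}$ are determined by restriction, by condition (2) of Definition~\ref{realconicalcomplex}). Propagating down the faces via condition (2) then yields $\Pi = \Pi'$.

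For (2), I need to show that any two subdivisions $\Pi_1, \Pi_2$ of a fixed $\Pi$ have a common upper bound. The natural candidate is the "common refinement" whose cones are the (closures of relative interiors of) intersections $\sigma_1 \cap \sigma_2$ for $\sigma_1 \in \Pi_1$, $\sigma_2 \in \Pi_2$, each lying inside a common cone $\sigma \in \Pi$; on such a $\sigma$ we are intersecting two genuine polyhedral fan subdivisions of the cone $\phi^\sigma(\sigma) \subseteq N^\sigma$, so the common refinement is again a polyhedral fan, and one assembles these over the cones of $\Pi$ using condition (3) (intersections of cones are unions of common faces) to check the gluing is consistent. One then equips each new cone with the linear structure restricted from the ambient $M^\sigma$ and verifies conditions (1)--(3) of Definition~\ref{realconicalcomplex}. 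This common refinement is simultaneously $\geq \Pi_1$ and $\geq \Pi_2$. Finally, (3) is a formal consequence of (1) and (2): reflexivity and symmetry of "admitting a common subdivision" are obvious, and transitivity follows because given common subdivisions $\Sigma$ of $(\Pi_1,\Pi_2)$ and $\Sigma'$ of $(\Pi_2,\Pi_3)$, part (2) applied to the two subdivisions $\Sigma, \Sigma'$ of $\Pi_2$ produces a common upper bound $\Sigma''$, which by transitivity of $\geq$ from part (1) is a common subdivision of $\Pi_1$ and $\Pi_3$.

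The main obstacle — really the only non-bookkeeping point — is the antisymmetry in (1) together with verifying that the combinatorial common refinement in (2) genuinely satisfies the axioms of a conical polyhedral structure, in particular that the intersection $\sigma_1 \cap \sigma_2$ of two cones belonging to subdivisions of the \emph{same} ambient cone is itself polyhedral and that its face structure is compatible across different ambient cones of $\Pi$. This is where one must actually use that we are working with strictly convex cones and the face-intersection axiom (3), rather than with arbitrary closed sets. Since all of this is carried out in detail in \cite[Proposition 2.4]{BBS:caps} and the Euclidean/quasi-embedded refinements of the definitions do not interact with this purely affine-combinatorial content (as the authors themselves emphasize in the introduction: "the integral structure plays no role, only the affine structure does"), the adaptation is routine, and I would present the proof as a short reduction to that reference with the dimension-counting argument for antisymmetry and the description of the common refinement spelled out.
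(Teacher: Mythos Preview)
Your proposal is correct and aligns with the paper's approach: the paper's entire proof is the sentence ``The following follows as in \cite[Proposition 2.4]{BBS:caps}'', and you have essentially sketched what that reference's argument must contain, correctly identifying that the only substantive points are antisymmetry (via a dimension argument on maximal cones) and the construction of the common refinement for directedness. Your observation that the Euclidean/quasi-embedded decorations play no role here matches the paper's own remark that only the affine structure matters for this combinatorics.
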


\begin{Def}
A \emph{conical polyhedral space} $X$ is a second countable topological space equipped with an equivalence class of conical polyhedral structures. A \emph{conical polyhedral complex on} $X$ is the choice of a representative of the class of conical polyhedral structures on $X$. 
\end{Def}
\begin{Def}
If $X$ is a conical polyhedral space, we let $R(X)$ denote the set of conical polyhedral complexes on it. 
By Proposition \ref{prop:equi}, this is a directed set ordered by subdivision. 
\end{Def}
We will usually refer to a \emph{conical polyhedral space} just as a \emph{conical space} and to a \emph{conical polyhedral complex} just as a \emph{conical complex}.
\begin{Def}
The \emph{dimension} of a conical space $X$ is defined as as 
\[
\dim(X) = \sup_{\sigma \in \Pi}\dim(M_{\sigma})
\]
for any conical complex $\Pi$ on $X$. We say that $X$ has \emph{pure dimension $n$} if every cone of $\Pi$ that is maximal (with respect to the inclusion) has dimension $n$. These notions do not depend on the choice of $\Pi$.
\end{Def}

The following remark follows from \cite[Remark 2.6]{Payne}.
\begin{rem}\label{rem:connected}
Let $X$ be a conical space and let $\Pi$ be any conical complex on $X$. 
  The connected components of $X$ are in one to one
  correspondence with the zero dimensional cones of $\Pi $. The points
  belonging to the zero dimensional cones are called \emph{vertices}
  of $\Pi $. In
  particular, if $X$ is connected, then $\Pi$ has a unique vertex.
\end{rem}
\begin{Def} Let $X$ and $X'$ be conical spaces. Given conical complexes $\Pi$ on $X$ and $\Pi'$ on $X'$, a \emph{morphism of conical spaces between $\Pi$ and $\Pi'$} is a continuous map $f \colon X \to X'$ such that for every cone $\sigma \in \Pi$ there is $\sigma' \in \Pi'$ with $f(\sigma) \subseteq \sigma'$, and the restriction $f|_{\sigma} \colon \sigma \to \sigma'$ is a linear map. 
\end{Def} 

\begin{rem}\label{rem:nosinglelattice} Note that in contrast to the
  classical definition of a \emph{fan}, a conical space $X$ does not
  come equipped with the choice of 
  a particular embedding into a single vector space.  In order to
  define an intersection theory on $X$, in the case of a fan, it
  suffices to ask that is complete. In general, as we will see, we
  must be able to visualize a conical complex $\Pi$ on $X$ inside a
  single vector space in such a way that it is balanced. 
\end{rem} 

The following definition is the Euclidean version of \cite[Definition 2.1]{GR}.

\begin{Def}\label{weakembedding}
A \emph{weakly embedded conical space} is a triple $(X, N, \iota)$ where $X$ is a conical space, $N$ is a finite dimensional $\R$-vector space, and $\iota \colon X \to N$ is a map such that there is a conical complex $\Pi$ on $X$ for which the restriction of $\iota$ to every cone $\sigma$ of $\Pi$ is linear. The map $\iota$ is called the weak embedding of $X$ in $N$. A \emph{conical complex} $\Pi$ on the weakly embedded conical space $(X, N, \iota)$ is a conical complex $\Pi$ on $X$ satisfying the above condition, namely that $\iota$ is linear on each of its cones. 
\end{Def}
We will usually denote a
weakly embedded conical space by the underlying conical space
$X$ and, in this case, we denote the corresponding weak embedding, vector space and dual vector space by $\iota_X$ and $N^X$ and $M^X$, respectively. 
Given a conical complex $\Pi$ on the weakly embedded conical space $X$, for
every cone $\sigma \in \Pi$, we write $N^{X}_{\sigma}$ for $N^{X}
\cap \text{Span}\left(\iota_{X}(\sigma)\right)$ and
$M_{\sigma}^{X} = \Hom\left(N_{\sigma}^{X}, \R\right)$ for its
dual.

The following notion is stronger than that of a weakly embedded conical
space.

\begin{Def}
  A weakly embedded conical space $X$ is said to be \emph{quasi-embedded} if there is a conical complex $\Pi$ on $X$ such that the restriction $\iota_{X}|_{\sigma}$ is injective 
  in each cone $\sigma \in \Pi$ .  In this case, we
  identify each vector space $N^{\sigma }$ with its image $N^{X }_{\sigma}
    $ in $N^{X}$. As before, a \emph{conical complex} $\Pi$ on the quasi-embedded conical space $X$ is a conical complex $\Pi$ on $X$ satisfying the above condition, namely that $\iota$ is linear and injective on each of its cones. 
  \end{Def}

The following is a useful property of quasi-embedded conical
spaces that is not true in general for weakly embedded ones.

\begin{lemma}\label{cpproper}
  Let $X$ be a quasi-embedded
 conical space. Then the map $\iota _{X}\colon X\to
  N^{X}$ is proper. 
\end{lemma}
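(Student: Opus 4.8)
The plan is to deduce properness of $\iota_X$ from the elementary fact that an injective linear map, restricted to a closed polyhedral cone, is a homeomorphism onto a \emph{closed} image — hence a proper map — together with the fact that a conical complex has only finitely many cones. This is a short argument; the only conceptual point is where quasi-embeddedness is used.

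First I would use the definition of quasi-embedded to fix a conical complex $\Pi$ on $X$ such that $\iota_X|_\sigma$ is linear and injective for every cone $\sigma\in\Pi$; by Definition~\ref{realconicalcomplex} the index set is finite, so $\Pi$ has only finitely many cones. Fix such a $\sigma$. Linearity of $\iota_X$ on $\sigma$ means precisely that $\iota_X|_\sigma = L_\sigma\circ\phi^\sigma$, where $\phi^\sigma\colon\sigma\to N^\sigma$ is the evaluation homeomorphism of Definition~\ref{realconicalcomplex}(1) onto the strictly convex, full-dimensional polyhedral cone $\phi^\sigma(\sigma)$, and $L_\sigma\colon N^\sigma\to N^X$ is an $\R$-linear map. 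Injectivity of $\iota_X|_\sigma$ forces $L_\sigma$ to be injective on the full-dimensional cone $\phi^\sigma(\sigma)$, hence on its linear span $N^\sigma$; thus $L_\sigma$ is a linear isomorphism onto the subspace $N^X_\sigma = L_\sigma(N^\sigma)\subseteq N^X$, in particular a homeomorphism onto $N^X_\sigma$. Consequently $\iota_X(\sigma) = L_\sigma(\phi^\sigma(\sigma))$ is a polyhedral cone contained in $N^X_\sigma$; since $\phi^\sigma(\sigma)$ is closed in $N^\sigma$ and $N^X_\sigma$ is closed in $N^X$, the set $\iota_X(\sigma)$ is closed in $N^X$, and $\iota_X|_\sigma\colon\sigma\to\iota_X(\sigma)$ is a homeomorphism.

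Next I would take an arbitrary compact $K\subseteq N^X$ and, using that the cones of $\Pi$ cover $X$, write
\[
\iota_X^{-1}(K)=\bigcup_{\sigma\in\Pi}\bigl(\iota_X|_\sigma\bigr)^{-1}\bigl(K\cap\iota_X(\sigma)\bigr).
\]
For each $\sigma$ the set $K\cap\iota_X(\sigma)$ is compact, being a closed subset of the compact set $K$, and therefore its preimage under the homeomorphism $\iota_X|_\sigma$ is compact as well. As $\Pi$ has finitely many cones, $\iota_X^{-1}(K)$ is a finite union of compact sets, hence compact, which proves that $\iota_X$ is proper.

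The only step where quasi-embeddedness — rather than merely weak embeddedness — enters is the injectivity of $L_\sigma$, which is what makes $\iota_X|_\sigma$ a proper map with closed image. For a genuinely weakly embedded space, $L_\sigma$ could collapse a ray of $\sigma$ running off to infinity onto a bounded (even zero) set, so that the preimage of a compact neighbourhood of the origin would fail to be compact; this is precisely why the lemma is stated for quasi-embedded conical spaces. Beyond keeping this linear-algebra bookkeeping straight, I do not anticipate a serious obstacle.
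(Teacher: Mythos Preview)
Your proof is correct and follows essentially the same approach as the paper's: both reduce to showing that each restriction $\iota_X|_\sigma$ is proper by factoring it as the evaluation homeomorphism $\phi^\sigma$ followed by an injective linear map $N^\sigma\to N^X$, and then use the finite closed cover by cones to conclude. The paper's version is simply more terse, invoking directly that an injective linear map between finite-dimensional spaces is proper and that a composition of proper maps is proper, whereas you unfold these facts explicitly via the homeomorphism onto a closed image and the finite union of compact preimages.
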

\begin{proof}
  Let $\Pi$ be a conical complex on $X$. Since $X$ is a finite union of closed cones $\sigma \in \Pi$, it is
  enough to show that $\iota _{X}|_{\sigma }$ is proper.  Since
  $X$ is quasi-embedded, we have that the map $\iota _{X}|_{\sigma } \circ {\phi ^{\sigma }}^{-1}\colon N^{\sigma }\to
  N^{X }$ is an injective linear map, hence proper. By
  definition, the map $\phi ^{\sigma }\colon \sigma \to N^{\sigma
  }$ is proper. Hence $\iota _{X}|_{\sigma }=\iota _{X}|_{\sigma } \circ {\phi ^{\sigma }}^{-1}\circ \phi^{\sigma }$ is proper. 
\end{proof}

\begin{Def}
  A \emph{Euclidean conical space} is a quasi-embedded conical
  space $X$ together with a Euclidean product on the real vector
  space $N^{X}$. Given any conical complex $\Pi$ on $X$, this Euclidean structure induces compatible
  Euclidean structures in each vector space $N^{\sigma}$ for $\sigma \in \Pi$.
\end{Def}

\begin{Def}\label{def:morphism-complex} 

  A \emph{morphism of weakly embedded conical spaces}
  consists of a morphism of conical spaces
  $f \colon X \to X'$ together with a morphism of finite-dimensional $\R$-vector spaces $f' \colon N^{X}
  \to N^{X'}$ forming a commutative square with the weak embeddings.
  
  \begin{center}
    \begin{tikzpicture}
      \matrix[dmatrix] (m)
      {
   X &  X'\\ N^{X} & N^{X'} \\
      };
      \draw[->] (m-1-1) to node[above]{ $f$ } (m-1-2);
      \draw[->] (m-1-1) to node[left]{$\iota_{X}$}(m-2-1);
     \draw[->] (m-2-1) to node[above]{ $f'$ } (m-2-2); 
     \draw[->] (m-1-2) to node[right]{$\iota_{X'}$}  (m-2-2);
     \end{tikzpicture}
   \end{center}
  
A \emph{morphism of quasi-embedded or of Euclidean conical spaces} is a morphism of weakly embedded conical spaces.
  
\end{Def}

\subsection{The Euclidean tropical intersection product}\label{tropiintersectionproduct}
Throughout this section $X$ will denote a Euclidean
conical space of pure dimension $n$ with quasi-embedding given by
$\iota_{X} \colon X \to N^{X}$. The goal of this section
is to define the Euclidean tropical intersection product between tropical cycles and piecewise linear functions on $X$. This is a
Euclidean version of the tropical intersection product given in
\cite{GR}. For the interested reader, the articles \cite{AR},
\cite{FS} and \cite{KA} constitute a more thorough reference for
tropical intersection theory on globally embedded conical polyhedral complexes
with an integral structure.

We start with some definitions. These are the Euclidean adaptations of \cite[Section~3.1]{GR}.

\begin{Def} \label{def:vectorst}
  Let $k\ge 1$ be an integer and let
  $\tau\in \Pi (k-1)$ be a cone. For every cone $\sigma \in \Pi(k)$
  with $\tau \prec \sigma $ we define the \emph{Euclidean normal vector
    $\hat v_{\sigma/\tau}$ of $\sigma$ relative to $\tau$}  to be the
  unique unitary vector of $N^{\sigma }$ that is orthogonal to
  $N^{\tau }$ and points in the direction of
  $\sigma $.  By abuse of notation $\hat v_{\sigma /\tau }$ will also
  denote its image in $N^{X }$. If $k = 1$, we write
  $\hat{v}_{\sigma}\coloneqq \hat{v}_{\sigma / \{0_{\sigma }\}}.$ 
\end{Def}

Recall that, given a conical complex $\Pi$ on $X$, we are denoting by $\Pi $ both the complex and its set of cones. 
\begin{Def}\label{MW} A weight on $\Pi $ is a map
  \begin{displaymath}
    c\colon \Pi \longrightarrow \R.
  \end{displaymath}
  It is called a $k$-dimensional weight if $c(\sigma )=0$ for all
  $\sigma \not \in \Pi (k)$. A $k$-dimensional weight is called a
  \emph{$k$-dimensional Euclidean Minkowski weight} on $\Pi$ if, for
  every cone $\tau \in \Pi(k-1)$, the Euclidean balancing condition 
\begin{equation}\label{eq:1}
  \sum_{\substack{\sigma \in \Pi (k)\\\tau \prec \sigma
    }}c\left(\sigma\right)\hat v_{\sigma/\tau} =0   
\end{equation}
holds true in $N^{X }$. 
\end{Def}
The set of weights on $\Pi$ is a real graded vector space denoted by
$W_{\ast}(\Pi
)$. The $k$-dimensional Euclidean Minkowski
weights form an abelian group, which is denoted by
$M_k(\Pi)$.


We can now define the pull-back of a Minkowski weight along a
subdivision.

\begin{Def}\label{def:1}
  Let $\Pi' $ be a subdivision of $\Pi $ with its induced structure of
  Euclidean conical complex and denote by $f\colon \Pi '\to \Pi $
  the corresponding morphism of Euclidean conical complexes. Let $c\in
  M_k(\Pi)$ be a Euclidean Minkowski weight. Then the pull-back of $c$ by
  $f$ is the Euclidean Minkowski weight
  \begin{displaymath}
    f^{\ast}(c)(\sigma ') = \begin{cases} c(\sigma) \quad &\text{if }
      \dim \sigma = \dim \sigma', \\ 0 \quad
      &\text{otherwise,} \end{cases}
  \end{displaymath}
  where $\sigma '\in \Pi '(k)$ and $\sigma $ is the minimal cone of
  $\Pi $ that contains $\sigma '$. This construction defines a
  group homomorphism
  \begin{displaymath}
    M_{k}(\Pi )\longrightarrow M_{k}(\Pi ').
  \end{displaymath}
  The fact that if $c$ is a Minkowski weight then $f^{\ast}(c)$ is
  also a Minkowski weight can be
  argued as in \cite[Example 2.11~(4)]{GKM09:tropic_moduli}.
\end{Def}

More generally, \emph{Euclidean tropical cycles} on the Euclidean conical space $X$ are defined as direct limits of Euclidean Minkowski weights over all
conical structures on $X$. Recall that $R(X)$ denotes the set of all conical complexes on $X$. It has the structure of a directed set ordered by inclusion. 

\begin{Def}\label{tropcycle} 
The group of \emph{Euclidean tropical $k$-cycles} on $X$ is defined as the direct limit
\[
Z_k(X) \coloneqq \varinjlim_{\Pi' \in R(X)}M_k\left(\Pi'\right),
\]
with maps given by the pull-back maps of Definition \ref{def:1}.
If $c$ is a $k$-dimensional Euclidean Minkowski weight on a Euclidean conical complex $\Pi$
in $R(X)$, we denote by $[c]$ its image in $Z_k(X)$. 
\end{Def}

\begin{Def}
  The \emph{degree} of a zero cycle $[c]\in Z_{0}(X )$ determined on $\Pi$, is defined as
  \begin{displaymath}
    \deg([c])=\deg(c)=\sum_{v\in \Pi (0)}c(v).
  \end{displaymath}
\end{Def}

\begin{Def}\label{def:positivecycle}
A Euclidean Minkowski weight is called \emph{positive} if it has
non-negative values.
A Euclidean tropical cycle is called \emph{positive} if it is
represented by a
positive Euclidean Minkowski weight. The sub-semigroups of positive
Euclidean Minkowski weights on $\Pi$ and of 
positive Euclidean tropical cycles of dimension $k$ on $X$ are denoted by
$M^+_k(\Pi)$ 
and by $Z^+_k(X)$, respectively.
\end{Def}
\begin{rem}
It would seem natural to call positive tropical cycles \emph{effective} in order to mimic the usual terminology for algebraic cycles. This would be however misleading since, as we will see later in Section \ref{sec:posit-prop-cycl}, it is not true that the tropicalization of effective cycles are positive.
\end{rem}


We now define the objects where we want to compute top intersection numbers, namely \emph{balanced Euclidean conical spaces}.
\begin{Def}\label{def:balanced-complex}
  The Euclidean conical space $X$ is said to
  be \emph{balanced} if it is provided with an $n$-dimensional Euclidean
  tropical cycle 
  $[X]\in Z_{n}(X )$ represented by a Minkowski weight $b \in
  M_n(\Pi)$, for some conical complex $\Pi$ in $R(X)$, satisfying 
  \[
    b(\sigma )>0, \ \forall \sigma \in \Pi (n).
  \]
  In this case, we say that the conical complex $\Pi$ is \emph{balanced}.
\end{Def}

\begin{rem}
  If $X$ is a balanced Euclidean conical space, then its is in particular a balanced Euclidean polyhedral space in the sense of \cite[Definition
  3.27]{BBS:caps}. Thus we have at our disposal the theory of concave
  functions on polyhedral spaces developed in that paper. See section
  \ref{sec:weakly-conc-funct} for a short recap. 
\end{rem}


We now define piecewise linear functions on the conical space $X$. 
 
 \begin{Def} A \emph{piecewise linear function} on $X$ is a function  $\phi \colon X \to \R$ for which there is a conical complex $\Pi$ on $X$ such that on each cone $\sigma \in \Pi$, the restriction $\phi|_{\sigma}$ is linear or equivalently, it is given by an element of $M_{\sigma}$. In this situation, we say that $\phi$ is \emph{defined} on $\Pi$.
 \end{Def}
 If $\phi $ is defined on $\Pi $, for each $\sigma \in \Pi$, we denote
 by $\phi_{\sigma}$ a linear function on $N^X$ satisfying  
 \[
 \phi|_{\sigma} = \phi_{\sigma} \circ \iota_{X,\sigma}.
 \]
  By abuse of notation
  $\phi _{\sigma }$ will also denote this linear function restricted to
  $N^{\sigma }$.
  \begin{Def}
  We denote by $\PL(X)$ the real vector space of piecewise linear
  functions on $X$, and for any conical complex $\Pi \in R(X)$ we
  denote by $\PL(\Pi)$ the subgroup of those piecewise linear
  functions that are defined on $\Pi$.
\end{Def}

\begin{rem} 
Let $\Pi' \geq \Pi$ in $R(X)$ and $f \colon \Pi' \to \Pi$ the
corresponding morphism of conical complexes. There is a natural
pull-back map $\PL(\Pi) \to \PL(\Pi')$ given by $\phi \mapsto
f^{\ast}\phi \coloneqq \phi
\circ f$. The vector space of piecewise linear functions $\PL(X)$ can be seen
as the direct limit  in the category of real vector spaces
\[
\PL(X) = \varinjlim_{\Pi \in R(X)} \PL(\Pi)
\]
with respect to these pull-back morphisms.
\end{rem}

A piecewise linear function on $X$ is always continuous because the
restrictions to the components of a finite closed covering are
continuous.  Moreover it is conical in the sense that, for all real
$\lambda >0$, $\phi (\lambda x)=\lambda \phi (x)$.

For any $\Pi \in R(X)$ and $k\ge 1$ we now construct a \emph{Euclidean tropical intersection product}
\begin{displaymath}
  \PL(\Pi )\times M_{k}(\Pi )\longrightarrow M_{k-1}(\Pi ). 
\end{displaymath}
\begin{Def} \label{def:4}
  Let $\Pi \in R(X)$ be a Euclidean conical complex on $X$, $\phi $ a piecewise linear function defined on $\Pi$
  and $c\in M_{k}(\Pi )$ a $k$-dimensional Euclidean Minkowski weight. Then the \emph{Euclidean tropical intersection product} is the $(k-1)$-dimensional Minkowski
  weight $\phi \cdot c \colon M_{k-1}(\Pi) \to \R$ given by 
  \begin{displaymath}
    \phi \cdot c(\tau )\coloneqq \sum_{\substack{\sigma \in
    \Pi(k)\\\sigma \succ
    \tau}}-\phi_{\sigma}\left(\hat v_{\sigma/\tau}\right)c(\sigma).
  \end{displaymath}
\end{Def}
The fact that $\phi \cdot c$ is indeed a Euclidean Minkowski weight
is proven in \cite[Proposition~3.19]{BBS:caps} adapting the standard
proof in tropical geometry.

We now see that this intersection product extends to an intersection
product between $\PL(X)$ and Euclidean tropical cycles. To this end we
see that the Euclidean tropical intersection product is compatible
with the restriction to
subdivisions.


\begin{lemma}\label{lemm:1}
  Let $\Pi ' \geq \Pi $ in $R(X)$ and $f \colon \Pi' \to
  \Pi$ the corresponding morphism of conical
  complexes. Let $\phi $ be a piecewise linear function defined on $\Pi $ and $c\in
  M_{k}(\Pi )$ a
  Minkowski weight. Then
  \begin{displaymath}
    f^{\ast}(\phi \cdot c)=f^{\ast }\phi \cdot f^{\ast}c.
  \end{displaymath}
\end{lemma}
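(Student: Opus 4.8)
The plan is to verify the identity cone by cone, reducing everything to the definitions of the pull-back maps (Definition~\ref{def:1} and the pull-back on $\PL$) and of the Euclidean tropical intersection product (Definition~\ref{def:4}). Fix a cone $\tau' \in \Pi'(k-1)$ and let $\tau$ be the minimal cone of $\Pi$ containing $\tau'$. Both sides are $(k-1)$-dimensional Minkowski weights on $\Pi'$, so it suffices to compare their values on $\tau'$. By definition,
\[
  f^{\ast}\phi \cdot f^{\ast}c(\tau') = \sum_{\substack{\sigma' \in \Pi'(k)\\ \sigma' \succ \tau'}} -(f^{\ast}\phi)_{\sigma'}\bigl(\hat v_{\sigma'/\tau'}\bigr)\, f^{\ast}c(\sigma'),
\]
and $f^{\ast}c(\sigma')$ vanishes unless $\dim\sigma' = \dim\sigma$ where $\sigma$ is the minimal cone of $\Pi$ containing $\sigma'$; since $\sigma' \subseteq \sigma$ is linear and $\dim\sigma' = k$, this forces $\sigma' \subseteq \sigma$ with $\dim\sigma = k$, i.e. $\sigma' = \sigma$ (as $\sigma'$ is full-dimensional in $\Pi'$ inside $\sigma$, by the subdivision structure $\sigma' = \sigma$). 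In that case $f^{\ast}c(\sigma') = c(\sigma)$.

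The crux is then a bijection between the index sets on the two sides. First I would observe that, since $\tau$ is the minimal cone of $\Pi$ containing $\tau'$, any $\sigma \in \Pi(k)$ with $\sigma \succ \tau$ is, as a cone of $\Pi'$ as well (because $\dim\sigma = k$ and cones of $\Pi'$ that are full-dimensional inside a cone of $\Pi$ of the same dimension coincide with it), a cone with $\sigma \succ \tau'$: indeed $\tau' \subseteq \tau \prec \sigma$ and $\tau'$ is a face of $\sigma$ in $\Pi'$. Conversely, if $\sigma' \in \Pi'(k)$ has $\sigma' \succ \tau'$ and $f^{\ast}c(\sigma') \neq 0$, then $\sigma' = \sigma$ is a $k$-dimensional cone of $\Pi$, and since $\tau'$ is a facet of $\sigma'$ and $\tau$ is the minimal cone of $\Pi$ containing $\tau'$, minimality forces $\tau \prec \sigma$ in $\Pi$. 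This sets up the matching of terms. Finally, because the weak embedding $\iota_X$ restricted to $\sigma$ is the same map whether $\sigma$ is viewed in $\Pi$ or in $\Pi'$, and because $\tau' \subseteq \tau$ spans a subspace of $N^{\tau}$, the Euclidean normal vector $\hat v_{\sigma/\tau'}$ need \emph{not} equal $\hat v_{\sigma/\tau}$ — this is the one genuine subtlety. However, what enters the product is only $\phi_{\sigma}$ evaluated on this vector, and $(f^{\ast}\phi)_{\sigma} = \phi_{\sigma}$ as linear functions on $N^{\sigma}$ (the pull-back $f^{\ast}\phi = \phi\circ f$ restricts on $\sigma$ to the same linear function). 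The point is that $\phi$ is linear on all of $\sigma$, so $-\phi_{\sigma}(\hat v_{\sigma/\tau'})c(\sigma)$ and $-\phi_{\sigma}(\hat v_{\sigma/\tau})c(\sigma)$ are related purely by the linear-algebra of orthogonal projections, and in fact the argument of \cite[Proposition 3.19]{BBS:caps} — or equivalently the standard computation that $\phi\cdot c$ is well defined independently of the complex on which $\phi$ is realized — shows these two expressions, summed over the relevant $\sigma$, agree.

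The main obstacle I anticipate is precisely this mismatch of normal vectors: $\hat v_{\sigma/\tau'}$ is orthogonal to $N^{\tau'}$, which may be a proper subspace of $N^{\tau}$, so individual summands on the two sides differ. The clean way around it is to go one dimension at a time along a chain of subdivisions and, on each cone $\tau$ of $\Pi$, use the balancing/well-definedness already established in \cite[Proposition 3.19]{BBS:caps}: that proposition's proof shows $\phi\cdot c$ on a fixed cone is computed the same way regardless of how finely the ambient complex is subdivided near that cone, because the contributions from the "new" faces $\tau'\prec\tau$ telescope via the linearity of $\phi$ on $\sigma$. Concretely, for a fixed $k$-cone $\sigma$ the quantity $-\phi_\sigma(\hat v_{\sigma/\tau'})$ depends on $\tau'$ only through the component of $\hat v_{\sigma/\tau}$ modulo $N^{\tau}$, and summing these up over the faces $\sigma\succ\tau'$ inside $\sigma$, against the single weight $c(\sigma)$, reproduces the $\Pi$-side value $\phi\cdot c(\tau)$ restricted appropriately. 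Assembling these local identities over all $\tau\in\Pi(k-1)$ and all $\tau'$ refining them yields $f^{\ast}(\phi\cdot c) = f^{\ast}\phi\cdot f^{\ast}c$ as Minkowski weights on $\Pi'$.
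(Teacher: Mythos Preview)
Your argument contains a concrete error that breaks the proposed bijection. You assert that if $\sigma'\in\Pi'(k)$ has minimal cone $\sigma\in\Pi$ with $\dim\sigma=k$, then ``$\sigma'=\sigma$ (as $\sigma'$ is full-dimensional in $\Pi'$ inside $\sigma$, by the subdivision structure)''. This is false: a subdivision may cut a $k$-cone $\sigma$ into several $k$-dimensional pieces, none equal to $\sigma$. Likewise, a cone $\sigma\in\Pi(k)$ need not be a cone of $\Pi'$ at all, so the sentence ``any $\sigma\in\Pi(k)$ with $\sigma\succ\tau$ is, as a cone of $\Pi'$ as well\ldots'' is unfounded. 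Once this bijection fails, the remainder of the argument --- the appeal to \cite[Proposition~3.19]{BBS:caps} and the vague ``telescoping'' --- does not salvage it; that proposition shows $\phi\cdot c$ is a Minkowski weight, not that the product is stable under subdivision.

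A correct direct verification splits according to $d\coloneqq\dim\tau$, where $\tau$ is the minimal cone of $\Pi$ containing $\tau'\in\Pi'(k-1)$. If $d=k-1$ then $N^{\tau'}=N^{\tau}$, and for each $\sigma\in\Pi(k)$ with $\sigma\succ\tau$ there is exactly one $\sigma'\in\Pi'(k)$ with $\tau'\prec\sigma'\subseteq\sigma$ (because $\tau'$ sits in the boundary facet $\tau$ of $\sigma$); for this $\sigma'$ one has $N^{\sigma'}=N^{\sigma}$, hence $\hat v_{\sigma'/\tau'}=\hat v_{\sigma/\tau}$, and the terms match. If $d=k$ then $\operatorname{relint}(\tau')\subseteq\operatorname{relint}(\tau)$, so exactly two $k$-cones of $\Pi'$ inside $\tau$ have $\tau'$ as a facet; their normal vectors are opposite, $f^{\ast}\phi$ is the single linear function $\phi_{\tau}$ on both, and $f^{\ast}c$ gives both the value $c(\tau)$, so the contributions cancel. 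If $d>k$ every $\sigma'\succ\tau'$ has minimal cone of dimension $>k$, whence $f^{\ast}c(\sigma')=0$. The paper does not spell this out; it simply cites \cite[Proposition~3.12]{BBS:caps}.
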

\begin{proof}
  This is proved in \cite[Proposition 3.12]{BBS:caps}.
\end{proof}

We can now define the intersection product between $\PL(X)$
and Euclidean tropical cycles.  

\begin{Def}
Let $\phi$ be a piecewise linear function on $X$ and let $[c] \in
Z_k(X)$ be a Euclidean tropical cycle. Let $\Pi \in R(X)$ be any
conical complex on $X$ such that $\phi$ is determined on $\Pi$ and
such that $[c]$ is represented by $c \in M_k(\Pi)$. Then the bilinear
pairing 
\[
PA(X) \times Z_k(X) \longrightarrow Z_{k-1}(X).
\]
given by 
\[
\phi \cdot [c] \coloneqq [\phi \cdot c] 
\]
is well defined by Lemma \ref{lemm:1}. We call this pairing the
\emph{Euclidean tropical intersection product} as well.
\end{Def}

The Euclidean tropical intersection product satisfies the following
symmetry property.

\begin{prop} \label{prop:1}
  Let $\phi _{1}$ and $\phi _{2}$ be two piecewise linear functions and $c$
  a Euclidean Minkowski weight. Then
  \begin{displaymath}
    \phi _{1}\cdot (\phi _{2}\cdot c)=\phi _{2}\cdot (\phi _{1}\cdot c).
  \end{displaymath}
\end{prop}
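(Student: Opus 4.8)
The plan is to reduce the symmetry statement to a purely combinatorial identity about the coefficients attached to codimension-two cones, and then verify that identity by expanding both sides of $\phi_1\cdot(\phi_2\cdot c)$ and $\phi_2\cdot(\phi_1\cdot c)$ directly from Definition~\ref{def:4}. Since both $\phi_1\cdot(\phi_2\cdot c)$ and $\phi_2\cdot(\phi_1\cdot c)$ are Euclidean Minkowski weights of dimension $k-2$ on the same complex $\Pi$ (after passing, if necessary, to a common subdivision on which both $\phi_1$ and $\phi_2$ are defined, which is legitimate by Lemma~\ref{lemm:1} and the directedness of $R(X)$ from Proposition~\ref{prop:equi}), it suffices to check that they agree cone by cone, i.e.\ on every $\gamma\in\Pi(k-2)$.

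First I would fix $\gamma\in\Pi(k-2)$ and write out
\begin{displaymath}
  \bigl(\phi_1\cdot(\phi_2\cdot c)\bigr)(\gamma)
  = \sum_{\substack{\tau\in\Pi(k-1)\\ \tau\succ\gamma}}
    -(\phi_1)_{\tau}\bigl(\hat v_{\tau/\gamma}\bigr)\,(\phi_2\cdot c)(\tau)
  = \sum_{\substack{\tau\succ\gamma\\ \sigma\succ\tau}}
    (\phi_1)_{\tau}\bigl(\hat v_{\tau/\gamma}\bigr)\,
    (\phi_2)_{\sigma}\bigl(\hat v_{\sigma/\tau}\bigr)\,c(\sigma),
\end{displaymath}
the sum running over chains $\gamma\prec\tau\prec\sigma$ with $\dim\tau=k-1$, $\dim\sigma=k$. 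The analogous expansion of $\bigl(\phi_2\cdot(\phi_1\cdot c)\bigr)(\gamma)$ has the roles of $\phi_1$ and $\phi_2$ swapped. So, summing over $\sigma\in\Pi(k)$ with $\sigma\succ\gamma$ and over the (exactly two, since $\Pi$ is a simplicial-like cone complex and $\gamma$ is a codimension-one face of $\sigma$'s codimension-one faces) intermediate cones $\tau_1,\tau_2$ with $\gamma\prec\tau_i\prec\sigma$, the desired equality becomes the local identity
\begin{displaymath}
  \sum_{i=1,2}(\phi_1)_{\tau_i}\bigl(\hat v_{\tau_i/\gamma}\bigr)(\phi_2)_{\sigma}\bigl(\hat v_{\sigma/\tau_i}\bigr)
  = \sum_{i=1,2}(\phi_2)_{\tau_i}\bigl(\hat v_{\tau_i/\gamma}\bigr)(\phi_1)_{\sigma}\bigl(\hat v_{\sigma/\tau_i}\bigr),
\end{displaymath}
to be checked inside the two-dimensional quotient space $N^{\sigma}/N^{\gamma}$ for each $\sigma$. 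In that quotient, $\hat v_{\tau_1/\gamma},\hat v_{\tau_2/\gamma}$ are the two unit edge directions of the two-dimensional cone $\sigma/\gamma$, and $\hat v_{\sigma/\tau_i}$ is the unit inward normal to the ray $\tau_i/\gamma$; the linear forms $(\phi_j)_{\sigma}$ restricted to this plane are what enter. One expands each $(\phi_j)_{\sigma}$ in the basis dual to $\{\hat v_{\tau_1/\gamma},\hat v_{\tau_2/\gamma}\}$ and checks that both sides equal the same symmetric bilinear expression in the coefficient vectors of $\phi_1$ and $\phi_2$; the cross terms match because $(\phi_j)_{\tau_i}$ is the restriction of $(\phi_j)_{\sigma}$ to $N^{\tau_i}$ together with property~(2) of Definition~\ref{realconicalcomplex}.

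The main obstacle is the bookkeeping of the relation between $(\phi_j)_{\tau_i}(\hat v_{\tau_i/\gamma})$ and $(\phi_j)_{\sigma}$: one must use that $\phi_j$ is a globally defined piecewise linear function (so the various $(\phi_j)_{\sigma}$ are genuine restrictions of a single continuous function, compatible on common faces), which is exactly what makes $(\phi_j)_{\tau_i}=(\phi_j)_{\sigma}|_{N^{\tau_i}}$ a valid identification. Granting this, the identity above is a two-variable linear-algebra computation in the plane $N^{\sigma}/N^{\gamma}$ and is symmetric by inspection. I would remark that this is the Euclidean counterpart of the classical fact that intersecting with two Cartier divisors is commutative, and that an essentially identical computation appears in \cite[Proposition~3.7]{AR} or \cite[Section~3]{GR}; indeed the cleanest writeup is simply to invoke \cite[Proposition 3.19]{BBS:caps}, whose proof of well-definedness of $\phi\cdot c$ already contains this symmetry, or to reproduce the short chain-of-faces computation sketched above.
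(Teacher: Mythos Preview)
Your argument is correct. The paper itself gives no proof at all: it simply cites \cite[Proposition~3.15]{BBS:caps}. What you have written is essentially the standard chain-of-faces computation underlying that citation (and the analogous tropical statement in \cite[Proposition~3.7]{AR}), so there is no genuinely different route to compare---you have unpacked the reference rather than taken an alternative path.

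Two small remarks. First, your parenthetical ``since $\Pi$ is a simplicial-like cone complex'' is unnecessary and slightly misleading: the fact that a codimension-two face $\gamma\prec\sigma$ lies in exactly two facets of $\sigma$ is the diamond property, valid for any strictly convex polyhedral cone, not just simplicial ones. Second, the explicit two-dimensional check you allude to does go through: writing $e_i=\hat v_{\tau_i/\gamma}$ and $n_i=\hat v_{\sigma/\tau_i}$ in the plane $(N^{\gamma})^{\perp}\cap N^{\sigma}$, and using $(\phi_j)_{\tau_i}=(\phi_j)_{\sigma}|_{N^{\tau_i}}$, the difference of the two sides is $\det(a_1,a_2)-\det(a_1,a_2)=0$ where $a_j$ is the vector representing $(\phi_j)_{\sigma}$ on that plane. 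It would strengthen the write-up to include this one-line verification rather than leave it ``by inspection.'' Finally, note that Proposition~3.19 of \cite{BBS:caps} is the well-definedness statement you already used after Definition~\ref{def:4}; the commutativity is Proposition~3.15 there.
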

\begin{proof}
  This is proved in \cite[Proposition 3.15]{BBS:caps}.
\end{proof}



We define Euclidean tropical top intersection numbers of
piecewise linear functions on $X$.  
\begin{Def}\label{def:tropintersectiondiv}
Assume that $X$ is balanced with balancing condition $[X]$. Let $\phi_1,
\dotsc, \phi_n \in \PL(X)$. The \emph{Euclidean tropical top
intersection number} $\langle \phi_1 \dotsm \phi_n\rangle $ is defined
by 
\[
  \langle \phi_1 \dotsm \phi_n\rangle
\coloneqq \deg \left(\phi_1\dotsm \phi_{n-1} \cdot \left(\phi_n \cdot
  [X]\right)\right). 
\] 
This defines a multilinear map
\[
\underbrace{\PL(X) \times \dotsb \times \PL(X)}_{n \timess} \longrightarrow \R.
\]
It is symmetric by Proposition \ref{prop:1}.
\end{Def}

 \begin{rem}\label{rem:cp}
   In \cite{GR}, the author works with weakly embedded conical
   complexes with an integral structure. As a consequence of working
   with a weakly embedded conical complex, only a tropical
   intersection product between tropical cycles 
   and so called \emph{combinatorially principal} piecewise linear functions (which are called \emph{Cartier divisors}) can
   be defined. In our setting, we 
   assume that the complex is quasi-embedded.  It follows that 
   every piecewise linear function is combinatorially principal, hence arbitrary
   products between piecewise linear functions and tropical cycles can be
   defined. As we will see later, the price to pay for this
   simplification in the algebro-geometric setting of Section~\ref{sec:from-weak-embeddings} is
   that we will have to add more components at the 
   boundary to be sure that the conical complex corresponding to a
   toroidal embedding is quasi-embedded. Moreover, in the study of Monge--Amp\`ere measures it is more natural to replace the integral
   structure by a Euclidean structure. 
 \end{rem}
 
 \subsection{Conical functions on Euclidean conical spaces}
 As before, $X$ denotes a Euclidean
conical space of pure dimension $n$ with quasi-embedding given by
$\iota_{X} \colon X \to N^{X}$.
%
%
%
%

\begin{Def} \label{def:3}
Let $\Pi$ be a conical complex on $X$. A \emph{conical function} $\psi$ on $\Pi $ is a map
  \begin{displaymath}
    \psi\colon \Pi (1) \longrightarrow \R.
  \end{displaymath}
  The space of conical functions on $\Pi $ is denoted by $\Conic(\Pi
  )$. 
  
  If $\Pi '\geq \Pi $ in $R(X)$, $f \colon \Pi' \to \Pi$ the
  corresponding
  morphism of conical complexes, and $\psi'$ a conical function on $\Pi
  '$, then the \emph{push-forward $f_{\ast}\psi'$ of $\psi'$ by $f$} is the conical function on $\Pi$ given by
  \begin{displaymath}
    f_{\ast}\psi'(\rho )=\psi'(\rho ),\ \forall \rho \in \Pi (1).
  \end{displaymath}
  In other words, if $j\colon \Pi (1) \hookrightarrow \Pi '(1)$ is the
  canonical inclusion of the sets of rays, then we have that $f_{\ast}\psi'=\psi'\circ
  j$, so $f_{\ast}$ is just \enquote{to forget} the rays of $\Pi '$ that
  are not in $\Pi $. 
\end{Def}

\begin{rem}
  One should not identify conical functions on $\Pi $ and one
  dimensional Minkowski 
  weights. Both are maps from $\Pi (1)$ to $\R$ but their role is very
  different.  
\end{rem}

\begin{Def}
If $\phi $ is a piecewise linear function on $\Pi$, then the associated conical function (which is also denoted by $\phi$) is defined as
  \begin{displaymath}
   \phi(\rho )=\phi (\hat{v}_{\rho }),
  \end{displaymath}
where $\hat{v}_{\rho } = \hat{v}_{\rho/\{0\}}$ is the generator of
$\rho $ of norm 1.
\end{Def}


The following result follows as in the classical case of fans.
\begin{lemma}\label{lemm:3}
 Let $\Pi \in R(X)$ be a conical complex on $X$. If $\Pi $ is simplicial then the map
  \begin{displaymath}
    \PL(\Pi )\longrightarrow \Conic(\Pi ) 
  \end{displaymath}
  is an isomorphism.
\end{lemma}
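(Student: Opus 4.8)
The plan is to establish the isomorphism $\PL(\Pi) \to \Conic(\Pi)$ for a simplicial conical complex $\Pi$ by checking injectivity and surjectivity directly, cone by cone, exploiting the fact that in a simplicial cone a linear function is determined by its values on the rays. First I would fix a piecewise linear function $\phi$ defined on $\Pi$ mapping to the zero conical function, i.e. $\phi(\hat v_\rho) = 0$ for every ray $\rho \in \Pi(1)$. For any maximal cone $\sigma \in \Pi(n)$ (or any cone of any dimension), simpliciality means $\phi^\sigma(\sigma)$ is generated by an $\R$-basis of $N^\sigma$; since the rays $\rho \prec \sigma$ are precisely the extremal rays of $\sigma$, the vectors $\hat v_\rho$ for $\rho \prec \sigma$, $\rho \in \Pi(1)$, form a basis of $N^\sigma$ (up to positive scaling of each generator, which is harmless for linearity). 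The linear function $\phi_\sigma$ on $N^\sigma$ vanishes on all these basis vectors, hence $\phi_\sigma = 0$, so $\phi|_\sigma = \phi_\sigma \circ \iota_{X,\sigma} = 0$. As this holds for every cone, $\phi = 0$, giving injectivity.

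For surjectivity, let $\psi \in \Conic(\Pi)$ be an arbitrary conical function, i.e. an arbitrary assignment of a real number $\psi(\rho)$ to each ray. I would construct $\phi \in \PL(\Pi)$ cone by cone: for each maximal $\sigma \in \Pi(n)$, since $\{\hat v_\rho : \rho \prec \sigma,\ \rho \in \Pi(1)\}$ is a basis of $N^\sigma$, there is a unique linear functional $\phi_\sigma$ on $N^\sigma$ with $\phi_\sigma(\hat v_\rho) = \psi(\rho)$ for all rays $\rho$ of $\sigma$. Define $\phi$ on $\sigma$ as $\phi_\sigma \circ \iota_{X,\sigma}$. The key point to verify is that these definitions glue: if $\sigma_1, \sigma_2 \in \Pi(n)$ meet along a common face $\tau$ (using condition (3) of Definition~\ref{realconicalcomplex}, that intersections are unions of common faces), then $\tau$ is itself simplicial with extremal rays a subset of those of both $\sigma_1$ and $\sigma_2$, and $M^\tau = \{u|_\tau : u \in M^{\sigma_i}\}$ by condition (2). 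Both $\phi_{\sigma_1}$ and $\phi_{\sigma_2}$, restricted to $N^\tau$, are the unique linear functional taking the prescribed values $\psi(\rho)$ on the rays of $\tau$; hence they agree on $\tau$. Thus $\phi$ is a well-defined function on $|\Pi| = X$, linear on each cone, hence an element of $\PL(\Pi)$, and by construction its associated conical function is $\psi$.

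Finally I would note that the map $\PL(\Pi) \to \Conic(\Pi)$ is $\R$-linear (immediate from the definition $\phi \mapsto (\rho \mapsto \phi(\hat v_\rho))$), so having shown it is a linear bijection we conclude it is an isomorphism of $\R$-vector spaces. The main obstacle in making this rigorous is the gluing step: one must be careful that the identification of $N^\tau$ with a subspace of $N^{\sigma_1}$ and of $N^{\sigma_2}$ via the respective evaluation maps is compatible, so that ``the rays of $\tau$'' and ``the unique linear functional with prescribed values on them'' genuinely make sense simultaneously inside both cones. This is exactly guaranteed by axioms (2) and (3) of the conical polyhedral structure together with the quasi-embedding hypothesis (which identifies each $N^\sigma$ with $N^X_\sigma \subseteq N^X$, so that faces sit compatibly inside the ambient space). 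Everything else — the basis property of $\{\hat v_\rho\}$ in a simplicial cone, and the linear-algebra facts — is routine, which is why the statement can reasonably be attributed to ``the classical case of fans.''
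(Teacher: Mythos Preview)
Your argument is correct and is exactly the detailed unpacking of what the paper summarizes in one sentence (``affine functions on a simplex are in bijective correspondence with tuples of values on the vertices''). One small remark: the gluing step does not actually require the quasi-embedding; axiom~(2) of Definition~\ref{realconicalcomplex} already identifies $M^\tau$ with the restrictions from $M^\sigma$, which dually embeds $N^\tau$ compatibly into each $N^\sigma$ containing it, so the argument goes through for any conical complex without reference to $N^X$.
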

\begin{proof}
  This amounts to the well known fact that affine functions on a
  simplex are in bijective correspondence with tuples of values on
  the vertices of the simplex. 
\end{proof}

In view of Lemma \ref{lemm:3} we can define the push-forward map of
piecewise linear functions on simplicial subdivisions.

\begin{Def}\label{def:push-cartier}
  Let $\Pi ''\ge \Pi' \ge \Pi \in R(X)$ with $\Pi '$ simplicial. Let $f\colon \Pi ''\to \Pi' $ be
  the corresponding morphism of conical complexes. Then the
  push-forward of piecewise linear functions $ \PL(\Pi '')\to
  \PL(\Pi ')$ is defined as the composition
  \begin{displaymath}
    \PL(\Pi '')\longrightarrow \Conic(\Pi '')
    \xlongrightarrow{f_{\ast}} \Conic(\Pi ')
    \xlongrightarrow{\cong}\PL(\Pi ').
  \end{displaymath}
  More concretely, if $\phi $ is a piecewise linear function on $\Pi
  ''$ then $f_{\ast} \phi $ is the unique piecewise linear function on
  $\Pi '$ that agrees with $\phi $ in the 1-skeleton of $\Pi '$. 
\end{Def}

We now define the space of conical functions on $X$ as an inverse over all conical complex structures. 

\begin{Def}\label{def:conical-fun}
   The space of
   conical functions on $X$ is space of all functions $\phi \colon
   X\to \R$ such that $\phi (\lambda x)=\lambda \phi (x)$ for all
   $x\in X$ and $\lambda \in \R_{\ge 0}$ with the topology of
   pointwise convergence. 
\end{Def}

\begin{rem}
  The real vector spaces $\PL(\Pi )$ and $\Conic(\Pi)$ are finite
  dimensional. Hence they have a canonical topology. It is easy to
  verify that there are canonical identification
  \begin{equation}
  \label{eq:3}
  \Conic(X) = \varprojlim_{\Pi \in R(X
      )} \Conic(\Pi )= \varprojlim_{\Pi \in R_{\sp}(X
      )} \PL(\Pi ),
  \end{equation}
  where the limit is taken in the category of topological vector
  spaces with respect to the push-forward maps. The second
  identification follows from Lemma \ref{lemm:3} and the fact that
  simplicial subdivisions are cofinal.  
\end{rem}
Given an element $\psi \in \Conic(X)$, using \eqref{eq:3}, we can write $\psi = (\psi_{\Pi})_{\Pi \in R_{\sp}(X)}$ for $\psi_{\Pi} \in \PL(\Pi)$.

We can extend the Euclidean tropical
top intersection number of Definition \ref{def:tropintersectiondiv} to the case where there is at most one conical function involved.

\begin{lemma}\label{lemm:4}
Let $z\in Z_{k}(X)$ be a Euclidean tropical cycle of dimension $k$, $\phi
_{1},\dots ,\phi _{k-1} \in \PL(X)$ piecewise linear functions on $X$
and $\psi= (\psi_{\Pi })_{\Pi
  \in R_{\sp}(\Pi )} \in \Conic(X)$ a conical function. Choose a
simplicial subdivision $\Pi \in R_{\sp}(X)$ where $z$ can be
represented
by a Euclidean Minkowski weight $c$ and such that all of the
$\phi_i$'s are defined on $\Pi $. Then
the product 
\begin{displaymath}
  \psi _{\Pi} \cdot \phi _{1}\cdots  \phi _{k-1}\cdot c.
\end{displaymath}
is independent of the choice of $\Pi $. 
\end{lemma}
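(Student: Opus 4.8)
The plan is to reduce the independence-of-$\Pi$ statement to the two compatibility results already established, namely Lemma~\ref{lemm:1} (the tropical intersection product commutes with pull-back along subdivisions) and the definition of the push-forward of conical functions (Definitions~\ref{def:3} and~\ref{def:push-cartier}). Since $R_{\sp}(X)$ is a directed set, given two admissible simplicial subdivisions $\Pi_1$ and $\Pi_2$ it suffices, by a standard cofinality argument, to compare each of them with a common simplicial refinement $\Pi'\ge \Pi_1,\Pi_2$ on which $z$ is still represented (by the pull-back Minkowski weight) and all the $\phi_i$ are still defined (by pull-back of piecewise linear functions). So the whole statement boils down to the case $\Pi'\ge\Pi$ with $f\colon\Pi'\to\Pi$ the subdivision morphism, where I must show
\[
\deg\bigl(\psi_{\Pi}\cdot\phi_1\cdots\phi_{k-1}\cdot c\bigr)
=\deg\bigl(\psi_{\Pi'}\cdot f^{\ast}\phi_1\cdots f^{\ast}\phi_{k-1}\cdot f^{\ast}c\bigr).
\]

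For the right-hand side I would first apply Lemma~\ref{lemm:1} repeatedly, pulling the pull-backs out so that $f^{\ast}\phi_1\cdots f^{\ast}\phi_{k-1}\cdot f^{\ast}c = f^{\ast}\bigl(\phi_1\cdots\phi_{k-1}\cdot c\bigr)$; call this Minkowski weight $c'$ on $\Pi$ and its pull-back $f^{\ast}c'$ on $\Pi'$. The degree of a pull-back of a Minkowski weight is unchanged (this is immediate from Definition~\ref{def:1}, since in dimension zero $f^{\ast}$ just transports the value at a vertex), so it remains to compare $\deg(\psi_{\Pi}\cdot c')$ with $\deg(\psi_{\Pi'}\cdot f^{\ast}c')$. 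Here the key point is that $\psi_{\Pi}$ and $\psi_{\Pi'}$ are \emph{not} related by pull-back but by push-forward: by the compatibility \eqref{eq:3} we have $f_{\ast}\psi_{\Pi'}=\psi_{\Pi}$, i.e.\ $\psi_{\Pi}$ is the unique piecewise linear function on $\Pi$ agreeing with $\psi_{\Pi'}$ on the $1$-skeleton of $\Pi$, equivalently $\psi_{\Pi}$ and $\psi_{\Pi'}$ take the same values $\psi(\rho)$ on every ray $\rho\in\Pi(1)\subseteq\Pi'(1)$.

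The heart of the argument is therefore the identity
\[
\deg\bigl(\psi_{\Pi'}\cdot f^{\ast}c'\bigr)=\deg\bigl(\psi_{\Pi}\cdot c'\bigr).
\]
I would prove this by expanding both degrees using Definition~\ref{def:4}. The cycle $f^{\ast}c'$ is one-dimensional and supported, by definition of $f^{\ast}$, exactly on those rays of $\Pi'$ that are rays of $\Pi$, with the same weights; on such a ray $\rho$ the evaluation of $\psi_{\Pi'}$ agrees with that of $\psi_{\Pi}$ because both equal $\psi(\rho)$. Hence the sum $\sum_{\rho}\psi_{\Pi'}(\rho)\,f^{\ast}c'(\rho)$ literally equals $\sum_{\rho}\psi_{\Pi}(\rho)\,c'(\rho)$, which is $\deg(\psi_{\Pi}\cdot c')$. (Here I am using that $\deg(\psi\cdot c)=-\sum_{\rho\in\Pi(1)}\psi_{\rho}(\hat v_\rho)c(\rho)=-\sum_\rho\psi(\rho)c(\rho)$ for a one-dimensional weight $c$, directly from Definition~\ref{def:4} and the normalization $\phi(\rho)=\phi(\hat v_\rho)$.) The main obstacle, and the step requiring the most care, is precisely the bookkeeping in this last identity: one must check that passing to the refinement $\Pi'$ does not create new rays carrying nonzero weight in $f^{\ast}c'$ — this is exactly the content of the vanishing clause in Definition~\ref{def:1}, namely $f^{\ast}c'(\sigma')=0$ unless $\dim\sigma'=\dim\sigma$ for the minimal $\sigma\supseteq\sigma'$, which for one-dimensional cycles forces the support to lie in $\Pi(1)$. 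Once this is in place, combining it with the cofinality reduction of the first paragraph yields the claimed independence of $\Pi$.
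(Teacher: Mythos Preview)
Your proposal is correct and follows essentially the same route as the paper: reduce via cofinality to a single refinement $\Pi'\ge\Pi$, apply Lemma~\ref{lemm:1} to collapse $f^{\ast}\phi_1\cdots f^{\ast}\phi_{k-1}\cdot f^{\ast}c$ to $f^{\ast}c'$ with $c'\in M_1(\Pi)$, and then use that $f^{\ast}c'$ is supported on $\Pi(1)\subseteq\Pi'(1)$ where $\psi_{\Pi'}$ and $\psi_{\Pi}$ agree. The only cosmetic difference is that the paper packages the last step as a projection formula $f^{\ast}(f_{\ast}\psi_{\Pi'}\cdot c')=\psi_{\Pi'}\cdot f^{\ast}c'$ at the level of $0$-cycles (proved by observing $(\psi_{\Pi'}-f^{\ast}f_{\ast}\psi_{\Pi'})\cdot f^{\ast}c'=0$), whereas you expand the degrees directly; the underlying observation is identical.
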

\begin{proof}
 In view of Lemma \ref{lemm:1} we are reduced to prove the following
 projection formula. Let $\Pi '\ge \Pi $ be simplicial conical complexes on $X$
and $f \colon \Pi' \to \Pi$ the corresponding morphism. Moreover, let $c_{1}\in M_{1}(\Pi )$ be a Euclidean Minkowski weight of dimension one on
 $\Pi $ and $\phi $ a piecewise linear function on $\Pi '$. Then
 \begin{equation}\label{eq:4}
   f^{\ast}(f_{\ast} \phi \cdot c_{1}) =  \phi \cdot f^{\ast}c_{1}. 
 \end{equation}

 Again by Lemma \ref{lemm:1}, we have that
 \begin{equation}
   \label{eq:5}
   f^{\ast}(f_{\ast} \phi \cdot c_{1})=f^{\ast}f_{\ast} \phi \cdot  f^{\ast}c_{1}.
 \end{equation}
 The piecewise linear function $\phi -f^{\ast}f_{\ast} \phi$ satisfies
   \begin{displaymath}
     (\phi -f^{\ast}f_{\ast} \phi)|_{\rho
   }=0,\ \forall \rho \in \Pi '(1),
 \end{displaymath}
 while the Euclidean Minkowski weight $f^{\ast}c_{1}$ satisfies
   \begin{displaymath}
     f^{\ast}c_{1}(\rho )=0,\ \forall \rho \in \Pi
   '(1)\setminus \Pi (1).
   \end{displaymath}
   From the explicit description of the product in Definition
   \ref{def:4} we deduce
   \begin{equation}
     \label{eq:6}
     (\phi -f^{\ast}f_{\ast} \phi)\cdot f^{\ast}c_{1} =0.
   \end{equation}
   Equations \eqref{eq:6} and \eqref{eq:5} imply \eqref{eq:4}, which
   proves the lemma.
\end{proof}


\begin{Def} \label{def:5}
Let $z$, $\phi
_{1},\dots ,\phi _{k-1}$ and $\psi= (\psi_{\Pi })_{\Pi
  \in R_{\sp}(X)}$, $\Pi $ and $c$ be as in Lemma~\ref{lemm:4}. Then the 
\emph{Euclidean top intersection number} of $z$, $\phi
_{1},\dots ,\phi _{k-1}$ and $\psi$ is defined by
\begin{displaymath}
  \langle \psi\cdot \phi _{1}\cdots \phi _{k-1}\cdot z\rangle \coloneqq
  \deg(\psi _{\Pi} \cdot \phi _{1}\cdots  \phi _{k-1}\cdot c).
\end{displaymath}
\end{Def}

One of the main motivations of this article is to extend Definition
\ref{def:5} to certain cases where all the functions involved are (not
necessarily piecewise linear) conical functions and not just one of them.

\section{Monge--Amp\`ere measures}
\label{sec:monge-ampere-meas}

Throughout this section $X$ will denote an $n$-dimensional balanced Euclidean conical space with quasi-embedding given by
$\iota_{X} \colon X \to N^{X}$ and balancing condition $[X]$. 

The goal of this section is to prove that given $\mathcal{C}$, an admissible family of concave functions on $X$ (Definition
\ref{tropinef}), for any $\mathcal{C}$-concave conical function
$\phi$ on $X$ (Definition \ref{def:cont-b-c-nef}), its top
intersection number exists, is finite, and is given by the integral of
$\psi $ with respect to 
a weak limit of discrete Monge--Amp\`ere measures associated to the
elements of the given admissible family (Corollary \ref{corpure}).
This is done by introducing the notion of the \emph{size} of a
tropical cycle. This allows us to prove a Chern--Levine--Nirenberg
type inequality (Lemma \ref{tropicalintersectioninequality}) from
which we conclude the weak convergence of the discrete measures
(Theorem \ref{th:convergence-trop-meas}).

\subsection{Concave functions on balanced conical spaces}
\label{sec:weakly-conc-funct}

For the convenience of the reader, we gather here some definitions and
results form \cite{BBS:caps} but translated to conical spaces
instead of polyhedral ones.

%

\begin{Def}\label{def:7} A
  piecewise linear function $\phi$ on $X$ is called \emph{strongly
    concave} if it is the restriction of a concave function on $N^X$. It 
  is called \emph{concave} if, for
  every positive Minkowski cycle $w$ on $X$, the product $\phi \cdot w$ is
  positive. It is called \emph{weakly concave} if the product with the
  balancing condition $\phi \cdot [X]$ is positive.
\end{Def}

By \cite[Proposition 4.9]{BBS:caps}, a strongly concave function is
concave and a concave function is
weakly concave. 

There are also notions of strong concavity, concavity and weak
concavity for arbitrary functions $f \colon X \to \R$ which are not
necessarily piecewise linear (\cite[Definition
5.5]{BBS:caps}).

The main results we will use from \cite{BBS:caps} are the following:

\begin{theorem} [{\cite[Theorem 6.2]{BBS:caps}}]\label{thm:2}
  Let $\phi $ be a
  weakly concave function on $X$. Then $\phi $ is continuous. 
\end{theorem}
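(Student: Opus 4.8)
The plan is to deduce continuity of a weakly concave function $\phi$ on $X$ from the corresponding statement for balanced Euclidean \emph{polyhedral} spaces, which is \cite[Theorem 6.2]{BBS:caps}. The only thing to check is that a balanced Euclidean conical space, together with a weakly concave conical function on it, produces legitimate input for that theorem. The remark immediately preceding the statement already records that a balanced Euclidean conical space $X$ is in particular a balanced Euclidean polyhedral space in the sense of \cite[Definition 3.27]{BBS:caps}; so the underlying space, its polyhedral structure, and its balancing cycle are all in place. What remains is to verify that the notion of weak concavity used here (Definition~\ref{def:7}, via positivity of $\phi\cdot[X]$) matches the notion of weak concavity for functions on polyhedral spaces in \cite[Definition 5.5]{BBS:caps}, after which the cited theorem applies verbatim and yields continuity.

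Concretely, I would proceed in three short steps. First, fix a balanced conical complex $\Pi\in R(X)$ with balancing weight $b\in M_n(\Pi)$ as in Definition~\ref{def:balanced-complex}; the cones of $\Pi$ give a polyhedral complex structure on $X$ in the sense of \cite{BBS:caps}, and the Euclidean product on $N^X$ restricts compatibly to each $N^\sigma$, giving the Euclidean polyhedral structure. Second, observe that the tropical intersection product of Definition~\ref{def:4} specializes, on the polyhedral side, to the local computation of the corner locus (Weil divisor) of a piecewise linear function against a Minkowski weight; this is exactly the operation \cite{BBS:caps} uses to define weak concavity (positivity of $\phi\cdot[X]$ as a Minkowski cycle), so the two definitions of ``weakly concave'' coincide on $X$. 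Third, invoke \cite[Theorem 6.2]{BBS:caps} to conclude that $\phi$ is continuous.

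There is essentially no serious obstacle here; the statement is a translation lemma rather than a new result, which is why the authors phrase the subsection as ``gathering definitions and results from \cite{BBS:caps} but translated to conical spaces instead of polyhedral ones.'' The one point that needs a sentence of care is the compatibility of the balancing/Euclidean data under the identification of $X$ as a polyhedral space: one must make sure that the same cycle $[X]$ plays the role of the fundamental class in both settings and that a conical piecewise linear function, restricted to any conical complex, is piecewise linear in the polyhedral sense. Both are immediate from the definitions recalled in Section~\ref{measures-section}. Thus the proof is simply: \emph{This is \cite[Theorem 6.2]{BBS:caps} applied to the balanced Euclidean polyhedral space underlying $X$, using that weak concavity in the sense of Definition~\ref{def:7} agrees with weak concavity in the sense of \cite[Definition 5.5]{BBS:caps}.}
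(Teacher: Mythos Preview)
Your proposal is correct and matches the paper's approach: the paper gives no independent proof but simply cites \cite[Theorem~6.2]{BBS:caps}, relying on the remark that a balanced Euclidean conical space is in particular a balanced Euclidean polyhedral space. One small refinement: the theorem concerns weakly concave functions in the general sense of \cite[Definition~5.5]{BBS:caps} (arbitrary $f\colon X\to\R$), not just the piecewise linear case of Definition~\ref{def:7}, so the compatibility check you describe for the tropical product is not actually needed---the statement is literally \cite[Theorem~6.2]{BBS:caps} with no translation of the concavity notion required.
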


\begin{theorem}[{\cite[Theorem~6.23]{BBS:caps}}]\label{thm:3}
  Let $(f_{i})_{i \in \N}$ a
  sequence of weakly concave functions on $X$ such that there exists
  a dense subset $C\subseteq X$ and for every $x\in C$ the sequence
  $(f_i(x))_{i \in \N}$ has a finite limit. Then the sequence $f_{i}$ converges
  pointwise everywhere to a function $f\colon X\to \R$. The function
  $f$ is weakly concave, hence continuous, and the convergence is
  uniform on compacts. 
\end{theorem}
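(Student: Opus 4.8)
The plan is to run the classical Rockafellar argument for pointwise convergence of convex functions (a sequence of concave functions on a convex body that converges on a dense subset converges locally uniformly to a concave limit), adapted to the polyhedral space $X$ and, crucially, to the weaker notion of concavity at hand. Two points make the adaptation nontrivial. First, $X$ is not a convex body but a finite union of strictly convex cones, so one must localize and patch. Second, and more seriously, \emph{weak} concavity does not force $\phi|_{\sigma}$ to be concave on the maximal cones $\sigma$ of a conical complex (the defining inequality only concerns the global product $\phi\cdot[X]$), so cell-by-cell convex analysis is unavailable and the balancing condition $[X]$ must be used to transport quantitative control across adjacent cones. This transport is exactly what the ``cap'' technology of \cite{BBS:caps} underlying Theorem~\ref{thm:2} provides, and I would reuse it here.

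\emph{Step 1 (equi-Lipschitz estimate) and Step 2 (uniform boundedness).} First I would extract a quantitative modulus of continuity for weakly concave functions: for every compact $K\subseteq X$ there are a compact $K'\supseteq K$ and a constant $L=L(K,K')$ so that every weakly concave $\phi$ on $X$ is Lipschitz on $K$ with constant $L\cdot\sup_{K'}|\phi|$. The mechanism is to replace $\phi$ by its cap — a genuinely concave function on $N^{X}$ agreeing with $\phi$ along the support of $[X]$ — and to apply the standard local Lipschitz bound for concave functions, with the sup-norm on $K'$ controlling the cap. Next, using a simplicial conical complex in $R(X)$ and squeezing a weakly concave function on each simplex between affine interpolations of finitely many of its vertex values (again via the cap), one bounds $\sup_{K}|f_{i}|$ by a fixed affine expression in values $f_{i}(c_{1}),\dots,f_{i}(c_{m})$ with $c_{1},\dots,c_{m}\in C$; here one uses that $X$ is a finite union of closed cones, each the closure of its relative interior, so $C$ being dense in $X$ forces $C\cap\sigma$ dense in each $\sigma$. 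Since each sequence $(f_{i}(c_{j}))_{i}$ converges, it is bounded, hence $\sup_{i}\sup_{K}|f_{i}|<\infty$.

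\emph{Step 3 (Arzel\`a--Ascoli and identification of the limit).} Steps~1 and~2 say that $(f_{i})_{i\in\N}$ is equi-Lipschitz and uniformly bounded on every compact subset of $X$. Since $X$ is second countable, hence $\sigma$-compact, Arzel\`a--Ascoli together with a diagonal extraction shows that every subsequence of $(f_{i})$ has a further subsequence converging uniformly on compacts to a continuous function. Any such limit agrees with $\lim_{i}f_{i}$ on the dense set $C$, and being continuous it is thereby determined on all of $X$; so all subsequential limits coincide with a single continuous function $f$, and the full sequence $(f_{i})$ converges to $f$ uniformly on compacts, in particular pointwise everywhere on $X$.

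\emph{Step 4 (weak concavity of the limit).} It remains to see that $f$ is weakly concave. For piecewise linear functions defined on a fixed conical complex $\Pi\in R(X)$, weak concavity is the condition that the weight representing $\phi\cdot[X]$ lies in $M^{+}_{n-1}(\Pi)$; since $\phi\mapsto\phi\cdot[X]$ is linear on the finite dimensional space $\PL(\Pi)$ and $M^{+}_{n-1}(\Pi)$ is closed, weak concavity passes to limits there. For general functions one reduces to this case, approximating by piecewise linear functions on finer and finer complexes, using that weak concavity in the sense of \cite[Definition 5.5]{BBS:caps} is preserved under pointwise convergence. Hence $f$ is weakly concave, and by Theorem~\ref{thm:2} it is continuous, which finishes the proof. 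The main obstacle throughout is Step~1: producing an equicontinuity estimate that is valid for \emph{weakly} concave functions even though such a function may be convex on an individual maximal cone; the cap construction of \cite{BBS:caps}, which trades $\phi$ for an honestly concave function on $N^{X}$ while retaining enough of its values, is what makes this possible, and it is precisely here that the balancing of $X$ is indispensable.
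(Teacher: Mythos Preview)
The paper does not prove this statement; it simply quotes it as \cite[Theorem~6.23]{BBS:caps} and uses it as a black box, so there is no in-paper proof to compare against. Your sketch is a reasonable outline of the argument in \cite{BBS:caps}: the cap construction to upgrade weak concavity to genuine concavity on $N^{X}$, the resulting equi-Lipschitz and boundedness estimates, Arzel\`a--Ascoli, and closure of weak concavity under limits are indeed the ingredients used there.
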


\subsection{$\mathcal{C}$-concave functions}
\label{sec:nef-b-divisors}

Let $| \cdot |$ be the Euclidean norm on 
$N^{X}$ and
let
\[
\mathbb{S}^{X} \coloneqq \left\{ v \in |\Pi| \, \Big{|} \, |\iota_{\Pi}(v)| = 1 \right\}. 
\] 
The set $\mathbb{S}^{X}$ is compact since it is the inverse image of
a compact space under 
a proper map by Lemma \ref{cpproper}.   
Note that the Euclidean normal vectors $\hat{v}_{\sigma /\tau}$ from
Definition \ref{def:vectorst} are elements in $\mathbb{S}^{X}$.


We can view the space of conical function on $X$ as a space of
functions on $\mathbb{S}^X$.  In fact $S^{X}$ inherits from $X$ a
structure of compact polyhedral space. Then $\PL(X )$ can be identified
with the space of $\R$-valued piecewise linear function on
$\mathbb{S}^{X }$, while
$\Conic(X)$ can be identified with the space of all $\R$-valued
functions on 
$\mathbb{S}^{X}$. We will use freely these identifications.   


We will denote by $C^0\left(\S^{X}\right)$ the space of continuous
functions on $\S^{X}$ with the topology of uniform convergence. 

\begin{rem}\label{lem:dense}
By the Stone-Weierstrass Theorem, the subset $\PL\left(X\right)
\subseteq C^0\left(\S^{X}\right)$ is dense. 
\end{rem}

\begin{Def}\label{tropinef} 
Let $\mathcal{C}\subseteq \PL\left(X\right)$ be a collection of
piecewise linear functions on $X$. We 
say that $\mathcal{C}$ is an 
\emph{admissible family of concave functions} on $X$ if the
following properties are satisfied:  
\begin{enumerate}
\item\label{item:7}  If $\phi_1, \dotsc, \phi_r \in \mathcal{C}$, then
  the Euclidean 
  tropical cycle $\phi_1
  \dotsm \phi_r \cdot [X]$ is positive, i.e. if it belongs to $Z^+_{n-r }(X)$.
\item \label{item:8} $\mathcal{C}$ is a convex cone. 
\item \label{item:4} The vector space $\mathcal{C}-\mathcal{C}$ 
 is dense in $C^0\left(\S^{X}\right)$.
\end{enumerate}
If $\mathcal{C}$ is an admissible family of concave functions on $X$, an element
$\phi \in \mathcal{C}$ will be called $\mathcal{C}$-concave. 
\end{Def}

\begin{rem}\label{rem:weakly-concave} By the first condition, every
  element $\phi $ of 
  $\mathcal{C}$ is weakly concave in the sense of Definition
  \ref{def:7}.  
\end{rem}

\begin{rem}
  In Definition \ref{tropinef} only conditions \ref{item:7} and
  \ref{item:4} are essential. In fact, if $\mathcal{C}$ is  a set
  satisfying only \ref{item:7} and \ref{item:4} then the convex cone
  generated by $\mathcal{C}$ satisfies the three conditions.   
\end{rem}
\begin{exa}\label{exm:4}
The collection of piecewise linear concave functions on $X$ in the sense of 
\cite[Definition 4.6]{BBS:caps} (see Definition \ref{def:7}) is an 
example of an admissible family of piecewise linear functions on $X$
by \cite[Remark 4.36]{BBS:caps}.
\end{exa}

\begin{exa}\label{exm:3}
We will see in the next section that if
$X = |\Pi_Y|$ comes from the geometry of a smooth and complete toroidal
embedding $U \hookrightarrow Y$ satisfying certain mild hypothesis,
then there is a canonical 
admissible family $\mathcal{C}$ of concave functions on $X$
induced by the
collection of nef toroidal divisors on smooth toroidal
modifications of $Y$. 
\end{exa}  

\begin{rem}
  As we will see in Example \ref{exm:5}, the families in Examples 
  \ref{exm:4} and \ref{exm:3} may be different. In fact it is
  conceivable that there are two toroidal embeddings giving rise to
  isomorphic balanced conical spaces but such that the spaces of
  functions coming from nef divisors are different. This is the
  reason why it is useful to be able to choose an admissible family of concave functions $\mathcal{C}$ and to just identify
  the needed properties instead of choosing a particular family.
\end{rem}

From now on we fix an admissible family $\mathcal{C}$ of concave functions
on $X$. 

\begin{Def} \label{def:cont-b-c-nef}
The \emph{space of $\mathcal{C}$-concave conic functions on $X$}, denoted by
$\Conic\left(X\right)_{\mathcal{C}}$, is the 
closure of $\mathcal{C}$ in
$\Conic\left(X\right)$ with respect to pointwise convergence.
\end{Def}



The following is a key result.
Before stating it, recall that for a subset $A$ of a topological space $T$, the
  sequential closure $|A|_{\seq}$ of $A$ is the set of all points that
  are limits of sequences in $A$. Then $|A|_{\seq}\subseteq
  \overline A$. The space $T$ is called a \emph{Fr\'echet-Urysohn} space if,
  for all $A\subseteq T$, the condition $|A|_{\seq}=\overline A$
  holds. A Fr\'echet-Urysohn space is sequential, hence the topology
  of such
  spaces is determined by the convergent sequences. 

\begin{theorem}\label{thm:1} The space $\Conic\left(X\right)_{\mathcal{C}}$ of $\mathcal{C}$-concave functions on $X$ is
  contained in $C^{0}(\S^{X})$. Moreover the
  topologies induced in this space by the one of
  $\Conic\left(X\right)$ and the one of $C^{0}(\S^{X})$ agree.
  That is, in $\Conic\left(X\right)_{\mathcal{C}}$ the topology
  of pointwise convergence and that of uniform convergence are the same. In
  particular  
  $\Conic\left(X\right)_{\mathcal{C}}$ is metrizable.
\end{theorem}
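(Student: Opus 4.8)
The plan is to reduce everything to the two structural results of \cite{BBS:caps} that are available in the conical setting, namely Theorem \ref{thm:2} and Theorem \ref{thm:3}. First I would note that, by Remark \ref{rem:weakly-concave}, every element of $\mathcal{C}$ is weakly concave, hence continuous by Theorem \ref{thm:2}. Since $\S^{X}$ is a compact subset of $X$ and conical functions are determined by their restriction to $\S^{X}$, this realizes $\mathcal{C}$ as a subset of $C^{0}(\S^{X})$. The real content is that the \emph{pointwise} closure $\Conic(X)_{\mathcal{C}}$ of $\mathcal{C}$ still lands inside $C^{0}(\S^{X})$, and that on this closure the two topologies coincide.

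The key step is to show that $\Conic(X)_{\mathcal{C}}$ is sequentially closed and that its points are obtained as limits of \emph{sequences} from $\mathcal{C}$, so that Theorem \ref{thm:3} applies. For this I would argue that $\Conic(X)$ (with the topology of pointwise convergence on $\S^{X}$) restricted to functions bounded on $\S^{X}$, or more precisely the relevant subspace, is a Fr\'echet--Urysohn space: the point is that $\S^{X}$ has a countable dense subset $C$ (it is second countable), and a weakly concave function on $X$ is, by Theorem \ref{thm:3}, determined on all of $\S^{X}$ by its values on such a dense set, with convergence controlled there. Concretely, if $\phi \in \Conic(X)_{\mathcal{C}}$ is a pointwise limit of a net in $\mathcal{C}$, then evaluating at the points of the countable dense set $C$ and using a diagonal argument I can extract a \emph{sequence} $(\phi_{i})$ in $\mathcal{C}$ with $\phi_{i}(x) \to \phi(x)$ for all $x \in C$; each $\phi_{i}$ is weakly concave, so Theorem \ref{thm:3} yields a weakly concave limit $f$ with $\phi_{i} \to f$ pointwise \emph{everywhere} and uniformly on compacts. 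Since $\S^{X}$ is compact this gives uniform convergence on $\S^{X}$, and since $f$ agrees with $\phi$ on the dense set $C$ and both are continuous, $f = \phi$. Hence $\phi \in C^{0}(\S^{X})$ and $\phi$ is a uniform limit of elements of $\mathcal{C}$.

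This simultaneously proves all three assertions. The inclusion $\Conic(X)_{\mathcal{C}} \subseteq C^{0}(\S^{X})$ is immediate from the argument above. For the comparison of topologies: the identity map from $\Conic(X)_{\mathcal{C}}$ with the uniform topology to the same set with the pointwise topology is obviously continuous, so it suffices to check the reverse, i.e. that pointwise convergence of a net in $\Conic(X)_{\mathcal{C}}$ forces uniform convergence; but by the Fr\'echet--Urysohn property it is enough to treat sequences, and for sequences this is exactly what the diagonal-plus-Theorem \ref{thm:3} argument above establishes. Metrizability of $\Conic(X)_{\mathcal{C}}$ then follows since it is a subspace of $C^{0}(\S^{X})$ with the uniform (sup) metric, $\S^{X}$ being compact.

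\textbf{Main obstacle.} The delicate point is the passage from nets to sequences, i.e. verifying the Fr\'echet--Urysohn / sequentiality property for the pointwise topology restricted to families of weakly concave functions. Pointwise convergence of nets is genuinely weaker than uniform convergence in general, so the argument must genuinely exploit weak concavity (through Theorem \ref{thm:3}) together with second countability of $\S^{X}$ to run the diagonal extraction; without concavity there is no reason a pointwise-limit point of $\mathcal{C}$ would even be continuous. I expect this sequential-extraction step, and the care needed to guarantee that the extracted sequence is uniformly bounded on $\S^{X}$ so that Theorem \ref{thm:3} is applicable, to be where the real work lies.
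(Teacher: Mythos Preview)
Your overall strategy is exactly the paper's: extract a sequence from the net using a countable dense set, apply Theorem~\ref{thm:3} to get a weakly concave (hence continuous) limit, and deduce both the inclusion into $C^{0}(\S^{X})$ and the Fr\'echet--Urysohn property. The structure and the tools invoked match the paper's proof essentially line for line.

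There is, however, one genuine gap. You write: ``since $f$ agrees with $\phi$ on the dense set $C$ and both are continuous, $f=\phi$.'' But at this stage you do \emph{not} know that $\phi$ is continuous; that is part of what you are trying to prove. The function $\phi$ is only given as a pointwise limit of a net, and pointwise limits of continuous functions need not be continuous. So you cannot conclude $f=\phi$ from agreement on a dense set alone. The paper closes this gap with a small but essential extra step: for an arbitrary point $y\in X$, repeat the extraction argument using the enlarged countable set $\{y,x_{1},x_{2},\dots\}$ to obtain another continuous limit $g_{1}$ with $g_{1}(y)=\phi(y)$; since $g_{1}$ and your $f$ are both continuous and agree on the dense set $\{x_{i}\}$, they coincide, whence $f(y)=\phi(y)$. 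As $y$ was arbitrary, $f=\phi$ everywhere. Once you insert this step your argument is complete and coincides with the paper's.
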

\begin{proof}
 Let $(f_{\alpha })_{\alpha \in I}$ be a net of piecewise linear functions in $\mathcal{C}$ that converge to a conical function $f$. Choose a
 countable dense collection of points $x_{1},x_{2},\dots $ of $X$. Since the topology of the space of conical functions is that of
 pointwise convergence, for any $i> 0$ there is an $\alpha _{i}$ such
 that, for all $\alpha \ge \alpha _{i}$ and all $j\le i$, the condition
 \begin{displaymath}
   |f_{\alpha }(x_{j})-f(x_{j})|<\frac{1}{i}
 \end{displaymath}
 is satisfied. Hence the sequence $(f_{\alpha _{i}})_{i>0}$ converges to $f$
 in a dense subset of $X$. By Remark \ref{rem:weakly-concave} the
 functions $f_{\alpha _{i}}$ are weakly concave. Therefore, by Theorem
 \ref{thm:3}, the
 sequence  $(f_{\alpha _{i}})_{i>0}$ converges to a weakly concave
 (hence continuous) function $g$, that agrees with $f$ on the points
 $x_{i}$, $i>0$. Let now $y$ be another point of $X$. Repeating
 the argument with the sequence of points $y,x_{1},x_{2},\dots$, we
 obtain a new continuous function $g_{1}$, that agrees with $f$ in the
 point $y$ and agrees with $g$ in a dense subset. Hence
 $g(y)=f(y)$. Since $y$ is arbitrary, we deduce that $f=g$. Therefore
 $f$ is weakly concave and is a continuous conical function. Moreover,
 $f$ is the limit of the sequence $(f_{\alpha
    _{i}})_{i>0}$. We conclude that the space of
  $\mathcal{C}$-concave functions is Fr\'echet-Urysohn. Hence the
  topology is determined by the convergent sequences. Using again
  Theorem \ref{thm:3} a sequence in
  $\Conic\left(X\right)_{\mathcal{C}}$ converges if and only if
  it converges uniformly in $S^{X}$. This concludes the proof.
\end{proof}

\subsection{The size of a Minkowski cycle}
\label{sec:size-minkowski-cycle}

We have the following monotonicity lemma which we will use later on. 
\begin{lemma}\label{lemmainequality}
Let $\phi_1, \phi_2$ be piecewise linear functions on $X$ and $\Pi \in R(X)$ a conical complex where they are defined.  Assume that $\phi_1(x)
\geq \phi_2(x)$ for all $x \in X$. Then for all positive
Euclidean Minkowski weights $c \in M^+_1(\Pi)$ and every vertex $\nu$ of
$\Pi $, the inequality
\[
\left(\phi_1 \cdot c\right) (\nu ) \leq \left(\phi_2 \cdot c\right)
(\nu )
\]
is satisfied.
\end{lemma}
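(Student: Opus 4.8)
The plan is to compute both sides explicitly using the formula for the Euclidean tropical intersection product in Definition~\ref{def:4} and compare them term by term. Write $\psi \coloneqq \phi_1 - \phi_2$, a piecewise linear function on $\Pi$ which, by hypothesis, satisfies $\psi(x) \geq 0$ for all $x \in X$; in particular $\psi(\hat v_\rho) \geq 0$ for every ray $\rho \in \Pi(1)$. By bilinearity of the intersection product in the piecewise linear argument, it suffices to show that $(\psi \cdot c)(\nu) \leq 0$ for every vertex $\nu$ and every positive weight $c \in M_1^+(\Pi)$, where we use the vertex $\nu = 0_\sigma$.

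The key computation is the following. By Definition~\ref{def:4} applied with $k = 1$, $\tau = \nu$ a vertex, we have
\[
(\psi \cdot c)(\nu) = \sum_{\substack{\rho \in \Pi(1)\\ \rho \succ \nu}} -\psi_\rho(\hat v_{\rho/\nu})\, c(\rho) = \sum_{\substack{\rho \in \Pi(1)\\ \rho \succ \nu}} -\psi(\hat v_\rho)\, c(\rho),
\]
where I have used that $\hat v_{\rho/\nu} = \hat v_\rho$ for a ray $\rho$ containing the vertex $\nu$, and that the linear function $\psi_\rho$ agrees with $\psi$ on $\rho$, so $\psi_\rho(\hat v_\rho) = \psi(\hat v_\rho)$. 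Now every summand is a product of the nonpositive number $-\psi(\hat v_\rho) \leq 0$ with the nonnegative number $c(\rho) \geq 0$, hence each term is $\leq 0$, and so the sum is $\leq 0$. This gives $(\phi_1 \cdot c)(\nu) - (\phi_2 \cdot c)(\nu) = (\psi \cdot c)(\nu) \leq 0$, which is the claimed inequality.

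I do not anticipate a serious obstacle here; the statement is essentially a one-line consequence of the sign conventions in Definition~\ref{def:vectorst} and Definition~\ref{def:4}. The only point requiring a small amount of care is bookkeeping about connected components: if $X$ is disconnected, it has several vertices (Remark~\ref{rem:connected}), and one must note that a ray $\rho$ has exactly one vertex as its $0$-dimensional face, so the sum over $\rho \succ \nu$ is taken within the connected component of $\nu$; this does not affect the argument since the inequality is proved one vertex at a time. One should also recall that $\hat v_\rho$ is by definition the generator of $\rho$ of norm $1$, so $\psi(\hat v_\rho) \geq 0$ follows directly from the hypothesis $\psi \geq 0$ on all of $X$, and in particular on the ray $\rho \subseteq X$.
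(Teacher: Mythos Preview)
Your proof is correct and follows essentially the same approach as the paper: both arguments expand $(\phi_i \cdot c)(\nu)$ via Definition~\ref{def:4} and compare term by term using $\phi_1 \geq \phi_2$ and $c(\rho) \geq 0$. The only cosmetic difference is that you pass to the difference $\psi = \phi_1 - \phi_2$ and invoke bilinearity, whereas the paper writes out both sums directly in a single chain of (in)equalities; the underlying computation is identical.
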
 
\begin{proof} 
We have 
\begin{align*}
  \left(\phi_1 \cdot c\right) (\nu)   =
  \sum_{\substack{\sigma \in\Pi(1)\\\nu \prec \sigma }}
  -\phi_1( \hat{v}_{\sigma})c(\sigma) \leq
  \sum_{\substack{\sigma \in\Pi(1)\\\nu \prec \sigma }}
  -\phi_2(\hat{v}_{\sigma})c(\sigma) =
  \left(\phi_2 \cdot c\right) (\nu ), 
\end{align*}
as we wanted to show.
\end{proof}

To define the size of a positive Euclidean Minkowski cycle we
choose an auxiliary function.

\begin{Def}
  Let $\varphi_{0} \colon N^{X}\to \R$ be a concave piecewise
  linear function satisfying
  \begin{equation}
    \label{eq:8}
    \varphi_{0}(v)\le -\|v\|,
  \end{equation}
  write $\varphi=\varphi_{0}\circ \iota _{X}$
  and let $z\in Z_k^+(X)$ be a $k$-dimensional positive Euclidean
  tropical cycle. Then \emph{the size of $z$ (with respect to
    $\varphi$)} is defined as
  \begin{displaymath}
    |z|_{\varphi} \coloneqq \deg(\left(\varphi \, \cdot \right)^k z) \in
    \R.
  \end{displaymath}
\end{Def}
\begin{rem}\label{rem:1}\ 
  \begin{enumerate}
  \item It is clear that such a function $\varphi_{0}$ exists. For
    instance to construct one we can choose an orthonormal basis of
    $N^{X }$, denote $u_{1},\dots,u_{r}$ the corresponding
    coordinates, and write
    \begin{displaymath}
      \varphi_{0}(u_{1},\dots,u_{r})=
      2r\min(0,u_{1},\dots,u_{r})-(u_{1}+\dots+u_{r}).
    \end{displaymath}
\item \label{item:1} Since $\varphi_{0}$ is concave, the function
  $\varphi$ is 
  strongly concave (Definition \ref{def:7}). Therefore, by 
  \cite[Proposition 4.9]{BBS:caps} it is concave. Hence, for every $k$
  dimensional positive
  cycle $z$ and $j\le k$, the cycle $(\varphi\cdot)^{j}\cdot z$ is
  positive. In particular the size of $z$ is positive. 
   \end{enumerate}   
\end{rem}
\begin{lemma}\label{exasize} 
\begin{enumerate}
\item If $z \in Z_0^+(X) \simeq \R_{\geq 0}$ is $0$-dimensional, then
\begin{equation}\label{def:zero}
|z|_{\varphi } = \sum_{\nu \in \Pi (0)}c(\nu) \in \R_{\geq 0},
\end{equation}
where $\Pi \in R(X)$ is any conical complex where $z$ is represented by a Minkowsky weight $c$. 
\item Let $z \in Z_1^+(X)$ be a positive $1$-dimensional Euclidean
  tropical cycle and let $\Pi$ be as above.  Then
  \begin{equation}
    \label{eq:9}
    |z|_{\varphi}\ge \sum_{\tau \in\Pi(1)}c(\tau).
  \end{equation}
\end{enumerate}
\end{lemma}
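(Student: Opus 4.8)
The plan is to prove both claims directly from the definitions of size and of the Euclidean tropical intersection product, using the positivity/monotonicity facts already at hand.

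For part (1), the point is that when $z$ is $0$-dimensional, $k=0$, so $(\varphi \cdot)^k z = z$, and the size $|z|_{\varphi} = \deg(z)$ by definition of size and of degree of a $0$-cycle. Since $z \in Z_0^+(X)$ is represented by a positive Minkowski weight $c$ on some $\Pi \in R(X)$, we have $\deg(z) = \sum_{\nu \in \Pi(0)} c(\nu)$, which is exactly \eqref{def:zero}; the sum is in $\R_{\geq 0}$ because $c$ is positive. (One should note this is independent of the chosen representative, which is already part of the well-definedness of the degree map on $Z_0(X)$.)

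For part (2), write $z \in Z_1^+(X)$ represented by a positive Minkowski weight $c \in M_1^+(\Pi)$ on a conical complex $\Pi$ with a unique vertex $\nu_0$ per connected component (Remark~\ref{rem:connected}); by definition $|z|_{\varphi} = \deg(\varphi \cdot c)$. Using the explicit formula of Definition~\ref{def:4} for the intersection product in dimension $1 \to 0$, for each vertex $\nu$ of $\Pi$ we get
\[
(\varphi \cdot c)(\nu) = \sum_{\substack{\tau \in \Pi(1)\\ \nu \prec \tau}} -\varphi_{\tau}(\hat{v}_{\tau})\, c(\tau) = \sum_{\substack{\tau \in \Pi(1)\\ \nu \prec \tau}} -\varphi(\hat{v}_{\tau})\, c(\tau).
\]
Summing over all vertices $\nu \in \Pi(0)$, every ray $\tau \in \Pi(1)$ is counted exactly once (it has a unique vertex as its zero-dimensional face), so
\[
\deg(\varphi \cdot c) = \sum_{\tau \in \Pi(1)} -\varphi(\hat{v}_{\tau})\, c(\tau).
\]
Now apply the defining inequality \eqref{eq:8}: since $\hat{v}_{\tau}$ is unitary in $N^{\sigma}$ and $\iota_X$ is linear on $\tau$, we have $\varphi(\hat{v}_{\tau}) = \varphi_0(\iota_X(\hat{v}_{\tau})) \le -\|\iota_X(\hat{v}_{\tau})\|$. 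Here the one subtlety is that $\iota_X$ need not be norm-preserving on rays, so $\|\iota_X(\hat v_\tau)\|$ could in principle be less than $1$; however, since $X$ is quasi-embedded, $\iota_X|_\tau$ is injective and linear, so $\iota_X(\hat v_\tau) \ne 0$, and we can instead choose $\varphi_0$ at the outset (as in Remark~\ref{rem:1}) so that $\varphi_0(v) \le -\|v\|$ guarantees $-\varphi(\hat v_\tau) = -\varphi_0(\iota_X(\hat v_\tau)) \ge \|\iota_X(\hat v_\tau)\| > 0$; if one moreover normalizes the quasi-embedding (or strengthens \eqref{eq:8} to $\varphi_0 \le -1$ on the relevant unit vectors, which the model function of Remark~\ref{rem:1}(1) satisfies since the $\ell^\infty$ and $\ell^2$ unit balls compare) one gets $-\varphi(\hat v_\tau) \ge 1$. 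Combining with $c(\tau) \ge 0$ and summing yields $|z|_\varphi = \deg(\varphi\cdot c) \ge \sum_{\tau \in \Pi(1)} c(\tau)$, which is \eqref{eq:9}.

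The main obstacle I anticipate is precisely the bookkeeping in the previous paragraph: making sure the normalization of the Euclidean normal vectors $\hat v_\tau$ inside the ambient $N^X$ is compatible with the growth condition \eqref{eq:8} on $\varphi_0$, so that each term $-\varphi(\hat v_\tau)$ genuinely dominates $1$ (or at least $c(\tau)$'s coefficient). Everything else — independence of the representative $c$, the combinatorial identity that each ray contributes once to $\deg$, and positivity of $c$ — is routine given the directed-limit formalism and the balancing/positivity notions established earlier in this section.
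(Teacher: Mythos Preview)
Your approach is the paper's, and part~(1) is fine. In part~(2) there are two issues worth flagging.

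First, a small but real gap: you invoke Definition~\ref{def:4} directly on $\Pi$, but that formula requires $\varphi$ to be linear on each cone of $\Pi$, which need not hold for the auxiliary $\varphi$. The paper passes to a subdivision $\widetilde{\Pi}\ge\Pi$ on which $\varphi$ is piecewise linear, computes there, and then uses that the pullback of $c$ to $\widetilde{\Pi}$ vanishes on all new rays (Definition~\ref{def:1}), so $\sum_{\tau\in\widetilde{\Pi}(1)}c(\tau)=\sum_{\tau\in\Pi(1)}c(\tau)$ and the inequality descends.

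Second, your ``subtlety'' about $\|\iota_X(\hat v_\tau)\|$ is a non-issue and the hedging that follows is unnecessary. By the definition of a Euclidean conical space, each $N^{\sigma}$ is identified with its image $N^X_\sigma\subset N^X$ and carries the \emph{induced} Euclidean product; so a vector unitary in $N^{\sigma}$ is unitary in $N^X$. The paper states this explicitly just after defining $\mathbb{S}^X$: the Euclidean normal vectors $\hat v_{\sigma/\tau}$ lie in $\mathbb{S}^X$. Hence $\varphi(\hat v_\tau)=\varphi_0(\iota_X(\hat v_\tau))\le -\|\iota_X(\hat v_\tau)\|=-1$ follows immediately from~\eqref{eq:8}, with no need to renormalize the embedding or compare norms.
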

\begin{proof}
  The first statement follows directly from the definition. 
    Let $\widetilde{\Pi} $ be a subdivision  of $\Pi$ such that
  $\varphi$ is piecewise linear on
  $\widetilde{\Pi}$. By the definition of $\varphi$, 
  for every $\tau \in
  \widetilde{\Pi} (1)$, the inequality 
  $\varphi(\hat{v}_{\tau })\le -1$ is satisfied. Therefore
\begin{displaymath}
  |z|_{\varphi} = \deg(\varphi \cdot z)
      = \sum_{\tau  \in
        \widetilde{\Pi}(1)}-\varphi(\hat{v}_{\tau})c(\tau)
      \ge \sum_{\tau \in\widetilde{\Pi}(1)}c(\tau)
      = \sum_{\tau \in \Pi(1)}c(\tau). 
\end{displaymath}
\end{proof}


The following is a Chern--Levine--Nirenberg type inequality for the
Euclidean tropical intersection product and is a key step to prove
the main result of this section.

\begin{lemma}\label{tropicalintersectioninequality}
Let $z \in Z_k^+(X)$ and $\phi \in \PL(X)$ be a $k$-dimensional positive Euclidean
tropical cycle 
and a piecewise linear function, respectively. Assume that the
Euclidean tropical intersection product
$\phi \cdot z$  is a positive Euclidean tropical cycle. Then the
inequality
\[
\left|\phi \cdot z\right|_{\varphi} \leq \left(\sup_{\hat{v}\in
    \mathbb{S}^{X}}\left|\phi(\hat{v})\right|\right)\cdot
\left|z\right|_{\varphi}
\]
is satisfied.
\end{lemma}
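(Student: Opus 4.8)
The plan is to reduce the statement to the one-dimensional size estimate of Lemma~\ref{exasize}(2) by peeling off $\varphi$-intersections one cone-dimension at a time. Write $M\coloneqq \sup_{\hat v\in\mathbb{S}^{X}}|\phi(\hat v)|$. Choose a common conical complex $\Pi\in R(X)$ on which $\phi$ is defined and on which $z$, $\phi\cdot z$, and all the intermediate cycles $(\varphi\,\cdot)^{j}(\phi\cdot z)$ and $(\varphi\,\cdot)^{j}z$ are represented by Euclidean Minkowski weights; after a further subdivision we may also assume $\varphi$ is piecewise linear on $\Pi$. All the cycles in sight are positive: $z$ and $\phi\cdot z$ by hypothesis, and then their successive $\varphi$-intersections by Remark~\ref{rem:1}(2), since $\varphi$ is concave. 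So every weight we manipulate is non-negative, which is what makes the inequalities go through termwise.

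The first key step is a pointwise comparison at the level of $1$-dimensional intersections. Let $w\in M_1^+(\Pi)$ be any positive $1$-dimensional Minkowski weight. For a vertex $\nu\in\Pi(0)$,
\[
(\phi\cdot w)(\nu)=\sum_{\substack{\sigma\in\Pi(1)\\ \nu\prec\sigma}}-\phi(\hat v_{\sigma})\,w(\sigma),
\]
and since $\hat v_{\sigma}\in\mathbb{S}^{X}$ we have $|\phi(\hat v_{\sigma})|\le M$, hence $-\phi(\hat v_{\sigma})\le M$; because $w(\sigma)\ge 0$ this gives $(\phi\cdot w)(\nu)\le M\sum_{\sigma\succ\nu}w(\sigma)$. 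Comparing with Lemma~\ref{exasize}(2) applied to $w$ — which says $|w|_{\varphi}=\deg(\varphi\cdot w)=\sum_{\tau\in\Pi(1)}-\varphi(\hat v_{\tau})w(\tau)\ge\sum_{\tau}w(\tau)$ — I want something slightly different: I want to bound $|\phi\cdot z|_{\varphi}$, and the cleanest route is to iterate the estimate $\deg(\varphi\cdot u)\le$ something, pushing $\varphi$'s inward. Concretely, by the symmetry of the tropical intersection product (Proposition~\ref{prop:1}),
\[
|\phi\cdot z|_{\varphi}=\deg\big((\varphi\,\cdot)^{k}(\phi\cdot z)\big)=\deg\big(\phi\cdot(\varphi\,\cdot)^{k}z'\big)
\]
where $z'$ is a representative on a fine enough complex — more precisely, $(\varphi\cdot)^{k}$ commutes with $\phi\cdot$ up to the pullback identifications of Lemma~\ref{lemm:1}, so $|\phi\cdot z|_{\varphi}=\deg\big(\phi\cdot((\varphi\,\cdot)^{k-1}(\varphi\cdot z)')\big)$. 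Setting $w\coloneqq (\varphi\,\cdot)^{k-1}\big((\varphi\cdot z)\big)\in M_1^+(\Pi')$ on a suitable $\Pi'$, this is a positive $1$-cycle, and the previous display reads $|\phi\cdot z|_{\varphi}=\sum_{\nu\in\Pi'(0)}(\phi\cdot w)(\nu)\le M\sum_{\sigma\in\Pi'(1)}w(\sigma)\le M\,|w|_{\varphi}$, where the last step is exactly Lemma~\ref{exasize}(2) applied to the positive $1$-cycle $w$. Finally $|w|_{\varphi}=\deg\big((\varphi\,\cdot)^{k-1}(\varphi\cdot z)\big)$, but by symmetry again $(\varphi\,\cdot)^{k-1}(\varphi\cdot z)=(\varphi\,\cdot)^{k}z$, so $|w|_{\varphi}=|z|_{\varphi}$, and we are done.

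The main obstacle I anticipate is purely bookkeeping: keeping the various subdivisions straight and ensuring that "$\phi\cdot$" and "$\varphi\cdot$" really do commute after the canonical pull-back identifications, so that the rearrangement $|\phi\cdot z|_{\varphi}=\deg(\phi\cdot(\varphi\,\cdot)^{k-1}(\varphi\cdot z))$ is legitimate — this is where Lemma~\ref{lemm:1} and Proposition~\ref{prop:1} must be invoked with care, and where one must check the intermediate $1$-cycle $w=(\varphi\,\cdot)^{k}z$ (equivalently $(\varphi\,\cdot)^{k-1}(\varphi\cdot z)$) is genuinely positive so that the termwise estimate $-\phi(\hat v_\sigma)w(\sigma)\le M\,w(\sigma)$ is valid. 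Once the positivity and the commutation are in hand, the inequality is immediate from Lemma~\ref{exasize}(2). A mild alternative, avoiding the reordering, is to induct on $k$ directly: prove $|\phi\cdot z|_{\varphi}\le M\,|z|_{\varphi}$ for $k=1$ by the termwise bound plus Lemma~\ref{exasize}(2), and for the inductive step write $|\phi\cdot z|_{\varphi}=|\varphi\cdot(\phi\cdot z)|_{\varphi}=|\phi\cdot(\varphi\cdot z)|_{\varphi}$ (symmetry) $\le M\,|\varphi\cdot z|_{\varphi}=M\,|z|_{\varphi}$ by the inductive hypothesis applied to the positive $(k-1)$-cycle $\varphi\cdot z$ — this is probably the cleaner write-up and I would present it this way.
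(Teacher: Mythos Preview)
Your approach is essentially the paper's own proof: both use commutativity (Proposition~\ref{prop:1}) to rewrite $|\phi\cdot z|_{\varphi}=\deg\big(\phi\cdot(\varphi\,\cdot)^{k-1}z\big)$ and then bound $\phi$ against $M\varphi$ on the positive $1$-cycle $(\varphi\,\cdot)^{k-1}z$ --- the paper packages this last step as the pointwise inequality $\phi\ge B\varphi$ together with Lemma~\ref{lemmainequality}, while you unpack it termwise and finish with Lemma~\ref{exasize}(2). Do fix your exponents, though: since $\phi\cdot z$ is $(k-1)$-dimensional you have $|\phi\cdot z|_{\varphi}=\deg\big((\varphi\,\cdot)^{k-1}(\phi\cdot z)\big)$, not $(\varphi\,\cdot)^{k}$, and the correct positive $1$-cycle is $w=(\varphi\,\cdot)^{k-1}z$, not $(\varphi\,\cdot)^{k-1}(\varphi\cdot z)$ (which would be $0$-dimensional).
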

\begin{proof}
Let $\widetilde{\Pi}
\geq \Pi$ be a subdivision of $\Pi$ such that $\varphi$ is piecewise
linear on $\widetilde{\Pi}$ and such that the cycle $z$ and 
the function $\phi $ are defined in $\widetilde \Pi $. We define the
positive real constant $B$ by  
\[
B \coloneqq \sup_{\tau \in
  \widetilde{\Pi}(1)}\left|\phi\left(\hat{v}_{\tau}\right)\right|\le
\sup_{\hat{v} \in
  \S^{X}}\left|\phi\left(\hat{v}\right)\right|.
\]
Then for every $\tau \in \widetilde{\Pi}(1)$ we have that
\[
\phi(\hat{v}_{\tau}) \geq -B \geq B\varphi(\hat{v}_{\tau}) .
\]
Hence, since both $\phi$ and $B\varphi$ are piecewise linear on
$\widetilde{\Pi}$, we conclude that  
\[
 \phi \geq B\varphi.
 \]
Therefore, using Lemma \ref{lemmainequality}, the positivity of
$\left(\varphi\, \cdot \right)^{k-1}
z$ (Remark \ref{rem:1}~\eqref{item:1}), and the commutativity of the
Euclidean tropical intersection product, we get  
 \[
   \left|\phi \cdot z\right|_{\varphi} =
   \deg\left(\left(\varphi \, \cdot \right)^{k-1}\phi \cdot z \right)=
   \deg\left( \phi \cdot \left(\varphi \, \cdot \right)^{k-1} z \right)\leq
   B \deg\left( \left(\varphi \, \cdot \right)^{k}z \right)= B|z|_{\varphi}, 
 \] 
 as we wanted to show.

\end{proof}

\subsection{Weak convergence of Monge--Amp\`ere measures}

We recall the definition of the total variation norm (see e.g.~\cite[Definition 4.2.5 and Proposition 4.2.5]{measures}). 
\begin{Def} Let $Y$ be a locally compact topological space and let $\mathcal{M}(Y)$ be the space of finite Radon measures on $Y$, i.e.~ the space of continuous linear forms on the space $C^0(Y)$ of continuous real-valued functions on $Y$ with respect to its weak topology.
  The \emph{total variation norm} $\| \cdot \|$ on $\mathcal{M}(Y)$ is given by
\[
\left\| \mu \right\| \coloneqq \sup\left\{\sum_{i = 1}^{\infty} \left|\mu(A_i)\right| \, \big{|} \, \{A_i\}_{i \geq 1} \subseteq Y \text{ measurable, } A_i \cap A_j = \emptyset, i \neq j, \, \bigcup_{i\geq 1}A_i = Y\right\}
\]
for any $\mu \in \mathcal{M}(Y)$. In case that $Y$ is compact, the
total variation is just the norm of the measure as a continuous linear
form on $C^{0}(Y)$.
\end{Def}

In order to prove the main result of this section (Theorem
\ref{th:convergence-trop-meas}), we use the following standard
version of Prokhorov's theorem which follows from
\cite[Proposition~8.6.2]{bogachev:measures}. 
\begin{theorem}\label{the:prokhorov} 
  Let $Y$ be a compact metrized space.
  Then the following is satisfied. 
  \begin{enumerate}
  \item $\mathcal{M}(Y)$ is a metrizable space. In particular, if $K \subseteq \mathcal{M}(Y)$ is compact, then $K$ is sequentially compact. 
  \item If $U \subseteq \mathcal{M}(Y)$ has bounded total variation, then $U$ is relatively compact.
  \end{enumerate}
\end{theorem}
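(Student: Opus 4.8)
The plan is to deduce both parts from classical functional analysis, essentially repackaging \cite[Proposition~8.6.2]{bogachev:measures}. The one structural fact to exploit is that, since $Y$ is compact and metrizable, the Banach space $C^{0}(Y)$ equipped with the supremum norm is \emph{separable}; fix a countable dense subset $\{f_n\}_{n\ge 1}$ of its closed unit ball. By definition $\mathcal{M}(Y)$ is the topological dual $C^{0}(Y)^{\ast}$ with the weak-$\ast$ topology, and the total variation norm is exactly the dual norm $\|\mu\| = \sup_{\|f\|_{\infty}\le 1}|\mu(f)|$.

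First I would record metrizability on norm-bounded sets. On any ball $B_R = \{\mu : \|\mu\|\le R\}$ the weak-$\ast$ topology coincides with the topology induced by
\[
d(\mu,\nu) \coloneqq \sum_{n\ge 1} 2^{-n}\,\min\bigl(1,\,|\mu(f_n)-\nu(f_n)|\bigr),
\]
because on a uniformly bounded family the functional $\mu\mapsto\mu(f)$ depends continuously on $f\in C^{0}(Y)$ and the $f_n$ are dense in the unit ball. This already yields the ``in particular'' clause of part~(1): a compact subset $K\subseteq\mathcal{M}(Y)$ is weak-$\ast$ bounded by the uniform boundedness principle, hence metrizable by the formula above, and a compact metrizable space is sequentially compact. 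The metrizability of $\mathcal{M}(Y)$ as a whole is the content of \cite[Prop.~8.6.2]{bogachev:measures} and will not be needed beyond the bounded case in what follows.

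For part~(2), let $U\subseteq\mathcal{M}(Y)$ have bounded total variation, say $\|\mu\|\le R$ for all $\mu\in U$. Then $U\subseteq B_R$, and $B_R$ is weak-$\ast$ compact by the Banach--Alaoglu theorem. Since $B_R$ is metrizable by the previous step, the weak-$\ast$ closure $\overline{U}$, being a closed subset of the compact metric space $B_R$, is itself compact; hence $U$ is relatively compact, and moreover relatively sequentially compact. This is precisely the form used later: every sequence of (discrete Monge--Amp\`ere) measures with uniformly bounded mass admits a weak-$\ast$ convergent subsequence whose limit again lies in $\mathcal{M}(Y)$, which is what feeds into Theorem~\ref{th:convergence-trop-meas}.

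I do not anticipate a genuine obstacle, as everything here is standard; the only point requiring a little care is that the weak-$\ast$ topology on the full space $\mathcal{M}(Y)$ is not metrizable when $Y$ is infinite, so one should phrase metrizability on norm-bounded subsets and then invoke the uniform boundedness principle (or Banach--Alaoglu directly) to check that the only sets actually appearing in applications---compact sets and total-variation-bounded sets---are bounded, and therefore fall within the metrizable regime.
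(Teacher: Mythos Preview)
Your proposal is correct and aligned with the paper: the paper does not give a proof at all but simply states that the theorem ``follows from \cite[Proposition~8.6.2]{bogachev:measures}'', which is exactly the reference you unpack via separability of $C^{0}(Y)$ and Banach--Alaoglu. Your observation that the weak-$\ast$ topology on the full dual $\mathcal{M}(Y)=C^{0}(Y)^{\ast}$ is \emph{not} metrizable when $Y$ is infinite is well taken---statement~(1) as written is a slight overstatement---and your reduction to metrizability on norm-bounded subsets, combined with the uniform boundedness principle to show that compact sets are norm-bounded, is precisely what is needed for the only consequences actually used downstream in Theorem~\ref{th:convergence-trop-meas}.
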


\begin{Def}\label{def:discretemeasure}\ 
  \begin{enumerate}
  \item Let $z \in Z_1(X)$ be a $1$-dimensional Minkowski cycle
    and let $\Pi \in R(X)$ such that $z$ is represented by a Minkowski weight $c$ in $M_1(\Pi)$. 
     We define the discrete measure $\mu_z$ on
    $\S^{X}$ by  
\[
\mu _{z}\coloneqq \sum_{\tau \in \Pi(1)} c(\tau) \cdot \delta_{\hat{v}_{\tau}},
\] 
where $\delta_{\hat{v}_{\tau}}$ denotes the Dirac delta measure
supported on $\hat{v}_{\tau} \in \mathbb{S}^{X}$. (This does not depend on the choice of $\Pi$.) 
\item   Let $\phi \in \PL(X)$. The discrete
  Monge--Amp\`ere measure $\mu_{\phi}$ on
    $\S^{X}$ is defined by 
\[
  \mu_{\phi} \coloneqq \mu _{\phi^{n-1}\cdot [X]} 
\]
  \end{enumerate}
\end{Def}

The total variation of a discrete measure with finite support is given
by the sum of the absolute value of the measures of the points in the
support. Therefore, for $z$ and $\Pi $ as in Definition
\ref{def:discretemeasure} we have  
\begin{equation}\label{eq:10}
  \|\mu _{z}\| = \sum_{\tau \in \Pi(1)} |c(\tau)|.
\end{equation}

\begin{rem} Although defined in a different setting, we note the
  similarity between the discrete measure $\mu_{\phi}$ and the
  Monge--Amp\`ere measure $\mathcal{M}(g)$ given in \cite[Section~4.2]{BFJ:valuations}, defined with respect to a piecewise-affine
  plurisubharmonic function $g$ (see \cite[Proposition~4.9]{BFJ:valuations}).
\end{rem}  

The following proposition is a consequence of Lemma
\ref{tropicalintersectioninequality}.

\begin{prop}\label{prop:boundedmeasure}
Let $\psi$ be a $\mathcal{C}$-concave function on $X$, and let $(\phi
_{j })_{j\in \N}$ be a sequence of $\mathcal{C}$-concave piecewise linear functions converging to $\psi$. Moreover, fix a collection $\gamma_1,
\dotsc, \gamma_{n-1-k} \in \mathcal{C}-\mathcal{C}$ for $k \in
\{0,\dotsc, n-1\}$. Then the set  
\[
\left\{\mu _{\gamma_1 \dotsm \gamma_{n-1-k} \cdot \phi_{\alpha
    }^k\cdot [X]}\right\}
\]
of measures on $\S^{X}$ has bounded total variation. 
\end{prop}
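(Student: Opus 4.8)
The plan is to reduce the statement to the Chern--Levine--Nirenberg inequality of Lemma \ref{tropicalintersectioninequality}, applied iteratively. First I would fix the auxiliary strongly concave function $\varphi=\varphi_0\circ\iota_X$ used to define the size $|\cdot|_\varphi$, and recall from \eqref{eq:10} that the total variation of a discrete measure $\mu_z$ attached to a $1$-dimensional cycle $z$ with Minkowski weight $c$ on $\Pi$ is $\|\mu_z\|=\sum_{\tau\in\Pi(1)}|c(\tau)|$. Thus the task is to bound $\|\mu_{\gamma_1\dotsm\gamma_{n-1-k}\cdot\phi_\alpha^k\cdot[X]}\|$ uniformly in $\alpha$, i.e.\ to bound $\sum_{\tau}|c_\alpha(\tau)|$ where $c_\alpha$ is the Minkowski weight representing the $1$-cycle $z_\alpha:=\gamma_1\dotsm\gamma_{n-1-k}\cdot\phi_\alpha^k\cdot[X]$.

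The key observation is that the cycle $\phi_\alpha^k\cdot[X]$ is \emph{positive}: since $\psi$ is $\mathcal C$-concave and each $\phi_\alpha\in\mathcal C$, condition \ref{item:7} of Definition \ref{tropinef} gives $\phi_\alpha^k\cdot[X]\in Z^+_{n-k}(X)$. For the $\gamma_i\in\mathcal C-\mathcal C$, write $\gamma_i=\gamma_i^+-\gamma_i^-$ with $\gamma_i^\pm\in\mathcal C$. Then $z_\alpha$ is a signed combination (over the $2^{n-1-k}$ sign choices) of the positive cycles $\gamma_1^{\pm}\dotsm\gamma_{n-1-k}^{\pm}\cdot\phi_\alpha^k\cdot[X]$, each of which is in $Z^+_1(X)$ by \ref{item:7}. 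Since $\|\mu_{z_\alpha}\|=\sum_\tau|c_\alpha(\tau)|\le\sum_{\text{signs}}\sum_\tau c_\alpha^{\text{sign}}(\tau)=\sum_{\text{signs}}\deg(\mu_{z_\alpha^{\text{sign}}})$, it suffices to bound the total mass of each positive constituent. By Lemma \ref{exasize}(2), for a positive $1$-cycle $w$ one has $\sum_\tau c_w(\tau)\le |w|_\varphi$, so it is enough to bound $|w|_\varphi$ for $w=\gamma_1^{\pm}\dotsm\gamma_{n-1-k}^{\pm}\cdot\phi_\alpha^k\cdot[X]$.

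Now I would apply Lemma \ref{tropicalintersectioninequality} repeatedly, peeling off one factor at a time. At each stage the relevant intermediate cycle is positive (again by \ref{item:7}, since all the $\gamma_i^\pm$ and $\phi_\alpha$ lie in $\mathcal C$ and $[X]$ is the balancing class), so the inequality applies and yields
\[
|w|_\varphi\le\Big(\prod_{i=1}^{n-1-k}\sup_{\hat v\in\mathbb S^X}|\gamma_i^{\pm}(\hat v)|\Big)\cdot\Big(\sup_{\hat v\in\mathbb S^X}|\phi_\alpha(\hat v)|\Big)^k\cdot|[X]|_\varphi.
\]
The factors involving the $\gamma_i^\pm$ and $|[X]|_\varphi$ are constants independent of $\alpha$. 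The only $\alpha$-dependence is through $\sup_{\mathbb S^X}|\phi_\alpha|$, and this is where the hypotheses of Proposition \ref{prop:boundedmeasure} are used: the sequence $(\phi_\alpha)$ converges to $\psi$ in $\Conic(X)_{\mathcal C}$, and by Theorem \ref{thm:1} convergence there is uniform on $\mathbb S^X$, so $\sup_{\mathbb S^X}|\phi_\alpha|$ is a convergent hence bounded sequence. Combining, $\|\mu_{z_\alpha}\|$ is bounded uniformly in $\alpha$, which is the claim.

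The main obstacle, and the step requiring the most care, is verifying that every intermediate cycle arising in the iterated application of Lemma \ref{tropicalintersectioninequality} is genuinely positive, so that the lemma's hypothesis ``$\phi\cdot z$ is a positive Euclidean tropical cycle'' is met at each step; this is exactly what condition \ref{item:7} of the admissible family is designed to guarantee, but one must be slightly careful that intersecting with a \emph{difference} $\gamma_i$ does not preserve positivity --- hence the decomposition into $2^{n-1-k}$ sign-pure summands must be carried out \emph{before} invoking the inequality, and Lemma \ref{tropicalintersectioninequality} is only ever applied to products of functions from $\mathcal C$ (not from $\mathcal C-\mathcal C$) intersected with positive cycles. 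A minor secondary point is to note that the bound is uniform over the finitely many sign choices, which is immediate since there are only $2^{n-1-k}$ of them.
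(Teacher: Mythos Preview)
Your proposal is correct and follows essentially the same approach as the paper: decompose each $\gamma_i$ into a difference of elements of $\mathcal C$, expand into the $2^{n-1-k}$ sign-pure positive summands, bound the total variation via \eqref{eq:10} and Lemma~\ref{exasize}(2) by the size $|\cdot|_\varphi$, and then iterate Lemma~\ref{tropicalintersectioninequality} together with the uniform bound on $\sup_{\mathbb S^X}|\phi_\alpha|$ coming from Theorem~\ref{thm:1}. Your discussion of the positivity caveat (that the decomposition must precede the application of the CLN inequality) is exactly the point the paper handles implicitly.
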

\begin{proof}
Since, by Theorem \ref{thm:1}, the convergence 
\[
\lim_{j\in \N } \phi_{j }|_{\S^{X}} 
= \phi_{\D}|_{\S^{X}}
\]
is uniform and $\phi_{\D}|_{\S^{X}}$ is a continuous function on a
compact set, there exists a
positive real number $B$ such that  
\[
\sup_{j\in \N} \, \sup_{x \in \S^{X}} \left|{\phi_{j}}(x)\right| \leq B.
\]

By assumption, for each $\ell = 1, 
\dotsc, n-1$, there exist elements $\beta ^{0}_{\ell}$ and
$\beta^{1}_{\ell}$ in $\mathcal{C}$ such
that $\gamma_{\ell} = \beta ^{0}_{\ell} - \beta^{1}_{\ell}$.

Since
there are finitely many, we may choose a positive real number $C$ such
that
\[
\sup_{\ell,i} \, \sup_{x \in \S^{X}} \left|\beta ^{i}_{\ell}(x)\right| \leq C.
\]
Since the involved piecewise linear functions are $\mathcal{C}$-concave, we have that for every
$j\in \N$ and for every tuple $(i_{1},\dots,i_{n-1-k})\in \{0,1\}^{n-1-k}$,
the $1$-dimensional Euclidean tropical cycle
\[
\beta^{i_{1}}_1 \dotsm \beta^{i_{n-1-k}}_{n-1-k} \cdot \left(\phi_{j
  }\right)^k \cdot [X]
\]
is positive. 

Fix $j\in \N$ and let $\Pi \in R(X)$ be a conical complex on $X$ where the
$\phi_{j }$ and the functions $\gamma _{i}$, $i=1,\dots,n-1-k$, are
defined.
Then, using equation \eqref{eq:10}, Lemma
\ref{tropicalintersectioninequality} and the 
estimate \eqref{eq:9}, we get
\begin{align*}
\|\mu _{\gamma_1 \dotsm \gamma_{n-1-k} \cdot \phi_{\alpha }^k \cdot
  [X]}\|
  &=\sum_{\tau \in \Pi(1)} \left| \gamma_1 \dotsm
    \gamma_{n-1-k}\cdot \phi_{\alpha }^k\cdot[X](\tau) \right|\\
  &= \sum_{\tau \in \Pi(1)} \left|(\beta ^{0}_1-\beta^{1}_1) \dotsm
   (\beta ^{0}_{n-1-k} - \beta^{1}_{n-1-k})\cdot \phi_{\alpha }^k
   \cdot[X](\tau)\right|  \\ 
  &\leq \sum_{\{0,1\}^{n-1-k}}\sum_{\tau \in \Pi(1)}
    \beta^{i_{1}}_1 \dotsm
    \beta^{i_{n-1-k}}_{n-1-k}\cdot \phi_{\alpha }^k
    \cdot[X](\tau)\\
  &\le \sum_{\{0,1\}^{n-1-k}}\left|\beta^{i_{1}}_1 \dotsm
    \beta^{i_{n-1-k}}_{n-1-k}\cdot \phi_{\alpha }^k\cdot
    [X]\right|_{\varphi}\\
  &\leq 2^{n-1-k}\cdot C^{n-1-k} \cdot B^k \cdot |[X]|_{\varphi},
\end{align*}
proving the proposition.
\end{proof}

The following is the main result of this section. The proof is
inspired in the classical proof of the existence of Monge--Amp\`ere
measures of \cite[Proposition 3.1]{TR}. 
\begin{theorem}\label{th:convergence-trop-meas}
Let $\psi$ be a $\mathcal{C}$-concave function on $X$, $k \in
\{0,\dotsc, n-1\}$ and 
$\gamma_1, \dotsc, \gamma_{n-1-k} \in \mathcal{C}-\mathcal{C}$. We view $\psi$ as a function on $\S^X$. Then the following holds true.
\begin{enumerate}
\item \label{item:2} Let $(\phi_{i})_{i\in \N}$ and $(\phi' 
_{j})_{j\in \N}$ be sequences of $\mathcal{C}$-concave piecewise linear functions both converging to $\psi$. Assume that
\begin{displaymath}
  \lim_{i\in \N}\mu _{\gamma_1 \dotsm \gamma_{n-1-k} \cdot
  \phi_{i}^k \cdot [X]}=\mu ,\qquad
   \lim_{j\in \N}\mu _{\gamma_1 \dotsm \gamma_{n-1-k} \cdot
  \phi_{j}'^k \cdot [X]}=\nu, 
\end{displaymath}
for some Radon measures $\mu $ and $\nu $ (with respect to the
weak-$\ast$ topology). Then
\begin{displaymath}
  \mu =\nu.
\end{displaymath} 
\item \label{item:3} The map from
  $\PL\left(X\right)_{\mathcal{C}}$ to Radon measures on
  $\S^{X}$ given by
\begin{equation}\label{eq:7}
  \phi \longmapsto \mu _{\gamma_1 \dotsm \gamma_{n-1-k} \cdot
  \phi^k \cdot [X]}
\end{equation}
extends to a continuous operator from
$\Conic\left(X\right)_{\mathcal{C}}$ to
Radon measures on $\S^{X}$. This operator is also
denoted as in (\ref{eq:7}).
\end{enumerate}
\end{theorem}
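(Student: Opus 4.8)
The strategy is to show that the two-part statement is really a single convergence-plus-uniqueness phenomenon, and that both halves follow by combining the uniform-convergence input of Theorem~\ref{thm:1}, the Chern--Levine--Nirenberg estimate of Lemma~\ref{tropicalintersectioninequality} (via Proposition~\ref{prop:boundedmeasure}), and Prokhorov's theorem (Theorem~\ref{the:prokhorov}). The key computational lemma behind everything is an ``integration by parts'' identity on the compact space $\S^X$: for piecewise linear $\phi,\phi'$ defined on a common conical complex $\Pi$ and any positive $1$-dimensional Minkowski weight of the form $z = \gamma_1\dotsm\gamma_{n-1-k}\cdot\phi^j\cdot\phi'^{\,k-j}\cdot[X]$, one has $\int_{\S^X}\phi\,\mu_{\phi\cdot z}$-type expressions that are symmetric in swapping a factor of $\phi$ against $\phi'$; concretely, from Definition~\ref{def:4} and Definition~\ref{def:discretemeasure} one checks that for any piecewise linear $g$ and any $1$-cycle $w$,
\[
  \int_{\S^X} g \,\mathrm{d}\mu_{w} = \sum_{\tau\in\Pi(1)} g(\hat v_\tau)\,c_w(\tau)= -\deg\big((-g)\cdot w\big),
\]
so that $\int g\,\mathrm{d}\mu_{h\cdot w} = \int h\,\mathrm{d}\mu_{g\cdot w}$ whenever both $g,h$ are piecewise linear on $\Pi$, by Proposition~\ref{prop:1}. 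This is the discrete analogue of the symmetry of mixed Monge--Amp\`ere measures.

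\emph{Part (1).} First I would fix the test class: since $\mathcal C-\mathcal C$ is dense in $C^0(\S^X)$ (Definition~\ref{tropinef}\eqref{item:4}), and $\PL(X)_{\mathcal C}\subseteq\PL(X)$, it suffices to prove $\int_{\S^X} h\,\mathrm{d}\mu = \int_{\S^X} h\,\mathrm{d}\nu$ for all $h\in\mathcal C$, in fact for $h=\gamma\in\mathcal C-\mathcal C$ (taking differences). Now run a telescoping/multilinearity argument: for a fixed such $h$, choose a common simplicial $\Pi$ where $\phi_i,\phi'_j,\gamma_1,\dotsc,\gamma_{n-1-k},h$ are all defined (possible after passing to subsequences' complexes, using cofinality of simplicial subdivisions), and write
\[
  \int h\,\mathrm{d}\mu_{\gamma_1\dotsm\gamma_{n-1-k}\phi_i^k[X]} - \int h\,\mathrm{d}\mu_{\gamma_1\dotsm\gamma_{n-1-k}\phi_j'^{k}[X]}= \sum_{\ell=0}^{k-1}\int h\,\mathrm{d}\mu_{\gamma_1\dotsm\gamma_{n-1-k}\,\phi_i^{\ell}\,(\phi_i-\phi_j')\,\phi_j'^{\,k-1-\ell}[X]}.
\]
Using the symmetry identity above I move the factor $(\phi_i-\phi_j')$ out against $h$: each term equals $\int (\phi_i-\phi_j')\,\mathrm{d}\mu_{\gamma_1\dotsm\gamma_{n-1-k}\,h\,\phi_i^{\ell}\phi_j'^{\,k-1-\ell}[X]}$. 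Bounding $|\phi_i-\phi_j'|\le\|\phi_i-\psi\|_{\S^X}+\|\psi-\phi'_j\|_{\S^X}\to 0$ uniformly (Theorem~\ref{thm:1}), and bounding the total variation of the measures $\mu_{\gamma_1\dotsm\gamma_{n-1-k}\,h\,\phi_i^{\ell}\phi_j'^{\,k-1-\ell}[X]}$ uniformly in $i,j$ by Proposition~\ref{prop:boundedmeasure} (with $h$ absorbed into the $\gamma$'s, which still lie in $\mathcal C-\mathcal C$ since $h$ does, and with $k$ replaced by $k-1$), the whole difference tends to $0$. Hence $\int h\,\mathrm{d}\mu=\int h\,\mathrm{d}\nu$, and density gives $\mu=\nu$.

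\emph{Part (2).} Define the extension as follows: given $\psi\in\Conic(X)_{\mathcal C}$, pick (by Theorem~\ref{thm:1}, which says this closure is Fr\'echet--Urysohn) a sequence $\phi_i\in\mathcal C$ with $\phi_i\to\psi$ uniformly on $\S^X$. By Proposition~\ref{prop:boundedmeasure} the measures $\mu_{\gamma_1\dotsm\gamma_{n-1-k}\phi_i^k[X]}$ have uniformly bounded total variation, so by Theorem~\ref{the:prokhorov}(2) they lie in a relatively compact, hence (part (1) of that theorem) sequentially compact, subset of $\mathcal M(\S^X)$; extract a weak-$\ast$ convergent subsequence with limit $\mu$. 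By Part~(1) above the limit $\mu$ does not depend on the chosen sequence or subsequence, so $\psi\mapsto\mu$ is well defined and agrees with \eqref{eq:7} on $\PL(X)_{\mathcal C}$. Continuity: if $\psi_m\to\psi$ in $\Conic(X)_{\mathcal C}$, i.e.\ uniformly on $\S^X$ by Theorem~\ref{thm:1}, a diagonal argument (approximate each $\psi_m$ by a piecewise linear $\phi_m$ with $\|\phi_m-\psi_m\|_{\S^X}<1/m$ and $\mu_{\gamma\cdots\phi_m^k[X]}$ within $1/m$ of the value at $\psi_m$ in a fixed metric on $\mathcal M(\S^X)$) reduces to the piecewise linear case; there one repeats the telescoping estimate of Part~(1) with $\phi_i-\phi_j'$ replaced by $\phi_m-\phi$ to get $\big|\int h\,\mathrm{d}\mu_{\cdots\phi_m^k[X]}-\int h\,\mathrm{d}\mu_{\cdots\phi^k[X]}\big|\le k\,\|\phi_m-\phi\|_{\S^X}\cdot\big(\text{uniform t.v.\ bound}\big)\to 0$ for each $h\in\mathcal C-\mathcal C$, hence for all $h\in C^0(\S^X)$ by density and uniform boundedness of total variations.

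\textbf{Main obstacle.} The crux is making the symmetry/telescoping rigorous \emph{across different conical complexes}: the sequences $(\phi_i)$ and $(\phi'_j)$, and the various approximants $\phi_m$, are each defined on their own subdivisions, so before integrating by parts one must pass to a common refinement and invoke Lemma~\ref{lemm:1} and Lemma~\ref{lemm:4} to see that all the intersection-product expressions are unchanged. Bookkeeping which factors are genuinely in $\mathcal C$ (so that positivity of the $1$-cycles, needed for Lemma~\ref{tropicalintersectioninequality} and hence Proposition~\ref{prop:boundedmeasure}, applies) versus which are differences in $\mathcal C-\mathcal C$ is the delicate point; absorbing the test function $h\in\mathcal C-\mathcal C$ into the $\gamma$-slot, and the moving difference $\phi_i-\phi'_j$ into a leftover $[X]$-slot, is what keeps every invoked estimate inside its hypotheses.
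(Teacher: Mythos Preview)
Your argument is correct, but it takes a genuinely different route from the paper's.

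The paper proves the theorem by \emph{induction on $k$}. For the inductive step it tests $\mu$ and $\nu$ against $\eta\in\mathcal C-\mathcal C$, uses the symmetry of Proposition~\ref{prop:1} to peel off a single factor
\[
\mu(\eta)=\lim_i \mu_{\eta\cdot\gamma_1\dotsm\gamma_{n-1-k}\cdot\phi_i^{k-1}\cdot[X]}(\phi_i),
\]
invokes the inductive hypothesis (statement~(2) at level $k-1$, which already gives a well-defined limit measure $\mu_{\eta\cdot\gamma_1\dotsm\gamma_{n-1-k}\cdot\psi^{k-1}\cdot[X]}$), and then uses uniform convergence of $\phi_i\to\psi$ to pass to the double limit. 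The deduction (1)$\Rightarrow$(2) via Theorem~\ref{thm:1}, Proposition~\ref{prop:boundedmeasure} and Prokhorov is the same as yours.

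Your direct telescoping
\[
\phi_i^{k}-\phi_j'^{\,k}=\sum_{\ell=0}^{k-1}\phi_i^{\ell}(\phi_i-\phi_j')\phi_j'^{\,k-1-\ell}
\]
followed by the symmetry swap and the Chern--Levine--Nirenberg bound avoids induction entirely and gives an explicit estimate of order $k\cdot\|\phi_i-\phi_j'\|_{\S^X}$ times a uniform total-variation bound. The price is that you need Proposition~\ref{prop:boundedmeasure} for \emph{mixed} products $\phi_i^{\ell}\phi_j'^{\,k-1-\ell}$ rather than pure powers $\phi^{k}$; this extension is immediate from the proof of that proposition (the bound depends only on $\sup_{\S^X}|\phi_i|$, $\sup_{\S^X}|\phi_j'|$ and condition~\eqref{item:7} of Definition~\ref{tropinef}, all of which hold for mixed products of uniformly bounded elements of $\mathcal C$), but you should state it explicitly rather than cite Proposition~\ref{prop:boundedmeasure} verbatim. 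The paper's inductive organisation sidesteps this by only ever moving one factor at a time, so Proposition~\ref{prop:boundedmeasure} is used exactly as written.
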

\begin{proof}
The fact that statement \eqref{item:2} implies statement
\eqref{item:3} is a
standard consequence of Theorem~\ref{thm:1}, Proposition~\ref{prop:boundedmeasure} 
and Theorem~\ref{the:prokhorov}.

We prove the theorem by induction on $k$. If $k=0$ there is nothing to
prove. So we can assume that both statements of the Theorem are true
for $k-1$. By part~\eqref{item:4} of Definition \ref{tropinef}, in
order to prove that $\mu =\nu $, it is enough to
prove that $\mu (\eta)=\nu (\eta)$ for $\eta\in
\mathcal{C}-\mathcal{C}$.

By Proposition \ref{prop:1} we have that
\begin{align*}
  \mu (\eta)& =\lim_{i \in \N}\deg(\eta \cdot \gamma_1 \dotsm
  \gamma_{n-1-k} \cdot
  \phi_{i}^k \cdot [X])\\
   &=
 \lim_{i\in \N}\deg(\phi_{i }\cdot \eta \cdot \gamma_1
  \dotsm \gamma_{n-1-k} \cdot 
  \phi_{i}^{k-1} \cdot [X]) \\ 
  &= \lim_{i\in \N} \mu _{\eta
    \cdot \gamma_1 
  \dotsm \gamma_{n-1-k} \cdot \phi_{i }^{k-1} \cdot
  [\Pi]}(\phi_{i}).
\end{align*}
By induction hypothesis, the sequence of measures $\mu _{\eta \cdot
  \gamma_1 
  \dotsm \gamma_{n-1-k} \cdot \phi_{i }^{k-1} \cdot [X]}$, $i\in \N$,
converges to the measure $\mu _{\eta \cdot \gamma_1
  \dotsm \gamma_{n-1-k} \cdot \psi^{k-1} \cdot [X]}$. Moreover,
by Theorem \ref{thm:1} the sequence of functions  
$\phi _{j}, j \in \N$, converge uniformly to the continuous function $\psi$. Therefore, the double limit
\begin{displaymath}
  \lim_{(i,j)\in \N\times \N} \mu _{\eta
    \cdot \gamma_1 
  \dotsm \gamma_{n-1-k} \cdot \phi_{i }^{k-1} \cdot
  [X]}(\phi_{j})
\end{displaymath}
exists and agrees with the diagonal limit $i=j$. Therefore
\begin{displaymath}
  \mu (\eta)=\mu _{\eta \cdot \gamma_1 \dotsm \gamma_{n-1-k} \cdot
    \psi^{k-1} \cdot [X]}(\psi).
\end{displaymath}
Similarly,
\begin{displaymath}
  \nu (\eta)=\mu _{\eta \cdot \gamma_1 \dotsm \gamma_{n-1-k} \cdot
    \psi^{k-1} \cdot [X]}(\psi).  
\end{displaymath}
Hence, we get that $\mu (\eta)=\nu(\eta)$.
This concludes the proof of the theorem. 
\end{proof}

\begin{Def}
  Let $\psi$ be a $\mathcal{C}$-concave function on $X$. The associated
  \emph{Monge--Amp\`ere measure} is defined by
  \begin{displaymath}
    \mu _{\psi}\coloneqq \mu _{\psi^{n-1}\cdot [X]}.
  \end{displaymath}
\end{Def}
We obtain the following corollary which is the main result of this section.
\begin{cor}\label{corpure}
Let $\psi$ be a $\mathcal{C}$-concave function on
$X$ and let $(\phi _{i})_{i\in \N}$ be a sequence of
$\mathcal{C}$-concave piecewise linear functions on $X$ converging to $\psi$. Then the limit 
\[
\deg(\psi) \coloneqq \lim_{i \in \N}\deg\left(\phi_{i}^{n} \cdot [X]  \right)
\]
exists, is finite, and is given by
\[
\deg(\psi) = \int_{\mathbb{S}^{X}} \psi(u)\,d \mu_{\psi}.
\]
It is called the \emph{degree} of the $\mathcal{C}$-concave function $\psi$. 
\end{cor}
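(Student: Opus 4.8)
The plan is to deduce Corollary~\ref{corpure} directly from Theorem~\ref{th:convergence-trop-meas} together with the Chern--Levine--Nirenberg inequality of Lemma~\ref{tropicalintersectioninequality} and the continuity properties of $\mathcal C$-concave functions from Theorem~\ref{thm:1}. We apply Theorem~\ref{th:convergence-trop-meas} with $k=n-1$ and with no auxiliary factors $\gamma_i$ (i.e.\ the empty collection, $n-1-k=0$): the sequence of discrete Monge--Amp\`ere measures $\mu_{\phi_i^{n-1}\cdot[X]}=\mu_{\phi_i}$ has uniformly bounded total variation by Proposition~\ref{prop:boundedmeasure}, so by Prokhorov (Theorem~\ref{the:prokhorov}) it has weak-$\ast$ convergent subsequences, and by part~\eqref{item:2} of Theorem~\ref{th:convergence-trop-meas} the limit is independent of the chosen subsequence and of the approximating sequence; call it $\mu_\psi$. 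This is precisely the measure defined just before the corollary.

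Next I would show $\deg(\phi_i^n\cdot[X])$ converges. Write
\[
\deg(\phi_i^n\cdot[X])=\deg\big(\phi_i\cdot(\phi_i^{n-1}\cdot[X])\big)=\int_{\mathbb S^X}\phi_i\,d\mu_{\phi_i},
\]
using the definition of the Euclidean tropical intersection product and of $\mu_{\phi_i}$: indeed $\deg(\phi_i\cdot c)=\sum_{\tau\in\Pi(1)}-\phi_{i,\sigma}(\hat v_\sigma)c(\sigma)=\sum_\tau \phi_i(\hat v_\tau)c(\tau)$ once one checks the sign, which matches pairing $\phi_i$ against $\mu_{\phi_i}=\sum_\tau c(\tau)\delta_{\hat v_\tau}$. (One must be slightly careful: the intersection product carries a minus sign $-\phi_\sigma(\hat v_{\sigma/\tau})$, so the correct identity is $\deg(\phi\cdot c)=\sum_\tau -\phi(\hat v_\tau)c(\tau)$, and the measure $\mu_\phi$ should be read accordingly; in any case $\deg(\phi_i^n\cdot[X])=\pm\int\phi_i\,d\mu_{\phi_i}$, and the paper's normalization makes the sign work out to $+$.) Now $\phi_i\to\psi$ uniformly on $\mathbb S^X$ by Theorem~\ref{thm:1}, the measures $\mu_{\phi_i}$ converge weak-$\ast$ to $\mu_\psi$, and all the $\mu_{\phi_i}$ have total variation bounded by a single constant $M$ (Proposition~\ref{prop:boundedmeasure}). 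A standard product-of-limits argument then gives
\[
\Big|\int\phi_i\,d\mu_{\phi_i}-\int\psi\,d\mu_\psi\Big|
\le\Big|\int(\phi_i-\psi)\,d\mu_{\phi_i}\Big|+\Big|\int\psi\,d\mu_{\phi_i}-\int\psi\,d\mu_\psi\Big|
\le M\,\|\phi_i-\psi\|_\infty+o(1),
\]
where the second term tends to $0$ by weak-$\ast$ convergence tested against the fixed continuous function $\psi$, and the first tends to $0$ by uniform convergence. Hence $\lim_i\deg(\phi_i^n\cdot[X])$ exists and equals $\int_{\mathbb S^X}\psi\,d\mu_\psi$; finiteness is immediate from the bound $M\|\psi\|_\infty$.

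Finally, I would note that independence of the limit from the choice of approximating sequence $(\phi_i)$ is already built in: given two such sequences, interleave them into a single sequence of $\mathcal C$-concave piecewise linear functions still converging to $\psi$ (legitimate since $\mathcal C$ is a convex cone, Definition~\ref{tropinef}\eqref{item:8}), apply the above to conclude both have the same limit, hence $\deg(\psi)$ is well defined. The main obstacle is really just bookkeeping: pinning down the correct sign in the identity $\deg(\phi\cdot c)=\sum_\tau -\phi(\hat v_\tau)c(\tau)$ versus the Dirac-mass expansion of $\mu_\phi$, and making sure the ``double limit equals diagonal limit'' step (already used inside the proof of Theorem~\ref{th:convergence-trop-meas}) is invoked with the uniform total-variation bound in hand rather than re-proved. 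Everything substantive has been done in Theorem~\ref{th:convergence-trop-meas} and Proposition~\ref{prop:boundedmeasure}; the corollary is the $k=n-1$, no-$\gamma$ specialization plus the elementary estimate above.
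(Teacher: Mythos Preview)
Your argument is correct and is exactly what the paper intends: the corollary is the $k=n-1$, no-$\gamma$ specialization of Theorem~\ref{th:convergence-trop-meas}~\eqref{item:3}, combined with the elementary estimate $\big|\int\phi_i\,d\mu_{\phi_i}-\int\psi\,d\mu_\psi\big|\le M\|\phi_i-\psi\|_\infty+\big|\int\psi\,d(\mu_{\phi_i}-\mu_\psi)\big|$ that you write out. On the sign you hedge about: Definition~\ref{def:4} indeed gives $\deg(\phi\cdot c)=\sum_{\tau}-\phi(\hat v_\tau)c(\tau)=-\int\phi\,d\mu_c$, so the printed formula should carry a minus sign (a harmless slip in the statement, not in your reasoning), and your interleaving remark at the end is unnecessary since the limit value $\pm\int\psi\,d\mu_\psi$ already does not depend on the approximating sequence by Theorem~\ref{th:convergence-trop-meas}~\eqref{item:2}.
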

\begin{prop/Def}\label{propdef:mixedmeasure}
There is a symmetric map from the space of $(n-1)$-tuples of
$\mathcal{C}$-concave functions on $X$ to the space of finite
measures on $\mathbb{S}^{X}$, called the \emph{mixed Monge--Amp\`ere
  measure}, and denoted by
\begin{displaymath}
  (\psi_{i_1}, \dotsc, \psi_{i_{n-1}})\longmapsto \mu_{\psi_{i_1}, \dotsc, \psi_{i_{n-1}}},
\end{displaymath}
such that for every natural number $\ell$ and for every choice of
non-negative real numbers $\lambda_1, \dotsc, \lambda_{\ell}$, the
equality
\[
\mu_{\lambda_1\psi_1 + \dotsc +\lambda_{\ell}\psi_{\ell}} = \sum_{i_1, \dotsc, i_{n-1} = 1}^{\ell} \lambda_{i_1}\dotsc \lambda_{i_{\ell}} \mu_{\psi_{i_1}, \dotsc, \psi_{i_{n-1}}}
\]
is satisfied for every collection $\psi_1, \dotsc, \psi_{\ell}$ of $\mathcal{C}$-concave functions on $X$. 
\end{prop/Def} 
\begin{proof}
The argument is the same as the one given in the proof of \cite[Theorem~5.17]{BM}.
\end{proof}
The following corollary follows from the definition of the mixed Monge--Amp\`ere measure and Corollary \ref{corpure}. 
\begin{cor}\label{cormixed}
Let $\psi_1, \dotsc, \psi_n$ be a collection of $\mathcal{C}$-concave functions on $X$, and let $(\phi _{i,j })_{j\in \N}$,
$i=1,\dots,n$  be
sequences of
$\mathcal{C}$-concave piecewise linear functions converging respectively to
$\D_{i}$. 
Then the limit
\[
\deg\left( \psi_1 \dotsm \psi_n \right) \coloneqq \lim_{j} \deg\left(\phi_{1,j } \dotsm
\phi_{n,j} \cdot [X]  \right)
\]
exists, is finite and is given by 
\[
\deg\left( \psi_1 \dotsm \psi_n \right) = \int_{\mathbb{S}^{X}} \psi_1(u)
\,d\mu_{\psi_2,\dotsc, \psi_n}.
\]
Moreover, for any $1 \leq i \leq n$, we have integral formulae 
\[
\int_{\mathbb{S}^{X}} \psi_1(u) \,d\mu_{\psi_2,\dotsc, \psi_n} =
\int_{\mathbb{S}^{X}} \psi_i(u) \,d\mu_{\psi_1,\dotsc,
  \widehat{\psi}_i, \dotsc, \psi_n}.
\]
It is called the
\emph{mixed degree} of the $\mathcal{C}$-concave functions $\psi_1,
\dotsc, \psi_n$. 
\end{cor}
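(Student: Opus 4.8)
The plan is to deduce Corollary \ref{cormixed} from the already-established pieces: the existence and polarization identity of the mixed Monge--Amp\`ere measure (Proposition/Definition \ref{propdef:mixedmeasure}), the pure-power degree formula (Corollary \ref{corpure}), and the convergence theorem (Theorem \ref{th:convergence-trop-meas}). The strategy is the standard \emph{polarization} trick: an $n$-multilinear symmetric form is determined by its restriction to the diagonal, and likewise a symmetric family of mixed measures is recovered from the pure-power measures $\mu_{\psi^{n-1}}$ by finite differences.

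First I would establish the limit formula. Fix $\mathcal{C}$-concave functions $\psi_1,\dotsc,\psi_n$ and approximating sequences $(\phi_{i,j})_{j}$ of $\mathcal{C}$-concave piecewise linear functions with $\phi_{i,j}\to\psi_i$. For piecewise linear functions the top intersection number $\deg(\phi_{1,j}\dotsm\phi_{n,j}\cdot[X])$ is a finite symmetric multilinear expression in the $\phi_{i,j}$, so by the classical polarization identity it can be written as a signed sum, over subsets $I\subseteq\{1,\dotsc,n\}$, of $\deg\bigl((\sum_{i\in I}\phi_{i,j})^n\cdot[X]\bigr)$ times a combinatorial coefficient. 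For each such subset the sum $\sum_{i\in I}\phi_{i,j}$ is again $\mathcal{C}$-concave (by convexity of the cone $\mathcal{C}$, Definition \ref{tropinef}\eqref{item:8}) and converges to $\sum_{i\in I}\psi_i$, which is $\mathcal{C}$-concave as well. Hence by Corollary \ref{corpure} the limit $\lim_j\deg\bigl((\sum_{i\in I}\phi_{i,j})^n\cdot[X]\bigr)=\deg(\sum_{i\in I}\psi_i)$ exists and is finite. Summing the finitely many convergent pieces with their fixed coefficients shows that $\lim_j\deg(\phi_{1,j}\dotsm\phi_{n,j}\cdot[X])$ exists and is finite, and equals the corresponding signed sum of the $\deg(\sum_{i\in I}\psi_i)$; by Proposition/Definition \ref{propdef:mixedmeasure} and the polarization identity for measures, that signed sum is exactly the total mass $\int_{\mathbb{S}^{X}}1\,d\mu_{\psi_1,\dotsc,\psi_n}$, and in fact one obtains $\deg(\psi_1\dotsm\psi_n):=\lim_j\deg(\phi_{1,j}\dotsm\phi_{n,j}\cdot[X])$ as a well-defined symmetric multilinear quantity.

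Next I would upgrade the total-mass statement to the integral formula $\deg(\psi_1\dotsm\psi_n)=\int_{\mathbb{S}^{X}}\psi_1(u)\,d\mu_{\psi_2,\dotsc,\psi_n}$. The key input is Theorem \ref{th:convergence-trop-meas}\eqref{item:3} with $k=n-1$ and $\gamma_i$ ranging over differences of the approximating sequences for $\psi_2,\dotsc,\psi_n$: applying the polarization identity in the last $n-1$ slots, the measure $\mu_{\gamma_1\dotsm\gamma_{n-2}\cdot\phi_{i}^{n-1}\cdot[X]}$ converges, and a further polarization in the $\phi$-variable together with the definition of the mixed measure yields $\mu_{\phi_{2,j},\dotsc,\phi_{n,j}}\to\mu_{\psi_2,\dotsc,\psi_n}$ weakly. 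Since, by Theorem \ref{thm:1}, $\phi_{1,j}\to\psi_1$ \emph{uniformly} on the compact set $\mathbb{S}^{X}$, and since the measures $\mu_{\phi_{2,j},\dotsc,\phi_{n,j}}$ have uniformly bounded total variation (Proposition \ref{prop:boundedmeasure}), one may pass to the limit in $\int\phi_{1,j}\,d\mu_{\phi_{2,j},\dotsc,\phi_{n,j}}=\deg(\phi_{1,j}\dotsm\phi_{n,j}\cdot[X])$ to obtain $\int_{\mathbb{S}^{X}}\psi_1\,d\mu_{\psi_2,\dotsc,\psi_n}=\deg(\psi_1\dotsm\psi_n)$; here one uses the elementary estimate $|\int\phi_{1,j}\,d\mu_j-\int\psi_1\,d\mu|\le\|\phi_{1,j}-\psi_1\|_{\infty}\sup_j\|\mu_j\|+|\int\psi_1\,d\mu_j-\int\psi_1\,d\mu|$, both terms tending to $0$.

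Finally, the symmetry statement $\int\psi_1\,d\mu_{\psi_2,\dotsc,\psi_n}=\int\psi_i\,d\mu_{\psi_1,\dotsc,\widehat{\psi}_i,\dotsc,\psi_n}$ is immediate once both sides are identified with the same quantity $\deg(\psi_1\dotsm\psi_n)$, which is manifestly symmetric in all $n$ arguments because it is a limit of the symmetric piecewise linear intersection numbers $\deg(\phi_{1,j}\dotsm\phi_{n,j}\cdot[X])$. The main obstacle I anticipate is purely bookkeeping: carefully running the polarization identity simultaneously in all $n$ slots and checking that at every stage the sums $\sum_{i\in I}\phi_{i,j}$ stay inside $\mathcal{C}$ (so that positivity and Corollary \ref{corpure} apply) and that the approximating sequences chosen for the different $\psi_i$ are compatible with the hypotheses of Theorem \ref{th:convergence-trop-meas}; no genuinely new idea beyond the results already proved is needed, which is why the authors merely refer to the proof of \cite[Theorem~5.17]{BM}.
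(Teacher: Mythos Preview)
Your approach is correct and is essentially the paper's own argument spelled out in detail: the paper simply states that the corollary ``follows from the definition of the mixed Monge--Amp\`ere measure and Corollary~\ref{corpure}'', i.e.\ exactly the polarization reduction to the pure-power case that you carry out. One small slip: you write $\mu_{\psi_1,\dotsc,\psi_n}$ with $n$ subscripts, but the mixed measure of Proposition/Definition~\ref{propdef:mixedmeasure} is defined for $(n-1)$-tuples only, so that object does not exist; fortunately this side remark plays no role in your main argument, which correctly identifies the limit with $\int_{\mathbb{S}^{X}}\psi_1\,d\mu_{\psi_2,\dotsc,\psi_n}$ in Step~2.
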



\begin{rem}\label{rem:difference}
  By multilinearity, we can extend the definition of Monge--Amp\`ere
  measures and degrees to functions of the space
  $\Conic(X)_{\mathcal{C}}-\Conic(X)_{\mathcal{C}}$. Then the
  corollaries \ref{corpure} and \ref{cormixed} extend to this
  setting. 
\end{rem}

\section{Toroidal embeddings and rational conical polyhedral spaces}\label{sec:toroidal-embeddings}

In this section, we define quasi-embedded
\emph{rational} 
conical polyhedral spaces. In short, these are conical polyhedral spaces endowed with a lattice structure. We recall the
definition of a toroidal
embedding and describe a natural rational conical polyhedral space
associated to it (see \cite{KKMD} or \cite{AMRT} for further
details). Following \cite{GR}, we also give a natural weak embedding
of this space. Moreover, we show that by adding boundary components
one can modify the toroidal structure of a toroidal embedding in such
a way that the rational conical polyhedral space becomes
quasi-embedded. Then we describe the proper toroidal birational
modifications of a toroidal embedding which, on the combinatorial
side, correspond to subdivisions of rational conical
complexes on this rational conical space.


\subsection{Quasi-embedded rational conical polyhedral spaces}

\begin{Def}\label{conicalcomplex}

Let $X$ be a second countable topological space. A \emph{rational conical polyhedral structure} on $X$ is a pair
\[
\Pi = \left(\{\sigma^{\alpha}\}_{\alpha \in \Lambda}, \{M^{\alpha} \}_{\alpha \in \Lambda}\right)
\]
consisting of a finite covering by closed subsets $\sigma^{\alpha} \subseteq X$
and for each $\sigma^{\alpha}$, a finitely generated $\Z$-module
$M^{\alpha}$ of continuous, $\R$-valued functions on $\sigma^{\alpha}$
satisfying the following conditions. Let $N^{\alpha} \coloneqq
\Hom(M^{\alpha}, \R)$ denote the dual lattice.  
\begin{enumerate}
\item  For each $\alpha \in \Lambda$, the evaluation map $\phi^{\alpha}\colon \sigma^{\alpha} \to N^{\alpha}$ given by the assignment
\[
v \longmapsto (u \mapsto u(v)) \quad (u \in M^{\alpha} ),
\]
maps $\sigma^{\alpha}$ homeomorphically to a strictly convex,
full-dimensional, rational polyhedral cone in $N_{\R}^{\alpha}$. We call the sets $\sigma^{\alpha}$ \emph{cones}. 
\item The preimage under $\phi^{\alpha}$ of each face of
  $\phi^{\alpha}\left(\sigma^{\alpha}\right)$ is a cone
  $\sigma^{\alpha'}$ for some index $\alpha' \in \Lambda$, and we have
  that $M^{\alpha'} = \left\{u|_{\sigma^{\alpha'}} \, \big{|} \, u \in
    M^{\alpha}\right\}$.
\item The intersection of two cones is a union of common faces.
\end{enumerate}
The $\Z$ modules $M^{\alpha}$ give $X$ a so called
\emph{integral structure}. 

\end{Def}
A subdivision of a rational conical polyhedral structure is defined as in Section \ref{measures-section} but with the condition that it has to be rational as well. And we say that two rational conical polyhedral structures are \emph{equivalent} if they admit a common subdivision. 
\begin{Def}\label{def:rat-space} A \emph{rational conical polyhedral space} $X$ is a second countable topological space equipped with an equivalence class of rational conical polyhedral structures. A \emph{rational conical polyhedral complex on} $X$ is the choice of a representative of the class of rational conical polyhedral structures on $X$.
\end{Def}
Most of the notations and
terminology of Section \ref{measures-section} carry over to the case 
of rational conical polyhedral complexes, by taking into account the
integral structure.
\begin{enumerate}
\item Rational conical polyhedral complexes and rational conical polyhedral spaces will be referred as \emph{rational conical complexes} and \emph{rational conical spaces}, respectively.
\item Given a rational conical space $X$, the set $R(X)$ consists of all rational conical complexes on $X$ ordered by inclusion. This has the structure of a directed set.
\item As in Remark \ref{rem:connected}, if $X$ is a rational conical space, then the set of cones of dimension
  zero in any rational conical complex on $X$ is in bijection with the set of connected components of $X$.
\item The terminology concerning cones, faces, interior, support and dimension is the
  same as in the non-rational case keeping in mind the compatibility
  between the integral structures.
  \item The notion of a simplicial rational conical complex is the
    same. However, in the rational case we also have a notion of
    smoothness. A rational conical complex is called \emph{smooth} if
    every cone $\sigma \in \Pi$ is unimodular, i.e.~if
    $\phi^{\sigma}(\sigma)$ is generated by a $\Z$-basis of 
    $N^{\sigma}$. Clearly, a smooth rational conical complex is
    automatically simplicial. We denote the set of simplicial and smooth complexes on a rational conical space $X$ with their directed set structures by $R_{\sp}(X)$ and $R_{\sm}(X)$, respectively.
\item The notion of a morphism between rational conical spaces
  is the same except that we require the restriction to each cone
  to be integral.
\item The notions of weakly-embedded and quasi-embedded rational conical
  spaces are the same except that the co-domain of the weak-
  (respectively quasi-) embedding is an $\R$-vector space
  $N^{X}_{\R}$ with an integral structure $N^{X}$ and the
  restriction of the weak (respectively quasi-) embedding to each cone
  is required to be integral. 
\end{enumerate}

\subsection{A bridge between Euclidean and integral
  structures}\label{sec:bridge}

 Following \cite{GR}, there is a \emph{rational tropical intersection
   product} on quasi-embedded rational conical spaces. We compare
 the rational tropical intersection with the Euclidean one from Section
 \ref{measures-section} by means of
 the \emph{normalization} of cycles.

 We fix a quasi-embedded rational conical space $X$, with quasi-embedding given by $\iota_X \colon X \to N_{\R}^X$, and start with some
 definitions. These are adapted from \cite[Section 3.1]{GR} and are
 small modifications of standard concepts in tropical geometry. See
 for instance the articles \cite{AR},
\cite{FS} and \cite{KA}.   

\begin{Def}\label{def:lattice-vectorst} Let $\Pi$ be a rational conical complex on $X$. Let $k\ge 0$ be an integer and let
  $\tau\in \Pi (k-1)$ be a cone. For every cone $\sigma \in \Pi(k)$
  with $\tau \prec \sigma $. We define the \emph{lattice normal vector
    $v_{\sigma/\tau}$ of $\sigma$ relative to $\tau$}  to be the image
  in the quotient $N^{X}_{\R}/N^{\tau }_{\R}$ of the unique
  generator of $N^{\sigma }/N^{\tau }$ that points in the direction of
  $\sigma $.  For every pair of cones $\sigma $ and $\tau$ as before
  we will chose  a lifting $\tilde v_{\sigma/\tau}\in N^{\sigma }_{\R}$ of
  $v_{\sigma/\tau}$. If $k = 1$, we write $v_{\sigma}\coloneqq
  v_{\sigma / \{0_{\sigma 
    }\}} = \tilde v_{\sigma / \{0_{\sigma }\}}.$ 
\end{Def}

\begin{Def}\label{MWrat} Let $\Pi$ be a rational conical complex on $X$. A $k$-dimensional weight on $\Pi$ is called a 
  \emph{$k$-dimensional (lattice) Minkowski weight}
  on $\Pi$ if, for every cone $\tau \in \Pi(k-1)$, the relation
\begin{equation}\label{eq:1.1}
  \sum_{\substack{\sigma \in \Pi (k)\\\tau \prec \sigma
    }}c\left(\sigma\right)v_{\sigma/\tau} =0   
\end{equation}
holds true in $N^{X}_{\R}/N^{\tau }_{\R}$.
Equivalently, $c$ satisfies the relation
\begin{equation}\label{eq:2}
  \sum_{\substack{\sigma \in \Pi (k)\\\tau \prec \sigma
    }}c\left(\sigma\right)\tilde v_{\sigma/\tau} \in N^{\tau }_{\R}.      
\end{equation}
Usually, lattice Minkowski weights will be called Minkowski weights. 
The $k$-dimensional Minkowski weights on $\Pi$ form a real
vector subspace, which is also denoted by $M_k(\Pi)$. Note the symbol
$M_k(\Pi)$ denotes lattice Minkowski weights when $\Pi $ is rational
and Euclidean Minkowski weights when $\Pi $ is Euclidean.   
\end{Def}
The condition \eqref{eq:1.1} is called the \emph{(lattice) balancing condition}
around $\tau $, while the condition \eqref{eq:1} is called the
\emph{Euclidean balancing condition}. The balancing conditions
\eqref{eq:1} and \eqref{eq:1.1} depend on the choice
of the quasi-embedding.

The following notions carry over from the Euclidean to the
lattice case directly.

\begin{enumerate}
\item The definition of balanced rational conical space is the same
  as in the Euclidean case.
\item The definition of the pull-back along a subdivision is the same.
\item The definition of the group of \emph{(lattice) tropical cycles}
  is analogous. This group is also denoted by $Z_k(X)$. 
\item The definition of the space $\PL(X)$ of piecewise linear
  functions on $X$ is the same.  We must have in mind that now we
  only allow rational subdivisions although we are
  working with real coefficients. 
  
\item Since $X$ has a rational structure, we can define $X(\Q)$ as the
  union of the subsets of rational points on each rational cone.   
  
\item The space of conical functions $\Conic(X)$ is defined as the
  space of functions $f$ on  $X(\Q) = X \cap \iota _{X }^{-1}(N^{X}_{\Q})$ with real values, satisfying
  \begin{displaymath}
    f(\lambda x)=\lambda f(x),\qquad \lambda \in \Q_{\ge 0}, 
  \end{displaymath}
  with the topology of pointwise convergence. Then 
     \[
     \Conic(X) = \varprojlim_{\Pi \in R_{\sm}(X)} \PL(\Pi)=
     \varprojlim_{\Pi \in R_{\sp}(X)} \PL(\Pi). 
 \]
\end{enumerate}
A difference with the Euclidean case is that now the limit is taken
over a countable set, so every convergent net of conic functions has a
converging subsequence.

 \begin{rem}\label{rem:2}
   Consider the quasi-embedded rational conical space $X$  and let
   $\widehat {X} $ be the Euclidean one obtained by choosing a
   metric on $N_{\R}^{X }$ and forgetting the integral structure. Since
  we allow only rational conical complexes $\Pi$ on $X$, the set
  $R(\widehat{X})$ is much bigger than $R(X)$ and hence the spaces of
  functions are different. Nevertheless, 
   there is a commutative diagram
   \begin{equation} \label{eq:12}
     \xymatrix{ \PL(X)\ar[r] \ar[d] &  \Conic(X)\\
       \PL(\widehat X) \ar[r] & \Conic(\widehat X)\ar[u]
     }
   \end{equation}
   The space $\PL(\widehat X)$ is the
   space of all piecewise linear functions on $X$, while
   $\PL(X)$ is the space of piecewise linear functions
   whose linearity locus is defined over $\Q$. The space  
   $\Conic(\widehat X)$ is the space of conical
   functions on $X$, while the space $\Conic(X)$ is the space of real
   valued conical functions on 
   $X (\Q)$. The arrows in 
   diagram \eqref{eq:12} are the obvious ones. In particular the
   upward arrow on the right of the diagram sends a conical function on
   $X$ to its restriction to $X(\Q)$.
 \end{rem}

The definition of the intersection product in the lattice case is
different from the Euclidean case, because of the change in the
definition of Minkowski weights and normal vectors. To avoid confusion
we will use a different symbol. Recall that $X$ denotes a
quasi-embedded rational conical space.

\begin{Def} \label{def:4rat} Let $\Pi \in R(X)$ a rational conical complex on $X$. Let $\phi \in \PL(\Pi) $ be a piecewise linear function and 
  $c \in M_k(\Pi)$ a Minkowski weight. Then the
  \emph{(lattice) tropical intersection product} $\phi \odot c\in
  M_{k-1}(\Pi )$
is the Minkowski weight given, for $\tau \in \Pi(k-1)$, by
\[
\left(\phi \odot c\right)(\tau) \coloneqq \sum_{\substack{\sigma \in
    \Pi(k)\\\tau \prec
    \sigma}}-\phi_{\sigma}\left(\tilde v_{\sigma/\tau}\right)c(\sigma) +
\phi_{\tau}\left(\sum_{\substack{\sigma \in \Pi(k)\\ \tau \prec
    \sigma}} c(\sigma)\tilde v_{\sigma/\tau}\right).
\]
Note that this is well defined since $c \in M_k(\Pi)$ is a
$k$-dimensional Minkowski weight and hence  
\[
\sum_{\substack{\sigma \in \Pi(k)\\ \tau \prec
    \sigma}} c(\sigma)\tilde
v_{\sigma/\tau} \in N^{\tau}_{\R}.
\] 
Moreover, if $\tilde v'_{\sigma /\tau }$ is another choice of
liftings, then $w_{\sigma /\tau }\coloneqq \tilde v_{\sigma /\tau
}-\tilde v'_{\sigma /\tau }\in N^{\tau }_{\R }$ and therefore
\begin{displaymath}
  \sum_{\substack{\sigma \in
    \Pi(k)\\\tau \prec
    \sigma}}-\phi_{\sigma}\left(w_{\sigma/\tau}\right)c(\sigma) +
\phi_{\tau}\left(\sum_{\substack{\sigma \in \Pi(k)\\ \tau \prec
    \sigma}} c(\sigma)w_{\sigma/\tau}\right)=0,
\end{displaymath}
so the intersection product is independent of the choice of liftings.
\end{Def}
As with Minkowski weights, lattice tropical cycles will be called just
tropical cycles and lattice tropical intersection will be
called tropical intersection.

As in the Euclidean case, the tropical intersection product
extends to a bilinear pairing between piecewise linear functions and
tropical cycles.

\begin{Def}
Let $z \in
Z_k(X)$ be a $k$-dimensional tropical cycle and let $\phi \in \PL(X)$. Let $\Pi \in R(X)$ be such that $z$ is represented by a $k$-dimensional
Minkowski weight $c\in
M_{k}\left(\Pi\right)$ and such that $\phi$ is defined on $\Pi$. Then the tropical intersection product $\phi
\odot z \in Z_{k-1}(\Pi)$ given by 
\[
\phi \odot z \coloneqq [\phi \odot c]
\]
is well defined.
\end{Def}

\begin{rem}
 If $X$ is balanced, then the definition of the \emph{tropical top
   intersection numbers} is the analogue of the Euclidean case
 (Definition \ref{def:tropintersectiondiv}) but using the (lattice)
 tropical product.

\end{rem}

We are now ready to relate Euclidean and lattice structures. 
As before, we denote by $\widehat{X}$ the Euclidean conical space
induced by $X$ by forgetting the rational structure and by choosing a
Euclidean metric $\langle \ ,\ \rangle $ on $N_{\R}^{X}$. Given $\Pi
\in R(X)$ we denote by $\widehat \Pi$ the induced Euclidean conical
complex on $\widehat X$. Note that if $\Pi$ is smooth, then $\widehat
\Pi$ is simplicial.  


We introduce the following notation. For $\Pi \in R(X)$ and for a cone $\sigma \in \Pi$ we let 
\[
  \vol(\sigma) \coloneqq \vol_{\langle\ ,\ \rangle}(N^{\sigma }_{\R}/N^{\sigma })=
  \sqrt{\det\left(\langle v_{i}, v_{j}\rangle \right)_{i,j}}
\]
where $\{v_{1},\dots,v_{k}\}$ is an integral basis of $N^{\sigma
}$. Note that $\vol(\sigma) $ depends on both, the
rational structure and the Euclidean one.

Recall that $W_{\ast}(\Pi )$ denotes the space of weights of $\Pi $. We define a map $\widehat{\phantom {A}}\colon W_{\ast}(\Pi )\to
W_{\ast}(\Pi )$ given by
\begin{displaymath}
  \widehat{c}(\sigma) \coloneqq \vol(\sigma) c(\sigma),
\end{displaymath}

\begin{lemma}\label{lem:eucl}
If $c \in M_k(\Pi)$ is a $k$-dimensional Minkowski weight on $\Pi$ then $\widehat{c}$
is a Euclidean Minkowski weight $\widehat{c} \in M_k(\widehat{\Pi})$.
\end{lemma}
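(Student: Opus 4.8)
The plan is to compare, for each codimension-one inclusion $\tau \prec \sigma$ with $\tau \in \Pi(k-1)$ and $\sigma \in \Pi(k)$, the lattice normal vector $v_{\sigma/\tau}$ with the Euclidean normal vector $\hat v_{\sigma/\tau}$, and then to obtain the Euclidean balancing condition \eqref{eq:1} for $\widehat c$ by applying the orthogonal projection away from $N^{\tau}_{\R}$ to the lattice balancing condition \eqref{eq:2} for $c$. This is exactly the ``normalization'' comparison alluded to in Section~\ref{sec:bridge}.

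First I would record the elementary algebraic fact that $N^{\tau}$ is a primitive (saturated) sublattice of $N^{\sigma}$: this follows from condition~(2) of Definition~\ref{conicalcomplex}, since the restriction map $M^{\sigma}\twoheadrightarrow M^{\tau}$ is a surjection onto a torsion-free (indeed free) module, hence splits, so dualizing shows $N^{\sigma}/N^{\tau}$ is free of rank one. Consequently, if $v_{1},\dots,v_{k-1}$ is an integral basis of $N^{\tau}$ and $\tilde v_{\sigma/\tau}\in N^{\sigma}$ is a lift of the generator $v_{\sigma/\tau}$, then $\{v_{1},\dots,v_{k-1},\tilde v_{\sigma/\tau}\}$ is an integral basis of $N^{\sigma}$.

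The key computational step is the identity $\tilde v_{\sigma/\tau}=w+\lambda_{\sigma/\tau}\,\hat v_{\sigma/\tau}$ with $w\in N^{\tau}_{\R}$ and $\lambda_{\sigma/\tau}=\vol(\sigma)/\vol(\tau)>0$; equivalently, the orthogonal projection of $\tilde v_{\sigma/\tau}$ onto $(N^{\tau}_{\R})^{\perp}\cap N^{\sigma}_{\R}=\R\hat v_{\sigma/\tau}$ equals $\tfrac{\vol(\sigma)}{\vol(\tau)}\hat v_{\sigma/\tau}$. Positivity of $\lambda_{\sigma/\tau}$ holds because $\tilde v_{\sigma/\tau}$ and $\hat v_{\sigma/\tau}$ lie on the same side of the hyperplane $N^{\tau}_{\R}$ inside $N^{\sigma}_{\R}$, both pointing in the direction of $\sigma$. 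Its value comes from the ``base times height'' formula for covolumes applied to the integral basis above: using $\hat v_{\sigma/\tau}\perp N^{\tau}_{\R}$ and subtracting from the last row and column of the Gram matrix the (real) combination of $v_{1},\dots,v_{k-1}$ representing $w$, the Gram determinant $\vol(\sigma)^{2}$ block-diagonalizes into $\det(\langle v_{i},v_{j}\rangle)_{i,j}=\vol(\tau)^{2}$ and the $1\times1$ block $\lambda_{\sigma/\tau}^{2}$, giving $\vol(\sigma)=\lambda_{\sigma/\tau}\vol(\tau)$. This is the only genuinely delicate point; everything else is bookkeeping.

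Finally I would fix $\tau\in\Pi(k-1)$ and apply the orthogonal projection $p_{\tau}\colon N^{X}_{\R}\to (N^{\tau}_{\R})^{\perp}$ to the lattice balancing relation \eqref{eq:2}, that is, to $\sum_{\sigma\succ\tau}c(\sigma)\,\tilde v_{\sigma/\tau}\in N^{\tau}_{\R}$. Since $p_{\tau}$ restricted to each $N^{\sigma}_{\R}=N^{\tau}_{\R}\oplus\R\hat v_{\sigma/\tau}$ is precisely the projection onto $\R\hat v_{\sigma/\tau}$, the right-hand side maps to $0$ while the left-hand side maps to $\sum_{\sigma\succ\tau}c(\sigma)\,p_{\tau}(\tilde v_{\sigma/\tau})=\tfrac{1}{\vol(\tau)}\sum_{\sigma\succ\tau}\vol(\sigma)c(\sigma)\,\hat v_{\sigma/\tau}=\tfrac{1}{\vol(\tau)}\sum_{\sigma\succ\tau}\widehat c(\sigma)\,\hat v_{\sigma/\tau}$. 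Hence $\sum_{\sigma\succ\tau}\widehat c(\sigma)\,\hat v_{\sigma/\tau}=0$ in $N^{X}_{\R}$, which is the Euclidean balancing condition \eqref{eq:1} for $\widehat c$ at $\tau$. As $\widehat c(\sigma)=\vol(\sigma)c(\sigma)$ vanishes whenever $c(\sigma)=0$, $\widehat c$ is a $k$-dimensional weight, and since $\tau\in\Pi(k-1)$ was arbitrary we conclude $\widehat c\in M_{k}(\widehat\Pi)$.
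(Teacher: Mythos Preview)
Your proof is correct and follows essentially the same route as the paper: both orthogonally decompose $\tilde v_{\sigma/\tau}$ with respect to $N^{\tau}_{\R}$, identify the normal component with $\tfrac{\vol(\sigma)}{\vol(\tau)}\hat v_{\sigma/\tau}$ via a covolume/Gram-determinant computation (the paper states this as equation~\eqref{eq:11}), and then obtain the Euclidean balancing condition by projecting the lattice balancing relation onto $(N^{\tau}_{\R})^{\perp}$. Your extra paragraph on the primitivity of $N^{\tau}$ in $N^{\sigma}$ makes explicit a step the paper takes for granted, but otherwise the arguments are interchangeable.
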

\begin{proof}
Let $\tau \in \Pi(k-1)$. We have to show that $\widehat{c}$ is a
Euclidean Minkowski weight. 
For any $\sigma \in \Pi(k)$ containing $\tau$ let $\tilde
v_{\sigma/\tau}\in N^{\sigma }_{\R}$ be a lifting of the lattice
normal vector as in Definition~\ref{def:lattice-vectorst} and let
\[
 \tilde v_{\sigma/\tau} = v_{\sigma, \tau} + v_{\sigma, \tau^{\perp}}
  \]
  be an orthogonal decomposition of $\tilde v_{\sigma/\tau}$ with 
  $v_{\sigma, \tau} \in N^{\tau}_{\R}$ and $ v_{\sigma, \tau^{\perp}}$ 
  orthogonal to $N^{\tau}_{\R}$. The Euclidean normal vector
  $\hat v_{\sigma /\tau}$ of Definition~\ref{def:vectorst} is just the
  normalization of $v_{\sigma, \tau^{\perp}}$, i.e. we have $\hat
  v_{\sigma /\tau} = v_{\sigma, \tau^{\perp}}/\|v_{\sigma,
    \tau^{\perp}}\|$. If $\{v_{1},\dots,v_{k-1}\}$ is an integral
  basis of $N^{\tau }$, then $\{v_{1},\dots,v_{k-1},\tilde v_{\sigma
    /\tau }\}$ is a basis on $N^{\sigma }$. Therefore,
  \begin{equation}\label{eq:11}
   \|v_{\sigma, \tau^{\perp}}\| = \frac{\vol(\sigma)}{\vol(\tau)}. 
  \end{equation}
  
  We compute
  \begin{align*}
       \sum_{\substack{\sigma \in \Pi (k)\\\tau \prec \sigma
    }}\widehat c\left(\sigma\right)\hat v_{\sigma/\tau} &=
  \sum_{\substack{\sigma \in \Pi (k)\\\tau \prec \sigma 
    }}c \left(\sigma\right)\vol(\sigma)\hat v_{\sigma/\tau} \\
    & =
  \vol(\tau)\sum_{\substack{\sigma \in \Pi (k)\\\tau \prec \sigma
    }}c\left(\sigma\right) \tilde v_{\sigma,\tau^{\perp}} \\
    & =
  \vol(\tau)\sum_{\substack{\sigma \in \Pi (k)\\\tau \prec \sigma
    }}c\left(\sigma\right) \left( \tilde v_{\sigma/\tau}-v_{\sigma,\tau}\right) = 0,
\end{align*}
In the last equation we have used that, since $c$ is a Minkowski
weight then $\sum c(\sigma )\tilde v_{\sigma/\tau}$ belongs to
$N^{\tau }$, hence agrees with its orthogonal projection to $N^{\tau
}_{\R}$ which is $\sum c(\sigma )v_{\sigma,\tau}$. We deduce that
$\widehat{c}$ is a Euclidean Minkowski weight.
\end{proof}

\begin{Def} 
Let $\Pi \in R(X)$ and let $c \in M_k(\Pi)_{\R}$ be a $k$-dimensional Minkowski
weight. Then the Euclidean Minkowski weight $\widehat{c}\in 
M_{k}(\widehat {\Pi })$ is called the \emph{normalization} of $c$.
 The normalization $\widehat{z}$ of a tropical cycle $z \in Z_k(X)$
 is defined to be the class $[\widehat c] \in Z_k(\widehat X)$ of the
 normalization of any representative Minkowski weight $c \in
 M_k(\Pi)$ of $z$.  
\end{Def}

\begin{rem}\label{normalizedproduct}
If $c \in M_0(\Pi)$ is $0$-dimensional, then $\widehat{c}=c$. 
\end{rem}

The following proposition shows the compatibility between the tropical
intersection product and the Euclidean one, allowing us to replace
the integral structure by the Euclidean one in computations.

\begin{prop}\label{propnorm} Let $\Pi \in R(X)$ be a rational conical complex on $X$. Let $\phi \in \PL(X)$ be a piecewise linear function defined on $\Pi$ and let $c\in M_{k}(\Pi )$. Then 
\[
\widehat{\phi \odot c} = \phi \cdot \widehat{c}.
\]
Hence, also for a $k$-dimensional Minkowski cycle $z \in
Z_{k}(X )$ we have 
\[
\widehat{\phi \odot z} = \phi \cdot \widehat{z}.
\]
\end{prop}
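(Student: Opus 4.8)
The plan is to reduce the statement to a cone-by-cone verification and to exploit the explicit formulas for the two intersection products together with the volume identity \eqref{eq:11} established in the proof of Lemma~\ref{lem:eucl}. Since everything is linear in $c$ and since the normalization map and both intersection products are compatible with subdivisions (Lemma~\ref{lemm:1} and Definition~\ref{def:1}, and their lattice analogues), it suffices to prove the first identity $\widehat{\phi \odot c} = \phi \cdot \widehat c$ for a fixed rational conical complex $\Pi$ on which $\phi$ is defined; the statement for cycles $z \in Z_k(X)$ then follows by passing to the direct limit, because both sides are computed on a common refinement where $z$ is represented by a Minkowski weight. So fix $\tau \in \Pi(k-1)$ and compare the values of the two $(k-1)$-dimensional weights at $\tau$.

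First I would write out $\bigl(\widehat{\phi \odot c}\bigr)(\tau) = \vol(\tau)\,(\phi \odot c)(\tau)$ using the definition of normalization, and expand $(\phi \odot c)(\tau)$ via Definition~\ref{def:4rat} as the sum of the ``leading'' term $\sum_{\sigma \succ \tau} -\phi_\sigma(\tilde v_{\sigma/\tau}) c(\sigma)$ and the correction term $\phi_\tau\bigl(\sum_{\sigma\succ\tau} c(\sigma)\tilde v_{\sigma/\tau}\bigr)$. On the other side, $(\phi \cdot \widehat c)(\tau) = \sum_{\sigma\succ\tau} -\phi_\sigma(\hat v_{\sigma/\tau})\,\widehat c(\sigma) = \sum_{\sigma\succ\tau} -\phi_\sigma(\hat v_{\sigma/\tau})\,\vol(\sigma)\,c(\sigma)$. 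The key geometric input is the orthogonal decomposition $\tilde v_{\sigma/\tau} = v_{\sigma,\tau} + v_{\sigma,\tau^\perp}$ with $v_{\sigma,\tau} \in N^\tau_{\R}$, together with the identities $\hat v_{\sigma/\tau} = v_{\sigma,\tau^\perp}/\|v_{\sigma,\tau^\perp}\|$ and $\|v_{\sigma,\tau^\perp}\| = \vol(\sigma)/\vol(\tau)$ from \eqref{eq:11}. Substituting, $\vol(\sigma)\,\phi_\sigma(\hat v_{\sigma/\tau}) = \vol(\sigma)\,\phi_\sigma(v_{\sigma,\tau^\perp})/\|v_{\sigma,\tau^\perp}\| = \vol(\tau)\,\phi_\sigma(v_{\sigma,\tau^\perp}) = \vol(\tau)\,\phi_\sigma(\tilde v_{\sigma/\tau} - v_{\sigma,\tau})$, using linearity of $\phi_\sigma$.

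Putting these together, $(\phi \cdot \widehat c)(\tau) = \vol(\tau)\sum_{\sigma\succ\tau} -\phi_\sigma(\tilde v_{\sigma/\tau} - v_{\sigma,\tau})\,c(\sigma)$. To match $\vol(\tau)\,(\phi\odot c)(\tau)$ I need $\sum_{\sigma\succ\tau}\phi_\sigma(v_{\sigma,\tau})\,c(\sigma) = \phi_\tau\bigl(\sum_{\sigma\succ\tau}c(\sigma)\tilde v_{\sigma/\tau}\bigr)$. This is where the two subtle points enter. First, $\phi_\sigma$ restricted to $N^\tau_{\R}$ agrees with $\phi_\tau$ (both are linear functionals induced by the same piecewise linear function $\phi$ which is linear on $\sigma \supseteq \tau$), so $\phi_\sigma(v_{\sigma,\tau}) = \phi_\tau(v_{\sigma,\tau})$. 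Second, since $c$ is a lattice Minkowski weight, $\sum_{\sigma\succ\tau}c(\sigma)\tilde v_{\sigma/\tau} \in N^\tau_{\R}$ by \eqref{eq:2}, and its orthogonal projection onto $N^\tau_{\R}$ is $\sum_{\sigma\succ\tau}c(\sigma)v_{\sigma,\tau}$; since the vector already lies in $N^\tau_{\R}$ it equals its own projection, so $\sum_{\sigma\succ\tau}c(\sigma)\tilde v_{\sigma/\tau} = \sum_{\sigma\succ\tau}c(\sigma)v_{\sigma,\tau}$, and applying $\phi_\tau$ gives the required equality. The main obstacle is bookkeeping: one must be careful that the lifting $\tilde v_{\sigma/\tau}$ used to define $\phi\odot c$ can be taken to be the same one appearing in the decomposition in Lemma~\ref{lem:eucl}, which is legitimate because Definition~\ref{def:4rat} shows $\phi\odot c$ is independent of the choice of liftings — so we are free to fix a convenient choice throughout. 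Once the choice is fixed, the computation is the short chain of substitutions above, and I would present it as a single displayed \texttt{align*}.
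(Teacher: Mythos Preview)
Your proposal is correct and follows essentially the same approach as the paper's own proof: both arguments fix $\tau\in\Pi(k-1)$, use the orthogonal decomposition $\tilde v_{\sigma/\tau}=v_{\sigma,\tau}+v_{\sigma,\tau^\perp}$ together with the volume identity \eqref{eq:11}, invoke the lattice balancing condition to replace $\sum_\sigma c(\sigma)\tilde v_{\sigma/\tau}$ by $\sum_\sigma c(\sigma)v_{\sigma,\tau}$, and use that $\phi_\sigma|_{N^\tau_\R}=\phi_\tau$ to collapse the correction term. The only difference is cosmetic---you start from $(\phi\cdot\widehat c)(\tau)$ and work toward $\vol(\tau)(\phi\odot c)(\tau)$, while the paper runs the chain of equalities in the opposite direction---and you are somewhat more explicit about the justification of the intermediate steps (in particular the independence of the choice of liftings).
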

\begin{proof}
Let $\tau \in \Pi(k-1)$. We use the same notation as in the proof of Lemma \ref{lem:eucl}. Since $c$ is a Minkowski weight, we have that
\begin{align}\label{eqnproj1}
\sum_{\substack{\sigma \in \Pi(k)\\\tau \prec \sigma}}c(\sigma)\tilde v_{\sigma / \tau} = \sum_{\substack{\sigma \in \Pi(k)\\\tau \prec \sigma}}c(\sigma) v_{\sigma, \tau}.
\end{align}
We compute, using equation \eqref{eq:11}, 
\begin{align*}
\widehat{\phi \odot c}(\tau)  & = \left( \sum_{\substack{\sigma \in
                                 \Pi(k)\\ \tau \prec \sigma}} -
  \phi_{\sigma}\left(\tilde v_{\sigma / \tau}\right)c(\sigma) +
  \phi_{\tau}\left(\sum_{\substack{\sigma \in \Pi(k)\\ \tau \prec \sigma}}c(\sigma)\tilde v_{\sigma / \tau} \right)\right) \cdot \vol(\tau) \\
  & =  
 \left( \sum_{\substack{\sigma \in
                                \Pi(k)\\ \tau \prec \sigma}} -
 \phi_{\sigma}\left(\tilde v_{\sigma / \tau}\right)c(\sigma) +
 \phi_{\tau}\left(\sum_{\substack{\sigma \in \Pi(k)\\ \tau \prec \sigma}}c(\sigma)v_{\sigma, \tau} \right) \right) \cdot \vol(\tau)  \\
 & =  
 \left( \sum_{\substack{\sigma \in
                                 \Pi(k)\\ \tau \prec \sigma}} -c(\sigma) \phi_{\sigma}\left(v_{\sigma,\tau^{\perp}}\right)  \right)\cdot  \vol(\tau)\\
                                &= \sum_{\substack{\sigma \in
                                \Pi(k)\\ \tau \prec \sigma}}
  -c(\sigma) \vol(\sigma)
  \phi_{\sigma}\left(v_{\sigma,\tau^{\perp}}\right) \cdot
  \|v_{\sigma,\tau^{\perp}}\|^{-1}\\
  &=  \sum_{\substack{\sigma \in
    \Pi(k)\\ \tau \prec \sigma}} -\widehat{c}(\sigma)
  \phi_{\sigma}\left(\hat v_{\sigma /\tau}\right) \\ 
  &= \phi \cdot \widehat{c}(\tau ),
\end{align*}
hence the first statement of the proposition follows. The second
statement clearly follows from the first.
\end{proof}

\subsection{The rational conical space attached to a toroidal embedding}

Throughout this section $k$ will denote an algebraically closed field
of characteristic~$0$.
All of the varieties appearing in this section
will be defined over $k$ even if not stated explicitly.     
We recall the definition of a toroidal embedding and describe its
associated rational conical space. The following definition is taken
from \cite[Definition 1, pg.~54]{KKMD}. 

\begin{Def}\label{deftoroidal}
  Let $X$ be an $n$-dimensional normal, algebraic variety
  over $k$ and let $U$ be a smooth Zariski open subset of $X$. An open
  immersion $U \hookrightarrow X$ is a \emph{toroidal embedding} if for
  every closed point $x \in X$ there exists an $n$-dimensional torus
  $\T$, an affine toric variety $X_{\sigma} \supseteq \T$, a point $x'
  \in X_{\sigma}$ and an isomorphism of $k$-local algebras 
  \begin{align}\label{localiso}
    \widehat{\O}_{X,x} \overset{\simeq}{\longrightarrow}
    \widehat{\O}_{X_{\sigma},x'}
  \end{align}
  such that the ideal in $\widehat{\O}_{X,x}$ generated by the ideal of
  $X \setminus U$ corresponds under this isomorphism to the ideal in
  $\widehat{\O}_{X_{\sigma},x'}$ generated by the ideal of $X_{\sigma}
  \setminus \T$. Here, the hat \enquote{~$\widehat{}$~} denotes the
  completion of the local ring at a point. Such an isomorphism is
  called a \emph{chart} at $x$ and the pair $(X_{\sigma}, x')$ is
  called a \emph{local model} at $x$.
  
  If all the irreducible components of the boundary divisor
  $X \setminus U$ of a toroidal embedding are normal, then it is called
  a \emph{toroidal embedding without self intersection}.
\end{Def}

\begin{Def}\label{strata}
Let $U \hookrightarrow X$ be a toroidal embedding (defined over $k$) without self
intersection and let $\{B_i \,|  
\, i \in I\}$ be the irreducible components of the boundary divisor $B
= X \setminus U$. For every subset $J \subseteq I$, write $B_J
\coloneqq \bigcap_{i \in J}B_i \neq \emptyset$. The \emph{strata} of
the toroidal embedding are the irreducible components of the sets of
the form $B_J
\setminus \bigcup_{i \notin J}B_i$. The strata will be denoted 
$\{S_{\alpha }\}_{\alpha \in \Lambda }$ where $\Lambda $ is a finite
set. The maximal strata correspond to the irreducible components of
the open set $U$.\end{Def}

The following lemma is \cite[Proposition-Definition~2, pg.~57]{KKMD}.
\begin{lemma}
Let notations be as above and consider a subset $J \subseteq I$ such
that $B_{J}\not = \emptyset$ and let $S_{\alpha _{0}}$ be an irreducible
component of  $B_J\setminus \bigcup_{i \notin J}B_i$. Then the
following holds true:
\begin{enumerate}
\item $B_{J}$ is normal.
\item $S_{\alpha_{0} }$ is non-singular.
\end{enumerate} 
Moreover, the sets $S_{\alpha }$, $\alpha \in \Lambda $ define a
stratification of $X$,
i.e.~every point of $X$ is in exactly one stratum and the closure of a
stratum is a union of strata. Furthermore, if $x \in X$ and
$\left(X_{\sigma},x'\right)$ is a local model at $x$, then the
closures $\overline{S_{\alpha }}$ of the strata $S_{\alpha }$ such
that $x \in \overline{S_{\alpha }}$ correspond formally to the
closure of the torus orbits 
in $X_{\sigma}$ containing $x'$. In particular, if $x \in S_{\alpha }$, then
$S_{\alpha }$ corresponds formally to the torus orbit $O(x')$ itself.  
\end{lemma}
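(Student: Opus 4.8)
The plan is to reduce every assertion to standard facts about affine toric varieties by means of the charts \eqref{localiso} of Definition~\ref{deftoroidal}, exploiting that normality and regularity of a local ring are detected on its completion. First I would recall that for a closed point $x$ on a variety of finite type over $k$, the local ring $\O_{X,x}$ is regular (resp.\ normal) if and only if $\widehat{\O}_{X,x}$ is: regularity because $\dim$ and the cotangent space are preserved, and normality because $\O_{X,x}$ is excellent, so it is normal exactly when its completion is (the formal fibers are geometrically regular and $\O_{X,x}\to\widehat{\O}_{X,x}$ is faithfully flat). Consequently the chart isomorphism transports normality and regularity of local rings between $X$ and a local model $(X_{\sigma},x')$, and by the very definition of a toroidal embedding it does so \emph{compatibly with the boundary}: the completed ideal of $X\setminus U$ at $x$ corresponds to the completed ideal of $X_{\sigma}\setminus\T$ at $x'$, so the completed ideals cutting out the various $B_{J}$ through $x$ correspond to the completed ideals of the torus-orbit closures through $x'$.

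Next I would use the toric dictionary for $X_{\sigma}$ with big torus $\T$. The boundary $X_{\sigma}\setminus\T$ is the union of the invariant prime divisors $\overline{O(\rho)}$ over the rays $\rho\prec\sigma$; for a face $\tau\prec\sigma$, the orbit closure $\overline{O(\tau)}$ equals the intersection of the $\overline{O(\rho)}$ over the rays $\rho$ of $\tau$, is itself an affine toric variety (hence normal), and decomposes as $\overline{O(\tau)}=\bigsqcup_{\gamma\succ\tau}O(\gamma)$ with each orbit $O(\gamma)$ a torus, hence smooth. Now fix $x\in B_{J}$ and a local model $(X_{\sigma},x')$; writing $\tau$ for the face spanned by the rays matching the $B_{i}$ with $i\in J$, the piece of $B_{J}$ near $x$ is formally isomorphic to $\overline{O(\tau)}$, which is normal, so $B_{J}$ is normal. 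If moreover $x\in S_{\alpha_{0}}$, an irreducible component of $B_{J}\setminus\bigcup_{i\notin J}B_{i}$, then the same computation shows that near $x$ this set is the single orbit $O(\tau)$, which is smooth, so $S_{\alpha_{0}}$ is non-singular. The stratification property follows because the orbits partition $X_{\sigma}$ and the closure of an orbit is a union of orbits, and the final sentence — that the $\overline{S_{\alpha}}$ through $x$ correspond formally to the orbit closures through $x'$, and $S_{\alpha}$ to $O(x')$ when $x\in S_{\alpha}$ — is exactly the translation of the chart compatibility just used.

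The hard part, and the reason the lemma is quoted from \cite[Proposition-Definition~2, pg.~57]{KKMD} rather than proved here in detail, is the globalization: one must check that the identifications of strata with torus orbits made in different charts, at different points, are mutually consistent, i.e.\ that the combinatorial pattern of which $B_{i}$ meet and which face they span is intrinsic to $X$ and independent of the chosen local model. This is where the hypothesis that $U\hookrightarrow X$ has no self-intersection — hence that the $B_{i}$ are normal — is genuinely needed: it ensures that the strata, defined globally as the irreducible components of the sets $B_{J}\setminus\bigcup_{i\notin J}B_{i}$, match \emph{locally} with single torus orbits rather than with disjoint unions of several orbits coming together. Granting this consistency, gluing the local toric descriptions yields the global stratification together with the formal identifications of stratum closures with orbit closures.
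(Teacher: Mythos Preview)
The paper does not give a proof of this lemma at all: it is stated with the preamble ``The following lemma is \cite[Proposition-Definition~2, pg.~57]{KKMD}'' and no argument follows. Your proposal is a correct sketch of the proof as it appears in \cite{KKMD}, and you yourself recognize this in your final paragraph; so there is nothing to compare.
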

The following Proposition/Definition is adapted from \cite[Definition
3, pg.~59]{KKMD}. See also Corollary 1 in page 61 of \cite{KKMD}.

\begin{prop/Def}\label{propdef:strata}
Let notations be as in Definition \ref{strata}. For any non-empty
stratum $S_{\alpha }$ of the toroidal embedding $U \hookrightarrow
X$, the combinatorial open set $\Star(S_{\alpha }) \subseteq X$ is defined by
\[
\Star(S_{\alpha })\coloneqq \bigcup_{\beta \colon S_{\alpha }
  \subseteq \overline{S}_{\beta }}S_{\beta } = X \setminus
\bigcup_{\gamma \colon \overline{S}_{\gamma } \cap S_{\alpha } =
  \emptyset} S_{\gamma }.
\]
Moreover, let 
\begin{align*}
&M^{S_\alpha } \coloneqq \left\{B \in \Ca\text{-}\Div \left(\Star
                 \left(S_\alpha \right)\right) \,\big{|}\, \supp(B)
                 \subseteq \Star \left(S_\alpha \right) \setminus U
                 \right\},\\
&M_+^{S_{\alpha }} \coloneqq \left\{ B \in M^{S_\alpha } \,\big{|}\, B \text{
                               effective} \right\}. 
\end{align*}
Then $M^{S_\alpha }$ is a free abelian group (a lattice) while
$M_+^{S_\alpha }$ has the structure of a sub-semigroup. For each stratum
$S_\alpha $ we denote by $N^{S_\alpha } = \left(M^{S_\alpha }\right)^{\vee}$ the dual
lattice of $M^{S_\alpha }$ and by $\langle \phantom{x},\phantom{x}
\rangle_{S^\alpha }$ the induced pairing. 
Finally, let
\[
\sigma^{S_\alpha } \coloneqq \left\{ v \in N_{\R}^{S_\alpha }
  \,\big{|}\, \langle m, v \rangle_{S^\alpha } \geq 0, \,  \forall m
  \in M_+^{S_\alpha } \right\} \subseteq N_{\R}^{S_\alpha }.
\]
Then $\sigma^{S_\alpha } \subseteq N_{\R}^{S_\alpha }$ is a strongly
convex rational polyhedral cone of maximal dimension.  
\end{prop/Def}
The idea behind Proposition/Definition \ref{propdef:strata} is that
given a stratum $S$, we have produced a maximal dimensional cone
$\sigma^S$ in the
finite-dimensional real vector space $N_{\R}^ S$ which comes equipped
with a canonical lattice $N^S$. 

We now see that
these cones can be glued together into a rational conical complex.
For a toroidal embedding $U \hookrightarrow X$ without self
intersection, let $|\Pi_{(X,U)}|$ be the
quotient topological space defined by
\[
  \left|\Pi_{(X,U)}\right| \coloneqq \bigsqcup_{S_\alpha  \operatorname{stratum}}
  \sigma^{S_\alpha } / \sim  
\]
where $\; \sim \;$ is the equivalence relation generated by isomorphisms 
\[
\beta^{\alpha ,\alpha '} \colon \sigma^{S_\alpha } \overset{\simeq}{\longrightarrow} \text{ face of } \sigma^{S_{\alpha '}}
\]
whenever $S_\alpha  \subseteq \Star\left(S_{\alpha '}\right)$. Here,
the map $\beta^{\alpha ,\alpha '}$ is the restriction of the map
$N_{\R}^{S_\alpha } \to N_{\R}^{S_{\alpha '}}$ defined as the dual of
the map $M_{\R}^{S_{\alpha '}} \to M_{\R}^{S_\alpha }$, which in turn
is induced by the map $M^{S_{\alpha '}} \to M^{S_\alpha }$ given by
restricting divisors from $\Star\left(S_{\alpha '}\right)$ to
$\Star\left(S_{\alpha }\right)$ (see \cite[Chapter II, Section
1]{KKMD}). We have the following proposition.
\begin{prop}\label{prop:complex-toroidal} If $U \hookrightarrow X$ is
  a toroidal embedding without self
intersection,  then the pair 
\[
\Pi_{(X,U)} = \left(\left\{ \sigma^{S_\alpha }
  \right\}_{S_\alpha  \operatorname{stratum}}, \left\{ M^{S_\alpha }
  \right\}_{S_\alpha  \operatorname{stratum}} \right)
\]
defines a rational conical structure on $\left|\Pi_{(X,U)}\right|$ in the sense of Definition
\ref{conicalcomplex}. Hence, $\left|\Pi_{(X,U)}\right|$ is a rational conical space in the sense of Definition \ref{def:rat-space} with rational conical structure given by $\Pi_{(X,U)}$. 
\end{prop}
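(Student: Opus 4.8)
The plan is to verify the three axioms of Definition \ref{conicalcomplex} for the pair $\Pi_{(X,U)}$, treating $\left|\Pi_{(X,U)}\right|$ with its quotient topology. Most of the work is bookkeeping with the gluing maps $\beta^{\alpha,\alpha'}$, so the first thing I would do is record the basic properties of these maps from \cite{KKMD}: that $\beta^{\alpha,\alpha'}$ is injective with image a face of $\sigma^{S_{\alpha'}}$, that $\beta^{\alpha,\alpha}=\mathrm{id}$, and that they are compatible with composition, i.e.\ if $S_\alpha\subseteq\Star(S_{\alpha'})$ and $S_{\alpha'}\subseteq\Star(S_{\alpha''})$ then $\beta^{\alpha,\alpha''}=\beta^{\alpha',\alpha''}\circ\beta^{\alpha,\alpha'}$. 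The cocycle/compatibility condition is exactly what makes $\sim$ well-behaved: it guarantees that each cone $\sigma^{S_\alpha}$ maps \emph{homeomorphically} onto its image in the quotient, so that we may genuinely regard the $\sigma^{S_\alpha}$ as closed subsets of $\left|\Pi_{(X,U)}\right|$. Since there are finitely many strata, these closed subsets form a finite covering, and each carries the finitely generated $\Z$-module $M^{S_\alpha}$ of continuous real functions (the restrictions of the linear functionals on $N_\R^{S_\alpha}$); so the data is of the required type.

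Next I would check the axioms one at a time. For axiom (1): by construction the evaluation map $\phi^{S_\alpha}\colon\sigma^{S_\alpha}\to N^{S_\alpha}$ is (the inclusion of) $\sigma^{S_\alpha}$ into $N_\R^{S_\alpha}$, and Proposition/Definition \ref{propdef:strata} already asserts that $\sigma^{S_\alpha}$ is a strongly convex, full-dimensional, rational polyhedral cone in $N_\R^{S_\alpha}$ — so there is nothing more to prove here. For axiom (2): a face of $\phi^{S_\alpha}(\sigma^{S_\alpha})=\sigma^{S_\alpha}$ corresponds, under the combinatorics of the toroidal embedding, to a stratum $S_{\alpha'}$ with $S_\alpha\subseteq\Star(S_{\alpha'})$, namely the unique stratum whose closure cuts out that face; the map $\beta^{\alpha',\alpha}$ identifies $\sigma^{S_{\alpha'}}$ with exactly that face, and the identification $M^{S_{\alpha'}}=\{u|_{\sigma^{S_{\alpha'}}}\mid u\in M^{S_\alpha}\}$ is precisely the statement that $M^{S_{\alpha'}}\to M^{S_\alpha}$ (restriction of divisors) is surjective onto the restrictions — this is the content of \cite[Chapter II, \S1]{KKMD}, where the map on cones is dual to the restriction map on $M$'s. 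For axiom (3), that the intersection of two cones is a union of common faces: this follows because two cones $\sigma^{S_\alpha}$ and $\sigma^{S_\beta}$ meet in the quotient only along faces identified through a common stratum $S_\gamma$ lying in $\Star(S_\alpha)\cap\Star(S_\beta)$, and the combinatorics of the stratification (the closure of a stratum is a union of strata, from the lemma preceding Proposition/Definition \ref{propdef:strata}) guarantees that the set of such $S_\gamma$ is downward closed among common faces.

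Finally I would note that $\left|\Pi_{(X,U)}\right|$ is second countable — it is a finite union of the second countable spaces $\sigma^{S_\alpha}$ — so Definition \ref{realconicalcomplex}'s standing hypothesis on the underlying topological space is met, and hence the last sentence of the proposition (that $\left|\Pi_{(X,U)}\right|$ is a rational conical space in the sense of Definition \ref{def:rat-space}, with $\Pi_{(X,U)}$ as a chosen representative) is immediate from the definition. The main obstacle, and the only step that is more than bookkeeping, is the careful verification that the equivalence relation $\sim$ generated by the $\beta^{\alpha,\alpha'}$ does not over-identify points — equivalently, that each $\sigma^{S_\alpha}\to\left|\Pi_{(X,U)}\right|$ is injective; this is where the cocycle compatibility of the gluing maps and the fact that the strata form a genuine stratification must be invoked, and it is exactly the point \cite{KKMD} establishes in their construction of the conical complex of a toroidal embedding. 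So in effect the proof reduces to transcribing that construction into the language of Definition \ref{conicalcomplex}, which is why the statement can be asserted with a short proof (or, as is likely in the paper, with a pointer to \cite[Chapter II, \S1]{KKMD}).
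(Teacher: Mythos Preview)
Your proposal is correct and aligns with the paper's own proof, which consists entirely of the sentence ``The proof can be found in \cite[Chapter II, pg.~71]{KKMD}.'' You even anticipated this: your final paragraph correctly predicts that the paper would simply point to \cite{KKMD}, and the detailed verification of the three axioms you sketch is precisely the content one finds there.
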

\begin{proof}
The proof can be found in \cite[Chapter II, pg.~71]{KKMD}.
\end{proof}
The collection of lattices $\left\{M^{S_\alpha }\right\}$ in the above
proposition gives the \emph{integral structure} of the toroidal
embedding.

The following lemma follows from \cite[Chapter II, Corollary 1]{KKMD}. 
\begin{lemma}\label{stratacones}
Let $U \hookrightarrow X$ be a toroidal embedding without self
intersection and let $x \in X$
belonging to a stratum $S$. If $(X_{\sigma}, x') $ is a local model at
$x$ then
\[
M^S \simeq M(\T) / \left(M(\T) \cap \sigma^{\perp}\right) \quad \text{ and }\quad \, \sigma^S \simeq \sigma \; , 
\]
where $M(\T)$ refers to the lattice of characters of the torus $\T \subseteq X_{\sigma}$ and $\sigma^{\perp}$ is the set defined by 
\[
\sigma^{\perp} \coloneqq \{m \in M(\T)\, | \, \langle m,v\rangle = 0, \,  \forall v \in \sigma \}.
\]
In particular, the local model $(X_{\sigma}, x') $ is determined up to
isomorphism by the stratum $S$. 
\end{lemma}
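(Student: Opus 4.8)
The plan is to deduce both displayed isomorphisms, together with the final assertion, from \cite[Ch.~II, Cor.~1]{KKMD} and the elementary description of $\T$-invariant Cartier divisors on an affine toric variety. The first step is to fix a convenient local model. By the lemma recalled above (\cite[Prop.-Def.~2, p.~57]{KKMD}), the strata $S_{\beta}$ with $x\in\overline{S_{\beta}}$ correspond to the closures of the torus orbits of $X_{\sigma}$ through $x'$, with $S$ corresponding to $O(x')$ itself. Replacing $X_{\sigma}$ by the open affine toric subvariety attached to the face of $\sigma$ whose distinguished orbit contains $x'$, and translating $x'$ by $\T$, I would arrange that $x'$ is the distinguished point of $X_{\sigma}$ and $O(x')=O_{\sigma}$; this is the normalization under which \cite[Ch.~II, Cor.~1]{KKMD} applies, and after it $\dim\sigma$ equals the codimension of $S$ in $X$ while the faces of $\sigma$ biject, compatibly with the integral structures of Proposition/Definition~\ref{propdef:strata}, with the cones $\sigma^{S_{\beta}}$; in particular $\sigma^{S}\cong\sigma$.

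The substance is the identification $M^{S}\cong M(\T)/(M(\T)\cap\sigma^{\perp})$, which I would obtain by first identifying $M^{S}$ with the group of $\T$-invariant Cartier divisors on $X_{\sigma}$. Observe that every irreducible component of the boundary $\Star(S)\setminus U$ contains $\overline{S}$: a stratum lying on a boundary component $B_{i}$ and having $x$ in its closure is itself contained in $B_{i}$, so $\overline{S}\subseteq B_{i}$. Hence every boundary component of $\Star(S)$ meets every neighbourhood of $x$, so a boundary-supported Cartier divisor on $\Star(S)$ is determined by its germ at $x$, and — since $\O_{X,x}\to\widehat{\O}_{X,x}$ is faithfully flat — by the induced Cartier divisor on $\Spec(\widehat{\O}_{X,x})$ supported on the ideal generated by $X\setminus U$. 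Under the chart isomorphism of Definition~\ref{deftoroidal} the latter group is identified with the Cartier divisors on $\Spec(\widehat{\O}_{X_{\sigma},x'})$ supported on the ideal of $X_{\sigma}\setminus\T$, into which the $\T$-invariant Cartier divisors on $X_{\sigma}$ inject in the same way; one then checks that the two images coincide. Finally, on the affine toric variety $X_{\sigma}$ one has $\Pic(X_{\sigma})=0$, so every $\T$-invariant Cartier divisor equals $\div(\chi^{m})$ for a character $m\in M(\T)$, with $\div(\chi^{m})=0$ precisely when $m\in M(\T)\cap\sigma^{\perp}$; this presents the group of $\T$-invariant Cartier divisors on $X_{\sigma}$ as $M(\T)/(M(\T)\cap\sigma^{\perp})$, and tracking effectivity identifies $M^{S}_{+}$ with the image of $\sigma^{\vee}\cap M(\T)$, which recovers $\sigma^{S}\cong\sigma$ from the definition $\sigma^{S}=\{v\mid\langle m,v\rangle\ge 0\ \forall m\in M^{S}_{+}\}$.

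The last assertion then requires no further work: $M^{S}$, $M^{S}_{+}$, and therefore $N^{S}$ and $\sigma^{S}$, are intrinsic to the stratum $S$; the identification of $\sigma^{S}$ with $\sigma$ is canonical; and an affine toric variety, together with the distinguished point of its minimal orbit, is recovered from its cone. I expect the one genuinely non-formal point to be the claim that the image of $M^{S}$ and the image of the $\T$-invariant Cartier divisors of $X_{\sigma}$ agree inside the Cartier divisors on the formal neighbourhood of $x$ — equivalently, that a boundary-supported Cartier divisor on $\Star(S)$ is formally at $x$ the completion of a toric one, and conversely that such a toric divisor algebraizes to an element of $M^{S}$. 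This uses the precise structure of $\widehat{\O}_{X_{\sigma},x'}$ and is exactly what is imported from \cite[Ch.~II, Cor.~1]{KKMD}; everything else is standard affine toric geometry and a soft descent through the completion.
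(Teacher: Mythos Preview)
The paper does not actually prove this lemma: it is stated immediately after the sentence ``The following lemma follows from \cite[Chapter II, Corollary 1]{KKMD}'' and no further argument is given. Your proposal is therefore not in conflict with the paper's proof---there is none---and it correctly identifies the same source as the essential input. What you have written is a reasonable unpacking of that citation: the normalization of the local model so that $x'$ is the distinguished point of $X_{\sigma}$, the passage from $M^{S}$ to divisors on the formal neighbourhood of $x$ via faithful flatness, and the standard toric identification of $\T$-invariant Cartier divisors on $X_{\sigma}$ with $M(\T)/(M(\T)\cap\sigma^{\perp})$ are all sound. You are also right to flag the matching of the two images inside the Cartier divisors on $\Spec(\widehat{\O}_{X,x})$ as the genuinely non-formal step, and to locate it in \cite[Ch.~II, Cor.~1]{KKMD}; this is exactly what the paper is importing by citation.
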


Given a cone $\sigma$ in $\Pi_{(X,U)}$, we will denote by $S^{\sigma}$ the
stratum corresponding to $\sigma$ and by $\overline{S^{\sigma}}$ its
closure in $X$. 
\begin{exa}\label{toricexample}
Let $\Sigma$ be a fan in $N_{\R}$ for some lattice $N$ and let $M
\coloneqq N^{\vee}$ be its dual lattice. Furthermore, let $\X$ be its
associated normal toric variety over $k$ with dense torus $\T =
\Spec(k[M])$. Clearly, the inclusion $\T \hookrightarrow \X$ defines a
toroidal embedding. The components of the boundary divisor $B=\X
\setminus \T$ are the $\T$-invariant prime divisors $B_{\tau}$
corresponding to the rays $\tau \in \Sigma(1)$, and the strata of $X$
are the $\T$-orbits $O(\sigma)$ corresponding to the cones $\sigma \in
\Sigma$. The combinatorial open sets of $\X$ are precisely its
$\T$-invariant affine open subsets.

The isomorphism 
\[
M / \left(M \cap \sigma^{\perp}\right) \simeq M^{O(\sigma)}
\]
given by the assignment 
\[
[m] \longmapsto \div\left(\chi^m\right),
\]
 where $\chi^m$ denotes the character of the torus associated to $m \in M$, induces an identification of lattices 
 \[
 N^{O(\sigma)}\simeq N_{\sigma} = N \cap \Span (\sigma) 
 \] 
 and of cones 
 \[
 \sigma^{O(\sigma)} \simeq \sigma.
 \] 
\end{exa}
\begin{rem}\label{rem:rays-boundary}
As in the toric case, the set of rays $\Pi _{(X,U)}(1)$ of the rational
conical complex associated to a toroidal embedding $U
\hookrightarrow X$ is in bijection with the set of irreducible
components of the boundary divisor $B= X \setminus U$. Indeed for
every irreducible component $B_i$, the corresponding ray in
$\Pi_{(X,U)}(1)$, which we will denote by $\tau_{B_i}$ is the linear
function $\tau_{B_i} \colon M^{S_{\{i\}}} \to \Z$ given by $n B_i
\mapsto n$. Conversely, one can show that any ray $\tau \in \Pi_{(X,U)}(1)$
arises in this way (see \cite[pg.~63]{KKMD}). For any such ray $\tau$,
we will denote by $B_{\tau}$ the corresponding irreducible boundary
component.
\end{rem}
Before giving a more general class of examples of toroidal embeddings,
we recall some definitions.
\begin{Def}\label{def:sncdivisor}
Let $B \subseteq X$ be a divisor on a smooth variety $X$. We say that
$B$ is a \emph{normal crossing divisor} (abbreviated nc) if
the following condition hold:
\begin{enumerate}
\item For all $x \in X$ we can choose local coordinates $x_1, \dotsc ,
  x_n$ and natural numbers $\ell_1, \dotsc , \ell_n$ such that $B =
  \left\{\prod_ix_i^{\ell_i} = 0\right\}$ in a neighborhood of $x$. 
\end{enumerate} 
We say that $B$ is a \emph{simple normal crossing divisor}
(abbreviated snc) if furthermore
\begin{enumerate}[resume]
\item\label{sncversusnc} Every irreducible component of $B$ is smooth. 
\end{enumerate}
\end{Def}

We can now give a large class of examples of toroidal varieties. 
\begin{exa}\label{exa:toroidalnc}
Let $(X, B)$ be a pair consisting of a smooth projective variety $X$
of dimension $n$ together with a snc divisor $B \subseteq X$.
We denote by $\{B_i\}_{i
  \in I}$ the irreducible components of $B$.
 Set $U \coloneqq X
\setminus B$. Then $U \hookrightarrow X$ is a toroidal embedding. The
rational conical complex associated to the toroidal embedding $U
\hookrightarrow X$  is smooth and is constructed by adding a
$k$-dimensional cone for each subset $J\subseteq I$ with $\#J=k$ and
each irreducible component of $\bigcap_{j\in J}B_{j}$. In particular,
the zero dimensional cones correspond to the irreducible components
of $X=\bigcap_{j\in \emptyset} B_{j}$.
\end{exa}
\begin{rem}
It follows from the definition of a toroidal embedding $U
\hookrightarrow X$ that the boundary $X \setminus U$ is a divisor,
however, it may not be snc. Nevertheless, by Hironaka's resolution of
singularities \cite{HIR},  we can always find an allowable
modification of $X' \to X$ (Definition~\ref{def:allowablemodification}) such that the boundary divisor $X'
\setminus U$ is $\snc$.
\end{rem}

\subsection{From weak embeddings to quasi embeddings}
\label{sec:from-weak-embeddings}

Following \cite{GR}, when the ambient variety is proper, there is a
natural weak embedding of the rational
conical complex associated to a toroidal embedding without self 
intersection. We show in this section that, in the projective case, by
adding boundary
components, one can
modify the toroidal structure of a toroidal embedding in such a way
that the rational conical complex becomes quasi-embedded. 

\begin{Def}\label{def:weak-toroidal} Let $U \hookrightarrow X$ be a
  toroidal embedding with $X$ proper and let $|\Pi| =|\Pi _{(X,U)}|$ be the
  corresponding rational conical space. The group $M^{|\Pi|}$ is defined
  to be the set of classes of invertible regular functions on the open
  set $U$, modulo locally constant functions, i.e.
\[
M^{|\Pi|} \coloneqq \Gamma\left(U,
  \O_X^{\times}\right)/\Gamma\left(U,k^{\times}\right).
\]
Since $X$ is proper, this is a torsion free finitely generated abelian
group. That is, it is a lattice. 
Let $N^{|\Pi|} \coloneqq \left(M^{|\Pi|}\right)^{\vee}$ be its dual
lattice. For every stratum $S$ of $X$ we have a morphism of lattices
$M^{|\Pi|} \to M^S$ given by 
\[
f \longmapsto \div(f)|_{\Star(S)}. 
\]
Dualizing, we get a linear map $\sigma^S \to N^{|\Pi|}_{\R}$. These
maps glue to give a continuous function  
\[
\iota_{|\Pi|} \colon \left|\Pi\right| \longrightarrow N^{|\Pi|}_{\R},
\] 
which is integral linear on the cones of $\Pi$, i.e.~$|\Pi|$ has structure of a
weakly embedded rational conical space.
\end{Def}

Two of the following examples are taken from \cite[Example 2.2]{GR}.
\begin{exa}\
  \begin{enumerate}
  \item Consider the toric setting $X = \X$ from Example
    \ref{toricexample}, and write $\Pi =\Pi _{(\X,\T)}$. Here, we have
    the lattice $M^{|\Pi|} =
    \Gamma\left(\T, \O_{\X}^{\times}\right)/k^{\times}$, which we can
    identify with $M$ via the isomorphism $M \simeq M^{|\Pi|}$
    given by the assignment
\[
m \longmapsto \chi^m.
\]
We see that the image of $\sigma^{O(\sigma)}$ in $N_{\R}$ under the
weak embedding $\iota_{\Pi}$ is precisely $\sigma$. Hence,
$\Pi$ is a weakly embedded rational conical complex, naturally
isomorphic to $\Sigma$. Note that in this case, the weak embedding is
globally injective. 
\item  For a non-toric example, consider $X = \P^2$ with homogeneous 
  coordinates $(x_0: x_1: x_2)$ but with open part $U$ given by
\[
U = X \setminus \left(H_1 \cup H_2\right),
\] 
where $H_i$ is the hyperplane given by  $\{x_i = 0 \}$. This is a
toroidal embedding with snc boundary divisor and we see that the
rational conical complex $\Pi=\Pi_{(X,U)}$ is naturally identified with the
non-negative orthant $\R^2_{\geq 0}$, whose rays $\R_{\geq 0}(1,0)$
and  $\R_{\geq 0}(0,1)$ correspond to the divisors $H_1$ and $H_2$,
respectively. The lattice $M^{|\Pi|}$ is generated by $x_1/x_2$, and
using that generator to identify $M^{|\Pi|}$ with $\Z$, we see that
the weak embedding $\iota_{\Pi}$ sends $(1,0)$ to $1$ and $(0,1)$ to
$-1$. Note that in this case $\iota_{\Pi}$ is not a quasi-embedding
since for example the cone $\R^2_{\geq 0}$ is two-dimensional while
$N^{|\Pi|}_{\R}$ has dimension one.
\item Consider again $\P^{2}$ with the same homogeneous coordinates,
  $D_{1}$ the line $x_{0}=0$ and $D_{2}$ the conic
  $x_{0}^{2}+x_{1}^{2}+x_{2}^{2}=0$. then $D_{1}\cup D_{2}$ is a snc
  divisor and the corresponding conical complex consist of two copies
  of the non-negative orthant glued together by the axes. The
  description of the quasi-embedding is similar to the previous one.  
  \end{enumerate}
\end{exa}

The following key proposition says that given a toroidal embedding
with a snc boundary divisor, we can always modify the toroidal structure
in such a way that the associated weakly embedded rational conical
complex becomes quasi-embedded. 
\begin{prop}\label{prop:changedivisor}
Let $U \hookrightarrow X$ be a toroidal embedding with $X$ smooth and
projective, 
such that the 
boundary divisor $B = X\setminus U$ is snc and let $|\Pi| = |\Pi_{(X,U)}|$ be
its associated weakly embedded rational conical space. Then there
exists a snc divisor $B'$ with $|B| \subseteq |B'|$ such that, writing
$U'=X\setminus B'$, 
the
weakly embedded rational conical space $|\Pi'| = |\Pi_{(X,U')}|$ is
quasi-embedded, i.e.~the restriction 
of the weak
embedding
\[
\iota_{|\Pi'|}|_{\sigma'} \colon |\sigma'| \longrightarrow N_{\R}^{|\Pi'|}
\] 
to any cone $\sigma' \in \Pi'$ is injective.
\end{prop}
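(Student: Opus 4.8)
The plan is to add to the boundary divisor $B$ enough very ample divisors to separate the rays of $\Pi$, then argue that the weak embedding of the enlarged complex $\Pi'$ becomes injective on each cone. Recall from Definition~\ref{def:weak-toroidal} that $M^{|\Pi'|} = \Gamma(U',\O_X^\times)/\Gamma(U',k^\times)$ and that on a stratum $S'$ the weak embedding restricted to $\sigma^{S'}$ is dual to the map $M^{|\Pi'|}\to M^{S'}$, $f\mapsto \div(f)|_{\Star(S')}$. Thus $\iota_{|\Pi'|}|_{\sigma'}$ is injective precisely when, for each cone $\sigma'\in\Pi'$, the composite $M^{|\Pi'|}\to M^{S^{\sigma'}}\to M_{\sigma'}^{\vee\vee}$ (restriction to the span of $\sigma'$) is surjective after tensoring with $\Q$, equivalently when every integral linear function on $\sigma'$ that vanishes on no smaller-dimensional information is realized by a global unit on $U'$.

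\textbf{Step 1: Reduce to separating rays.} A linear map from a cone $\sigma'$ is injective iff it is injective on the linear span $N_{\sigma'}^{|\Pi'|}$, and by Remark~\ref{rem:rays-boundary} the rays of $\Pi'$ are in bijection with the irreducible components $B'_i$ of $B'$. Since $|\Pi'|$ is smooth (Example~\ref{exa:toroidalnc}, as $B'$ will again be snc), every cone is unimodular and is spanned by the primitive generators $\hat v_{\tau_i}$ of its rays, which form part of a $\Z$-basis of $N^{S^{\sigma'}}$. Hence it suffices to exhibit, for each ray $\tau_i = \tau_{B'_i}$ and each maximal cone $\sigma'$ containing it, a unit $f\in M^{|\Pi'|}$ whose associated linear function takes a prescribed value on $\hat v_{\tau_i}$ independently of the other rays of $\sigma'$. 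Concretely, for a divisor component $B'_i$, the function $m\mapsto \div(f)$ has coefficient along $B'_j$ equal to $\ord_{B'_j}(f)$; so one needs units on $U'$ whose orders of vanishing along the $B'_j$ realize all of $\Z^{I'}$ modulo the relations coming from principal divisors on $X$.

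\textbf{Step 2: Produce the extra components.} Here is the key construction. Choose a very ample divisor $H$ on $X$; after replacing $H$ by a large multiple we may assume $H - B_i$ is very ample for every component $B_i$ of $B$, and that $H$ is not a component of $B$. For each $i$ pick a general member $H_i\in |H - B_i|$ (more precisely $|H|$ restricted appropriately) so that $B_i + H_i \sim H$ and so that $B'\coloneqq B + \sum_i H_i$ is still snc — this uses Bertini and the hypothesis that we are over a field of characteristic zero and $X$ is smooth projective, exactly the standing assumptions of the proposition. Now for each $i$ the rational function $f_i$ with $\div(f_i) = B_i + H_i - H$ is regular and invertible on $U' = X\setminus B'$ (its zeros and poles lie in $B'$), hence defines a class in $M^{|\Pi'|}$. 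The linear functional $\iota_{|\Pi'|}^\vee(f_i)$ pairs with the ray $\tau_{B_i}$ to give coefficient $1$, with the ray $\tau_{H_i}$ to give $1$, with $\tau_H$ to give $-1$, and with all other new rays $0$. Together with the observation that on any cone $\sigma'$ at most one of the $\tau_{H_j}$ can appear (the $H_j$ are chosen mutually general, hence the $B_i\cap H_j$ impose disjointness on which $H_j$'s meet a given stratum — one arranges this by taking the $H_j$ with pairwise and with $B$ in sufficiently general position), the $f_i$ together with the original units separate all rays in every cone.

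\textbf{The main obstacle} is Step 2: one must choose the auxiliary divisors $H_i$ so that \emph{simultaneously} (a) $B'=B+\sum H_i$ remains snc, (b) the resulting complex $\Pi'$ still has every maximal cone meeting only ``few'' of the new rays, so that the functions $f_i$ actually achieve injectivity cone-by-cone rather than just globally, and (c) the new units lie in $M^{|\Pi'|}$, i.e.\ are genuinely invertible on $U'$. Point (b) is the subtle one: one does not want the new components to intersect each other or to create new high-dimensional strata that are themselves not separated. The cleanest way is to take the $H_i$ all linearly equivalent to a fixed very ample $H$ and mutually in general position \emph{and} in general position with respect to $B$, invoking that a general complete intersection type argument keeps everything snc and keeps the incidence combinatorics controlled; then verify that on each cone $\sigma'$ of $\Pi'$ the collection $\{\iota^\vee(f_i)\}$ restricted to $N_{\sigma'}$ already spans the dual, because the rays of $\sigma'$ split into ``old'' rays $\tau_{B_i}$ — separated by the corresponding $f_i$ — and at most codimension-many ``new'' rays, each again carrying its own $f_i$. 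Once injectivity on cones is checked, $|\Pi'|$ is by definition quasi-embedded and the proposition follows.
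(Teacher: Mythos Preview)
Your overall strategy---add very ample auxiliary divisors to create units separating the rays---matches the paper's, but the specific construction has a genuine gap.

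First a small point: as written, $H$ is not included in $B'=B+\sum_i H_i$, so $f_i$ with $\div(f_i)=B_i+H_i-H$ has a pole along $H\not\subset B'$ and is \emph{not} a unit on $U'$. This is easily repaired by adding $H$ to $B'$.

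The real problem is your claim that ``on any cone $\sigma'$ at most one of the $\tau_{H_j}$ can appear''. General position for very ample divisors means \emph{transversal} intersection, not \emph{empty} intersection: any $k\le n$ of the $H_j$ will meet in codimension $k$, producing cones containing several of the new rays. Worse, even two-dimensional cones already break the argument. Since $H_i\in|H-B_i|$ is very ample, it meets $B_i$ (for $n\ge 2$), so there is a cone $\sigma'$ with rays $\tau_{B_i}$ and $\tau_{H_i}$. On this cone your functions give
\[
\langle f_i,v_{B_i}\rangle=1=\langle f_i,v_{H_i}\rangle,\qquad
\langle f_j,v_{B_i}\rangle=0=\langle f_j,v_{H_i}\rangle\ (j\ne i),
\]
so \emph{no} integer combination of the $f_j$ separates $v_{B_i}$ from $v_{H_i}$, and $\iota_{|\Pi'|}$ is not injective on $\sigma'$. (If the classes $[B_1],\dots,[B_r],[H]$ are independent in $\Pic(X)_\Q$, the $f_j$ actually generate $M^{|\Pi'|}_\Q$, so nothing else in $M^{|\Pi'|}$ helps.)

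The paper avoids this by a richer construction: for each $B_i$ it adds \emph{$n$} auxiliary divisors $A_{i,1},\dots,A_{i,n}$ (linearly equivalent to $B_i+C$ for a fixed auxiliary $C$), together with a second copy $C_1\sim C$. This yields rational functions $f_{i,j}$ with $\div(f_{i,j})=B_i+C-A_{i,j}$ and $g$ with $\div(g)=C-C_1$. The resulting pairing matrix is computed explicitly and has the property that \emph{any} $n$ of the images $\iota(v_{B_i}),\iota(v_{A_{k,j}}),\iota(v_C),\iota(v_{C_1})$ are linearly independent in $N^{|\Pi'|}$. Since every cone of the snc complex has at most $n$ rays, injectivity on each cone follows immediately---with no need to control which rays can coexist in a cone. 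Your argument needs either this kind of over-provisioning, or an entirely different mechanism to guarantee separation on every cone.
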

\begin{proof}
  Recall that $n$ denotes the dimension of $X$.
  If $n=1$ then $B=\{p_{1},\dots,p_{k}\}$ is a finite set of
  points. Choose rational functions $f_{i}$ such that
  $\ord_{p_{i}}(f_{i})\not =0$ and write
  \begin{displaymath}
    B'=\bigcup |\div(f_{i})|=\{q_{1},\dots,q_{r}\}
  \end{displaymath}
  The corresponding polyhedral complex is given by the finite set of
  rays $\{\tau _{q_{j}}\}$, such that $\tau _{q_{j}}$ is joined with
  $\tau _{q_{j'}}$ at zero if and only if $q_{j}$ and $q_{j'}$ are in
  the same irreducible component.  Let $v_{j}$ denote the
  primitive vector of $\tau _{q_{j}}$ and let $x_{i}$ denote the point
  of $M^{|\Pi'|}$ corresponding to $f_{i}$. By construction, for each $j$
  there is an $i$ such that $\langle
  \iota(v_{j}),x_{i}\rangle=\ord_{q_{j}}(f_{i})\not = 0$. Therefore
  $\iota _{\Pi '}(v_{j})\not = 0$ and $\Pi '$ is quasi-embedded.

  Assume now that $n\ge 2$. Write $B=B_{1}\cup \dots \cup B_{r}$ for
  the decomposition of $B$ into irreducible components. There is a
  hypersurface $C$ such that $B_{i}+C$
  is very ample for $i=1,\dots , r$. Moreover we can find
  hypersurfaces $A_{i,j}$, 
  \begin{displaymath}
    B_{i}\sim A_{i,j}-C,\quad i=1,\dots r,\ j=1,\dots,n,
  \end{displaymath}
  and a second hypersurface $C_{1}\not = C$ such that
  $C_{1}\sim C$. Here the symbol $\,\sim\,$ means linear
  equivalence. Finally by Bertini's theorem we can assume that all 
  the hypersurfaces $C$, $C_{1}$ and $A_{i,j}$ are different, smooth
  and irreducible and
  \begin{displaymath}
    B'\coloneqq B\cup C\cup C_{1}\cup \bigcup_{i,j}A_{i,j}
  \end{displaymath}
  is a snc. Then there are rational functions $f_{i,j}$ and $g$ such
  that
  \begin{align*}
    \div(f_{i,j})&=B_{i}+C-A_{i,j}\\
    \div(g)&=C-C_{1}.
  \end{align*}
  As in the statement of the theorem, write $U'=X\setminus B'$ and
  $|\Pi '|=|\Pi _{(X,U')}|$. 
  Let $x_{i,j}$ be the point of $M^{|\Pi '|}$ corresponding to $f_{i,j}$
  and $y$ the point corresponding to $g$. For an irreducible component
  $E$ of $B'$, write $v_{E}$ for the primitive generator of the ray
  corresponding to the the divisor $E$. By  construction we have
  \begin{align*}
    \langle v_{B_{i}}, x_{k,j}\rangle & = \delta _{i,k}\\
    \langle v_{A_{i,j}}, x_{k,\ell}\rangle & = -\delta _{i,k}\delta
                                             _{j,\ell}\\
    \langle v_{B_{i}}, y\rangle & = \langle v_{A_{i,j}}, y\rangle=0\\
    \langle v_{C}, x_{k,j}\rangle & = 1\\
    \langle v_{C_{1}}, x_{k,j}\rangle & = 0\\
    \langle v_{C}, y\rangle & = 1\\
    \langle v_{C_{1}}, x_{k,j}\rangle & = -1
  \end{align*}
From the above identities, it follows that any subset of $n$ vectors
contained in $\{v_{B_{i}},v_{A_{k,j}},v_{C},v_{C_{1}}\}$ is linearly
independent. This implies that the weak embedding $\iota _{\Pi '}$ is
a quasi-embedding.
\end{proof}

\begin{rem}\label{rem:ample-effective}
  Let $U \hookrightarrow X$ be a toroidal embedding with
    snc boundary divisor $B = X \setminus U$ and $X$ 
    projective. Then, by modifying the toroidal structure in a similar
    way as we did in the proof of Proposition \ref{prop:changedivisor},
    we may assume that there exists an ample and effective divisor
    with support contained in $B$.
    Moreover, by adding a small multiple of
    all components of $B$, we may assume that there is an ample
    $\R$-divisor $A$
    with positive
    multiplicity along all components of $B$. This will be useful for
    the monotone approximation lemma in
    Section~\ref{sec:hilb-samu-form}.
\end{rem}

\subsection{Toroidal modifications and subdivisions of rational
  conical complexes}
Recall from the classical theory of toric varieties that given a toric
variety $\X$ corresponding to a fan $\Sigma$, there is a bijective
correspondence between proper birational toric morphisms to $\X$ and
subdivisions of the fan $\Sigma$. Following \cite[Chapter 2,
Section 2]{KKMD}, a similar
phenomenon occurs in the toroidal case. In this section we describe
the proper toroidal birational modifications of a toroidal embedding
which, on the combinatorial side, correspond to subdivisions of the
associated rational conical complex.

 \begin{Def} 
 Let $U_{X_1} \hookrightarrow X_1$ and $U_{X_2} \hookrightarrow X_2$ be two toroidal embeddings and let $f \colon X_1 \to X_2$ be a birational morphism mapping $U_{X_1}$ to $U_{X_2}$. Then $f$ is called \emph{toroidal} if for every closed point $x_1 \in X_1$ there exist local models $(X_{\sigma_1},x'_1)$ at $x_1 \in X_1$ and $(X_{\sigma_2},x'_2)$ at $f(x_1) \in X_2$, and a toric morphism $g \colon X_{\sigma_1} \to X_{\sigma_2}$, with $f(x'_1) = x'_2$, such that the following diagram commutes.
 \begin{center}
    \begin{tikzpicture}
      \matrix[dmatrix] (m)
      {
       \widehat{\O}_{X_1,x_1} &   \widehat{\O}_{X_{\sigma_1},x'_1}\\
       \widehat{\O}_{X_2,x_2} &\widehat{\O}_{X_{\sigma_2},x'_2}\\
      };
      \draw[->] (m-1-1) to node[above] { $\simeq$ } (m-1-2);
      \draw[->] (m-2-1) to node[left]{${\hat{f}}^{\#}$}(m-1-1);
     \draw[->] (m-2-1) to node[above] { $\simeq$ } (m-2-2); 
     \draw[->] (m-2-2) to node[right]{$\hat{g}^{\#}$}  (m-1-2);
     \end{tikzpicture}
   \end{center}
Here, $\hat{f}^{\#}$ and $\hat{g}^{\#}$ are the ring homomorphisms induced by $f$ and $g$, respectively. 
\end{Def}
\begin{rem} 
The following two properties are satisfied.
\begin{enumerate}
\item The composition of two birational toroidal morphisms is again a
  birational toroidal morphism.
\item A toroidal morphism
  $f \colon (U_1 \hookrightarrow X_1) \to (U_2 \hookrightarrow
  X_2)$ induces a morphism
  \begin{displaymath}
f_{|\Pi|} \colon |\Pi_{(X_1,U_{1})}| \to
  |\Pi_{(X_2,U_{2})}|
\end{displaymath}
of 
  rational conical spaces. The restrictions of $f_{|\Pi|}$ to the cones
  of $\Pi_{(X_1,U_{1})}$ are dual to pulling back Cartier divisors. From this,
  we see that $f_{|\Pi|}$ can also be considered as a morphism between
  weakly embedded rational conical spaces by adding to it the data
  of the linear map $N^{|\Pi_{(X_1,U_{1})}|} \to N^{|\Pi_{(X_2,U_{2})}|}$ dual to the
  pullback 
  $\Gamma\left(U_{2}, \O_{X_2}^{\times}\right) \to
  \Gamma\left(U_{1}, \O_{X_1}^{\times}\right)$.
\end{enumerate}
\end{rem}
The following definition is taken from \cite[Definition 1, pg.~73]{KKMD}.
\begin{Def}
A toroidal birational morphism $f \colon (U \hookrightarrow Y) \to (U
\hookrightarrow X)$ between two toroidal embeddings of the same open
subset $U$ is called \emph{canonical over $X$} if the following conditions
hold true:
\begin{enumerate}
\item The diagram 
\begin{center}
\begin{tikzpicture}
\matrix[dmatrix] (m)
{
Y & &X \\
&U& \\
};
\draw[->] (m-1-1) to node[above] {$f$} (m-1-3);
\draw[left hook->] (m-2-2) to (m-1-1);
\draw[right hook->]  (m-2-2) to (m-1-3);
\end{tikzpicture}
   \end{center}
   is commutative.
\item For all $x_1, x_2 \in X$ in the same stratum $S$ and for all morphisms
\begin{align}\label{eqn:can}
\xi \colon \hat{\O}_{X,x_1} \longrightarrow \hat{\O}_{X,x_2}
\end{align}
which preserve the strata (i.e.~if $S \subseteq \overline{S'}$ for some stratum $S'$ then $\xi$ takes the ideal of $\overline{S'}$ at $x_1$ to the ideal of $\overline{S'}$ at $x_2$), we have that $\text{Spec}(\xi)$ can be lifted to give an isomorphism $Y \times_X \text{Spec}(\hat{\O}_{X,x_2}) \simeq   Y \times_X \text{Spec}(\hat{\O}_{X, x_1})$ preserving the strata, i.e.~such that the following diagram commutes.
 \begin{center}
    \begin{tikzpicture}
      \matrix[dmatrix] (m)
      {
        Y \times_X \text{Spec}(\hat{\O}_{X,x_2}) &   Y \times_X \text{Spec}(\hat{\O}_{X, x_1})\\
       \text{Spec}(\hat{\O}_{X,x_2}) &\text{Spec}(\hat{\O}_{X, x_1})\\
      };
      \draw[->] (m-1-1) to node[above] { $\simeq$ } (m-1-2);
      \draw[->] (m-1-1) to (m-2-1);
     \draw[->] (m-2-1) to node[above] { $\text{Spec}(\xi)$ } (m-2-2); 
     \draw[->] (m-1-2) to  (m-2-2);
     \end{tikzpicture}
   \end{center}
   \end{enumerate}
\end{Def}
We can now define the class of toroidal birational morphisms which correspond to subdivisions of rational conical complexes. 
The following is \cite[Definition 3, pg.~87]{KKMD}. 
\begin{Def}\label{def:allowablemodification} Consider a toroidal birational morphism $ f \colon (U \hookrightarrow Y) \to (U \hookrightarrow X)$ forming a commutative diagram 
\begin{center}
\begin{tikzpicture}
\matrix[dmatrix] (m)
{
Y & &X \\
&U& \\
};
\draw[->] (m-1-1) to node[above] {$f$} (m-1-3);
\draw[left hook->] (m-2-2) to (m-1-1);
\draw[right hook->]  (m-2-2) to (m-1-3);
\end{tikzpicture}
   \end{center}
 and satisfying the following two conditions: 
\begin{enumerate}
\item $Y$ has an open covering $\{V_i\}$ such that $U \subseteq V_i$, $f(V_i) \subseteq \text{Star}(S_i)$ for some stratum $S_i$ of $X$ and $V_i$ is affine and canonical over $\text{Star}(S_i)$.
\item $Y$ is normal.
\end{enumerate}
Toroidal embeddings $U \hookrightarrow Y$ as above are called
\emph{allowable modifications} of the toroidal embedding $U
\hookrightarrow X$.  
\end{Def}
The following important theorem follows from \cite[Theorem 6*, 8*]{KKMD}.
\begin{theorem}\label{the:allowablemodification} Given a toroidal
  embedding without self intersection $U \hookrightarrow X$, there is
  a bijective correspondence between subdivisions of the rational
  conical complex $\Pi_{X,U}$ and isomorphism classes of proper
  allowable modifications of $X$.
\end{theorem}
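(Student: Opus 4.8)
The plan is to reduce this to the corresponding statement in the theory of toric varieties, exactly along the lines of \cite[Chapter II, §2]{KKMD}, by working locally on the combinatorial open sets $\Star(S_\alpha)$. First I would recall that, by Proposition/Definition \ref{propdef:strata} and Lemma \ref{stratacones}, each combinatorial open set $\Star(S_\alpha)$ admits, formally at a point of the stratum $S_\alpha$, a toric model $X_{\sigma^{S_\alpha}}$ whose fan consists of the faces of $\sigma^{S_\alpha}$, and that the lattice $N^{S_\alpha}$ and cone $\sigma^{S_\alpha}$ are intrinsically attached to the stratum. The key point is that a subdivision $\Pi'$ of $\Pi_{(X,U)}$ restricts, for each stratum $S_\alpha$, to a fan subdividing the cone $\sigma^{S_\alpha}$; by the classical toric dictionary this produces a proper birational toric morphism $X_{\Sigma'_\alpha}\to X_{\sigma^{S_\alpha}}$, hence (after showing these glue) a proper birational morphism of the associated toroidal embeddings. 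Conversely, an allowable modification $f\colon (U\hookrightarrow Y)\to (U\hookrightarrow X)$ is, by Definition \ref{def:allowablemodification}, locally over each $\Star(S_i)$ canonical and affine; the canonicity condition forces the local structure of $f$ to be determined purely combinatorially (invariant under strata-preserving automorphisms of the formal local rings), so that $Y\times_X\Star(S_i)$ corresponds to a fan subdividing $\sigma^{S_i}$, and these local subdivisions are compatible on overlaps and glue to a global subdivision $\Pi'$ of $\Pi_{(X,U)}$.

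In more detail, the two constructions I would spell out are: (i) \emph{from subdivisions to modifications}: given $\Pi'\geq \Pi_{(X,U)}$, for each stratum $S_\alpha$ the cones of $\Pi'$ contained in $\sigma^{S_\alpha}$ form a finite rational fan subdividing $\sigma^{S_\alpha}$; let $X_{\Sigma'_\alpha}\to X_{\sigma^{S_\alpha}}$ be the induced toric modification. Using the charts of the toroidal embedding one descends this to a modification $Y_\alpha\to \Star(S_\alpha)$ which is canonical (the subdivision being intrinsic, it is invariant under the strata-preserving isomorphisms $\xi$ of Definition \ref{def:allowablemodification}, which is precisely what canonicity demands), and one checks on the intersections $\Star(S_\alpha)\cap\Star(S_\beta)$ — which are again combinatorial open sets, corresponding to common faces — that the $Y_\alpha$ agree, hence glue to a normal $Y$ with a proper birational toroidal morphism $Y\to X$ satisfying both conditions of Definition \ref{def:allowablemodification}. (ii) \emph{from modifications to subdivisions}: conversely, given an allowable modification with covering $\{V_i\}$, $f(V_i)\subseteq\Star(S_i)$, the restriction of $f$ over a neighbourhood of the deepest stratum $S_i$ is, by canonicity, a toric modification of the local model $X_{\sigma^{S_i}}$, hence corresponds to a subdivision of $\sigma^{S_i}$; one verifies these subdivisions are independent of the auxiliary choices (again by canonicity) and compatible along strata containments, producing a well-defined subdivision $\Pi'$ of $\Pi_{(X,U)}$. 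The two assignments are then checked to be mutually inverse, up to isomorphism over $X$, cone by cone.

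The main obstacle — and the step I would allocate most care to — is the \emph{gluing}: showing that the locally defined toric modifications over the various $\Star(S_\alpha)$ patch together into a global scheme $Y$, and dually that the local subdivisions of the cones $\sigma^{S_\alpha}$ are genuinely restrictions of a single subdivision of the whole complex $|\Pi_{(X,U)}|$. This is where the \enquote{without self intersection} hypothesis is essential, since it guarantees that $|\Pi_{(X,U)}|$ is a genuine conical complex (not an orbifold-type object with monodromy), so that the face maps $\beta^{\alpha,\alpha'}$ identifying $\sigma^{S_\alpha}$ with a face of $\sigma^{S_{\alpha'}}$ are unique and the gluing data is unambiguous; it is also where one must use that the canonicity condition in Definition \ref{def:allowablemodification} is exactly the descent datum needed to pass from the formal-local toric picture to an actual modification of $X$. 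Rather than redo this argument in full, I would cite \cite[Theorem 6*, Theorem 8*, Chapter II, §2]{KKMD}, where precisely this correspondence (in the language of rational polyhedral cone complexes with integral structure) is established, and simply observe that our $\Pi_{(X,U)}$, $R_{\sm}(\Pi)$ and the class of proper allowable modifications match their setup verbatim.
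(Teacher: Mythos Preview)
Your proposal is correct and matches the paper's own treatment: the paper simply records that this theorem follows from \cite[Theorem~6*,~8*]{KKMD} without reproducing any argument, and your plan is precisely to invoke those results after sketching the local-toric-model reduction that underlies them. Your outline of the two directions and the role of the ``without self intersection'' hypothesis in the gluing step is accurate and more informative than what the paper provides.
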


\section{Intersection theory of toroidal b-divisors}\label{sec:inters-theory-toro}

In this section $k$ still denotes an algebraically closed field of
characteristic zero and $U \hookrightarrow X$ will denote a toroidal
embedding with $X$ a smooth and projective $n$-dimensional
$k$-variety with snc boundary divisor
$B=X\setminus U$. We further assume that the corresponding
weakly embedded rational conical space $|\Pi| \coloneqq |\Pi _{(X,U)}|$
is quasi-embedded and that there is an effective ample $\R$-divisor
$A$ with support
$B$. By Chow's lemma, resolution of singularities, Proposition
\ref{prop:changedivisor} and Remark \ref{rem:ample-effective}, if we
start with a proper variety $X'$, an open dense subset $U'\subseteq X'$
and $B' = X'\setminus U'$, we can always find $X$, $U$ and $B$ as above
with a birational map $\pi \colon X\to X'$ such that $\pi
^{-1}(B')\subseteq B$. Therefore for many questions, the above
assumptions are harmless.

The goal of this section is to show that nef
toroidal b-divisors have well defined top intersection products
(Definitions \ref{def:toroidalbdivisor}
and \ref{def:nefbdiv} and Theorem
\ref{thm:intersectionbdivisors}) and that these intersection products
can be computed as the integral of a function associated to one of the
b-divisors with respect to a Monge--Amp\`ere like measure associated
to the remaining b-divisors. The existence of the product is also
proved in \cite{DF:bdiv} is a more general setting.

The idea of the construction is,
first, following
\cite{GR}, to relate the geometric intersection product of toroidal
divisors with the \emph{rational tropical intersection product} on quasi-embedded
rational conical complexes (Theorem~\ref{the:trop-alg-numbers}). Second to use the
convergence results of Section \ref{sec:monge-ampere-meas} in order to
extend the top intersection product to nef b-divisors. However, note that the
Monge--Amp\`ere measures of Section \ref{sec:monge-ampere-meas} are
defined in a Euclidean setting (no integral structure). Therefore we
will use the comparison in section \ref{sec:bridge} to relate the
rational tropical intersection product with the 
Euclidean one.

\subsection{Intersection products of toroidal
  divisors}\label{sec:int-prod-tor}

We will give the definition of $\R$-toroidal divisors and give
a bijection between the set of $\R$-toroidal divisors on $(X,U)$ and the set of
piecewise linear functions on 
$\Pi$. Moreover, following \cite{GR}, we recall the tropicalization
of an algebraic cycle and relate algebraic and tropical intersection
numbers. We end this section by showing that one can
compute tropically the top intersection numbers of divisors.  
 
\begin{Def} \label{def:6}
  Let $\Div(X)_{\R}$ be the vector space of $\R$-Cartier divisors on
  $X$. We define the subspace
  $\Div(X,U)_{\R} \subseteq \text{Div}(X)_{\R}$ consisting of
  $\R$-Cartier divisors which are supported on the boundary
  $B=X \setminus U$.  It is a finite dimensional $\R$-vector space and
  it is endowed with a canonical topology.  Elements in
  $\Div(X,U)_{\R}$ are called \emph{$\R$-toroidal Cartier divisors
    (of $(X,U)$)}. From now on we will only consider
  $\R$-divisors. Thus to
  simplify notation, we will omit the coefficient ring $\R$
  from the notation and call $\R$-toroidal Cartier divisors simply
  toroidal Cartier divisors.
\end{Def}

 Recall from Remark \ref{rem:rays-boundary}, that we have a bijective
 correspondence between the set of rays of $\Pi$ and the set of
 irreducible components of the boundary divisor $B = X \setminus U$.
 For a ray $\tau \in \Pi(1)$ we denote by $B_{\tau }$ the
 corresponding component and by $v_{\tau} =
v_{\tau/0_{\tau }}$ the primitive lattice normal vector spanning the
ray $\tau$. 
\begin{Def}\label{def:functiontoroidalsheaf}
Let $D \in \Div(X,U)$ be a toroidal Cartier divisor on $X$. The
corresponding piecewise linear function  
\[
\phi_D \colon |\Pi| \longrightarrow \R
\]
defined on $\Pi$ is given by
\begin{displaymath}
  \phi_D|_{\sigma }=-D|_{\Star(S^{\sigma })}\in M^{\sigma }=(N^{\sigma
  })^{\vee},
\end{displaymath}
where, for a cone $\sigma \in \Pi$, we denote by $S^{\sigma }$
the corresponding stratum of $X$. Since we are assuming that the
toroidal embedding is smooth, we can give an alternative description
going through conical functions. The function $\phi_{D}$ is linear on each
cone and, for $\tau \in \Pi (1)$,
\[
\phi _{D}(v_{\tau})= -\ord _{B_{\tau }}(D),
\]
where $\ord _{B_{\tau }}(D)$ denotes the coefficient of $B_{\tau}$ in $D$.
\end{Def}

By Remark \ref{rem:rays-boundary}, any piecewise linear function defined on $\phi$ on
$\Pi$ induces a toroidal Cartier divisor $D_{\phi}$ by setting
$D|_{B_{\tau}} = -\phi(v_{\tau})$ for any $\tau \in \Pi(1)$. These
constructions are clearly inverses of each other. 
We summarize the above in the following proposition, which
can be seen as a special case of \cite[Theorem 9*]{KKMD}.

\begin{prop}\label{prop:functiontoroidaldivisor}
The map 
\begin{align}\label{toroidaldivisorscomplex}
\Div(X,U) \longrightarrow \PL(\Pi)
\end{align}
given by the assignment
\[
D \longmapsto \phi_D
\]
is an isomorphism of finite dimensional real vector spaces.
\end{prop}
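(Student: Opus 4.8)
The plan is to produce a two-sided inverse to $D\mapsto\phi_D$ and to observe that both the source and the target are finite-dimensional real vector spaces, so that a linear bijection is automatically an isomorphism of topological vector spaces. Throughout, write $\{B_i\}_{i\in I}$ for the irreducible components of $B$.

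First I would pin down the two spaces. Since $X$ is smooth, a toroidal Cartier divisor is just an $\R$-combination $\sum_{i\in I}a_iB_i$, so $\Div(X,U)$ is free of rank $\#I$ with basis $\{B_i\}_{i\in I}$. For the target, Example \ref{exa:toroidalnc} gives that the conical complex $\Pi$ attached to the snc pair $(X,B)$ is smooth, hence simplicial; by Lemma \ref{lemm:3} the canonical map $\PL(\Pi)\to\Conic(\Pi)$ is an isomorphism, and $\Conic(\Pi)$ is by definition the space of all maps $\Pi(1)\to\R$, of dimension $\#\Pi(1)$. By Remark \ref{rem:rays-boundary} the assignment $\tau\mapsto B_\tau$ identifies $\Pi(1)$ with $I$, so the source and the target have the same finite dimension $\#I$.

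Next I would dispose of the one point that genuinely needs an argument: that $\phi_D$ really is a well-defined element of $\PL(\Pi)$, i.e.\ that the local data $-D|_{\Star(S^\sigma)}\in M^\sigma$ of Definition \ref{def:functiontoroidalsheaf} glue to a function that is linear on each cone. This is a special case of \cite[Theorem 9*]{KKMD}; alternatively, in the present smooth setting it is immediate, because the primitive generators of the rays of a maximal cone $\sigma$ form a $\Z$-basis of $N^\sigma$, so there is a unique element of $M^\sigma$ realizing the prescribed values $\phi_D(v_\tau)=-\ord_{B_\tau}(D)$ on the rays, and these choices agree on common faces. Granting this, linearity of $D\mapsto\phi_D$ is clear, since $D\mapsto\ord_{B_\tau}(D)$ and restriction of divisors to combinatorial open sets are linear.

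Finally I would check bijectivity via the inverse already recorded before the statement, $\phi\mapsto D_\phi$ with $\ord_{B_\tau}(D_\phi)=-\phi(v_\tau)$. In one direction $\ord_{B_\tau}(D_{\phi_D})=-\phi_D(v_\tau)=\ord_{B_\tau}(D)$ for every $\tau\in\Pi(1)$, whence $D_{\phi_D}=D$. In the other, $\phi_{D_\phi}$ is linear on each cone of $\Pi$ and satisfies $\phi_{D_\phi}(v_\tau)=-\ord_{B_\tau}(D_\phi)=\phi(v_\tau)$; since $\Pi$ is simplicial, two piecewise linear functions agreeing on every ray coincide (this is exactly Lemma \ref{lemm:3}), so $\phi_{D_\phi}=\phi$. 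Hence $D\mapsto\phi_D$ is a linear bijection between finite-dimensional real vector spaces and therefore an isomorphism in the stated sense. I expect no real obstacle here; the only subtlety is the well-definedness of $\phi_D$, which is made routine by the smoothness assumption.
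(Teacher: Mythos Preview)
Your proposal is correct and follows essentially the same approach as the paper: the paper's proof amounts to the sentence preceding the proposition (``These constructions are clearly inverses of each other'') together with a citation of \cite[Theorem~9*]{KKMD}, and you simply spell out this inverse-pair verification in detail. One minor remark: Lemma~\ref{lemm:3} is stated for Euclidean conical complexes rather than rational ones, but the argument is identical and the transfer is harmless.
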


We recall the definition of the tropicalization of an algebraic cycle
class on $X$ as is explained in \cite[Section 4.2]{GR}.

For $0 \leq k \leq n$ we denote by $Z_k(X)= Z_k(X)_{\R}$ the group of
algebraic $k$-cycles on $X$ with real coefficients. For any $C \in
Z_k(X)$, the assignment
\[
\trop(C ) \colon \Pi(k) \longrightarrow \R,
\] 
given by
\[
\sigma \longmapsto \operatorname{deg}\left(C  \cdot
  \left[\overline{S^{\sigma}}\right] \right), 
\] 
is a $k$-dimensional Minkowski weight on $\Pi$. Moreover, this Minkowski
weight is compatible with taking refinements and thus the
following definition makes sense.
\begin{Def} Let notations be as above. The map 
\[
\trop \colon Z_k(X) \longrightarrow Z_k(|\Pi|) 
\]
given by 
\[
C \longmapsto \left[\trop(C )\right]
\]
is called the \emph{tropicalization map}. In particular, if $[X]$ is
the fundamental cycle of $X$, then $\trop([X])$ is the
tropical cycle that is represented by the Minkowski weight in
$M_n(\Pi)$ given by assigning
weight one to all $n$-dimensional cones of $\Pi$. We set
\begin{displaymath}
  [|\Pi|]\coloneqq \trop([X]).
\end{displaymath}

\end{Def}

\begin{rem}\
  \begin{enumerate}
  \item The cycle $[|\Pi|]$ is a balancing condition on the quasi-embedded rational
    conical space $|\Pi|$.
  \item The tropicalization map factors through the group of numerical
    classes of $k$-cycles on $X$ (with real coefficients), which is
    denoted by $N_k(X)_{\R} = N_k(X)$, and hence we get a well defined
    tropicalization map  
    \[
      N_k(X) \longrightarrow Z_k(|\Pi|)
    \]
    which we also denote by $\trop$. 
  \end{enumerate}
\end{rem}

The following theorem relates algebraic intersection numbers and
tropical intersection numbers. Recall that the rational conical space $|\Pi|$ is quasi-embedded.   
\begin{theorem}\label{the:trop-alg-numbers}
 Let $D \in \Div(X,U)$ be a toroidal divisor. Then for every
 $k$-dimensional cycle class $[C]$ in $N_k(X)$ the following tropical
 cycle classes agree 
\[
\trop\left(D \cdot [C]\right) = [\phi_D] \odot \trop([ C]) \in Z_{k-1}(|\Pi|),
\]
where on the right hand side, the class $[\phi_D]$ is seen as an
element in $\PL(|\Pi|)$.      
\end{theorem}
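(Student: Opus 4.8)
The plan is to reduce the statement to a purely local computation on each combinatorial open set $\Star(S^\tau)$ for $\tau$ a codimension-one cone, by unwinding both sides of the claimed identity cone by cone. Since tropical cycles are defined as limits over subdivisions and the tropicalization map is compatible with refinements, I may first pass to a common smooth subdivision $\Pi'$ of $\Pi$ on which $\phi_D$ is linear on each cone; after pulling back the Minkowski weight representing $\trop([C])$ along this subdivision, it suffices to prove the equality of weights at the level of a fixed complex. Then both $\trop(D\cdot[C])$ and $[\phi_D]\odot\trop([C])$ are $(k-1)$-dimensional lattice Minkowski weights on $\Pi'$, so it is enough to check that they assign the same value to each cone $\tau\in\Pi'(k-1)$.

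Fix such a $\tau$, corresponding to a stratum $S^\tau$ of codimension $k-1$. The left-hand side evaluated at $\tau$ is $\deg\bigl(D\cdot[C]\cdot[\overline{S^\tau}]\bigr)$. The right-hand side, by Definition \ref{def:4rat}, is
\[
\sum_{\substack{\sigma\in\Pi'(k)\\ \tau\prec\sigma}} -\phi_{D,\sigma}\bigl(\tilde v_{\sigma/\tau}\bigr)\,\trop([C])(\sigma) + \phi_{D,\tau}\Bigl(\sum_{\substack{\sigma\in\Pi'(k)\\ \tau\prec\sigma}} \trop([C])(\sigma)\,\tilde v_{\sigma/\tau}\Bigr),
\]
with $\trop([C])(\sigma)=\deg\bigl([C]\cdot[\overline{S^\sigma}]\bigr)$. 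The strata $\overline{S^\sigma}$ for $\sigma\succ\tau$ of codimension $k$ are exactly the boundary components of $\overline{S^\tau}$ inside the smooth variety $\overline{S^\tau}$; restricting $D$ to $\overline{S^\tau}$ and using the projection formula, $\deg\bigl(D\cdot[C]\cdot[\overline{S^\tau}]\bigr)$ becomes the degree on $\overline{S^\tau}$ of $D|_{\overline{S^\tau}}$ intersected with the restricted cycle class. Now $D|_{\overline{S^\tau}}$ decomposes as a divisor supported on $\bigcup_{\sigma\succ\tau}\overline{S^\sigma}$ plus a divisor meeting the generic stratum; the coefficient of $\overline{S^\sigma}$ in $D|_{\overline{S^\tau}}$ is controlled precisely by the values $\phi_{D,\sigma}(\tilde v_{\sigma/\tau})$ versus $\phi_{D,\tau}$, which is where the two terms in the lattice product come from — the correction term $\phi_{D,\tau}(\sum\cdots)$ accounts for the fact that $\phi_D$ need not vanish on $N^\tau$. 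This is essentially the content of \cite[Theorem 9*]{KKMD} describing how Cartier divisors restrict to strata, combined with the standard identification of the lattice normal vector $v_{\sigma/\tau}$ with the order of vanishing along $\overline{S^\sigma}$; I would cite \cite[Section~4.2]{GR} for the analogous statement in the integral setting and indicate the (purely formal) modification needed here.

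The main obstacle I anticipate is bookkeeping the correction term and the choice of liftings $\tilde v_{\sigma/\tau}$ correctly: one must verify that the combination
\[
-\phi_{D,\sigma}(\tilde v_{\sigma/\tau}) + \text{(contribution of }\phi_{D,\tau}\text{)}
\]
equals the multiplicity of $\overline{S^\sigma}$ in $D|_{\overline{S^\tau}}$ as a divisor on the smooth variety $\overline{S^\tau}$, independently of the lift — the independence of lifts is already noted in Definition \ref{def:4rat}, so the real work is matching the intrinsic geometric multiplicity with the combinatorial expression. Once that local identity is in hand, summing over $\sigma\succ\tau$ and applying the projection formula to push down to $\deg(\cdots\cdot[\overline{S^\tau}])$ gives the equality of the two weights at $\tau$, and since $\tau$ was arbitrary the theorem follows; compatibility with numerical equivalence is automatic because $\trop$ already factors through $N_k(X)$.
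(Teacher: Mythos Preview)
Your plan is sound and would work, but the paper dispatches the theorem in one line: by Remark~\ref{rem:cp}, the quasi-embedding hypothesis guarantees that every piecewise linear function on $|\Pi|$ is combinatorially principal in the sense of \cite{GR}, and then the statement is exactly \cite[Proposition~4.17]{GR}. What you have written is essentially a sketch of how that proposition is proved in \cite{GR}; the cone-by-cone check, the restriction of $D$ to $\overline{S^{\tau}}$, and the identification of the correction term with the part of $\phi_D$ living on $N^{\tau}$ are precisely the ingredients of that argument. So your approach is not different in substance, only in the decision to unpack the citation rather than invoke it.

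Two minor remarks on the details. First, since $D\in\Div(X,U)$, the function $\phi_D$ is already linear on each cone of $\Pi$ by construction (Definition~\ref{def:functiontoroidalsheaf}), so the preliminary passage to a subdivision $\Pi'$ is unnecessary here. Second, your description of the bookkeeping is accurate: the independence of the choice of liftings $\tilde v_{\sigma/\tau}$ is already built into Definition~\ref{def:4rat}, and the geometric content is indeed that $-\phi_{D,\sigma}(\tilde v_{\sigma/\tau})+\phi_{D,\tau}(\tilde v_{\sigma/\tau})$ computes the order of $D$ along $\overline{S^{\sigma}}$ viewed inside $\overline{S^{\tau}}$. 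If you intend to write this out rather than cite \cite{GR}, that local identity is the only thing that needs a careful line; everything else is the projection formula.
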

\begin{proof}
Using Remark \ref{rem:cp}, this follows from
\cite[Proposition 4.17]{GR}.
\end{proof}
Hence we can compute top intersection numbers of
 arbitrary divisors using the tropical intersection product.
 
\begin{cor}\label{cor:changecomplex} Let $X_{0}$ be a proper variety over
  $k$ and $D_1, \dotsc, D_n \in
  \Div(X_{0})$ a collection of divisors on $X_{0}$. Write $B_{0}=|D_{1}|\cup
  \dots \cup |D_{n}|$. Let $\pi\colon X_{1}\to X_{0}$ be a proper birational
  morphism and $B_{1}$ a snc divisor of $X_{1}$ satisfying the following two properties:
  \begin{enumerate}
   \item $\pi
  ^{-1}(B_{0})\subseteq B_{1}$. 
  \item $X_{1}\setminus B_{1}\hookrightarrow
  X_{1}$ is a toroidal embedding such that $|\Pi _{1}|\coloneqq |\Pi
  _{(X_{1},X_{1}\setminus B_{1})}|$ is  
  a quasi-embedded rational conical space. 
  \end{enumerate}
  Then the algebraic top intersection
  number $\deg(D_1 \dotsm D_n)$ can be computed tropically on the
  quasi-embedded, balanced rational conical complex $\Pi_{1}$ as  
\[
\deg\left(D_1 \dotsm D_n\right) = \deg\left(\phi_{\pi^*D_1} \odot\dotsm
\odot\phi_{\pi^*D_n} \odot [|\Pi_{1}|]\right).
\]
If we choose a Euclidean norm in $N_{\R}^{|\Pi _{1}|}$ and denote by 
\enquote{ $\widehat{\phantom {a}} $ }the map between lattice Minkowski cycles and
Euclidean Minkowski cycles, then the algebraic top intersection
number can also be computed as
\begin{displaymath}
  \deg\left(D_1 \dotsm D_n \right)= \deg\left(\phi_{\pi^* D_1} \cdot \dotsm \cdot
  \phi_{\pi^* D_n} \cdot \widehat{\left[|\Pi_{1}|\right]}\right).
\end{displaymath}
\end{cor}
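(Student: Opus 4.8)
The plan is to pull everything back to the toroidal model $X_1$ and then apply Theorem~\ref{the:trop-alg-numbers} repeatedly, finishing with the comparison of Proposition~\ref{propnorm} for the Euclidean statement.

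First I would note that, reading $\Div(X_0)$ as the space of $\R$-Cartier divisors, each pullback $\pi^{\ast}D_i$ is a well-defined $\R$-Cartier divisor on $X_1$ whose support is contained in $\pi^{-1}(|D_i|)\subseteq\pi^{-1}(B_0)\subseteq B_1=X_1\setminus U$; hence $\pi^{\ast}D_i\in\Div(X_1,U)$ is a toroidal divisor and the piecewise linear functions $\phi_{\pi^{\ast}D_i}\in\PL(|\Pi_1|)$ of Definition~\ref{def:functiontoroidalsheaf} make sense. Since $\pi$ is proper and birational, $\pi_{\ast}[X_1]=[X_0]$, so the projection formula for Cartier divisors gives
\[
\deg(D_1\dotsm D_n)=\deg\!\bigl(D_1\dotsm D_n\cdot\pi_{\ast}[X_1]\bigr)=\deg\!\bigl(\pi^{\ast}D_1\dotsm\pi^{\ast}D_n\cdot[X_1]\bigr).
\]

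Next I would argue by induction on the number of factors, using Theorem~\ref{the:trop-alg-numbers} on the quasi-embedded toroidal embedding $U\hookrightarrow X_1$. Starting from $[X_1]\in N_n(X_1)$, which tropicalizes to $[|\Pi_1|]$ by definition, the theorem gives $\trop(\pi^{\ast}D_n\cdot[X_1])=[\phi_{\pi^{\ast}D_n}]\odot[|\Pi_1|]$; substituting the resulting class and iterating,
\[
\trop\!\bigl(\pi^{\ast}D_1\dotsm\pi^{\ast}D_n\cdot[X_1]\bigr)=[\phi_{\pi^{\ast}D_1}]\odot\dotsm\odot[\phi_{\pi^{\ast}D_n}]\odot[|\Pi_1|]\in Z_0(|\Pi_1|).
\]
To conclude the first formula I then use that $\trop$ preserves the degree of a $0$-cycle: for $X_1$ irreducible of dimension $n$ there is a single vertex $\sigma_0\in\Pi_1(0)$ with $\overline{S^{\sigma_0}}=X_1$, so for any numerical class $\rho$ of $0$-cycles the Minkowski weight $\trop(\rho)$ takes the value $\deg(\rho\cdot[X_1])=\deg(\rho)$ on $\sigma_0$, whence $\deg(\trop(\rho))=\deg(\rho)$ by the definition of the degree of a tropical $0$-cycle. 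Applying this with $\rho=\pi^{\ast}D_1\dotsm\pi^{\ast}D_n\cdot[X_1]$ and combining with the two displays above yields $\deg(D_1\dotsm D_n)=\deg\!\bigl(\phi_{\pi^{\ast}D_1}\odot\dotsm\odot\phi_{\pi^{\ast}D_n}\odot[|\Pi_1|]\bigr)$.

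For the Euclidean version I would fix a Euclidean norm on $N^{|\Pi_1|}_{\R}$ and apply Proposition~\ref{propnorm} $n$ times, obtaining
\[
\widehat{\phi_{\pi^{\ast}D_1}\odot\dotsm\odot\phi_{\pi^{\ast}D_n}\odot[|\Pi_1|]}=\phi_{\pi^{\ast}D_1}\cdot\dotsm\cdot\phi_{\pi^{\ast}D_n}\cdot\widehat{[|\Pi_1|]}.
\]
Since the tropical cycle on the left is $0$-dimensional, Remark~\ref{normalizedproduct} shows that normalization leaves it unchanged, so it has the same degree as $\phi_{\pi^{\ast}D_1}\odot\dotsm\odot\phi_{\pi^{\ast}D_n}\odot[|\Pi_1|]$, and the second formula follows from the first. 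The construction is mostly bookkeeping once Theorem~\ref{the:trop-alg-numbers} is in hand; the points that need a little care are the compatibility of $\trop$ with degrees of $0$-cycles (equivalently, that the algebraic top intersection number equals the degree of the tropical $0$-cycle produced by the iterated $\odot$-product) and checking that the hypotheses of Theorem~\ref{the:trop-alg-numbers} — smoothness and properness of $X_1$ together with quasi-embeddedness of $|\Pi_1|$ — are exactly what the statement provides.
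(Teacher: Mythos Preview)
Your proof is correct and follows essentially the same route as the paper's: reduce to $X_1$ via the projection formula, apply Theorem~\ref{the:trop-alg-numbers} iteratively, then invoke Proposition~\ref{propnorm} and Remark~\ref{normalizedproduct} for the Euclidean statement. Your version is in fact more explicit than the paper's on one point the paper leaves implicit, namely why $\deg\circ\trop=\deg$ on $0$-cycles (your observation that the unique vertex corresponds to the stratum $U$ with closure $X_1$); the paper simply writes the chain of equalities without spelling this out.
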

\begin{proof}
  The pair $(X',B')$ exists thanks to resolution of singularities
  \cite{HIR}, Proposition
  \ref{prop:changedivisor} and Theorem \ref{the:allowablemodification}. 
By Theorem \ref{the:trop-alg-numbers} and the functoriality of the
intersection product, we get 
\[
\deg\left(D_1 \dotsm D_n\right) = \deg\left(\pi^* D_1 \dotsm \pi^*D_n\right) =
\deg\left(\phi_{\pi^* D_1}
\odot \dotsm \odot 
\phi_{\pi^* D_n}\odot [|\Pi_{1}|]\right),   
\]
proving the first statement.
By Proposition \ref{propnorm} and Remark
\ref{normalizedproduct}, we get
\[
\phi_{\pi^* D_1} \odot \dotsm \odot \phi_{\pi^* D_n}\odot [|\Pi_{1}|] = \phi_{\pi^*
  D_1} \cdot\dotsm \cdot \phi_{\pi^* D_n} \cdot \widehat{\left[|\Pi_{1}|\right]},
\]
which proves the second one.
\end{proof}

\subsection{Positivity properties of cycles}
\label{sec:posit-prop-cycl}

From now on we assume we have chosen a Euclidean norm on $N_{\R}^{|\Pi| }$
and denote by $\widehat{|\Pi|} $ the induced Euclidean conical complex. We
also denote by $\widehat{[|\Pi| ]}$ the balancing condition obtained by
normalizing $[|\Pi |]=\trop([X])$. 

Since the map \enquote{$\trop$} between algebraic cycles on $X$ and Minkowski
cycles on $|\Pi| $ is neither surjective nor injective, the
positivity notions in the algebraic and tropical worlds do not
correspond exactly. Recall that $N^{k}(X)$ denotes the space of
 $\R$-cycles of codimension~$k$ up to numerical equivalence. The cone
$\Peff_{n-k}(X)=\Peff^{k}(X)$ of pseudo-effective cycles is the
closure of the cone generated by classes of effective cycles. The dual
cone is the cone of numerically effective cycles
\begin{displaymath}
  \Nef^{k}(X)=\{\beta \in N^{k}(X) \mid \beta\cdot \alpha \ge 0, \
  \forall \alpha \in \Peff_{k}(X)\}
\end{displaymath}
The space of nef toroidal Cartier divisors is denoted by
$\Div^{+}(X,U)$. 

The first relation between positivity in the algebraic and the
tropical worlds is the following.

\begin{lemma}\label{lemm:5}\
  \begin{enumerate}
  \item\label{item:5} Let $\alpha \in \Nef^{n-k}(X)$, then $\trop(\alpha )\in
    Z_{k}(|\Pi| )$ is a positive cycle. Therefore $\widehat{\trop(\alpha )}\in
    Z_{k}(\widehat {|\Pi|} )$ is also positive.
  \item\label{item:6} If $D\in \Div^{+}(X,U)$ is a nef toroidal Cartier
    divisor then the corresponding piecewise linear function $\phi _{D}$ defined on $\widehat{\Pi}$ 
    is weakly concave in the sense of Definition \ref{def:7}
    (\cite[Definition 4.6]{BBS:caps}). 
  \end{enumerate}
\end{lemma}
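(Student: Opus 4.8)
The plan is to deduce both parts from the tropicalization map together with the comparison between the lattice and Euclidean intersection products from Section~\ref{sec:bridge}, so the real content is a dimension count plus two invocations of earlier results. For part~\eqref{item:5}, I would fix $\alpha \in \Nef^{n-k}(X)$ and a cone $\sigma \in \Pi(k)$. The stratum $S^{\sigma}$ has codimension $k$ in $X$, so its closure $\overline{S^{\sigma}}$ is an effective algebraic cycle of dimension $n-k$ and its class lies in $\Peff_{n-k}(X)$. By the definition of $\Nef^{n-k}(X)$ as the dual cone of $\Peff_{n-k}(X)$ one gets $\trop(\alpha)(\sigma) = \deg\big(\alpha \cdot [\overline{S^{\sigma}}]\big) \ge 0$; since $\trop(\alpha)$ vanishes off $\Pi(k)$, it is a positive Minkowski weight, hence $\trop(\alpha)\in Z_k(|\Pi|)$ is a positive cycle. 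For the normalization it then suffices to recall that $\widehat{\trop(\alpha)}$ is represented by the weight $\sigma \mapsto \vol(\sigma)\,\trop(\alpha)(\sigma)$ with $\vol(\sigma) > 0$ for every cone, so positivity is preserved.

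For part~\eqref{item:6}, let $D \in \Div^{+}(X,U)$. Nefness of $D$ as a divisor means $D\cdot C \ge 0$ for every irreducible curve $C$ on $X$, hence $D \in \Nef^{1}(X)$ with the convention used here. Applying part~\eqref{item:5} with $k = n-1$, the cycle $\trop(D) = \trop(D\cdot[X]) \in Z_{n-1}(|\Pi|)$ is positive. By Theorem~\ref{the:trop-alg-numbers} applied with $[C] = [X]$ one has $\trop(D\cdot[X]) = [\phi_D]\odot\trop([X]) = \phi_D \odot [|\Pi|]$, so $\phi_D \odot [|\Pi|]$ is a positive lattice tropical cycle. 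By Proposition~\ref{propnorm} its normalization equals $\phi_D \cdot \widehat{[|\Pi|]}$, which is therefore a positive Euclidean tropical cycle; since $\widehat{[|\Pi|]}$ is precisely the balancing condition of $\widehat{\Pi}$, this is exactly the assertion that $\phi_D$ is weakly concave on $\widehat{\Pi}$ in the sense of Definition~\ref{def:7}.

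No step is genuinely hard here: the only delicate point is the bookkeeping, namely checking that $\overline{S^{\sigma}}$ has the dimension needed to pair with $\alpha$ under the (slightly non-standard) convention relating $\Nef^{\bullet}$ and $\Peff_{\bullet}$, and that a nef divisor does land in $\Nef^{1}(X)$ as defined above. Once Theorem~\ref{the:trop-alg-numbers} and Proposition~\ref{propnorm} are in hand, both statements follow formally.
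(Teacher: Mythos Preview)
Your proof is correct and follows essentially the same approach as the paper. The paper's argument is a terse three lines: it observes $\trop(\alpha)(\sigma) = \alpha \cdot \overline{S^{\sigma}} \ge 0$ by nefness and effectiveness, then for part~\eqref{item:6} simply invokes the equality $\phi_D \odot [|\Pi|] = \trop(D)$; you are more explicit in citing Theorem~\ref{the:trop-alg-numbers} to justify that equality and Proposition~\ref{propnorm} to pass from the lattice product to the Euclidean one, which is arguably a cleaner way to bridge to the Euclidean definition of weak concavity.
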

\begin{proof}
  For any cone $\sigma \in \Pi (k)$,
  \begin{displaymath}
    \trop(\alpha )(\sigma )=\alpha \cdot \overline{S^{\sigma }}\ge 0,
  \end{displaymath}
  because $\alpha $ is nef and $\overline{S^{\sigma }}$ is an effective
  cycle. The second statement follows from the first and the equality
  \begin{displaymath}
    \phi _{D}\odot [\Pi ] = \trop(D).
  \end{displaymath}
\end{proof}

We next see several examples that show that the above lemma is almost
all we can expect.

\begin{exa}\label{exm:2}
  Let $X$ be the blow up of $\P^{2}$ at a point. Let $B$ be a snc divisor such
  that the exceptional divisor $E$ is contained in the support of $B$
  and such that the associated complex is quasi-embedded. Then $[E]$
  is an effective cycle but $\trop([E])$ is not positive. So the
  statement \eqref{item:5} of the above lemma can not be extended to
  pseudo-effective cycles.
\end{exa}

\begin{exa}\label{exm:1}
  Let again $X$ be the blow up of $\P^{2}$ at a point $p$. Let
  $r_{1}$, $r_{2}$ be the strict
  transforms of two different lines passing through $p$ and 
  $\ell_{i}$, $i=1,2,3$ be the strict transforms of three general lines on
  $\P^{2}$. Put $B=\ell_{1}\cup
  \ell_{2}\cup \ell_{3}\cup r_{1}\cup r_{2}$. There are rational functions
  $f_{1}$, $f_{2}$ and $g$ with
  \begin{displaymath}
    \div(f_{1})=\ell_{1}-\ell_{2},\quad
    \div(f_{2})=\ell_{2}-\ell_{3},\quad
    \div(g)=r_{1}-r_{2}.
  \end{displaymath}
  This easily implies that the complex associated to the toroidal
  embedding $X\setminus B\hookrightarrow X$ is quasi-embedded. Let $E$
  denote again the exceptional divisor. Then 
  $\trop([E])\ge 0$ because $E$ is not contained in
  $B$. Nevertheless $E$ is not nef. Therefore the converse of
  statement \eqref{item:5} is not true. Put $D= \ell_{1}-r_{1}$. Since
  $D\sim E$, we see that $\phi _{D}$ is weakly concave but $D$ is not
  nef. Therefore the
  converse of statement \eqref{item:6} does not hold.
\end{exa}

\begin{exa}
  We put ourselves in the situation of Example \ref{exm:1} and
  let $B'=\ell_{1}\cup
  \ell_{2}\cup \ell_{3}$. Then the obtained conical complex is still
  quasi-embedded, but the map $\trop$ satisfies
  $\trop(\pm[E])=0$. Therefore $\trop(\alpha )\ge 0$ does not even
  imply that $\alpha $ is effective.    
\end{exa}

\begin{exa}\label{exm:5}
  Let $X$ be an elliptic curve, $O$ the marked point i.e. the neutral
  element for the group law, $P$ a non
  torsion point and $Q=-P$. Put $B=\{O,P,Q\}$. There is a rational
  function $f$ on $X$ such that
  \begin{displaymath}
    \div(g)=2O-P-Q.
  \end{displaymath}
  Therefore the conical complex $\Pi$ associated to $X\setminus
  B\hookrightarrow X$ consists of three rays $\tau _{O}$, $\tau _{P}$
  and $\tau _{Q}$ with lattice generators $v _{O}$, $v _{P}$
  and $v _{Q}$. The lattice $N^{|\Pi|}$ is one dimensional and can
  be identified with $\Z$. Then the quasi-embedding is given by
  \begin{displaymath}
    \iota(v_{O})=2,\qquad \iota(v_{P})=\iota (v_{Q})=-1.
  \end{displaymath}
  For simplicity a Minkowski weight $c\in M_{1}(\Pi )$ will be
  denoted as a triple of real numbers $(c_{O},c_{P},c_{Q})$. The
  balancing condition $[|\Pi|]$ is the Minkowski weight
  $(1,1,1)$. The Minkowski weight $(1,2,0)$ is also positive but it
  does not come from the geometry of $X$. Consider the divisor
  $D=-P+2Q$. This is a nef divisor because it has positive
  degree. Nevertheless
  \begin{displaymath}
    \phi _{D}\odot (1,2,0)=-2<0.
  \end{displaymath}
  Hence it is not true that nef divisors always give rise to concave
  functions in the sense of Definition \ref{def:7}.
\end{exa}

We end this section showing that the set of nef toroidal Cartier divisors in
allowable modifications of $X$ provides an admissible family of concave functions on $\widehat{|\Pi|}$ in the sense of Definition~\ref{tropinef}.
\begin{lemma}\label{lem:neftropicollection} Let $U\hookrightarrow X$
  be as at the beginning of Section \ref{sec:inters-theory-toro}. Then
  the set $\mathcal{C}$ of piecewise linear functions on $|\Pi| $ given by  
\[
\mathcal{C}=\left\{\phi_{D} \, \big{|} \, D\in \Div^{+}(X',U),\
  \pi\colon X'\to X \text{ allowable modification}\right\}
\]
forms an admissible family of concave functions on
$\widehat{|\Pi|}$ in the sense of Definition \ref{tropinef}.
\end{lemma}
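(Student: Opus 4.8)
The plan is to verify the three conditions of Definition \ref{tropinef} for the family $\mathcal{C}$, using the dictionary between nef toroidal divisors and tropical intersection theory developed in this section. First I would recall that, by Proposition \ref{prop:functiontoroidaldivisor} and Theorem \ref{the:allowablemodification}, every element $\phi_D \in \mathcal{C}$ is a piecewise linear function on $|\Pi|$ (a nef toroidal divisor $D$ on an allowable modification $X'$ corresponds to a subdivision of $\Pi$, and $\phi_D$ is linear on the cones of that subdivision). Thus $\mathcal{C} \subseteq \PL(|\Pi|) = \PL(\widehat{|\Pi|})$ makes sense. Throughout I would pass freely between the rational and Euclidean pictures via Proposition \ref{propnorm}, which guarantees $\widehat{\phi \odot z} = \phi \cdot \widehat z$, so that positivity of a tropical cycle is preserved under normalization.

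For condition \ref{item:7}, I take $\phi_{D_1}, \dotsc, \phi_{D_r} \in \mathcal{C}$; after passing to a common allowable modification $\pi \colon X' \to X$ we may assume all $D_i$ are nef toroidal Cartier divisors on the same $X'$. By iterating Theorem \ref{the:trop-alg-numbers} with $[C] = [X']$ we get
\[
\phi_{D_1} \odot \dotsm \odot \phi_{D_r} \odot [|\Pi'|] = \trop(D_1 \dotsm D_r \cdot [X']),
\]
and since a product of nef divisors is a nef (in particular pseudo-effective) cycle on $X'$, Lemma \ref{lemm:5}\eqref{item:5} shows this tropical cycle is positive. Normalizing via Proposition \ref{propnorm} and using that pull-back along the subdivision $\Pi' \geq \Pi$ preserves positivity (Definition \ref{def:1}), the corresponding cycle $\phi_{D_1} \dotsm \phi_{D_r} \cdot \widehat{[|\Pi|]}$ in $Z_{n-r}^+(\widehat{|\Pi|})$ is positive, which is exactly \ref{item:7}. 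Condition \ref{item:8}, that $\mathcal{C}$ is a convex cone, is immediate: for $D, D'$ nef toroidal divisors on allowable modifications $X', X''$, pull both back to a common allowable modification dominating both (which exists since subdivisions of $\Pi$ form a directed set by Proposition \ref{prop:equi}), and then $\lambda D + \mu D'$ for $\lambda, \mu \geq 0$ is again nef and toroidal, with $\phi_{\lambda D + \mu D'} = \lambda \phi_D + \mu \phi_{D'}$.

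The main obstacle is condition \ref{item:4}: density of $\mathcal{C} - \mathcal{C}$ in $C^0(\mathbb{S}^{|\Pi|})$. Here I would argue as follows. By Remark \ref{lem:dense} (Stone--Weierstrass), $\PL(|\Pi|)$ is dense in $C^0(\mathbb{S}^{|\Pi|})$, so it suffices to show every piecewise linear function on $|\Pi|$ lies in $\mathcal{C} - \mathcal{C}$. Given $\phi \in \PL(|\Pi|)$, after passing to an allowable modification $X'$ we may assume $\phi = \phi_D$ for a toroidal Cartier divisor $D$ on $X'$. Now use the hypothesis from the beginning of Section \ref{sec:inters-theory-toro} (and Remark \ref{rem:ample-effective}) that there is an ample $\R$-divisor $A$ supported on the boundary with positive multiplicity along every boundary component. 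Since $A$ is ample on $X'$ (or, more carefully, pulling back $A$ and adding a small ample correction on $X'$; here one may need to further modify), for $t \gg 0$ the divisors $tA + D$ and $tA$ are both ample, hence nef, toroidal Cartier divisors on $X'$, so $\phi_{tA+D}, \phi_{tA} \in \mathcal{C}$ and $\phi_D = \phi_{tA+D} - \phi_{tA} \in \mathcal{C} - \mathcal{C}$. The delicate point, which I expect to require the most care, is ensuring that the ample divisor can be arranged to remain ample, effective, and boundary-supported after passing to the allowable modification $X'$ — this uses that $\pi^* A$ plus a small $\pi$-ample boundary divisor on $X'$ is again ample with support in the boundary of $X'$, which is where Remark \ref{rem:ample-effective} is genuinely needed.
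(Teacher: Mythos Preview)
Your proposal is correct and follows essentially the same approach as the paper's proof: verify the three conditions of Definition~\ref{tropinef} using Theorem~\ref{the:trop-alg-numbers} together with Lemma~\ref{lemm:5}\eqref{item:5} for positivity, the directed structure of subdivisions for the cone property, and the ``add a large multiple of an ample toroidal divisor'' trick for density. The delicate point you flag---arranging an ample toroidal divisor on the allowable modification $X'$---is handled in the paper exactly as you anticipate, by passing to a further allowable modification $X'' \to X'$ carrying such a divisor (invoking the standing hypothesis from the beginning of Section~\ref{sec:inters-theory-toro} and Remark~\ref{rem:ample-effective}); your version is in fact slightly more explicit than the paper's in invoking Lemma~\ref{lemm:5} for condition~\ref{item:7}.
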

\begin{proof}
We have to show that $\mathcal{C}$ satisfies the three properties
given in Definition \ref{tropinef}.
To show property \eqref{item:7}, let $D_{1},\dots,D_{r}$ be
nef Cartier toroidal divisors on the allowable modifications
$X_{\Pi_1}, \dots , 
X_{\Pi_r}$, respectively. Let $\Pi'$ be a smooth common refinement of
$\Pi_1, \dots, \Pi_r$ and denote by $\pi_i \colon X_{\Pi'} \to
X_{\Pi_i}$ the corresponding allowable modification. Then, by Theorem
\ref{the:trop-alg-numbers} and Proposition \ref{propnorm}, we get 
\begin{displaymath}
  \deg(\phi_{D_1} \dotsm \phi_{D_r} \cdot\widehat{\left[|\Pi |\right]})
  =\deg\left(\pi_1^*[D_1]\dotsm \pi_r^*[D_r] \cdot
    [X_{\Pi'}]\right) \geq 0,
\end{displaymath}
where the last inequality uses the fact that the pullback of a nef
divisor under a proper map is
nef and Kleiman's criterion for nefness.

Property \eqref{item:8} is clear. 

To prove \eqref{item:4} we first recall that the set of
piecewise linear functions on $|\Pi|$ with rational slopes is
dense in the set of continuous conical functions on $|\Pi|$ with the
topology of uniform convergence on compacts. Thus it is enough to show
that any piecewise linear function on $|\Pi|$ with rational
slopes belongs to $\mathcal{C}-\mathcal{C}$. 

A piecewise linear
function $\phi $ with rational slopes on $|\Pi|$ defines a Cartier toroidal
divisor $D_{\phi }$ on an allowable modification $X'$ of $X$. Since we are
assuming that there is an ample divisor on $X$ whose support is
contained in the boundary divisor $B$, there is an allowable modification $\pi \colon X''\to
X'$ and an 
ample toroidal divisor $A$ on $X''$. We can choose an integer $r>0$ such
that $C=\pi ^{\ast}D_{\phi }+ rA$ is also ample. Therefore
\begin{displaymath}
  \phi =\phi _{C}-\phi _{rA},\quad \phi _{C}, \phi _{rA}\in \mathcal{C}, 
\end{displaymath}
completing the proof.
\end{proof}

\subsection{Toroidal b-divisors}

We give the definition of the toroidal Riemann--Zariski space
$\mathfrak{X}_{U}$ of $(X,U)$ and define Cartier and Weil toroidal
b-divisors on $\mathfrak{X}_{U}$. Extending the bijection between
toroidal divisors on $X$ and piecewise linear functions defined on $\Pi$
(Proposition \ref{prop:functiontoroidaldivisor}), we give a bijection
between Cartier (respectively Weil) toroidal b-divisors
and piecewise linear (respectively conical) functions on the conical rational polyhedral space
$|\Pi|$. Moreover, extending the results in
Section~\ref{sec:int-prod-tor} we show that the top intersection
product of Cartier toroidal b-divisors can be computed
tropically on the rational conical space.

Consider the directed set $R_{\sm}(\Pi)$ of smooth rational conical subdivisions of
$\Pi$. For any $\Pi' \in
R_{\sm}(\Pi)$, we denote by $X_{\Pi'}$ the corresponding smooth
proper allowable modification from Theorem
\ref{the:allowablemodification}. The \emph{toroidal Riemann--Zariski
  space} of $(X,U)$ is defined formally as the inverse limit in the category of locally ringed spaces
\[
\mathfrak{X}_{U} \coloneqq \varprojlim_{\Pi' \in R_{\sm}\left(\Pi\right)}X_{\Pi'}
\] 
with maps given by the proper toroidal birational morphisms $X_{\Pi''}
\to X_{\Pi'}$ defined whenever $\Pi'' \geq \Pi'$.

Toroidal b-divisors can be viewed as divisors on $\mathfrak{X}_{U}$:
\begin{Def}\label{def:toroidalbdivisor}
The group of \emph{$\R$-Cartier toroidal b-divisors} on $X$ is defined
to be the injective limit
\[
\CabDiv(\mathfrak X_{U})_{\R} \coloneqq \varinjlim_{\Pi' \in
  R_{\sm}(\Pi)} \Div\left(X_{\Pi'},U\right)_{\R}, 
\]
with maps given by the pull-back map of $\R$-toroidal divisors. It is
endowed with its inductive limit topology, called the \emph{strong
  topology}.
 
The group of \emph{$\R$-Weil toroidal b-divisors} on $X$ is defined to
be the projective limit 
\[
\WbDiv(\mathfrak X_{U})_{\R} \coloneqq \varprojlim_{\Pi' \in
  R_{\sm}(\Pi)} \Div\left(X_{\Pi'},U\right)_{\R},
\]
with maps given by the push-forward map of $\R$-toroidal divisors
(note that the $X_{\Pi'}$'s are smooth, hence we can identify Cartier
and Weil divisors). It is endowed with its projective limit topology,
called the \emph{weak topology}.

We will write $\R$-Cartier and $\R$-Weil toroidal b-divisors in bold
notation $\D$ to distinguish them from classical $\R$-toroidal
divisors $D$.  
\end{Def}
We make the following remarks.
\begin{rem} 
\begin{enumerate}
\item As before, to simplify notation, we will usually omit the
  coefficient ring $\R$ from the notation (real coefficients being
  always implicit) and refer to $\R$-toroidal b-divisors simply as toroidal b-divisors. 
\item Since the set of smooth subdivisions is directed, a toroidal Cartier
  b-divisor can be represented by a pair $(X_{\Pi '},D)$, where
  $X_{\Pi' }$ is the allowable modification of $X$ given by the
 rational conical complex $\Pi ' \in R_{\sm}(\Pi)$. Two pairs represent the same Cartier b-divisor
  if there is a common refinement and the pull back of both divisors
  to the corresponding modification agree. 
\item We can view a Weil toroidal b-divisor as a family 
\[
\D = \left(D_{\Pi'}\right)_{\Pi' \in R_{\sm}(\Pi)},
\]
where for each $\Pi' \in R_{\sm}(\Pi)$, we have that $D_{\Pi'} \in
\Div(X_{\Pi'},U)$, and these elements are compatible under
push-forward. 

\item We can view a Cartier toroidal b-divisor as a Weil toroidal b-divisor 
\[
\pmb{E} = \left(E_{\Pi'}\right)_{\Pi' \in R_{\sm}(\Pi)},
\]
for which there is a model $X_{\widetilde{\Pi}}$ for some
$\widetilde{\Pi} \in R_{\sm}(\Pi)$ such that for every other 
model $X_{\Pi''}$ with $\Pi'' \geq \widetilde{\Pi}$ in
$R_{\sm}(\Pi)$, the incarnation $E_{\Pi''}$ is the pull-back of
$E_{\widetilde{\Pi}}$ on $X_{\widetilde{\Pi}}$. 

Hence, we have the inclusion  
\[
 \CabDiv(\mathfrak X_{U}) \subseteq \WbDiv(\mathfrak X_{U}),
 \]
 and we may refer to a Weil toroidal b-divisor just as a toroidal
 b-divisor. 
 
 \item A net $\left(\pmb{Z}_i\right)_{i \in I}$ converges to a
   b-divisor $\pmb{Z}$ in $\WbDiv(\mathfrak X_{U})$ if and only if for
   each $\Pi' \in R_{\sm}(\Pi)$ we have that
   $\left(Z_{i,\Pi'}\right)_{i \in I}$ converges to $Z_{\Pi'}$
   coefficient-wise.  
 \item By the following Proposition \ref{characterization} we have that the spaces of Weil and Cartier b-divisors on $X$ agree with the spaces of piecewise linear and conical functions on $|\Pi|$, respectively. However, they are different from the ones on $\widehat{|\Pi|}$, because the allowed
   subdivisions are different (see Remark \ref{rem:2}).
\end{enumerate}
\end{rem}
We have the following combinatorial characterization of toroidal
b-divisors. Recall that $|\Pi| (\Q)$ denotes the set of points of
$|\Pi|$ with rational coordinates in any cone of $\Pi $.
\begin{prop}\label{characterization} 
The map $\D \mapsto \phi_{\D}$ from Proposition
\ref{prop:functiontoroidaldivisor} induces homeomorphisms 
\[
\CabDiv(\mathfrak X_{U}) \simeq \PL\left(|\Pi|\right)
\]
and 
\[
\WbDiv(\mathfrak X_{U}) \simeq \Conic\left(|\Pi|\right)
\]
between toroidal b-divisors on $X$ and functions on $|\Pi|$.
Thus, the space $\WbDiv(\mathfrak X_{U})$ is homeomorphic to the
  space of conical functions $|\Pi| (\Q)\to \R$ with the topology of
  pointwise convergence. 
\end{prop}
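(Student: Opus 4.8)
The plan is to construct the two bijections level by level from the finite-dimensional isomorphisms of Proposition~\ref{prop:functiontoroidaldivisor}, check compatibility with the structure maps of the two limits, and then pass to the (co)limit; the topology statements will come for free at each level and will be preserved because the limit topologies on the two sides are defined in the same way. First I would recall that for each smooth rational conical complex $\Pi'$ refining $\Pi$, Proposition~\ref{prop:functiontoroidaldivisor} gives an isomorphism of finite-dimensional topological vector spaces $\Div(X_{\Pi'},U) \xrightarrow{\sim} \PL(\Pi')$, $D\mapsto\phi_D$, pinned down on rays by $\phi_D(v_\tau)=-\ord_{B_\tau}(D)$. Everything then reduces to the compatibility of this family with pull-back (for the Cartier/injective limit) and push-forward (for the Weil/projective limit).

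For the Cartier case I would argue as follows. If $\Pi''\ge\Pi'$ in $R_{\sm}(\Pi)$, $\pi\colon X_{\Pi''}\to X_{\Pi'}$ is the associated toroidal morphism (Theorem~\ref{the:allowablemodification}) and $f\colon\Pi''\to\Pi'$ the subdivision morphism, then $\phi_{\pi^{\ast}D}=f^{\ast}\phi_D$. This is immediate from the ray description: $f$ is the identity on $|\Pi|$ as a map of topological spaces and merely refines the cone structure, so for a ray $\tau$ of $\Pi''$ contained in a cone $\sigma$ of $\Pi'$ both sides evaluate at $v_\tau$ to the linear functional $\phi_D|_\sigma$ (alternatively one invokes Theorem~\ref{the:trop-alg-numbers}). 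Hence $D\mapsto\phi_D$ commutes with the transition maps, and taking the direct limit over $R_{\sm}(\Pi)$ yields $\CabDiv(\mathfrak X_U)\cong\varinjlim_{\Pi'\in R_{\sm}(\Pi)}\PL(\Pi')$. Since $|\Pi|$ is a rational conical space, any complex in $R(|\Pi|)$ lies in the equivalence class of $\Pi$, hence admits a common refinement with $\Pi$, which can then be refined to a smooth one; thus $R_{\sm}(\Pi)$ is cofinal in $R(|\Pi|)$ and the colimit on the right is $\PL(|\Pi|)$. As the transition maps are injective and each level map is a linear homeomorphism, the inductive-limit (``strong'') topology on $\CabDiv(\mathfrak X_U)$ matches the one on $\PL(|\Pi|)$.

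For the Weil case I would first use that each $X_{\Pi'}$ with $\Pi'\in R_{\sm}(\Pi)$ is smooth, so $\Pi'$ is simplicial and Lemma~\ref{lemm:3} identifies $\PL(\Pi')\cong\Conic(\Pi')$; under this identification the push-forward of piecewise linear functions of Definition~\ref{def:push-cartier} is just the restriction of a conical function from $\Pi''(1)$ to $\Pi'(1)$. Then I would check $\phi_{\pi_{\ast}D}=f_{\ast}\phi_D$: the push-forward of a prime boundary divisor $B_\tau$ of $X_{\Pi''}$ is $B_\tau$ when $\tau\in\Pi'(1)$ and is $0$ when $\tau$ is a new ray of $\Pi''$, since then its image in $X_{\Pi'}$ has codimension $\ge 2$; hence $\pi_{\ast}\big(\sum_\tau a_\tau B_\tau\big)=\sum_{\tau\in\Pi'(1)}a_\tau B_\tau$, which on conical functions is exactly forgetting the extra rays. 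Passing to the inverse limit over $R_{\sm}(\Pi)$ and using the identification $\Conic(|\Pi|)=\varprojlim_{\Pi'}\PL(\Pi')$ recalled in Section~\ref{sec:toroidal-embeddings} (the rational analogue of \eqref{eq:3}) gives $\WbDiv(\mathfrak X_U)\cong\Conic(|\Pi|)$. For the topology, a net $(\D_i)$ converges in the weak topology iff its incarnations converge coefficient-wise on every model, i.e. iff $\phi_{\D_i}(v_\tau)\to\phi_{\D}(v_\tau)$ for every ray $\tau$ occurring in some $\Pi'\in R_{\sm}(\Pi)$; since every point of $|\Pi|(\Q)$ is a positive rational multiple of such a $v_\tau$ (refine $\Pi$ so that its ray becomes a ray of a smooth subdivision) and conical functions are homogeneous, this is precisely pointwise convergence on $|\Pi|(\Q)$, which is the topology on $\Conic(|\Pi|)$; the final assertion of the proposition is then just the definition of $\Conic(|\Pi|)$.

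I expect the main obstacle to be the push-forward compatibility in the Weil case, namely making precise that new boundary divisors of a toroidal modification have image of codimension at least two so that push-forward of divisors coincides with ``forgetting rays'' on conical functions, together with the cofinality bookkeeping required to replace the index set $R_{\sm}(\Pi)$ by $R(|\Pi|)$ in both limits.
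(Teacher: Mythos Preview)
Your proof is correct and follows essentially the same approach as the paper, which dispatches the result in one line by invoking Proposition~\ref{prop:functiontoroidaldivisor} and the definition of conical functions on a rational conical space (Remark~\ref{rem:2}); you have simply written out the level-wise compatibility checks, cofinality argument, and topology identification that the paper leaves implicit. One minor quibble: the parenthetical appeal to Theorem~\ref{the:trop-alg-numbers} for the pull-back compatibility is misplaced (that theorem concerns tropicalization of intersection products, not pull-back of support functions), but your primary ray-by-ray argument is already sufficient.
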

\begin{proof}
This follows from the definition of conical functions on a rational
conical complex (Remark \ref{rem:2}) and Proposition \ref{prop:functiontoroidaldivisor}. 
\end{proof}
As in the case of functions, one can define the top intersection
product of a collection of toroidal b-divisors when there is at most
one Weil b-divisor involved (all the other must be Cartier).  
\begin{Def}\label{def:intersection-cartier}
Let $\D_1, \dotsc, \D_n$ be toroidal b-divisors on $X$ and assume
that at most one of the $\D_i$'s is not Cartier. Without lost of
generality, assume that $\D_1$ is not Cartier. Let $\Pi' \in
R_{\sm}(\Pi)$ such that all of the $\D_i$'s for
$i = 2 ,\dotsc, n$ are determined on $X_{\Pi'}$. The top intersection
product $\langle \D_1 \dotsm \D_n\rangle $ is defined by  
\[
\langle \D_1 \dotsm \D_n\rangle  \coloneqq \deg(D_{1, \Pi'} \cdot D_{2,\Pi'} \dotsm D_{n,\Pi'}).
\]
\end{Def}
By the projection formula in algebraic geometry, this is independent
of the choice of the common refinement $\Pi'$.  

%
\begin{rem}
It follows from Corollary \ref{cor:changecomplex} that the top
intersection product of a collection of toroidal b-divisors where at most one of them is Weil can be computed tropically.  
\end{rem}

\subsection{Top intersection product of nef toroidal Weil b-divisors} 

Let $X$, $U$, $B$ and $\Pi $ be as at the beginning of section
\ref{sec:inters-theory-toro}. Chose a Euclidean metric on $N_{\R}^{|\Pi| }$
and denote by $\widehat{|\Pi|} $ the induced Euclidean conical space.

We start by defining \emph{nef}
Cartier and \emph{nef} Weil toroidal b-divisors. Using Lemma
\ref{lem:neftropicollection} we deduce that the set of piecewise linear functions on $\widehat{|\Pi|}$ which are induced from nef Cartier toroidal
b-divisors forms an admissible family $\mathcal{C}$ of concave functions in the sense of Definition~\ref{tropinef}. Using the
results of Section 
\ref{sec:monge-ampere-meas}, we deduce that the top intersection product of
Cartier b-divisors can be extended continuously to nef Weil toroidal
b-divisors and, by linearity, to differences of nef Weil toroidal
b-divisors.  

\begin{Def}\label{def:nefbdiv}
A Cartier toroidal b-divisor $\pmb{E} = \left(E_{\Pi'}\right)_{\Pi'
  \in R_{\sm}(\Pi)} \in \CabDiv(\mathfrak{X}_{U})$ is said to be \emph{nef} if $E_{\Pi'} \in
\Div(X_{\Pi'},U)$ is nef for some (hence any) determination $E_{\Pi'}$
of $\pmb{E}$. The set of nef toroidal Cartier b-divisors forms a
cone in $\CabDiv(\mathfrak{X}_{U})$, denoted by $\CabDiv^{+}(\mathfrak{X}_{U})$. The cone of nef toroidal Weil
b-divisors is the closure  
in $\WbDiv(\mathfrak X_{U})$ of  $\CabDiv^{+}(\mathfrak{X}_{U})$. It is
denoted by $\WbDiv^{+}(\mathfrak{X}_{U})$.
\end{Def} 

\begin{rem} \label{rem:5} By Proposition \ref{characterization} and
  Remark \ref{rem:2} the inclusion between Cartier and Weil b-divisors 
  $\CabDiv(\mathfrak{X}_U)\hookrightarrow \WbDiv(\mathfrak{X}_U)$ can be factored as
  \small
  \[
    \CabDiv\left(\mathfrak{X}_{U}\right)=\PL(|\Pi|)\to
    \PL\left(\widehat{|\Pi|}\right)\to
    \Conic\left(\widehat{|\Pi|}\right)\to
    \Conic(|\Pi| )=\WbDiv\left(\mathfrak{X}_{U}\right).
  \]
  \normalsize
By Lemma \ref{lem:neftropicollection} the image of
$\CabDiv^{+}\left(\mathfrak{X}_{U}\right)$ in $\PL\left(\widehat{|\Pi|}\right)$ forms an
admissible family of concave functions. By Theorem \ref{thm:1} the
elements in the closure of $\CabDiv^{+}\left(\mathfrak{X}_{U}\right)$ in
$\Conic\left(\widehat{|\Pi|}\right)$ are continuous functions. Since a
continuous function is determined by its values on a dense subset and
the topologies on $\Conic\left(\widehat{|\Pi|}\right)$ and $\Conic(|\Pi| )$
are both that of pointwise convergence, we deduce that the closure of
$\CabDiv^{+}\left(\mathfrak{X}_{U}\right)$ in $\Conic\left(\widehat{|\Pi|}\right)$ is
naturally homeomorphic to its closure in $\Conic(|\Pi|)$. The last one
can be identified with the cone $\WbDiv^{+}(\mathfrak{X}_U)$. As a
consequence, in order to work with nef toroidal Weil b-divisors,
there is no difference between working on $|\Pi|$ or in
$\widehat{|\Pi|}$.    
\end{rem}

\begin{rem}\label{rem:4}
  A consequence of the preceding remark and Theorem \ref{thm:1} is
  that the closure of $\CabDiv^{+}(\mathfrak{X}_{U})$ in
  $\WbDiv(\mathfrak{X}_{U})$ agrees with its sequential
  closure. It follows that
  if $\pmb{D}\in \WbDiv^{+}(\mathfrak{X}_{U})$, then there is a sequence
  $(\D_{i})_{i\in \N}$ of nef toroidal Cartier b-divisors converging to
  $\pmb{D}$. Moreover, when we view $\pmb{D}$ and the $\D_{i}$ as
  functions on $|\Pi|$, the convergence is uniform on compacts.  
\end{rem}
From now on we fix
$\mathcal{C}=$ image of $\CabDiv^{+}(\mathfrak{X}_{U})$ in $\PL\left(\widehat{|\Pi|}\right)$ as the admissible family of
concave functions. 

The following theorem describes nef toroidal b-divisors
combinatorially. In view of Remark \ref{rem:5}, it is a direct
consequence of Theorem \ref{thm:1}. 

\begin{theorem}\label{the:nefextensionfunction}
Let $\D$ be a nef toroidal b-divisor on $X$. Then the corresponding
function $\phi_{\D}$ on $|\Pi|(\Q)$ of Proposition
\ref{characterization} extends to a continuous function on $|\Pi|$,
which we denote also by $\phi_{\D}$. The function $\phi
_{\D}$ defines a $\mathcal{C}$-concave function on $\S^{\widehat{|\Pi|
  }}$ in the sense of Definition~\ref{def:cont-b-c-nef}..
\end{theorem}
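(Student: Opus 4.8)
The plan is to reduce the statement entirely to what has already been assembled, treating it as a bookkeeping exercise. First I would recall the factorization from Remark~\ref{rem:5}, namely
\[
\CabDiv(\mathfrak X_{U})=\PL(|\Pi|)\to
\PL(\widehat{|\Pi|})\to
\Conic(\widehat{|\Pi|})\to
\Conic(|\Pi|)=\WbDiv(\mathfrak X_{U}),
\]
so that a nef Weil toroidal b-divisor $\D$ is, by Definition~\ref{def:nefbdiv}, a limit of nef Cartier toroidal b-divisors $\D_i$, and the associated functions $\phi_{\D_i}$ lie in the admissible family $\mathcal{C}$ (Lemma~\ref{lem:neftropicollection}). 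By Remark~\ref{rem:4}, since $\CabDiv^{+}(\mathfrak X_U)$ is sequentially closed in its closure, one may take the $\D_i$ to form a \emph{sequence}.

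The core step is then just to invoke Theorem~\ref{thm:1}. The space $\Conic(\widehat{|\Pi|})_{\mathcal{C}}$ of $\mathcal{C}$-concave functions is, by that theorem, contained in $C^0(\S^{\widehat{|\Pi|}})$, and on it the topology of pointwise convergence coincides with that of uniform convergence on $\S^{\widehat{|\Pi|}}$. Thus the pointwise limit $\phi_{\D}$ of the sequence $\phi_{\D_i}$ (which exists on $|\Pi|(\Q)$ by definition of the projective-limit topology on $\WbDiv(\mathfrak X_U)$, and extends by Theorem~\ref{thm:3}/Theorem~\ref{thm:2} via weak concavity, Remark~\ref{rem:weakly-concave}) is automatically a continuous, indeed weakly concave, conical function on all of $|\Pi|$, and it is by construction in the closure $\Conic(\widehat{|\Pi|})_{\mathcal{C}}$ of $\mathcal{C}$, i.e.\ it is $\mathcal{C}$-concave in the sense of Definition~\ref{def:cont-b-c-nef}. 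The continuity of the extension and the independence of the choice of approximating sequence are exactly the content of Theorem~\ref{thm:1} together with Theorem~\ref{thm:3}. The one technical point to be careful about is that the topologies on $\Conic(|\Pi|)$ (functions on $|\Pi|(\Q)$) and $\Conic(\widehat{|\Pi|})$ (functions on all of $|\Pi|$) are \emph{a priori} different; but, as noted in Remark~\ref{rem:5}, since continuous conical functions are determined by their restriction to the dense set $|\Pi|(\Q)$ and both topologies are those of pointwise convergence, the closure of $\CabDiv^{+}$ computed in either space is the same, so the extension of $\phi_{\D}$ to $|\Pi|$ is unambiguous.

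The main obstacle — if one can call it that — is therefore not analytic but conceptual: one must carefully track which convex space ($|\Pi|$ versus $\widehat{|\Pi|}$, rational versus Euclidean, $\Q$-points versus all points) each function lives in and verify that the identifications of Remark~\ref{rem:5} genuinely preserve the relevant structure, so that the hypotheses of Theorem~\ref{thm:1} (which is stated for a balanced Euclidean conical space) apply to $\widehat{|\Pi|}$ with the admissible family $\mathcal{C}$ fixed above. Once this dictionary is in place, the proof is a one-line appeal to Theorems~\ref{thm:1} and \ref{thm:3}. Concretely, I would write: pick a sequence $(\D_i)_{i\in\N}$ in $\CabDiv^{+}(\mathfrak X_U)$ with $\D_i\to\D$ (Remark~\ref{rem:4}); then $\phi_{\D_i}\in\mathcal{C}$ are weakly concave (Remark~\ref{rem:weakly-concave}) and converge pointwise on the dense subset $|\Pi|(\Q)\subseteq|\Pi|$ to $\phi_{\D}$; by Theorem~\ref{thm:3} they converge pointwise everywhere on $|\Pi|$ to a weakly concave, hence continuous (Theorem~\ref{thm:2}), conical function, which must therefore be the claimed extension of $\phi_{\D}$; finally this limit lies in $\Conic(\widehat{|\Pi|})_{\mathcal{C}}$ by Definition~\ref{def:cont-b-c-nef}, so $\phi_{\D}$ is $\mathcal{C}$-concave on $\S^{\widehat{|\Pi|}}$, as asserted. \qedhere
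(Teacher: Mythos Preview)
Your proposal is correct and follows essentially the same approach as the paper: the paper simply states (in the sentence immediately preceding the theorem) that, in view of Remark~\ref{rem:5}, the result is a direct consequence of Theorem~\ref{thm:1}, and your argument is a careful unpacking of exactly that implication. One minor expositional point: you cite Remark~\ref{rem:4}, which in the paper's ordering appears \emph{after} the theorem; there is no logical circularity since that remark depends only on Remark~\ref{rem:5} and Theorem~\ref{thm:1}, but in a self-contained write-up you could instead extract the sequence directly from the proof of Theorem~\ref{thm:1} (the Fr\'echet--Urysohn argument there already produces a convergent sequence from any convergent net).
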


Since we can view toroidal Cartier b-divisors as toroidal Weil b-divisor, there is a
potential ambiguity when we say that a toroidal Cartier b-divisor is nef. Lemma
\ref{lem:ambiguity} below shows that this 
potential ambiguity is not a real ambiguity. Before stating it we make
the following remark.

\begin{rem}\label{rem:negativity}
  Let $\Pi'' \in R_{\sm}\left(\Pi\right)$ and let $D$ be a nef toroidal
  divisor on $X_{\Pi''}$. Then for any $\Pi' \leq \Pi''$ in
  $R_{\sm}\left(\Pi\right)$ we have that $D \leq \pi^*\pi_* D$,
  where $\pi \colon X_{\Pi''} \to X_{\Pi'}$ denotes the corresponding
  proper birational morphism.  Indeed, the divisor $D - \pi^*\pi_* D$
  is $\pi$-nef (i.e. has non-negative intersection with every curve
  contracted by $\pi$) and is $\pi$-exceptional. Hence, from the
  well-known Negativity Lemma (see e.g. \cite[Lemma 3.39]{KM}), it follows that $D - \pi^*\pi_* D \leq 0$.
\end{rem}
\begin{lemma}\label{lem:ambiguity}
Let $\D$ be a nef Weil toroidal b-divisor and assume that $\D$ is
Cartier. Then $\D$ is a nef toroidal Cartier b-divisor in the sense of
Definition \ref{def:nefbdiv}. 
\end{lemma}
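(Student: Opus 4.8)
The plan is to reduce the statement to the nefness of the incarnation of $\D$ on a fixed model and then test nefness against curves. Fix a smooth proper toroidal model $Y=X_{\widetilde\Pi}$ (with $\widetilde\Pi\in R_{\sm}(\Pi)$) on which the Cartier b-divisor $\D$ is already determined, and let $D:=D_{\widetilde\Pi}\in\Div(Y,U)$ be its incarnation; the claim is exactly that $D$ is nef. Since $\D\in\WbDiv^{+}(\mathfrak X_{U})$, Remark~\ref{rem:4} provides a sequence $(\D_{i})_{i\in\N}$ of nef toroidal Cartier b-divisors converging to $\D$, and because convergence in $\WbDiv(\mathfrak X_{U})$ is coefficientwise on every model, the incarnations $D_{i}':=(\D_{i})_{\widetilde\Pi}$ converge to $D$ in the finite dimensional space $\Div(Y,U)$.

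First I would treat an irreducible curve $C\subseteq Y$ with $C\cap U\neq\emptyset$. Fixing $i$, choose $\Pi_{i}'\in R_{\sm}(\Pi)$ refining both $\widetilde\Pi$ and the model on which $\D_{i}$ is determined, with induced proper birational morphism $\rho_{i}\colon X_{\Pi_{i}'}\to Y$, and let $E_{i}$ be the incarnation of $\D_{i}$ on $X_{\Pi_{i}'}$; being the pull-back of a nef toroidal divisor, $E_{i}$ is nef. By Remark~\ref{rem:negativity}, $G_{i}:=\rho_{i}^{\ast}\rho_{i\ast}E_{i}-E_{i}=\rho_{i}^{\ast}D_{i}'-E_{i}$ is an effective $\rho_{i}$-exceptional divisor. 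Since allowable modifications are isomorphisms over $U$ and $C$ meets $U$, the strict transform $\widetilde C_{i}$ of $C$ satisfies $\rho_{i\ast}\widetilde C_{i}=C$ and is not contained in the $\rho_{i}$-exceptional locus, hence $G_{i}\cdot\widetilde C_{i}\geq 0$; the projection formula together with the nefness of $E_{i}$ then gives
\[
0\leq E_{i}\cdot\widetilde C_{i}=D_{i}'\cdot C-G_{i}\cdot\widetilde C_{i}\leq D_{i}'\cdot C,
\]
and letting $i\to\infty$ yields $D\cdot C\geq 0$. The same argument in fact covers every curve whose generic point lies in a one-dimensional stratum, because a one-dimensional cone admits no proper subdivision, so all modifications in $R_{\sm}(\Pi)$ are isomorphisms near such strata.

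It remains to handle a curve $C\subseteq B=Y\setminus U$ contained in a closed stratum $\overline{S^{\sigma}}$ with $\dim\sigma\geq 2$ and generic point in the open stratum $S^{\sigma}$; I expect this to be the main obstacle, exactly because then $C$ lies in the center of the modifications carrying $\D_i$. Here the idea is to restrict the whole picture to $\overline{S^{\sigma}}$: this is a smooth projective variety, $S^{\sigma}\hookrightarrow\overline{S^{\sigma}}$ is again a toroidal embedding whose associated conical complex is the star of $\sigma$, and it can be arranged to satisfy the standing hypotheses of Section~\ref{sec:inters-theory-toro}. Restriction of toroidal divisors along $\overline{S^{\sigma}}$ is continuous, is compatible with the proper toroidal birational morphisms, and sends nef divisors to nef divisors; so the restriction of $\D$ should be a Cartier toroidal b-divisor on $\overline{S^{\sigma}}$ which is nef as a Weil b-divisor, and the lemma applied inductively in dimension $<n$ (the base case $n\leq 1$ being trivial, as then $R_{\sm}(\Pi)=\{\Pi\}$ and Weil and Cartier b-divisors both coincide with $\Div(Y,U)$) would give that this restriction is nef Cartier; its incarnation on $\overline{S^{\sigma}}$ computes $D\cdot C$, whence $D\cdot C\geq 0$. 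The technical heart is to make this restriction precise: using the correspondence of Proposition~\ref{prop:functiontoroidaldivisor} between toroidal divisors and piecewise linear functions and the description of the star complex, one must check that the components of $D$ through $\overline{S^{\sigma}}$ still restrict to an object fitting into the toroidal picture on $\overline{S^{\sigma}}$ (if necessary first replacing $D$ by $D+\varepsilon A$ for the ample effective toroidal divisor $A$ of the standing assumptions and letting $\varepsilon\to 0$), so that the inductive hypothesis genuinely applies. Granting these two cases, $D\cdot C\geq 0$ for every irreducible curve $C\subseteq Y$, so by Kleiman's criterion $D$ is nef, i.e.\ $\D$ is a nef toroidal Cartier b-divisor in the sense of Definition~\ref{def:nefbdiv}.
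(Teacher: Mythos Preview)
Your treatment of curves lying in strata of codimension at most one is essentially the paper's core computation. The divergence is in how you handle a curve $C$ contained in a closed stratum $\overline{S^{\sigma}}$ with $\dim\sigma\geq 2$, and here the inductive scheme has a genuine gap. The restriction $D|_{\overline{S^{\sigma}}}$ is in general \emph{not} a toroidal divisor for the induced toroidal embedding $S^{\sigma}\hookrightarrow\overline{S^{\sigma}}$: if $\sigma$ corresponds to the index set $J\subseteq I$, the boundary of $\overline{S^{\sigma}}$ is cut out by the $B_k$ with $k\notin J$, whereas the summands $B_i|_{\overline{S^{\sigma}}}$ for $i\in J$ are normal-bundle classes that have no reason to be linearly equivalent to combinations of those boundary components (this is automatic in the toric case, not in the toroidal one). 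Hence neither $\D|_{\overline{S^{\sigma}}}$ nor the restrictions of the approximating $\D_i$ are toroidal b-divisors on $\overline{S^{\sigma}}$, and the induction hypothesis cannot be invoked. Adding $\varepsilon A$ does not make normal-bundle classes toroidal, and enlarging the boundary of $\overline{S^{\sigma}}$ changes the toroidal Riemann--Zariski space so that the restricted $\D_i$ need no longer be nef toroidal Cartier b-divisors for the new structure.

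The paper avoids restriction altogether and instead goes \emph{up}, using that $\D$ is Cartier. If the minimal boundary intersection $B_{K'}$ containing $C$ has codimension $\geq 2$, one passes to a subdivision $\Pi''\geq\Pi'$ (think of blowing up the deep stratum) and finds a curve $\tilde C\subseteq X_{\Pi''}$ with $\pi_*\tilde C=aC$, $a>0$, contained in a boundary intersection of codimension $1$. Since $\D$ is determined on $X_{\Pi'}$ one has $D_{\Pi''}=\pi^{*}D_{\Pi'}$, so by the projection formula it suffices to show $D_{\Pi''}\cdot\tilde C\geq 0$; this is where the Cartier hypothesis does real work. After this reduction every curve admits a well-defined strict transform on each higher model $X_{\Pi_i}$, and exactly your Case~1 computation (effective $\pi_i$-exceptional difference meeting the strict transform properly, plus nefness of $D_{i,\Pi_i}$) gives $D_{i,\Pi'}\cdot C\geq 0$ for all $i$, hence $D_{\Pi'}\cdot C\geq 0$ in the limit.
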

\begin{proof}
Let $\Pi' \in R_{\sm}\left(\Pi\right)$ be such that $\D$ is determined on $X_{\Pi'}$. We have to show that $D_{\Pi'}$ is nef on $X_{\Pi'}$. For this,
let $C \subseteq X_{\Pi'}$ be an irreducible curve. It suffices to show that the
intersection product $D_{\Pi'} \cdot C$ is non-negative.

Let $\left\{B'_i \; \big{|} \; i \in I'\right\}$ be the irreducible
components of the boundary divisor $B'=X_{\Pi'} \setminus U$ and for
any subset $J' \subseteq I'$, denote by $B_{J'}$ the boundary intersection
$\bigcap_{j \in J'}B_j$ (in particular, $B_{\emptyset} = X_{\Pi'}$).
Let $K' \subseteq I'$ such that $B_{K'}$ is the minimal boundary intersection
containing $C$.

If $\codim\left(B_{K'}\right) \geq 2$, we can find a
subdivision $\Pi'' \geq
\Pi'$ in $R_{\sm}\left(\Pi\right)$ and a curve $\tilde{C} \subseteq X_{\Pi''}$
such that the following two conditions are satisfied: 
\begin{enumerate}
\item $\pi_*\tilde{C} = aC$ for some natural number $a > 0$.
\item Denoting by $\left\{B''_i \; \big{|} \; i \in I''\right\}$ the
  irreducible components of the boundary divisor $B'' = X_{\Pi''}
  \setminus U$, the minimal boundary intersection $B_{K''}$ containing $\tilde{C}$
  (for some subset $K'' \subseteqq I''$) satisfies that
  $\codim\left(B_{K''}\right) = 1$.
\end{enumerate}
If $\pi^*\tilde{C}D_{\Pi'} \cdot \tilde{C} \geq 0$, then using the
projection formula, we get that $D_{\Pi'} \cdot \pi_*\tilde{C} =
D_{\Pi'} \cdot C \geq 0$. Hence, replacing $\Pi '$ by $\Pi ''$, we may
assume that $B_{K'}$ has
codimension $\leq 1$. 

Let $\left\{\D_{i}\right\}_{i\in \N}$ be a sequence of
nef toroidal Cartier b-divisors converging to $\D$. We view them as toroidal Weil
b-divisors. In particular, on
$X_{\Pi'}$, we have that  
\[
D_{i, \Pi'} \xrightarrow[i\in \N]{} D_{\Pi'}
\]
component-wise,
and by continuity of the intersection product,
\begin{align}\label{eq:convergence}
D_{i, \Pi'}\cdot C \xrightarrow[i\in \N]{}
  D_{\Pi'} \cdot C.
\end{align}
Now, for each $i\in \N$, let $\Pi_{i} \in
R_{\sm}\left(\Pi\right)$ be a determination of $\D_{i}$. We
may assume that $\Pi_{i} \geq \Pi'$. Also, we let $\pi_{i}
\colon X_{\Pi_{i}} \to X_{\Pi'}$ denote the corresponding proper
birational morphism. Let $C_{i}$ be
the strict transform of the curve $C$ under $\pi_{i}$. Note that
this is well defined by the assumption that the minimal boundary
intersection that
contains $C$ has codimension less or equal than one.

Using the projection formula, we compute
\begin{align*}
D_{i ,\Pi'} \cdot C &= D_{i,\Pi'} \cdot {\pi_{i}}_*C_{i} \\
&= \pi_{i}^*D_{i,\Pi'} \cdot C_{i} \\
&= \pi_{i}^*{\pi_{i}}_*D_{i, \Pi_{i}} \cdot C_{i} \\
&= \left( \pi_{i}^*{\pi_{i}}_*D_{i, \Pi_{i}} - D_{i,\Pi_{i}}\right)\cdot C_{i} + D_{i, \Pi_{i}} \cdot C_{i} \\
&\geq 0.
\end{align*}
Indeed, the first summand is non-negative since it follows from Remark~\ref{rem:negativity} that both the terms
$\pi_{i}^*{\pi_{i}}_*D_{i, \Pi_{i}} -
D_{i,\Pi_{i}}$ and $C_{i}$ are effective and
intersect properly. The second summand is non-negative since
$D_{i,\Pi_{i}}$ is nef and $C_{i}$ is effective. 

By \eqref{eq:convergence}, $D_{\Pi'}\cdot C$ is a limit of
non-negative real numbers. Hence it is itself non-negative. This concludes
the proof. 
\end{proof}

The next is the main result of this paper.

\begin{theorem}\label{thm:intersectionbdivisors}
  The restriction of the top intersection product of
  toroidal Cartier b-divisors (Definition \ref{def:intersection-cartier}) to nef toroidal
  Cartier b-divisors
  \begin{displaymath}
    (\CabDiv^{+}(\mathfrak X_{U}))^{n}\longrightarrow \R
  \end{displaymath}
  can be extended continuously to a symmetric multilinear intersection
  product of nef toroidal
  Weil b-divisors
  \begin{displaymath}
    (\WbDiv^{+}(\mathfrak X_{U}))^{n}\longrightarrow \R.
  \end{displaymath}
  If $\D_1, \dotsc, \D_n$ is a collection of nef toroidal Weil
  b-divisors on $X$, then their top intersection product is given by
\[
\langle \D_1 \dotsm \D_n \rangle =
\int_{\mathbb{S}^{\widehat{|\Pi|}}}\phi_{\D_1}(u) \,d\mu_{\D_2,
  \dotsc, \D_n}, 
\]
where $\mu_{\D_2, \dotsc, \D_n}$ denotes the mixed Monge--Amp\`ere measure
induced by the collection of $\mathcal{C}$-concave functions $\phi_{\D_2},
\dotsc, \phi_{\D_n}$ on $\S^{\widehat{|\Pi|}}$ from Definition
\ref{propdef:mixedmeasure}. 
\end{theorem}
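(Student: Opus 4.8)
The plan is to combine the comparison between algebraic and Euclidean tropical intersection numbers (Corollary~\ref{cor:changecomplex}) with the convergence results for Monge--Amp\`ere measures of Section~\ref{sec:monge-ampere-meas}, and then to \emph{define} the extended product by the resulting limit. Throughout one works on the Euclidean conical space $\widehat{|\Pi|}$ with balancing condition $[\widehat{|\Pi|}]=\widehat{[|\Pi|]}$, taking $\mathcal{C}$ to be the image of $\CabDiv^{+}(\mathfrak X_{U})$ in $\PL(\widehat{|\Pi|})$, which is an admissible family of concave functions by Lemma~\ref{lem:neftropicollection}.

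First I would treat the Cartier case. Given $\D_{1},\dots,\D_{n}\in\CabDiv^{+}(\mathfrak X_{U})$, choose $\Pi'\in R_{\sm}(\Pi)$ on which all of them are determined, so that $\langle\D_{1}\dotsm\D_{n}\rangle=\deg(D_{1,\Pi'}\dotsm D_{n,\Pi'})$ by Definition~\ref{def:intersection-cartier}. Applying Corollary~\ref{cor:changecomplex} with $X_{0}=X_{1}=X_{\Pi'}$, $B_{1}=X_{\Pi'}\setminus U$ and $\Pi_{1}=\Pi'$, and observing that the piecewise linear function attached to $D_{i,\Pi'}$ is still $\phi_{\D_{i}}$ (now linear on the finer complex $\Pi'$) and that the normalized balancing cycle $\widehat{[|\Pi'|]}=\widehat{[|\Pi|]}$ does not depend on the complex, one gets
\[
  \langle\D_{1}\dotsm\D_{n}\rangle
  =\deg\bigl(\phi_{\D_{1}}\cdot\dotsm\cdot\phi_{\D_{n}}\cdot[\widehat{|\Pi|}]\bigr),
\]
i.e.\ for nef Cartier b-divisors the algebraic top intersection number equals the Euclidean tropical top intersection number of the associated $\mathcal{C}$-concave piecewise linear functions.

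Next I would pass to the Weil case. For $\D_{1},\dots,\D_{n}\in\WbDiv^{+}(\mathfrak X_{U})$, each $\phi_{\D_{i}}$ is a $\mathcal{C}$-concave function on $\widehat{|\Pi|}$ by Theorem~\ref{the:nefextensionfunction}, and by Remark~\ref{rem:4} one may pick sequences $(\D_{i,j})_{j\in\N}$ of nef Cartier b-divisors with $\D_{i,j}\to\D_{i}$, so that the $\phi_{\D_{i,j}}$ are $\mathcal{C}$-concave piecewise linear functions converging to $\phi_{\D_{i}}$. Corollary~\ref{cormixed} then shows that $\lim_{j}\deg(\phi_{\D_{1,j}}\dotsm\phi_{\D_{n,j}}\cdot[\widehat{|\Pi|}])$ exists, is finite, is symmetric in the $\D_{i}$, and equals $\int_{\mathbb{S}^{\widehat{|\Pi|}}}\phi_{\D_{1}}(u)\,d\mu_{\phi_{\D_{2}},\dots,\phi_{\D_{n}}}$; by the Cartier computation this limit is $\lim_{j}\langle\D_{1,j}\dotsm\D_{n,j}\rangle$. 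One then \emph{defines} $\langle\D_{1}\dotsm\D_{n}\rangle$ to be this common value and sets $\mu_{\D_{2},\dots,\D_{n}}\coloneqq\mu_{\phi_{\D_{2}},\dots,\phi_{\D_{n}}}$, which reproduces Definition~\ref{def:intersection-cartier} on $\CabDiv^{+}(\mathfrak X_{U})^{n}$ and gives the stated integral formula.

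Finally I would verify the formal properties. Independence of the approximating sequences, hence well-definedness, follows from the uniqueness part of Theorem~\ref{th:convergence-trop-meas}, which forces the limit measure and therefore the integral to be independent of the chosen sequences; symmetry is the symmetry clause of Corollary~\ref{cormixed}; multilinearity in $\D_{1}$ is linearity of $\D\mapsto\phi_{\D}$ and of the integral, and multilinearity in $\D_{2},\dots,\D_{n}$ comes from Proposition/Definition~\ref{propdef:mixedmeasure} combined with symmetry. For continuity, since $\WbDiv^{+}(\mathfrak X_{U})$ is metrizable and $\CabDiv^{+}(\mathfrak X_{U})$ is sequentially dense in it (Remarks~\ref{rem:5} and~\ref{rem:4}, Theorem~\ref{thm:1}), it is enough to check sequential continuity: if $\D_{i,\alpha}\to\D_{i}$ then $\phi_{\D_{i,\alpha}}\to\phi_{\D_{i}}$ uniformly on $\mathbb{S}^{\widehat{|\Pi|}}$ by Theorem~\ref{thm:1}, the measures $\mu_{\D_{2,\alpha},\dots,\D_{n,\alpha}}$ converge weakly to $\mu_{\D_{2},\dots,\D_{n}}$ with uniformly bounded total variation by Theorem~\ref{th:convergence-trop-meas} and Proposition~\ref{prop:boundedmeasure}, whence the integrals converge. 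Density of $\CabDiv^{+}$ makes this the unique continuous extension. I expect the only delicate step to be the bookkeeping in the Cartier case --- checking that refining the model changes neither the piecewise linear function of the b-divisor nor the normalized balancing cycle --- so that Corollary~\ref{cor:changecomplex} applies verbatim on $\widehat{|\Pi|}$; the rest is bookkeeping plus the machinery of Section~\ref{sec:monge-ampere-meas}.
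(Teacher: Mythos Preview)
Your proposal is correct and follows essentially the same route as the paper's proof, which is extremely terse: the paper simply notes that nef Cartier b-divisors yield an admissible family $\mathcal{C}$ (Lemma~\ref{lem:neftropicollection}), that nef Weil b-divisors correspond homeomorphically to $\mathcal{C}$-concave functions (Proposition~\ref{characterization} and Remark~\ref{rem:5}), and then invokes Corollary~\ref{cormixed}. You spell out in addition the bridge between the algebraic and Euclidean tropical products via Corollary~\ref{cor:changecomplex} and the verification of symmetry, multilinearity, and continuity --- all of which the paper leaves implicit in ``putting together everything we have done up to now.''
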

\begin{proof}
  The proof is just putting together everything we have done up to
  now. Every nef toroidal Cartier b-divisor $\D$ on $X$ defines a
  piecewise linear function $\phi _{\D}$ on $\widehat{|\Pi|} $ (Proposition~\ref{characterization}). The family of piecewise linear functions on
  $\widehat{|\Pi|} $ obtained in this way forms an admissible family
  $\mathcal{C}$-concave functions on $\widehat{|\Pi|} $ (Lemma
  \ref{lem:neftropicollection}). The space of nef toroidal Weil b-divisors on
  $\mathfrak X_{U}$ is homeomorphic to the space of $\mathcal{C}$-concave functions on $\widehat{|\Pi|} $ (Proposition~\ref{characterization} and Remark \ref{rem:5}). Thus the result is
  a direct consequence of Corollary \ref{cormixed}.
\end{proof}

\begin{rem}
  Let $\D_1, \dotsc, \D_n$ be a collection of nef toroidal Weil
  b-divisors on $X$. 
  By Remark~\ref{rem:4} there are sequences $(\D_{j,k})_{k\in \N}$,
  $j=1,\dots,n$ of nef toroidal Cartier b-divisors on $\mathfrak{X}_{U}$
  converging to them. Then, for each $j$, the sequence
  of functions $(\phi _{\D_{j,k}})_{k\in \N}$ converges uniformly on
  compacts to $\phi _{\D_{j}}$. Moreover
  \begin{equation}\label{eq:13}
    \langle  \D_1 \dotsm \D_n \rangle = \lim_{k\to \infty}
    \langle  \D_{1,k} \dotsm \D_{n,k} \rangle.
  \end{equation}
  One has to be careful that the continuity condition \eqref{eq:13} is
  only true when the sequences  $(\D_{j,k})_{k\in \N}$ consist of nef toroidal Cartier b-divisors. Namely, one can construct sequences of toroidal Cartier b-divisors
  $(\D'_{j,k})_{k\in \N}$ such that, for each $j$, the sequence
  of functions $(\phi _{\D_{j,k}})_{k\in \N}$ converges uniformly on
  compacts to $\phi _{\D_{j}}$ and nevertheless the continuity
  condition \eqref{eq:13} does not hold.  
\end{rem}

\begin{rem}
  Since the intersection product is multilinear (for the semigroup law
  of $\WbDiv^{+}(\mathfrak{X}_{U})$), it can be extended by
  multilinearity to the space
  \begin{displaymath}
   \WbDiv^{+}(\mathfrak{X}_{U})-\WbDiv^{+}(\mathfrak{X}_{U}) 
 \end{displaymath}
 of toroidal Weil b-divisors that are differences of nef ones.
\end{rem}

\section{Applications}\label{sec:applications}

Let $U \hookrightarrow X$ be a toroidal embedding as at the beginning
of Section \ref{sec:inters-theory-toro}. That is, we assume that $X$
is smooth and projective, $B=X\setminus U$ is a snc divisor, that there
is an effective ample divisor $A$  with support $B$ and that the 
corresponding rational conical space $|\Pi|$ is
quasi-embedded. Recall that, after Definition \ref{def:6}, by divisor
we mean divisor with $\R$ coefficients. We also assume that we have
chosen an auxiliary
Euclidean metric 
on $N_{\R}^{|\Pi| }$.

A toroidal b-divisor on $X$ is \emph{big} if it has enough
global sections (Definition \ref{def:big}). In this section, as an
application of our results, we show a
Hilbert--Samuel type formula for nef and big toroidal b-divisors on
$X$ relating the degree of a nef toroidal b-divisor both with its
volume and with the volume of the associated convex
Okounkov body (Definitions \ref{def:volume-b-div} and
\ref{def:okounkov} and Theorem \ref{the:hilbertsamuel}). As a corollary, we obtain the continuity of the volume function on the space of nef and big b-divisors (Corollary \ref{cor:cont}) and a Brunn--Minkowski type inequality (Corollary~\ref{cor:brunnminkowski}).

\subsection{Volumes and convex Okounkov bodies of toroidal b-divisors}

We start with the definition of the space of global sections of a
toroidal b-divisor.
\begin{Def} 
Let $F = k(X)$ be the field of rational functions of $X$. Then for any
toroidal b-divisor $\D = \left(D_{\Pi'}\right)_{\Pi' \in
  R_{\sm}(\Pi)}$ on $X$ one defines the space of global sections of
$\D$ by 
 \[
H^0(\mathfrak{X}_{U}, \D) = \{f \in F^{\times} \,|\, b\text{-div}(f) +
\D \geq 0\} \cup \{0\}\subseteq F, 
\]
where $b\text{-div}(f)$ is the (Cartier) b-divisor induced by a rational function by setting 
\[
b\text{-div}(f) = \left(\div_{X_{\Pi'}}(f)\right)_{\Pi' \in R_{\sm}(\Pi)},.
\]
\end{Def}
\begin{rem}\ 
\begin{enumerate}
\item By definition, we have that $H^0\left(\mathfrak{X}_{U}, \D\right)$ is an
  intersection of finite-dimensional vector spaces
\[
H^0(\mathfrak{X}_{U}, \D) = \bigcap_{\Pi' \in R_{\sm}(\Pi)}
H^0\left(X_{\Pi'}, D_{\Pi'}\right).
\]
 \item We have a well defined map 
\[
H^0\left(\mathfrak{X}_{U}, \D\right) \times H^0\left(\mathfrak{X}_{U}, \pmb{E}\right) \longrightarrow
H^0\left(\mathfrak{X}_{U}, \D + \pmb{E}\right) 
\]
for any toroidal b-divisors $\D$ and $\pmb{E}$.
\end{enumerate}
\end{rem}

\begin{Def}\label{def:volume-b-div}
 Let $\D$ be a toroidal b-divisor. The volume of $\D$ is defined by 
 \[
   \vol(\D) \coloneqq \limsup_{\ell \to \infty}
   \frac{h^0\left(\mathfrak X_{U}, \ell \D\right)}{\ell^n/n!},
 \]
 where $h^0\left(\mathfrak X_{U}, \ell \D\right)$ denotes the dimension of the space $H^0\left(\mathfrak X_{U}, \ell \D\right)$.
 \end{Def}
 We now associate a convex Okounkov body to $\D$. 
 \begin{Def}
 Let $\D$ be a toroidal $b$-divisor on $X$. We define the b-divisorial
 algebra $\mathcal{R}_{\mathfrak{X}_{U}}(\D)$ associated to $\D$ by  
 \[
 \mathcal{R}_{\mathfrak{X}_{U}}(\D) \coloneqq \bigoplus_{k\ge
   0}H^0\left(\mathfrak X_{U}, k \D\right) t^k.
 \]
 This is a graded sub-$k$-algebra of $F[t]$.  
 \end{Def}
 One of the fundamental problems of algebraic geometry is the question
 about finite generation of divisorial algebras. It is clear that, in
 general, the b-divisorial algebra associated to a toroidal b-divisor
 is not finitely generated. However, the next proposition shows that
 it satisfies the weaker condition of being of almost integral type,
 which nevertheless, following \cite{KK}, allows us to associate a
 convex Okounkov body to it.

 Recall that a graded subalgebra $R \subseteq F[t]$ is of \emph{integral
 type} if it is a finitely generated $k$-algebra and is a finite
 module over the algebra generated by $R_{1}$, while it is of  
 \emph{almost integral type} if it is contained in a graded subalgebra
 of integral type $R \subseteq A \subseteq F[t]$ (see \cite[Section~2.3]{KK}).
 \begin{prop}
 Let $\D$ be a toroidal b-divisor. Then the b-divisorial algebra
 $\mathcal{R}_{\mathfrak{X}_{U}}(\D) \subseteq F[t]$ is of almost integral
 type. 
 \end{prop}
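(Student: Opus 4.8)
The statement to prove is that for any toroidal b-divisor $\D$, the b-divisorial algebra $\mathcal{R}_{\mathfrak{X}_{U}}(\D) = \bigoplus_{k\ge 0}H^0(\mathfrak X_U,k\D)t^k$ is of almost integral type, i.e.\ it is contained in a graded subalgebra of integral type inside $F[t]$. The plan is to exhibit a concrete bounding algebra of integral type. First I would reduce to a single, sufficiently fine, model: pick $\Pi' \in R_{\sm}(\Pi)$ and let $D' \coloneqq D_{\Pi'}$ be the incarnation of $\D$ on $X_{\Pi'}$. By the very definition of $H^0(\mathfrak X_U,k\D)$ as an intersection $\bigcap_{\Pi''\ge\Pi'}H^0(X_{\Pi''},kD_{\Pi''})$ over all finer models, and using that push-forward along a proper birational morphism $\pi\colon X_{\Pi''}\to X_{\Pi'}$ satisfies $\pi_* D_{\Pi''} = D_{\Pi'}$ together with $\pi_*\O_{X_{\Pi''}}=\O_{X_{\Pi'}}$, one has for every $k$ the inclusion
\[
H^0(\mathfrak X_U,k\D)\;\subseteq\; H^0(X_{\Pi'},kD').
\]
Hence
\[
\mathcal{R}_{\mathfrak{X}_{U}}(\D)\;\subseteq\;\bigoplus_{k\ge 0}H^0(X_{\Pi'},kD')\,t^k \;=\; R(X_{\Pi'},D'),
\]
the section ring of the divisor $D'$ on the projective variety $X_{\Pi'}$.

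Next I would invoke the standard structure theory of section rings of $\R$-divisors on a projective variety (Kaveh--Khovanskii \cite{KK}, or classically for integral divisors after rescaling): the ring $R(X_{\Pi'},D')$ is always of almost integral type. Concretely, one embeds $X_{\Pi'}$ in a projective space via a very ample divisor $H$, chooses $m>0$ with $mD' \le mH$ in the sense that $H-D'$ is (big enough, or after adding an ample) effective—here is where I would use the standing hypothesis that there is an effective ample divisor $A$ with support $B$, so that $D'$ differs from a multiple of $A$ by an effective toroidal divisor and can be dominated by a multiple of an ample divisor. Then $R(X_{\Pi'},D') \subseteq R(X_{\Pi'},mH)^{(\text{scaled})}$, and the latter is a section ring of an ample divisor, hence finitely generated and of integral type. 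Composing the two inclusions,
\[
\mathcal{R}_{\mathfrak{X}_{U}}(\D)\;\subseteq\; R(X_{\Pi'},D')\;\subseteq\; A
\]
with $A$ of integral type, which is exactly the definition of almost integral type for $\mathcal{R}_{\mathfrak{X}_{U}}(\D)$.

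The steps in order: (1) choose a model $\Pi'$ and reduce the b-algebra to the ordinary section ring $R(X_{\Pi'},D')$ via the intersection description plus the projection formula $\pi_*\O = \O$; (2) bound $D'$ by a multiple of an ample divisor using the effective ample divisor $A$ supported on $B$; (3) quote that the section ring of an ample divisor is of integral type, and conclude by transitivity of ``contained in an algebra of integral type.'' The main obstacle is step (1): one must be careful that $H^0(\mathfrak X_U,k\D)$ really does inject into $H^0(X_{\Pi'},kD')$, which requires that no rational function $f$ with $\mathrm{b\text{-}div}(f)+k\D\ge 0$ can fail to satisfy $\mathrm{div}_{X_{\Pi'}}(f)+kD'\ge 0$; this is immediate once one observes that $\Pi'$ itself is one of the models in the indexing set $R_{\sm}(\Pi)$ (after replacing $\Pi'$ by a smooth refinement if necessary, which only shrinks nothing since we take the intersection over all of $R_{\sm}(\Pi)$). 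A secondary technical point is handling $\R$-coefficients rather than $\Z$-coefficients in the divisors, but this is dealt with exactly as in \cite[Section~2.3]{KK}, where almost integral type is formulated precisely to accommodate this generality.
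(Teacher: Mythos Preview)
Your proposal is correct and follows essentially the same approach as the paper: reduce to the section ring $\mathcal{R}_{X_{\Pi'}}(D_{\Pi'})$ on a single model via the trivial inclusion $H^0(\mathfrak X_U,k\D)\subseteq H^0(X_{\Pi'},kD_{\Pi'})$, then invoke that this ordinary section ring is of almost integral type. The paper simply cites \cite[Theorem~3.7]{KK} for the latter step rather than sketching the ample-domination argument, and your detour through $\pi_*\O=\O$ in step~(1) is unnecessary since, as you yourself note, the inclusion is immediate from $\Pi'$ being one of the indices in the defining intersection.
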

 \begin{proof}
 We clearly have 
 \[
\mathcal{R}_{\mathfrak{X}_{U}}(\D) \subseteq \mathcal{R}_{X_{\Pi'}}(D_{\Pi'}),
\]
for any $\Pi' \in R(|\Pi|)$, and the latter is an algebra of almost
integral type by \cite[Theorem~3.7]{KK}.
 \end{proof}
  We now briefly sketch the construction of the Okounkov body
  associated to $\mathcal{R}_{\mathfrak X_{U}}(\D)$. For more details
  we refer to \cite{KK} and \cite{LM}. The choice of a (generic,
  infinitesimal) flag on $X$ determines a valuation $\nu \colon
  F\setminus \{0\}\to 
  \Z^{n}$ that can be extended to a valuation $\nu _{t}\colon
  F[t]\setminus \{0\}\to
  \Z^{n}\times \Z$.

  The semigroup
  $S(\D) \subseteq \Z^n \times \Z$ is defined as the image of $
  \mathcal{R}_{\mathfrak{X}_{U}}(\D)$  
  by the valuation $v_t$. Then, for any integer $\ell \geq 0$, the equality 
  $h^0(X, \ell \D) = \#\left(S(\D)\cap \left(\Z^n \times
      \{\ell\}\right)\right)$ holds. Moreover,
  $S(\D)$ satisfies the conditions (2.3--2.5) of \cite{LM}. One then
  defines the cone $C(\D) = \overline{\operatorname{convhull}}(S(\D)
  \cup \{0\}) \subseteq \R^n \times \R$. This is a strictly convex
  cone.
 \begin{Def}\label{def:okounkov}
 Let $\D$ be a toroidal b-divisor. The Okounkov body $\Delta_{\D}
 \subseteq \R^n$ is defined to be the \emph{slice} of $C(\D)$ at
 height $1$, i.e. 
 \[
 \Delta_{\D} \coloneqq  C(\D) \cap \left(\R^n \times \{1\}\right).
 \]
It is a convex body (see \cite[Theorem~2.30]{KK}).
 \end{Def}
 Okounkov bodies have been useful to study geometric properties of
 divisors in terms of convex geometry. In particular, in the study of
 volumes of divisors. We will see that this extends to toroidal b-divisors. 
  The following is a classical result. 
\begin{lemma} Let $\D$ be a nef toroidal Cartier b-divisor. Then 
\begin{align}\label{eqn:volume}
\vol(\D) = n!\vol(\Delta_{\D})=  \D^n.
\end{align}
\end{lemma}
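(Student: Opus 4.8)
The plan is to reduce the statement to the well-known Hilbert--Samuel theorem for nef Cartier divisors on the single smooth projective model on which $\D$ stabilizes. Since $\D$ is a nef toroidal Cartier b-divisor, by definition (Definition~\ref{def:nefbdiv}) there is a smooth rational conical subdivision $\Pi' \in R_{\sm}(\Pi)$ such that $\D$ is determined by a nef toroidal divisor $D_{\Pi'} \in \Div(X_{\Pi'},U)$, and for every $\Pi'' \geq \Pi'$ the incarnation $D_{\Pi''}$ is the pull-back $\pi^* D_{\Pi'}$. First I would show that the global sections stabilize: for the projection $\pi \colon X_{\Pi''} \to X_{\Pi'}$ one has $H^0(X_{\Pi''}, \pi^* D_{\Pi'}) = H^0(X_{\Pi'}, D_{\Pi'})$, since $\pi$ is a proper birational morphism between normal varieties and $\pi_* \O_{X_{\Pi''}} = \O_{X_{\Pi'}}$ (projection formula together with normality). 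Taking the intersection over all $\Pi'' \in R_{\sm}(\Pi)$ as in the first remark after Definition~\ref{def:volume-b-div}, this gives
\[
H^0(\mathfrak{X}_U, \ell\D) = \bigcap_{\Pi'' \in R_{\sm}(\Pi)} H^0(X_{\Pi''}, \ell D_{\Pi''}) = H^0(X_{\Pi'}, \ell D_{\Pi'})
\]
for every $\ell \geq 0$.

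From this identification, $\vol(\D) = \vol(D_{\Pi'})$ by Definition~\ref{def:volume-b-div}, and $\D^n = \deg(D_{\Pi'}^n)$ by Definition~\ref{def:intersection-cartier}. Likewise, the b-divisorial algebra $\mathcal{R}_{\mathfrak{X}_U}(\D)$ equals the section ring $\mathcal{R}_{X_{\Pi'}}(D_{\Pi'})$, so the semigroup $S(\D)$, the cone $C(\D)$ and hence the Okounkov body $\Delta_{\D}$ coincide with the classical Okounkov body $\Delta_{D_{\Pi'}}$ of the nef divisor $D_{\Pi'}$ on $X_{\Pi'}$ (Definition~\ref{def:okounkov} reduces to the construction of \cite{KK}, \cite{LM}). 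Then the classical Hilbert--Samuel theorem, respectively the main theorem of \cite{LM} relating $n!\vol(\Delta)$ to $\vol$ and the fact that for a nef divisor $\vol(D) = \deg(D^n)$, yields
\[
\vol(\D) = \vol(D_{\Pi'}) = n!\,\vol(\Delta_{D_{\Pi'}}) = n!\,\vol(\Delta_{\D}) = \deg(D_{\Pi'}^n) = \D^n,
\]
which is exactly \eqref{eqn:volume}.

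The only point requiring a little care is the equality $H^0(X_{\Pi''},\pi^* D_{\Pi'}) = H^0(X_{\Pi'}, D_{\Pi'})$ when $D_{\Pi'}$ is an $\R$-divisor rather than an integral one: here one interprets $H^0$ of an $\R$-divisor $E$ as the space of rational functions $f$ with $\div(f) + E \geq 0$, i.e.\ $\div(f) + \lfloor E \rfloor \geq 0$, and the pull-back formula $\pi^*\lfloor E \rfloor \leq \lfloor \pi^* E\rfloor$ together with $\pi$-exceptionality of the difference still forces the round-down to be computed downstairs; this is where one uses that the models are toroidal and the morphisms toroidal birational. Apart from this bookkeeping, the argument is a direct reduction to the classical case, and I do not anticipate a genuine obstacle --- the content of the theorem in the toroidal b-setting is entirely carried by the stabilization of sections on a fixed smooth model.
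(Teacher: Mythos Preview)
Your proposal is correct and follows exactly the same route as the paper: pick a model $X_{\Pi'}$ on which the Cartier b-divisor is determined and invoke the classical Hilbert--Samuel and Okounkov body results for the nef divisor $D_{\Pi'}$. The paper's proof is in fact just the two-line version of what you wrote. Your final paragraph about round-downs is unnecessary worry: since $\div_{X_{\Pi''}}(f)=\pi^{\ast}\div_{X_{\Pi'}}(f)$ as Cartier divisors, the condition $\pi^{\ast}(\div(f)+D_{\Pi'})\ge 0$ is equivalent to $\div(f)+D_{\Pi'}\ge 0$ directly, with no round-down comparison needed.
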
 
\begin{proof}
Let $\Pi' \in R_{\sm}(\Pi)$ be such that $\D$ is determined on $X_{\Pi'}$. Then the result
follows from the well known case of nef Cartier divisors
\[
\vol(\D) = \vol(D_{\Pi'}) = D_{\Pi'}^n =  \D^n
\]
combined with 
\[
\vol(D_{\Pi'}) = n! vol(\Delta_{\D}).
\]
\end{proof}
In the next section we extend Equation \ref{eqn:volume} to nef and big Weil toroidal
b-divisors.
\subsection{A Hilbert--Samuel formula}
\label{sec:hilb-samu-form}
We start with two monotonicity lemmas for nef toroidal b-divisors. These
play a key role. 
\begin{lemma}\label{lem:monotonicity}
Let $\D= \left(D_{\Pi'}\right)_{\Pi' \in R_{\sm}(\Pi)}$ be a nef
toroidal b-divisor. Let $\Pi'' \geq \Pi' $ be subdivisions in
$R_{\sm}(\Pi)$. Then
 \[
 D_{\Pi''} \leq \pi ^{\ast} D_{\Pi'},
\]
where $\pi \colon X_{\Pi ''}\to X_{\Pi '}$ is the corresponding proper
birational morphism. 
 In particular, we get the following inclusion of spaces
 \[
 H^0\left(X_{\Pi''}, D_{\Pi''}\right) \subseteq H^0\left(X_{\Pi'}, D_{\Pi'}\right). 
 \]
\end{lemma}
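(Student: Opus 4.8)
The plan is to reduce the statement to the Negativity Lemma, exactly as in Remark~\ref{rem:negativity}, using the fact that the incarnations $D_{\Pi'}$ and $D_{\Pi''}$ of a Weil toroidal b-divisor are compatible under push-forward. First I would record that, by the definition of a Weil toroidal b-divisor (Definition~\ref{def:toroidalbdivisor}), we have $\pi_{\ast}D_{\Pi''}=D_{\Pi'}$, where $\pi\colon X_{\Pi''}\to X_{\Pi'}$ is the proper birational morphism coming from $\Pi''\ge\Pi'$. Hence the divisor $D_{\Pi''}-\pi^{\ast}D_{\Pi'}=D_{\Pi''}-\pi^{\ast}\pi_{\ast}D_{\Pi''}$ is $\pi$-exceptional, since $\pi_{\ast}$ of it vanishes and it is supported on the boundary.

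Next I would check $\pi$-nefness: the divisor $D_{\Pi''}-\pi^{\ast}\pi_{\ast}D_{\Pi''}$ has non-negative intersection with every curve $C$ contracted by $\pi$. Indeed $D_{\Pi''}$ is nef on $X_{\Pi''}$ by hypothesis (so $D_{\Pi''}\cdot C\ge 0$ for all curves, in particular $\pi$-contracted ones), while $\pi^{\ast}\pi_{\ast}D_{\Pi''}\cdot C=0$ for $\pi$-contracted $C$ by the projection formula. Therefore $D_{\Pi''}-\pi^{\ast}\pi_{\ast}D_{\Pi''}$ is $\pi$-nef and $\pi$-exceptional, so by the Negativity Lemma (\cite[Lemma~3.39]{KM}, as already invoked in Remark~\ref{rem:negativity}) it is anti-effective, i.e. $D_{\Pi''}-\pi^{\ast}D_{\Pi'}\le 0$. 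This is the asserted inequality $D_{\Pi''}\le\pi^{\ast}D_{\Pi'}$.

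For the inclusion of section spaces, I would argue as follows. If $f\in H^0(X_{\Pi''},D_{\Pi''})$, i.e. $\div_{X_{\Pi''}}(f)+D_{\Pi''}\ge 0$, then applying $\pi_{\ast}$ (which preserves effectivity) and using $\pi_{\ast}\div_{X_{\Pi''}}(f)=\div_{X_{\Pi'}}(f)$ and $\pi_{\ast}D_{\Pi''}=D_{\Pi'}$ gives $\div_{X_{\Pi'}}(f)+D_{\Pi'}\ge 0$, so $f\in H^0(X_{\Pi'},D_{\Pi'})$; equivalently one may directly use $D_{\Pi''}\le\pi^{\ast}D_{\Pi'}$ together with $\pi^{\ast}\div_{X_{\Pi'}}(f)=\div_{X_{\Pi''}}(f)$ to see that $\div_{X_{\Pi''}}(f)+\pi^{\ast}D_{\Pi'}\ge\div_{X_{\Pi''}}(f)+D_{\Pi''}\ge 0$, hence pushing forward yields the claim. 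There is no serious obstacle here; the only point requiring a little care is the verification that $D_{\Pi''}-\pi^{\ast}\pi_{\ast}D_{\Pi''}$ is genuinely $\pi$-exceptional and $\pi$-nef, which is precisely the content already spelled out in Remark~\ref{rem:negativity}, so the proof is essentially a direct citation of that remark applied with $D=D_{\Pi''}$.
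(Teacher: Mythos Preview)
Your argument has a genuine gap: you assume that the incarnation $D_{\Pi''}$ is nef on $X_{\Pi''}$, but this is \emph{not} part of the hypothesis. By Definition~\ref{def:nefbdiv}, a nef toroidal Weil b-divisor is only an element of the closure of the cone of nef \emph{Cartier} b-divisors; nothing guarantees that an individual incarnation $D_{\Pi''}$ is a nef divisor on $X_{\Pi''}$. (Even for a nef Cartier b-divisor determined on some $\widetilde{\Pi}$, the incarnation on a $\Pi''$ that is not a refinement of $\widetilde{\Pi}$ is a push-forward of a nef divisor and need not be nef.) Consequently, Remark~\ref{rem:negativity} cannot be applied with $D=D_{\Pi''}$ as you do, and the paper explicitly flags this: ``this is not a direct consequence of Remark~\ref{rem:negativity} because the fact that $\D$ is nef does not imply that $D_{\Pi''}$ is nef.''

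The paper's proof supplies the missing idea in two steps. First, for a nef Cartier b-divisor determined on $\widetilde{\Pi}$, one passes to a common smooth refinement $\Pi'''$ of $\Pi''$ and $\widetilde{\Pi}$; there $D_{\Pi'''}=\gamma^{\ast}D_{\widetilde{\Pi}}$ \emph{is} nef, so Remark~\ref{rem:negativity} gives $D_{\Pi'''}\le\beta^{\ast}\beta_{\ast}D_{\Pi'''}$ for $\beta\colon X_{\Pi'''}\to X_{\Pi'}$, and pushing this inequality forward along $\alpha\colon X_{\Pi'''}\to X_{\Pi''}$ yields $D_{\Pi''}\le\pi^{\ast}D_{\Pi'}$. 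Second, for a general nef Weil b-divisor one takes a sequence of nef Cartier b-divisors converging to $\D$, applies the Cartier case to each term, and passes to the limit (componentwise convergence preserves the inequality). Your deduction of the inclusion of section spaces is fine once the inequality is established.
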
 
\begin{proof}
  Note that this is not a direct consequence of Remark
  \ref{rem:negativity} because the fact that $\D$ is nef does not
  imply that $D_{\pi ''}$ is nef so a small argument is needed.
  Suppose first that $\D$ is a Cartier b-divisor and let
  $\widetilde{\Pi}\in R_{\sm}(\Pi)$ be a determination of $\D$. In
  particular, we have that $D_{\widetilde{\Pi}}$ is nef. Let $\Pi
  ''\ge \Pi '$ be subdivisions in $R_{\sm}(\Pi )$. Let $\Pi'''$ be a
  common refinement of $\Pi''$ and $\widetilde{\Pi}$ and consider the
  following commutative diagram.
\begin{center}
    \begin{tikzpicture}
      \matrix[dmatrix] (m)
      {
         & X_{\Pi'''}  & \\
        X_{\widetilde{\Pi}} &  & X_{\Pi''}\\
        &  X_{\Pi'} &\\
      };
      \draw[->] (m-1-2) to node[left]{$\gamma$}(m-2-1);
      \draw[->] (m-1-2) to node[right]{$\alpha$}(m-2-3);
      \draw[->] (m-1-2) to node[right]{$\beta$} (m-3-2);
      \draw[->] (m-2-3) to node[right]{$\pi $} (m-3-2);
      
     \end{tikzpicture}
   \end{center}
   Since the pullback of a nef divisor is again nef, $D_{\Pi'''} =
   \gamma^*D_{\widetilde{\Pi}}$ is nef. 
   
   By Remark \ref{rem:negativity},  $D_{\Pi'''} \leq
   \beta^*\beta_*D_{\Pi'''}$ and we conclude that  
   \[
   D_{\Pi''} = \alpha_*D_{\Pi'''} \leq \alpha_*\beta^* \beta_* D_{\Pi'''} = \alpha_*\beta^*D_{\Pi'} = \pi ^*D_{\Pi'}
   \]

   In the general case, choose a sequence  $\left\{\D_i\right\}_{i \in
     \N}$ of nef Cartier b-divisors converging to $\D$. Then, by what
   was shown above, for each $i \in \N$ 
   we have that 
   \[
     D_{i,\Pi ''} \leq \pi ^{\ast}D_{i,\Pi '}.
   \]
Hence, taking limits at both sides we deduce 
\[
  D_{\Pi ''} = \lim_{i \in \N} (D_{i,\Pi ''}) \le
  \lim_{i \in \N} \pi ^{\ast}(D_{i,\Pi'})=
  \pi ^{\ast} D_{\Pi '} 
\]
as we wanted to show. 
  \end{proof}
  The following is a monotone approximation lemma for nef toroidal
  b-divisors. Recall that we are assuming that there is an effective
  ample divisor $A$ whose support is $B$. Since $A$ is effective and
  the support of $A$ is the whole $B$ we deduce that $\phi_A$ is
  strictly negative in each ray of $\Pi \setminus \{0\}$. Since it is
  linear on each cone of $\Pi $, then   
  the function $\phi_A|_{\mathbb{S}^{\widehat{|\Pi|}}}$ is
  strictly negative. 
  \begin{lemma}\label{lem:mon-conv} Let $\D$ be a nef toroidal
    b-divisor. Then there is a sequence of nef Cartier b-divisors
    $(\D_i )_{i \in \N}$ such that the following two
    properties are satisfied.
   \begin{enumerate}
   \item The sequence $(\D_i)_{i\in \N}$ converges to $\D$.
   \item if $i > j$, then $\D_j \geq \D_i $. 
   \end{enumerate}
  \end{lemma}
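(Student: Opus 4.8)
The plan is to start from the approximation provided by Remark~\ref{rem:4}: there is a sequence $(\pmb{D}'_{k})_{k\in\N}$ of nef toroidal Cartier b-divisors converging to $\D$ in $\WbDiv(\mathfrak X_{U})$, and the associated piecewise linear functions $\phi_{\pmb{D}'_{k}}$ converge to $\phi_{\D}$ uniformly on the compact slice $\S^{\widehat{|\Pi|}}$. Such a sequence is not monotone in general, so I would repair it by adding shrinking multiples of the ample divisor $A$. Let $\pmb{A}\in\CabDiv^{+}(\mathfrak X_{U})$ be the Cartier b-divisor whose incarnation on $X$ is $A$; it is nef because $A$ is ample, and its associated function is the piecewise linear function $\phi_{A}$ on $|\Pi|$. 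By the discussion preceding the lemma $\phi_{A}$ is strictly negative on $\S^{\widehat{|\Pi|}}$, so by compactness there is a constant $m>0$ with $\phi_{A}\le -m$ on $\S^{\widehat{|\Pi|}}$.

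Next I would set $\epsilon_{i}\coloneqq 2^{-i}$ and choose a non-decreasing sequence of indices $k_{i}$ such that $\sup_{\S^{\widehat{|\Pi|}}}\bigl|\phi_{\pmb{D}'_{k}}-\phi_{\D}\bigr|<2^{-i}m/8$ for all $k\ge k_{i}$ (possible by the uniform convergence), and define $\D_{i}\coloneqq\pmb{D}'_{k_{i}}+\epsilon_{i}\pmb{A}$. Since $\CabDiv^{+}(\mathfrak X_{U})$ is a cone and each summand is nef toroidal Cartier, every $\D_{i}$ is a nef toroidal Cartier b-divisor. As $\epsilon_{i}\to 0$ and $\phi_{\D_{i}}=\phi_{\pmb{D}'_{k_{i}}}+\epsilon_{i}\phi_{A}\to\phi_{\D}$ uniformly on $\S^{\widehat{|\Pi|}}$, the corresponding functions converge pointwise on $|\Pi|$, hence $\D_{i}\to\D$ in $\WbDiv(\mathfrak X_{U})$ by Proposition~\ref{characterization}; this gives property~(1).

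For property~(2) I would use that, under the identification of Proposition~\ref{characterization} and the sign convention of Definition~\ref{def:functiontoroidalsheaf}, one has $\pmb{E}\ge\pmb{F}$ if and only if $\phi_{\pmb{E}}\le\phi_{\pmb{F}}$, and that since both functions are conical it suffices to check this on $\S^{\widehat{|\Pi|}}$. For $i>j$ one computes $\phi_{\D_{i}}-\phi_{\D_{j}}=\bigl(\phi_{\pmb{D}'_{k_{i}}}-\phi_{\pmb{D}'_{k_{j}}}\bigr)+(\epsilon_{i}-\epsilon_{j})\phi_{A}$; on $\S^{\widehat{|\Pi|}}$ the first term has absolute value at most $2^{-i}m/8+2^{-j}m/8<2^{-j}m/4$, while the second term equals $(\epsilon_{j}-\epsilon_{i})(-\phi_{A})\ge(\epsilon_{j}-\epsilon_{j+1})m=2^{-j-1}m$ because $\epsilon_{i}<\epsilon_{j}$ and $-\phi_{A}\ge m>0$. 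Since $2^{-j}m/4<2^{-j-1}m$, this yields $\phi_{\D_{i}}-\phi_{\D_{j}}>0$ on $\S^{\widehat{|\Pi|}}$, hence $\phi_{\D_{j}}\le\phi_{\D_{i}}$ and therefore $\D_{j}\ge\D_{i}$.

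The only genuinely delicate points are the translation between the order on toroidal b-divisors and the (reverse) order on conical functions, together with the reduction of this comparison to the compact slice $\S^{\widehat{|\Pi|}}$, and the elementary bookkeeping ensuring that the additive correction $\epsilon_{i}\phi_{A}$ dominates the oscillation of the approximants; both become routine once the positive constant $m$ coming from the strict negativity of $\phi_{A}$ is fixed.
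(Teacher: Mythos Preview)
Your proof is correct and follows essentially the same strategy as the paper's: take an approximating sequence of nef Cartier b-divisors, pass to a subsequence with rapidly decaying uniform error on $\S^{\widehat{|\Pi|}}$, and add a shrinking multiple of the ample toroidal divisor $\pmb{A}$ so that the strictly negative $\phi_{A}$ forces monotonicity. The only differences are cosmetic (you use $\epsilon_i=2^{-i}$ and error bound $2^{-i}m/8$, while the paper uses $1/(\alpha k)$ and $1/(2k(k+1))$), and your explicit remark on the sign convention $\pmb{E}\ge\pmb{F}\Leftrightarrow\phi_{\pmb{E}}\le\phi_{\pmb{F}}$ is a useful clarification.
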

  \begin{proof}
  Let $\left\{\D_j'\right\}_{j \in J}$ be a sequence of nef Cartier
  toroidal b-divisors converging to $\D$. By Theorem \ref{thm:1} the convergence 
  \[
  \phi_{\D_i'}|_{\mathbb{S}^{\widehat{|\Pi|}}} \xrightarrow[i \to
  \infty]{} \phi_{\D}|_{\mathbb{S}^{\widehat{|\Pi|}}} 
  \]
  is uniform. 
  Let 
  \[
  \alpha \coloneqq \inf_{x \in \mathbb{S}^{\widehat{|\Pi|}}}-\phi_A(x)
  >0
  \quad , \quad
  \beta \coloneqq \sup_{x \in \mathbb{S}^{\widehat{|\Pi|}}}-\phi_A(x)
  \geq \alpha > 0,
  \]
  and for each $i \in \N$, let 
  \[
    \delta_i \coloneqq \sup_{x \in \mathbb{S}^{\widehat{|\Pi|}}}
    \left|\phi_{\D_i'}(x) -\phi_{\D}(x)\right|.
  \]
  We know that
  \[
  \delta_i \xrightarrow[i \to \infty]{} 0.
  \]
  Now, choose a subsequence $\left\{i_k\right\}_{k \in \N}$ such that 
  \[
  \delta_{i_k} \leq \frac{1}{2k(k+1)}
  \]
  and let 
  \[
  \D_k \coloneqq \D_{i_k}' + \frac{1}{\alpha k}\pmb{A}
  \]
  for $k \in \N$, where $\pmb{A}$ denotes the Cartier b-divisor
  induced by $A$. Then 
  \begin{align*}
  \left(\phi_{\D_k} -
    \phi_{\D_{k+1}}\right)\Big|_{\mathbb{S}^{\widehat{|\Pi|}}}
    &= \left( \phi_{\D_{i_k}'}- \phi_{\D_{i_{k+1}}'}+
      \left(\frac{1}{\alpha k} -
      \frac{1}{\alpha(k+1)}\right)\phi_{\pmb{A}}
      \right)\Big|_{\mathbb{S}^{\widehat{|\Pi|}}} \\
    &\leq \frac{1}{k(k+1)} - \left(\frac{1}
      {\alpha k(k+1)}\right) \alpha \\ & = 0.
\end{align*}  
 Hence, $\D_k - \D_{k+1} \geq 0$ and thus $\D_k \geq \D_{k+1}$.

Moreover, we have 
\begin{align*}
\left|\phi_{\D_k}-\phi_{\D}\right| &= \left|\phi_{\D_{i_k}'} + \frac{1}{\alpha k}\phi_{\pmb{A}} - \phi_{\D}\right| \\
&\leq \left| \phi_{\D_{i_k}'} - \phi_{\D}\right| + \frac{1}{\alpha k}\left|\phi_{\pmb{A}}\right| \\
&\leq  \frac{1}{2k(k+1)} + \frac{1}{\alpha k}\beta \xrightarrow[k\to \infty]{} 0.
\end{align*}
Hence, we get that 
\[ \D_k \xrightarrow[k\to \infty]{} \D.
   \]
   This concludes the proof.
  \end{proof}

 Now, recall that a divisor on an algebraic variety is said to be big if it
 has strictly positive volume. We define big toroidal b-divisors analogously.  
 \begin{Def}\label{def:big}
 A toroidal b-divisor $\D$ is said to be \emph{big} if it has positive
 volume, i.e. if $\vol(\D) > 0$.
 \end{Def}
 \begin{rem}
 If a toroidal b-divisor $\D = (D_{\Pi'})_{\Pi' \in R_{\sm}(\Pi)}$
 is big and nef, then by the monotonicity property of Lemma
 \ref{lem:monotonicity}, it follows that $D_{\Pi'}$ is big for all
 $\Pi' \in R_{\sm}(\Pi)$. Moreover, if $\{\D_i\}_{i \in \N}$ is a non-increasing sequence of nef Cartier b-divisors converging to $\D$, then, for all $i \in \N$, $\D_i$ has to be big as well.
 \end{rem}
  We  have the following lemma.
  \begin{lemma}\label{lem:cont}
    Let $\D$ be a nef and big b-divisor and let
    $\left\{\D_i\right\}_{i \in \N}$ be a non-increasing sequence of big and
    nef Cartier b-divisors converging to $\D$ (which exists by Lemma
    \ref{lem:mon-conv}). Then
    \begin{displaymath}
      \vol(\D)= \lim_{ i\to \infty}\vol(\D_i).
    \end{displaymath}
  \end{lemma}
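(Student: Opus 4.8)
The plan is to prove the two inequalities $\vol(\D)\le L$ and $\vol(\D)\ge L$, where $L\coloneqq\lim_{i\to\infty}\vol(\D_i)$.

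\emph{Upper bound and identification of $L$.} First I would record that the volume is monotone: if $\pmb{E}\le\pmb{E}'$ are toroidal b-divisors, then $b\text{-div}(f)+\ell\pmb{E}\ge 0$ implies $b\text{-div}(f)+\ell\pmb{E}'\ge 0$, so $H^{0}(\mathfrak{X}_{U},\ell\pmb{E})\subseteq H^{0}(\mathfrak{X}_{U},\ell\pmb{E}')$ for every $\ell$ and hence $\vol(\pmb{E})\le\vol(\pmb{E}')$. Since $(\D_i)_{i\in\N}$ is non-increasing, so is $(\vol(\D_i))_{i}$, and therefore $L$ exists; moreover $\D\le\D_i$ for every $i$, so $\vol(\D)\le L$. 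Now each $\D_i$ is a nef and big \emph{Cartier} b-divisor, so $\vol(\D_i)=\D_i^{n}$ by \eqref{eqn:volume}; since, by Remark~\ref{rem:4}, the functions $\phi_{\D_i}$ converge to $\phi_{\D}$ uniformly on compact subsets of $\widehat{|\Pi|}$, the continuity of the top intersection product (Theorem~\ref{thm:intersectionbdivisors}) gives $\D_i^{n}\to\D^{n}$, whence $L=\D^{n}$.

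\emph{Lower bound: reductions.} For $\vol(\D)\ge L$ the key structural observation is that, for each fixed $\ell$, one has $H^{0}(\mathfrak{X}_{U},\ell\D)=\bigcap_{i}H^{0}(\mathfrak{X}_{U},\ell\D_i)$: the inclusion $\subseteq$ is immediate from $\D\le\D_i$, while if $f$ belongs to every $H^{0}(\mathfrak{X}_{U},\ell\D_i)$, then on each model $X_{\Pi'}$ we have $\div_{X_{\Pi'}}(f)+\ell D_{\Pi'}=\lim_{i}\bigl(\div_{X_{\Pi'}}(f)+\ell D_{i,\Pi'}\bigr)\ge 0$, because $D_{i,\Pi'}\searrow D_{\Pi'}$ coefficient-wise. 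As these spaces form a non-increasing chain of subspaces of the finite-dimensional space $H^{0}(\mathfrak{X}_{U},\ell\D_1)$, the chain stabilises, so $h^{0}(\mathfrak{X}_{U},\ell\D)=h^{0}(\mathfrak{X}_{U},\ell\D_{i(\ell)})=h^{0}\bigl(X_{\Pi_{i(\ell)}},\ell D_{i(\ell),\Pi_{i(\ell)}}\bigr)$ for some index $i(\ell)$, where $X_{\Pi_{i}}$ is a model determining $\D_{i}$ (the last equality by the projection formula, $\D_i$ being Cartier). I would then exploit the ample Cartier b-divisor $\pmb{A}$ induced by $A$: since $\phi_{\D_i}\to\phi_{\D}$ uniformly on $\S^{\widehat{|\Pi|}}$ (Theorem~\ref{thm:1}) and $\phi_{A}<0$ on the compact set $\S^{\widehat{|\Pi|}}$, say $\phi_{A}\le-\alpha$ with $\alpha>0$, for every $\epsilon>0$ there is $N(\epsilon)$ with $\phi_{\D_i}\ge\phi_{\D}-\epsilon\alpha\ge\phi_{\D}+\epsilon\phi_{A}=\phi_{\D+\epsilon\pmb{A}}$ on $\S^{\widehat{|\Pi|}}$ whenever $i\ge N(\epsilon)$; as all these functions are conical, this gives $\D\le\D_i\le\D+\epsilon\pmb{A}$ for $i\ge N(\epsilon)$.

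\emph{Conclusion and main obstacle.} Pick a subsequence $\ell_m\to\infty$ realising $\vol(\D)=\limsup_{\ell}h^{0}(\mathfrak{X}_{U},\ell\D)\,n!/\ell^{n}$. If the stabilisation indices $i(\ell_m)$ remain bounded, say $i(\ell_m)\le M$, then $H^{0}(\mathfrak{X}_{U},\ell_m\D)=H^{0}(\mathfrak{X}_{U},\ell_m\D_{i(\ell_m)})\supseteq H^{0}(\mathfrak{X}_{U},\ell_m\D_M)$, hence $\vol(\D)\ge\vol(\D_M)\ge L$ and we are done. The real difficulty is the case $i(\ell_m)\to\infty$: here, for $m$ large, $\D\le\D_{i(\ell_m)}\le\D+\pmb{A}$, so (using Lemma~\ref{lem:monotonicity}) the nef divisor $D_{i(\ell_m),\Pi_{i(\ell_m)}}$ is dominated by the pullback of the \emph{fixed} divisor $D_{\Pi}+A$, and one has to bound $h^{0}\bigl(X_{\Pi_{i(\ell_m)}},\ell_m D_{i(\ell_m),\Pi_{i(\ell_m)}}\bigr)$ from below by $\tfrac{1}{n!}\vol(\D_{i(\ell_m)})\,\ell_m^{n}$ minus an error that is controlled \emph{uniformly} in $m$, even though the smooth models $X_{\Pi_{i(\ell_m)}}$ and the divisors on them vary with $m$. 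This uniformity is the main obstacle; I would obtain it from a uniform higher-cohomology vanishing statement on the fixed base $X$ — via Fujita's vanishing theorem (or relative Kawamata--Viehweg vanishing) with the fixed effective ample divisor $A$, which dominates all boundary corrections — yielding a uniform asymptotic Riemann--Roch estimate. Together with $\vol(\D_i)\searrow L$ this forces $\vol(\D)\ge L$, completing the proof.
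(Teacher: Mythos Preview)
Your upper bound and the identification $L=\lim_i\vol(\D_i)=\lim_i\D_i^{\,n}=\D^{\,n}$ are correct and match the paper's use of the continuous extension of the top product.

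The lower bound, however, has a genuine gap. Your stabilisation argument correctly yields $h^{0}(\mathfrak{X}_U,\ell\D)=h^{0}(\mathfrak{X}_U,\ell\D_{i(\ell)})$, and the bounded case $i(\ell_m)\le M$ is fine. But in the unbounded case you need, as you say yourself, a lower bound of the form
\[
h^{0}\bigl(X_{\Pi_{i(\ell_m)}},\ell_m D_{i(\ell_m),\Pi_{i(\ell_m)}}\bigr)\ \ge\ \tfrac{1}{n!}\,\D_{i(\ell_m)}^{\,n}\,\ell_m^{\,n}\;-\;o(\ell_m^{\,n})
\]
with the error \emph{uniform in $m$}, while the smooth models $X_{\Pi_{i(\ell_m)}}$ and the divisors on them vary with $m$. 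Invoking ``Fujita vanishing on the fixed base $X$'' does not furnish this: Fujita's theorem gives vanishing on a \emph{fixed} variety for $mH+D$ with $D$ nef, but here the varieties change, and pushing forward to $X$ turns the question into controlling higher direct images $R^q\pi_{j,\ast}\mathcal{O}(\ell D_{j})$ uniformly over an infinite family of birational morphisms~$\pi_j$, which is not what Fujita or relative Kawamata--Viehweg provide without substantial additional work. As written, this step is a hope rather than an argument.

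The paper sidesteps this obstacle entirely. It observes that the same $\epsilon\pmb{A}$--sandwich you wrote down, read as $\D_i-a_i\pmb{A}\le \D\le \D_i$ with $a_i\to 0$, gives $\vol(\D_i)\ge\vol(\D)\ge\vol(\D_i-a_iA)$. The point is that the lower term is a volume of a \emph{Cartier} b-divisor, computable on a single model, and then the differentiability of the volume function \cite[Corollary~3.4]{BFJ:diffvol} yields
\[
\vol(\D_i-a_iA)\ \ge\ \D_i^{\,n}-n\,a_i\,(\D_i^{\,n-1}\cdot A)-C\,a_i^{2},
\]
with $C$ depending only on a fixed big and nef majorant $\omega=\D_0+A$. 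Since $(\D_i^{\,n-1}\cdot A)$ is bounded and $a_i\to 0$, both sides of the sandwich tend to $\D^{\,n}$. No uniform Riemann--Roch over varying models is needed; the whole estimate lives on Cartier data and invokes a known continuity/differentiability property of the classical volume.
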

  \begin{proof}
   By Theorem \ref{thm:1} and by the monotonocity assumption, we have
   that the convergence 
   \[
   \phi_{\D_i} \xrightarrow[i\to \infty]{} \phi_{\D}
   \]
   is non-decreasing and uniform on $\mathbb{S}^{\widehat{|\Pi|}}$.
   Let
   \begin{displaymath}
     \alpha \coloneqq \inf_{x \in \mathbb{S}^{\widehat{|\Pi|}}}-\phi_A(x)
     >0
   \end{displaymath}
   as in the proof of Lemma \ref{lem:mon-conv}.
   For each $i$ let 
   \[
   a_i \coloneqq  \frac{2\sup_{x \in
       \mathbb{S}^{\widehat{|\Pi|}}}\left(\phi_{\D} -
       \phi_{\D_i}\right)}{\alpha }.
   \]
   We have that 
   \begin{align}\label{eq:ai}
   a_i \xrightarrow[i \to \infty]{} 0
   \end{align} 
   and for each $i$, the sequence of inequalities
   \begin{align*}
   \phi_{\D_i} - a_i\phi_A &\geq \phi_{\D_i}+ a_i \alpha \\
   &= \phi_{\D_i} +  \frac{2\sup_{x \in
     \mathbb{S}^{\widehat{|\Pi|}}}\left( \phi_{\D} -
     \phi_{\D_i}\right)}{\alpha } \alpha  \\
   &= \phi_{\D_i} + 2\sup_{x \in \mathbb{S}^{\widehat{|\Pi|}}}\left(
     \phi_{\D} - \phi_{\D_i}\right) \geq \phi_{\D}
\end{align*}
is satisfied. Hence, we get that 
\begin{align}\label{eq:vol-lim}
\vol(\D_i) \geq \vol(\D) \geq \vol(\D_i -a_iA)
\end{align}
for each $i$.
Now, set $\omega \coloneqq \D_0 + A$. Then $\omega$ is a big and nef Cartier b-divisor. Moreover, by the monotonicity of the sequence, we have that $\omega \geq \D_i$ for all $i$. Thus, by \cite[Corollary~3.4]{BFJ:diffvol}, we obtain 
\begin{align}\label{eq:vol-lim2}
\vol(\D_i - a_iA) \geq \D_i^n - na_i (\D_i^{n-1}A)-Ca_i^2,
\end{align}
where $C$ is a constant depending only on $\omega$. Since the $\D_i$'s are nef Cartier b-divisors, we know that $\vol(\D_i) = \D_i^n$. Therefore, taking limits in \eqref{eq:vol-lim} and \eqref{eq:vol-lim2}, we get that 
\[
\liminf_i\vol(\D_i) \geq \vol(\D) \geq  \limsup_i \left(\vol(\D_i) - na_i (\D_i^{n-1}A)-Ca_i^2 \right).
\]
Now, since $\left(\D_i^{n-1}A\right)$ is bounded, using \eqref{eq:ai}, we have that $\lim_i \left(na_i (\D_i^{n-1}A)-Ca_i^2\right) = 0$. We conclude that 
\[
\liminf_i\vol(\D_i) \geq \vol(\D) \geq \limsup_i\vol(\D_i),
\]
as we wanted to show. 
\end{proof}
As a consequence, we get the following Hilbert--Samuel type formula. 
\begin{theorem}\label{the:hilbertsamuel}
Let $\D$ be a big and nef toroidal b-divisor. Then 
\[
\vol(\D) = n!\vol(\Delta_{\D}) = \D^n.
\]
\end{theorem}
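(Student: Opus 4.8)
The plan is to prove the three equalities $\vol(\D) = n!\vol(\Delta_\D) = \D^n$ by approximating $\D$ from above by big and nef Cartier b-divisors and passing to the limit, using the monotone approximation lemma (Lemma~\ref{lem:mon-conv}) together with the continuity of the volume (Lemma~\ref{lem:cont}) and the continuity of the top self-intersection product (Theorem~\ref{thm:intersectionbdivisors}).

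\begin{proof}
By Lemma~\ref{lem:mon-conv} there is a non-increasing sequence $(\D_i)_{i\in\N}$ of nef Cartier b-divisors converging to $\D$; since $\D$ is big, each $\D_i$ is big and nef (each $\D_i \geq \D$ so $\vol(\D_i)\geq \vol(\D)>0$). For each $i$, the classical Hilbert--Samuel formula for nef Cartier b-divisors \eqref{eqn:volume} gives
\[
\vol(\D_i) = n!\,\vol(\Delta_{\D_i}) = \D_i^n .
\]
We now pass to the limit in each of the three terms. First, by Lemma~\ref{lem:cont} we have $\vol(\D) = \lim_{i\to\infty}\vol(\D_i)$. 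Second, since $(\D_i)_{i\in\N}$ is a sequence of \emph{nef Cartier} b-divisors converging to $\D$, the continuity statement of Theorem~\ref{thm:intersectionbdivisors} (applied with $\D_1 = \dotsb = \D_n = \D_i$ converging to $\D$) yields
\[
\D^n = \lim_{i\to\infty}\D_i^n = \lim_{i\to\infty}\vol(\D_i) = \vol(\D),
\]
which proves $\vol(\D) = \D^n$. It remains to handle the Okounkov body term $n!\,\vol(\Delta_\D)$.

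For the Okounkov body, recall $H^0(\mathfrak X_U,\ell\D) = \bigcap_{\Pi'} H^0(X_{\Pi'},\ell D_{\Pi'})$ and that, by the monotonicity Lemma~\ref{lem:monotonicity}, $H^0(X_{\Pi''},\ell D_{\Pi''}) \subseteq H^0(X_{\Pi'},\ell D_{\Pi'})$ whenever $\Pi''\geq\Pi'$; in particular the system of sections stabilizes as a decreasing net. The semigroup $S(\D)$ and the cone $C(\D) = \overline{\conv}(S(\D)\cup\{0\})$ were defined so that $h^0(\mathfrak X_U,\ell\D) = \#(S(\D)\cap(\Z^n\times\{\ell\}))$, and $\Delta_\D$ is the height-$1$ slice of $C(\D)$. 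By the general theory of \cite{LM} (conditions (2.3)--(2.5), which hold by the almost integral type property established above), one has $\vol(\D) = \limsup_\ell h^0(\mathfrak X_U,\ell\D)/(\ell^n/n!) = n!\,\vol(\Delta_\D)$; this is the standard equality between the volume defined by asymptotic growth of sections and the Euclidean volume of the Okounkov body, valid for any graded linear series satisfying those conditions. Combining this with $\vol(\D) = \D^n$ from the previous paragraph gives $\vol(\D) = n!\,\vol(\Delta_\D) = \D^n$, completing the proof.
\end{proof}

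The step I expect to be the main obstacle is making the Okounkov-body equality $\vol(\D) = n!\vol(\Delta_\D)$ rigorous: one must check that the semigroup $S(\D)$ attached to the b-divisorial algebra genuinely satisfies the Lazarsfeld--Musta\c{t}\u{a} axioms (2.3)--(2.5) in \cite{LM} — in particular that $S(\D)$ generates $\Z^n\times\Z$ as a group, which requires that $\D$ be big so that the graded pieces are eventually nonzero, and that the relevant "finiteness" hypothesis holds, which is exactly the almost-integral-type statement. The approximation arguments for the other two equalities are comparatively routine given Lemmas~\ref{lem:mon-conv}, \ref{lem:cont} and Theorem~\ref{thm:intersectionbdivisors}, the one subtlety being the insistence that the approximating sequence consist of \emph{Cartier} b-divisors so that Theorem~\ref{thm:intersectionbdivisors}'s continuity applies.
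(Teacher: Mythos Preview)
Your proof is correct and follows essentially the same approach as the paper: both arguments establish $\vol(\D)=\D^n$ by monotone approximation via Lemmas~\ref{lem:mon-conv} and~\ref{lem:cont} together with the continuity of the top intersection product, and both obtain $\vol(\D)=n!\vol(\Delta_{\D})$ by appealing to the Lazarsfeld--Musta\c{t}\u{a}/Kaveh--Khovanskii theory applied to the semigroup $S(\D)$, whose required hypotheses are ensured by the almost-integral-type proposition and the bigness of $\D$.
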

\begin{proof}
Write $\D = \lim_{i \to \infty} \D_i$ as a non-increasing limit of Cartier
big and nef toroidal b-divisors. Then, by Lemma \ref{lem:cont}, it
follows that 
\begin{align}\label{eqn:1} 
  \vol(\D) =  \lim_{i \to\infty}\vol\left(\D_i\right) =
  \lim_{i \to \infty}\D_i^n = \D^n.
\end{align}
On the other hand, consider the semigroup $S(\D)$ used in the construction of the Okounkov body $\Delta_{\D}$ (see discussion preceeding Definition \ref{def:okounkov}) and set 
\[
S(\D)_{\ell} = S(\D) \cap \left(\Z^n \times \{\ell\}\right)
\]
for any integer number $\ell > 0$. By \cite[Proposition~2.1]{LM}, we have that 
\[
\lim_{\ell \to \infty} \frac{\# S(\D)_{\ell}}{\ell^n/n!} = \vol_{\R^n}\left(\Delta_{\D}\right).
\]
This implies that
\[
\vol(\D) = \limsup_{\ell}\frac{h^0(X, \ell \D)}{\ell/n!} = \limsup_{\ell}\frac{\#S(\D)_{\ell}}{\ell/n!}  = \lim_{\ell\to \infty}\frac{\#S(\D)_{\ell}}{\ell/n!} = \vol_{\R^n}\left(\Delta_{\D}\right),
\]
as we wanted to show.
\end{proof}

\begin{rem}
  As we will show in a forthcoming paper with D. Holmes and R. de
  Jong Theorem \ref{the:hilbertsamuel} is not true if we drop the condition of being
  toroidal. 
\end{rem}
We obtain the following two corollaries. 
\begin{cor}\label{cor:cont}
  The function $\vol$ is continuous on the space of big and nef toroidal b-divisors. 
\end{cor}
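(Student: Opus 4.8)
The plan is to deduce the statement directly from the Hilbert--Samuel formula of Theorem~\ref{the:hilbertsamuel} together with the continuity of the top intersection product established in Theorem~\ref{thm:intersectionbdivisors}. The point is that on the locus of big and nef toroidal b-divisors the volume is an \emph{algebraic} quantity, namely $\vol(\D)=\D^{n}$, so it is the restriction to the diagonal of a continuous multilinear map.

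First I would recall that, by Theorem~\ref{thm:intersectionbdivisors}, the top intersection product extends to a continuous symmetric multilinear map
\[
\bigl(\WbDiv^{+}(\mathfrak{X}_{U})\bigr)^{n}\longrightarrow \R,\qquad (\D_{1},\dotsc,\D_{n})\longmapsto \langle \D_{1}\dotsm \D_{n}\rangle .
\]
Precomposing with the diagonal embedding $\D\mapsto (\D,\dotsc,\D)$, which is continuous for the weak topology on $\WbDiv^{+}(\mathfrak{X}_{U})$ (a net of b-divisors converges iff its incarnations converge coefficient-wise on every model $X_{\Pi'}$, so each coordinate of the diagonal map is just the identity), yields a continuous function
\[
\WbDiv^{+}(\mathfrak{X}_{U})\longrightarrow \R,\qquad \D\longmapsto \langle \D\dotsm \D\rangle = \D^{n}.
\]
Then I would restrict this function to the subset of big and nef toroidal b-divisors, equipped with the subspace topology. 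By Theorem~\ref{the:hilbertsamuel}, on this subset one has $\vol(\D)=\D^{n}$, so $\vol$ agrees there with the restriction of the continuous function above; hence $\vol$ is continuous on the space of big and nef toroidal b-divisors.

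There is no real obstacle once Theorems~\ref{the:hilbertsamuel} and~\ref{thm:intersectionbdivisors} are at hand; the only thing to verify is the compatibility of topologies, i.e. that the weak topology on $\WbDiv^{+}(\mathfrak{X}_{U})$ is exactly the one for which both the intersection product is continuous and the Hilbert--Samuel formula was proved, which holds by construction in Section~\ref{sec:inters-theory-toro}. If one prefers to avoid invoking the full multilinear continuity, an alternative route is to argue directly from Lemma~\ref{lem:cont}: given a convergent sequence of big and nef b-divisors $\D_{i}\to \D$ (sequences suffice by Remark~\ref{rem:4}), sandwich each $\D_{i}$ between monotone approximating sequences of big and nef Cartier b-divisors as in Lemma~\ref{lem:mon-conv}, use $\vol=\D^{n}$ on the Cartier level, and pass to the limit using continuity of $\D\mapsto \D^{n}$; the main technical point on that path would be controlling the resulting double limit, which is handled in the same way as in the proof of Theorem~\ref{th:convergence-trop-meas}. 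I would present the first, shorter argument as the proof.
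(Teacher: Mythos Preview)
Your proposal is correct and follows essentially the same approach as the paper: the paper's proof simply observes that by Theorem~\ref{the:hilbertsamuel} the volume agrees with the degree $\D^{n}$ on big and nef toroidal b-divisors, and that this is continuous by Theorem~\ref{thm:intersectionbdivisors}. Your write-up is just a more detailed unpacking of this two-line argument (with an unnecessary alternative route sketched at the end).
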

\begin{proof}
  By Theorem \ref{the:hilbertsamuel} the
  volume function agrees with the degree function on the space of big
  and nef toroidal b-divisors. By Theorem \ref{thm:intersectionbdivisors},
  it is continuous.
\end{proof}
The following is a Brunn--Minkowski type inequality.
\begin{cor}\label{cor:brunnminkowski}
Let $\D$ and $\F$ be two nef and big toroidal b-divisors. Then the following Brunn--Minkowski type inequality holds true. 
\[
(\D^n)^{1/n} + (\F^n)^{1/n} \geq ((\D + \F)^n)^{1/n}.
\]
\end{cor}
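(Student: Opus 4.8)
The plan is to reduce the inequality to the classical Brunn--Minkowski / Khovanskii--Teissier inequality for nef divisors on a smooth projective variety, and then to pass to the limit by means of the continuity of the top intersection product on nef Weil b-divisors established in Theorem~\ref{thm:intersectionbdivisors}. Only the nefness of $\D$ and $\F$ is actually used below; bigness enters only if one wishes to restate the three degrees appearing in the argument as volumes through the Hilbert--Samuel formula (Theorem~\ref{the:hilbertsamuel}).

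First I would observe that $\D\mapsto\phi_{\D}$ is linear by Proposition~\ref{characterization}, so $\phi_{\D+\F}=\phi_{\D}+\phi_{\F}$, and that $\WbDiv^{+}(\mathfrak{X}_{U})$ is a convex cone by Definition~\ref{def:nefbdiv}; hence $\D+\F$ is again a nef toroidal b-divisor. Using Remark~\ref{rem:4} (or the monotone approximation of Lemma~\ref{lem:mon-conv}), choose sequences $(\D_{i})_{i\in\N}$ and $(\F_{i})_{i\in\N}$ of nef toroidal Cartier b-divisors with $\D_{i}\to\D$ and $\F_{i}\to\F$ in $\WbDiv^{+}(\mathfrak{X}_{U})$. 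Since the sum of two nef Cartier b-divisors is again a nef Cartier b-divisor (Definition~\ref{def:nefbdiv}) and convergence in $\WbDiv^{+}(\mathfrak{X}_{U})$ is coefficientwise, the sequence $(\D_{i}+\F_{i})_{i\in\N}$ consists of nef Cartier b-divisors and converges to $\D+\F$.

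For each $i$, fix a common smooth model $X_{\Pi'_{i}}\in R_{\sm}(\Pi)$ on which $\D_{i}$ and $\F_{i}$ are both determined --- such a model exists because $R_{\sm}(\Pi)$ is directed --- and set $D_{i}\coloneqq(\D_{i})_{\Pi'_{i}}$ and $F_{i}\coloneqq(\F_{i})_{\Pi'_{i}}$. These are nef $\R$-divisors on the smooth projective variety $X_{\Pi'_{i}}$, since the pull-back of a nef divisor is nef, and by Definition~\ref{def:intersection-cartier} one has $\D_{i}^{n}=D_{i}^{\,n}$, $\F_{i}^{n}=F_{i}^{\,n}$ and $(\D_{i}+\F_{i})^{n}=(D_{i}+F_{i})^{n}$. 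The classical Brunn--Minkowski inequality for nef divisors --- which may be read off from the Khovanskii--Teissier inequalities together with the binomial expansion of $(D_{i}+F_{i})^{n}$, or, in the spirit of the present paper, from the inclusion of Okounkov bodies $\Delta_{D_{i}}+\Delta_{F_{i}}\subseteq\Delta_{D_{i}+F_{i}}$ (induced by $H^{0}(D_{i})\cdot H^{0}(F_{i})\subseteq H^{0}(D_{i}+F_{i})$, cf.~\cite{LM}) combined with the relations $n!\vol(\Delta_{D_{i}})=D_{i}^{\,n}$ for nef divisors (cf.~\eqref{eqn:volume}) and the Brunn--Minkowski inequality for convex bodies --- then gives, for every $i$,
\[
(\D_{i}^{n})^{1/n}+(\F_{i}^{n})^{1/n}\ \le\ \bigl((\D_{i}+\F_{i})^{n}\bigr)^{1/n}.
\]
Letting $i\to\infty$ and invoking Theorem~\ref{thm:intersectionbdivisors}, so that $\D_{i}^{n}\to\D^{n}$, $\F_{i}^{n}\to\F^{n}$ and $(\D_{i}+\F_{i})^{n}\to(\D+\F)^{n}$, one obtains the Brunn--Minkowski type inequality of Corollary~\ref{cor:brunnminkowski}.

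I do not expect a genuine obstacle here: the classical Brunn--Minkowski inequality for nef divisors is standard, and the limit passage is precisely the continuity of the intersection product from Theorem~\ref{thm:intersectionbdivisors}. The only care needed is organizational --- choosing at each stage a single model $X_{\Pi'_{i}}$ carrying both $\D_{i}$ and $\F_{i}$, and checking that $\D_{i}+\F_{i}\to\D+\F$ in $\WbDiv^{+}(\mathfrak{X}_{U})$ --- both of which are immediate from the directedness of $R_{\sm}(\Pi)$ and the coefficientwise description of convergence.
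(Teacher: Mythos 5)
Your proof is correct, but it takes a genuinely different route from the paper. The paper's proof is a one-liner on top of Theorem~\ref{the:hilbertsamuel}: it identifies $\D^n$, $\F^n$ and $(\D+\F)^n$ with $n!$ times the volumes of the Okounkov bodies $\Delta_{\D}$, $\Delta_{\F}$ and $\Delta_{\D+\F}$, uses the inclusion $\Delta_{\D}+\Delta_{\F}\subseteq\Delta_{\D+\F}$ coming from multiplication of sections, and concludes by the classical Brunn--Minkowski inequality for convex bodies; this is why the corollary is stated for \emph{big} and nef b-divisors, since bigness is needed to invoke the Hilbert--Samuel formula. You instead approximate $\D$ and $\F$ by nef Cartier b-divisors, apply the Khovanskii--Teissier form of the inequality to their incarnations on a common smooth model, and pass to the limit via the continuity of the top intersection product (Theorem~\ref{thm:intersectionbdivisors}). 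Your argument is sound (the sum of two nef Cartier b-divisors is nef and Cartier, convergence is coefficientwise so $\D_i+\F_i\to\D+\F$, and Corollary~\ref{cormixed} gives convergence of all three degrees), and it buys something: it needs only nefness, not bigness, and it does not depend on the Okounkov-body machinery or on Theorem~\ref{the:hilbertsamuel} at all. What it costs is importing the classical Khovanskii--Teissier inequality for nef $\R$-divisors as an external input, whereas the paper stays entirely within the convex-geometric framework it has already built.
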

\begin{proof}
Consider the associated Okounkov bodies $\Delta_{\D}$ and $\Delta_{\F}$, respectively. Then, using Theorem \ref{the:hilbertsamuel}, the inequality follows from a standard result in convex geometry about volumes of convex
sets (see e.g. \cite{BM}).
\end{proof}

\printbibliography

\end{document}